\theoremstyle{plain}
\newtheorem{theorem}{Theorem}[section]
\newtheorem{lemma}[theorem]{Lemma}
\newtheorem{corollary}[theorem]{Corollary}
\newtheorem{proposition}[theorem]{Proposition}
\newtheorem{definition}[theorem]{Definition}
\newtheorem{assumption}[theorem]{Assumption}
\newtheorem{remark}[theorem]{Remark}
\DeclareMathOperator*{\du}{d\!}
\def\bu{\mathbf{u}}
\def\bv{\mathbf{v}}
\def\bw{\mathbf{w}}
\def\be{\mathbf{e}}
\def\bphi{\boldsymbol{\varphi}}
\def\bpsi{\boldsymbol{\psi}}
\def\bro{\boldsymbol{\varrho}}
\def\bV{\boldsymbol{V}\!}
\def\bH{\boldsymbol{H}}
\def\bW{\boldsymbol{W}}
\def\bB{\boldsymbol{B}}
\def\bL{\boldsymbol{L}}
\def\bA{\boldsymbol{A}_\sigma}
\def\blf{\mathbf{f}}
\def\bq{\mathbf{q}}
\def\bF{\mathbf{F}}
\DeclareMathOperator{\bn}{\boldsymbol{n}}
\DeclareMathOperator{\bta}{\boldsymbol{\eta}}
\DeclareMathOperator{\bzeta}{\boldsymbol{\zeta}}
\DeclareMathOperator{\bvsigma}{\boldsymbol{\varsigma}}
\DeclareMathOperator{\ws}{\mathrel{\ensurestackMath{\stackon[1pt]{\rightharpoonup}{\scriptstyle\ast}}}}
\newcommand{\dive}{\operatorname{div}}
\newcommand\sbullet[1][.5]{\mathbin{\vcenter{\hbox{\scalebox{#1}{$\bullet$}}}}}
\newcommand\frightarrow{\scalebox{.4}[.4]{$\sbullet[1.5]$}}
\newcommand\comeq{\mathrel{%
		\stackengine{-1.25pt}{\leq}{\frightarrow}{U}{r}{F}{T}{S}}}
\title{Analysis of Unregularized Optimal Control Problems Constrained by the Boussinesq
	System\thanks{N.J. was supported by the FWF grants P-31400-N32 and I4571-N.}}
\author{Nicolai Jork\thanks{Institute of Statistics and Mathematical Methods in Economics, Vienna University of Technology, Vienna, Austria.}  \and John Sebastian Simon\thanks{Radon Institute for Computational and Applied Mathematics, Austrian Academy of Sciences, Altenbergerstraße
		69, Linz, 4040, Upper Austria, Austria. email: \texttt{john.simon@ricam.oeaw.ac.at, jhsimon1729@gmail.com}}}
\date{September 2023}
\begin{document}

	\maketitle
	\begin{abstract}
		This paper investigates solution stability properties of unregularized tracking-type optimal control problems constrained by the Boussinesq system. In our model, the controls may appear linearly and distributed in both of the equations that constitute the Boussiniesq system and in the objective functional. We establish, not only the existence of weak solutions, but also unique existence of strong solutions in $L^p$ sense for the Boussinesq system as well as its corresponding linearized and adjoint systems. The optimal control problem is then analyzed by providing the existence of an optimal control, and by establishing first-order necessary and second order sufficient conditions. Then, using assumptions on the joint growth of the first and second variations of the objective functional, we prove the strong metric Hölder subregularity of the optimality mapping, which in turn allows the study of solution stability of the optimal control and states under various linear and nonlinear perturbations. Such perturbations may appear in the Boussinesq system and the objective functional. As an application, we provide a convergence rate for the optimal solutions of the Tikhonov regularized problem as the Tikhonov parameter tends to zero. Furthermore, the obtained stability of the optimal states provides, to the best knowledge of the authors, the first result on the stability of the second-order sufficient condition in affine PDE-constrained optimization under an assumption on the desired profile which is natural for tracking-type objective functionals.
	\end{abstract}

	\section{Introduction}
	
	Optimal control is concerned with finding a function that steers a given dynamical system according to some goal. 
	For instance, driving a fluid toward a given velocity field.
	In the past few decades, optimal control problems with dynamics given by solutions of partial differential equations have become of great interest among mathematicians and engineers alike. 
	This is of course because a lot of phenomena related to physics or economics are modeled with such kinds of equations.
	Notably challenging are optimal control problems with equations describing nonlinear dynamics, of which one is considered in this paper. 
	In particular, we shall consider the so-called Boussinesq system that describes a heat-conducting fluid and is governed by the viscous incompressible Navier--Stokes equations and the heat equation.
	The fluid is affected by the local variations of the temperature along the direction of the gravitational force, while the temperature is affected by the fluid by virtue of the drift of the fluid velocity.
	Such systems, and their variations, have been well-studied due to their ability to model fluid flow phenomena wherein thermal effects cannot be ignored, eg. on atmospheric models \cite{hsia2015} and ocean circulation \cite{greatbatch2001}. 
	Mathematically speaking, the Boussinesq system has also gained attention due to its nonlinear nature. 
	We refer to the recent works \cite{amorim2024,gong2023,lin2024,sun2024} dealing with specific variations of the system we intend to study.
	
	Optimal control problems subject to the Boussinesq system have also been the focus of several papers. For example, in \cite{BFA} the authors established the existence of an optimal solution for a tracking-type objective functional and a corresponding Pontryagin minimum principle for the controls appearing in the heat equation. Li \cite{li2005}, on another hand, studied a problem where constraints are imposed on the fluid velocity and temperature variable. The author further considered a generalized objective function that consists of a Lipschitz continuous functional with respect to the state variables and a functional that is at least quadratic. We also mention the following works that dealt with optimal control subject to the Boussinesq system \cite{baranovskii2021,chierici2022,lee2002,li2003}. 
	In the present paper, the controls may appear both on the momentum and the heat equation and are supported on small regions of the domain. This allows the model to apply to various situations, for instance, a control problem subject to a two-dimensional Navier--Stokes equation, which is reciprocally influenced by a separately controlled heat equation. We study distributed optimal control problems where the control appears at most affinely in the governing state equations and the objective functional. 
	Affinely controlled optimal control problems have attracted much attention recently as they appear naturally when considering tracking type objective functionals where the Tikhonov regularization term is absent. 
	This setting promotes a bang-bang structure of the optimal controls. 
	The study of optimal control problems subject to partial differential equations involving bang-bang solutions can only be traced to a handful of works. We mention \cite{DH2012} where the authors provided methods for approximating the solutions for a bang-bang optimal control problem for a system governed by the Poisson equation. In the aforementioned paper, the authors also laid out special cases wherein the so-called structural assumption for the adjoint variable will imply that the controls are indeed bang-bang. In \cite{casas2012}, E. Casas provided a second-order sufficient condition for an optimization problem constrained with an elliptic semilinear equation. For the analysis of optimal control problems with bang-bang solutions concerning more involved nonlinear systems, we mention \cite{casas2017}, wherein a data-tracking problem is analyzed for the Navier--Stokes equations. The authors established necessary and sufficient conditions for the mentioned optimization and provided error estimates for the finite element approximation of the solutions. 
	
	Aside from the necessary and sufficient conditions for the optimization problem, this paper delves into the stability of the optimal controls as well as the optimal states. The analysis in this direction will be aided by the so-called strong metric H{\"o}lder subregularity which has been studied in the case of elliptic \cite{casas2022,dominguez2022}, parabolic \cite{jork2023}, and the Navier--Stokes equations \cite{DJNS2023}. The underlying concept in such regularity is the stability, under well-chosen perturbations, of the optimality mapping derived from the first-order optimality conditions.
	
	% Optimal control problems subject to the Boussinesq system were considered in \cite{BFA} where the authors provide the existence of optimal solution for a tracking type objective functional and establish a corresponding Pontryagin minimum principle for the controls appearing in the heat equation. 
	% In this paper, the controls may appear in both PDEs that form the Boussinesq system distributedly. 
	% This allows the model to apply to various situations, for instance, a control problem subject to a two-dimensional nonstationary Navier-Stokes equation, which is reciprocally influenced by a separately controlled heat equation.
	\bigskip
	
	In what follows we present an overview of the core results of the present paper. For that let us first fix some terminology. %To precisely state the optimal control problem, 
	Let $\Omega\subset\mathbb{R}^2$ be a domain with $C^3$ boundary $\partial\Omega$, and let $0<T\in \mathbb R$ be a fixed final time. We denote $I:= (0,T)$, $Q:= I\times \Omega$ and $\Sigma:= I\times \partial\Omega$. And fix target functions $\bu_{d}:(0,T)\times\Omega\to \mathbb{R}^2$, $\bu_{T}:\Omega\to \mathbb{R}^2$, $\theta_d: (0,T)\times\Omega\to \mathbb{R}$ and $\theta_{T}:\Omega\to \mathbb{R}$. For us to be able to consider controls of bang-bang type we consider the functions $\overline{\bq}=(\overline{q}_{1},\overline{q}_{2}),\underline{\bq}=(\underline{q}_{1},\underline{q}_{2})\in L^\infty((0,T)\times\omega_q)^2$, and $\overline{\Theta},\underline{\Theta}\in L^\infty((0,T)\times\omega_h)$, which gives us the set of admissible controls that satisfy box constraints, i.e.
	\begin{equation}\label{box}
		\mathcal{U}:=\Big\{(\bq,\Theta)\in L^\infty((0,T)\times\omega_q)^2\times L^\infty((0,T)\times\omega_h) \Big \vert \ \underline{\bq} \comeq \bq \comeq \overline{\bq},\, \underline{\Theta}\leq \Theta \leq \overline{\Theta} \Big\},
	\end{equation}
	where the notation $\comeq$ stands for component-wise inequality, i.e. $(a_1,a_2)\comeq (b_1,b_2)$ if and only if $a_1 \le b_1$ and $a_2\le b_2$.
	The Boussinesq system consists of the following equations
	\begin{align}
		\partial_t \bu - \nu\Delta\bu + (\bu\cdot\nabla)\bu + \nabla p & = \be_2\theta + \blf + \bq \chi_{\omega_q} &&\text{in }Q,\label{B1}\\
		\dive\bu & = 0 &&\text{in }Q,\label{B2}\\
		\partial_t\theta -\kappa\Delta \theta + \bu\cdot\nabla\theta & = h + \Theta\chi_{\omega_h} &&\text{in }Q\label{B3},\\
		\bu = 0, \quad \theta & = 0 && \text{on }\Sigma,\label{B4}\\
		\bu(0,\cdot) = \bu_0, \quad \theta(0,\cdot) & = \theta_0 && \text{in }\Omega,\label{B5}
	\end{align}
	where $\omega_q,\omega_h\subset\Omega$ are two subdomains with sufficient regularity, $(\bu,p):(0,T)\times\Omega\to \mathbb{R}^2\times\mathbb{R}$ denotes the fluid velocity and pressure, $\theta:(0,T)\times\Omega\to \mathbb{R}$ accounts for the temperature, the term $\be_2\theta$, with $\be_2:= (0,1)$, takes into account the buoyancy force, $\bq$ and $\Theta$ are external force and heat source for the fluid and the temperature acting as controls, $\blf$ and $h$ respectively are given distributed external force and heat source, and the constant parameters $\nu>0$ and $\kappa>0$ are the fluid viscosity and heat conductivity, respectively. The tracking type optimal control problem, which is the main object of interest in our study, is then defined by
	\begin{align}\label{objectivfunk}
		\begin{aligned}
			\min_{(\bq,\Theta)\in \mathcal{U}} 
			J(\bq,\Theta) :=&\, \frac{\alpha_1}{2}\int_Q \vert \bu-\bu_{d}\vert^2 \du x\du t + \frac{\alpha_2}{2}\int_Q \vert \theta-\theta_{d}\vert^2 \du x\du t\\
			& + \frac{\beta_1}{2}\int_\Omega \vert \bu(T,\cdot)-\bu_{T}\vert^2\du x + \frac{\beta_2}{2}\int_\Omega \vert \theta(T,\cdot)-\theta_{T}\vert^2\du x ,
		\end{aligned}
	\end{align}
	subject to \eqref{B1}--\eqref{B5}, where $\alpha_1,\alpha_2,\beta_1,\beta_2\ge 0$ are weight parameters that satisfy $\alpha_1 + \alpha_2 + \beta_1 + \beta_2 > 0$.
	\bigskip
	
	The paper is concerned with the solution stability of the optimal controls and states of \eqref{objectivfunk}. To concertize this and to state the  main result in this paper, let us consider the Tikhonov regularized problem with additional perturbations in the desired data:
	\begin{align}\label{genpurt}
		\min_{(\bq,\Theta)\in \mathcal{U}}&\ \  \frac{\alpha_1}{2}\int_Q \vert \bu-(\bu_{d}+\widehat{\bu}_{d})\vert^2\ \du x\du t + \frac{\alpha_2}{2}\int_Q \vert \theta-(\theta_{d}+\widehat{\theta}_d)\vert^2\ \du x\du t + \frac{\beta_1}{2}\int_\Omega \vert \bu(T,\cdot)-\bu_{T}\vert^2  \du x \\
		&+ \frac{\beta_2}{2}\int_\Omega \vert \theta(T,\cdot)-\theta_{T}\vert^2\du x +\frac{\varepsilon_1}{2}\int_0^T\int_{\omega_q}\vert\bq \vert^2 \du x\du t+ \frac{\varepsilon_2}{2}\int_0^T\int_{\omega_h} \vert\Theta \vert^2 \du x\du t\nonumber
	\end{align}
	subject to \eqref{B1}-\eqref{B5}, but also with perturbations on the initial data, i.e., $\bu(0,\cdot) = \bu_0 + \widehat{\bu}_0$ and $\theta(0,\cdot) = \theta_0 + \widehat{\theta}_0$, and on the distributed external force and heat source denoted as $\widehat{\blf}$ and $\widehat{h}$. 
	Suppose that $(\bq^\star,\Theta^\star)\in \mathcal{U}$ is a local solution to the unperturbed optimal control problem. Then, under assumptions on the joint growth of the first and second variation of the objective functional to be specified later on, we obtain the following stability estimate for the regularized and perturbed problem in terms of the perturbations: there exist positive constants $c,\alpha $ and $\mu\in [1,2)$ such that
	\begin{align*}
		&\| \bq^\star - \bq\|_{\bL^1((0,T)\times\omega_q)} + \| \Theta^\star - \Theta\|_{\bL^1((0,T)\times\omega_h)}\\
		& \leq c\Big( \| \widehat{\bu}_{d}\| + \|\widehat{\theta}_{d}\| + \| \widehat{\blf}\| + \|\widehat{h}\| +  \| \widehat{\bu}_{0}\| + \|\widehat{\theta}_{0}\| +\varepsilon_1\| \bq^\star\|_{\bL^\infty(Q)}+\varepsilon_2\|\Theta^\star\|_{L^\infty}\Big)^{1/\mu}
	\end{align*}
	for all local minimizers $(\bq^\star,\Theta^\star)\in \mathcal{U}$ of the perturbed optimal control problem satisfying $$\| \bq^\star - \bq\|_{\bL^1((0,T)\times\omega_q)} + \| \Theta^\star - \Theta\|_{\bL^1((0,T)\times\omega_h)}<\alpha.$$  
	The stability under perturbations of the control problem is practical also in estimating the error of the numerical approximation obtained by FEM schemes see \cite{J2023} or the SQP-method which is generally known. But solution stability also has implications of analytical type. For instance, in a forthcoming paper, the authors show the applicability of solution stability for the investigation of the regularity of the local value function. 
	Furthermore, as we shall see in Section \ref{sec7}, the stability of the optimal states implies a result regarding the stability of the second-order optimality condition. That is, assume that the optimal solution $(\bq^\star,\Theta^\star)$ of \eqref{objectivfunk} satisfies
	\begin{align}\label{naturaltracl}
		\|\bu^\star - \bu_d\|_{\bL^s(Q)} + \|\theta^\star -\theta_d \|_{L^s(Q)} < \frac{\min\{\alpha_1,\alpha_2 \}}{\delta} \quad\text{for }s\ge 4,
	\end{align}
	for a positive constant $\delta$. 
	Then we prove that for sufficiently small perturbations the second-order derivative of the perturbed functional satisfies a growth with respect to the difference of the optimal states for the perturbed problem. One can easily see that condition \eqref{naturaltracl} is natural for a tracking type problem, in the sense that it is satisfied as soon as the set of admissible controls allows for a good approximation of the desired data.
	
	% $\zeta$, it holds for solutions $(q^{\star,\zeta},\Theta^{\star,\zeta})$ of the first-order optimality condition of the perturbed problem, that there exist positive constants $c$ and $\delta$ independent of $\zeta$ such that
	% \begin{equation}
		%     J''_\zeta((q^{\star,\zeta},\Theta^{\star,\zeta}))((q-q^{\star,\zeta},\Theta-\Theta^{\star,\zeta}))^2\geq  c\Big ( \|{\bu} - {\bu}^{\star,\zeta} \|_{\bL^2}^2 +\|{\theta} - {\theta}^{\star,\zeta}\|_{L^2}^2  \Big)
		% \end{equation}
	% for all $(q,\Theta)\in \mathcal{U}$ with $\|{\bu}^{\star,\zeta} -  {\bu} \|_{\bL^\infty} +\|{\theta}^{\star,\zeta} - {\theta}\|_{L^\infty} <\delta$.
	% In other words, the solutions of the perturbed first-order condition, that are close to the reference solution, satisfy the second-order sufficient condition. Interestingly, the condition \eqref{naturaltracl} is natural for a tracking type problem, in the sense that it is satisfied as soon as the set of admissible controls allows for a good approximation of the to-be-tracked data.
	\bigskip
	
	The remainder of this article is as follows: Section \ref{sec2} is dedicated to introducing the functional analysis tools. The analysis for the governing equations will be presented in Section \ref{sec3}, additionally, we establish the existence of weak and strong solutions for the corresponding linearization of the nonlinear system \eqref{B1}--\eqref{B5} as well as its adjoint system. The control problem will be analyzed in Section \ref{sec4}. In Section \ref{sec5} we discuss the stability of solutions in detail, evoking notions and approaches from Variational Analysis. In particular, we discuss in subsection \ref{subsection4.17} the stability of the optimal control and states emanating from assumption on the growth of the derivatives of the objective functional with respect to the controls (see Assumption \ref{growth1}). While subsection \ref{sec6} deals with the state stability under the assumption on the growth of the derivatives of the objective functional with respect to the states (see Assumption \ref{growth2}). We conclude and give our final remarks in Section \ref{conclusion}.

	\section{Preliminaries}\label{sec2}
	Given a Banach space $X$ with the norm $\| \cdot\|_X$, we denote its dual as $X^*$ and we write $\langle x^*,x\rangle_X $ for the dual pairing of $x\in X$ and $x^*\in X^*$. 
	%The dual cone of a cone $U\subset X$ is a subset of the topological dual $X'$ given by
	%\begin{equation*}
	%    U^*:=\{\Lambda \in X'\vert \ \Lambda(f)\geq 0\ \forall f \in U \}.
	%\end{equation*}
	% We consider real-valued functions with values in $\mathbb R^n$ with $n=1, 2$.  
	% By $C^l_b(\Omega)$, we refer to the functions having bounded derivatives up the order $l$. 
	The space of Lebesque measurable and $s$-integrable, with $s\geq 1$, on a measurable set $D$ is denoted by $L^s(D)$. Following the usual notation, we write $\|\cdot\|_{L^s}$ for the norm in $L^s(D)$, whenever the domain $D$ is known, and we write $(\cdot,\cdot)_D$ for the $L^2$-inner product in the domain $D$. The Sobolev-Slobodeckij spaces, given $s\ge 1$ and $m\ge 0$, are denoted as $W^{m,s}(\Omega)$ and for the case $s=2$ we use the notation $H^m(\Omega):=W^{m,2}(\Omega)$. The norm in $W^{m,s}(\Omega)$ is denoted by $\|\cdot\|_{W^{m,s}}$. Due to the Dirichlet conditions imposed on the governing equations, we are also inclined to consider zero trace Sobolev spaces which we denote as $W^{m,s}_0(\Omega)$. 
	To emphasize spaces consisting of vector-valued functions, we write in bold letter, for instance, $\bL^s(\Omega)$, $\bW^{m,s}(\Omega)$.
	%The closure of $C^\infty_0(\Omega)$ under the $H^1$ semi-norm is denoted by $H^1_0(\Omega)$. 
	For simplicity, we use the notations $H:= L^2(\Omega)$ and $V := H_0^1(\Omega)$ which will be primarily used for the analysis of the temperature.
	
	To facilitate the analyses, we review some useful estimates.
	\begin{lemma}[Rellich-Konrachov]
		Let $\Omega$ be $C^1$, $0<m$, $0\le \bar{m}$ and $1\le s,s_1,s_2$. Suppose that either of the two cases holds: i. $m>0$ and $1\le s_2 \le 2s_1/(2-ms_2) $; or ii. $2 = ms_1$ and $1\le s_2 < +\infty$. Then the embedding $W^{m,s_1}(\Omega)\hookrightarrow L^{s_2}(\Omega)$ is compact. Furthermore, if $ms>2$ then we get the compact embedding $W^{m+\bar{m},s}(\Omega)\hookrightarrow C^{\bar{m}}(\overline{\Omega})$.
	\end{lemma}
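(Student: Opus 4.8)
The plan is to establish the two compact embeddings separately, in both cases reducing the problem to a compactness criterion on the whole plane via a bounded extension operator. Because $\partial\Omega$ is $C^1$ and $\Omega$ is bounded, there is a bounded linear extension $E\colon W^{m,s_1}(\Omega)\to W^{m,s_1}(\mathbb{R}^2)$ taking values in compactly supported functions, all of whose supports lie in a fixed bounded set $K$. It therefore suffices to prove that $\{Eu:\|u\|_{W^{m,s_1}(\Omega)}\le 1\}$ is precompact in $L^{s_2}(K)$.

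For the first assertion I would begin with the continuous Gagliardo--Nirenberg--Sobolev embedding. In case (i), where $ms_1<2$, this gives $W^{m,s_1}(\mathbb{R}^2)\hookrightarrow L^{s_1^\ast}(\mathbb{R}^2)$ with critical exponent $s_1^\ast=2s_1/(2-ms_1)$; in the borderline case (ii), where $ms_1=2$, it gives the embedding into every $L^{r}(\mathbb{R}^2)$ with $r<\infty$. Either way the family is uniformly bounded in some $L^{r}$ with $r>s_2$, and combined with H\"older's inequality on the bounded set $K$ it is uniformly bounded in $L^{s_2}$. To upgrade boundedness to compactness in the (strictly subcritical) range $s_2<s_1^\ast$ I would invoke the Fr\'echet--Kolmogorov theorem, whose remaining hypothesis is uniform smallness of translations. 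This is the heart of the matter: the first-order estimate $\|\tau_h u-u\|_{L^{s_1}}\le |h|\,\|\nabla u\|_{L^{s_1}}$ (valid after extension, and upgraded to order $m$ by iteration) controls translations in $L^{s_1}$, and interpolating between $L^{s_1}$ and the critical space $L^{s_1^\ast}$ yields $\sup\|\tau_h u-u\|_{L^{s_2}}\to0$ as $|h|\to0$ uniformly over the family. Equivalently one may mollify, prove $\|u\ast\rho_\varepsilon-u\|_{L^{s_2}}\to0$ uniformly, apply Arzel\`a--Ascoli to the smooth family $\{u\ast\rho_\varepsilon\}$ for each fixed $\varepsilon$, and close with a diagonal extraction.

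For the second assertion I would invoke Morrey's inequality: when $ms>2$ one has the continuous embedding $W^{m,s}(\mathbb{R}^2)\hookrightarrow C^{0,\gamma}(\overline{\Omega})$ for a suitable H\"older exponent $\gamma\in(0,1)$. Applying this to all partial derivatives of order up to $\bar{m}$ promotes it to the bounded inclusion $W^{m+\bar{m},s}(\Omega)\hookrightarrow C^{\bar{m},\gamma}(\overline{\Omega})$. Compactness of $C^{\bar{m},\gamma}(\overline{\Omega})\hookrightarrow C^{\bar{m}}(\overline{\Omega})$ is then the classical Arzel\`a--Ascoli theorem applied to the derivatives up to order $\bar{m}$: a set bounded in $C^{\bar{m},\gamma}$ has uniformly bounded and uniformly equicontinuous derivatives of each order $\le\bar{m}$, hence is precompact in $C^{\bar{m}}$.

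The step I expect to be the main obstacle is the compactness upgrade in the first assertion---making the translation (or mollification) estimate uniform over the entire bounded family, and in particular handling the borderline case $ms_1=2$. There the Sobolev embedding just fails to reach $L^\infty$, so one cannot interpolate against a finite critical exponent as in case (i); instead I would fix an auxiliary exponent $r$ with $s_2<r<\infty$, use the embedding into $L^r$ that case (ii) does provide, and interpolate between $L^{s_1}$ and $L^r$. The extension step, the continuous embeddings, and the Arzel\`a--Ascoli arguments are otherwise standard.
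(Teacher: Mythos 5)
The paper does not prove this lemma: it is stated in the preliminaries as a classical embedding result (Rellich--Kondrachov together with the Morrey-type embedding into H\"older spaces) and used as a black box, so there is no in-paper argument to compare yours against. Your proposal is the standard textbook proof --- extension to $\mathbb{R}^2$ with supports in a fixed compact set, uniform $L^{s_1^\ast}$ bounds from Gagliardo--Nirenberg--Sobolev, a uniform translation (or mollification) estimate feeding into the Fr\'echet--Kolmogorov criterion, interpolation to reach $L^{s_2}$ in the subcritical range, and Morrey plus Arzel\`a--Ascoli for the H\"older part --- and it is essentially sound. Your handling of the borderline case $ms_1=2$ by interpolating against an auxiliary $L^r$ with $s_2<r<\infty$ is the right fix.

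Two remarks. First, you correctly restrict the compactness argument to the \emph{strictly} subcritical range $s_2<s_1^\ast$; this is not a defect of your proof but a (known) overstatement in the lemma as printed, since at the critical exponent the embedding is continuous but not compact (and the displayed exponent $2s_1/(2-ms_2)$ contains an evident typo for $2s_1/(2-ms_1)$). No proof can deliver compactness at equality, so your restriction is the honest statement. Second, the spaces $W^{m,s}$ here are Sobolev--Slobodeckij spaces with possibly non-integer $m$; your translation estimate $\|\tau_h u-u\|_{L^{s_1}}\le |h|\,\|\nabla u\|_{L^{s_1}}$ and its ``iteration to order $m$'' only make sense for integer $m$. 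For fractional $0<m<1$ you need the analogous bound $\|\tau_h u-u\|_{L^{s_1}}\le c\,|h|^{m}\,[u]_{W^{m,s_1}}$ in terms of the Gagliardo seminorm (or a reduction to the integer case by interpolation of spaces); this is standard but should be said if you want the proof to cover the stated generality.
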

	
	\begin{lemma}[Gagliardo-Nirenberg]
		Let $\Omega$ be a Lipschitz domain, and suppose that $m_1,m_2\in\mathbb{N}$ with $m_1<m_2$, $1\le s_1,s_2,s_3 $ and $\theta\in[0,1]$ satisfy
		\begin{align*}
			\frac{1}{s_1} = \frac{m_1}{2} + \theta\left( \frac{1}{s_2} - \frac{m_2}{2} \right) + \frac{1-\theta}{s_3},\quad \frac{m_1}{m_2}\le \theta\le 1.
		\end{align*}
		Then there exists $c>0$ such that
		\begin{align*}
			\|D^{m_1}u\|_{L^{s_1}} \le c \|u\|_{W^{m_2,s_2}}^\theta \|u\|_{L^{s_3}}^{1-\theta}
		\end{align*}
		for any $u\in L^{s_3}(\Omega)\cap W^{m_2,s_2}(\Omega)$.
	\end{lemma}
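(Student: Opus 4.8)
The statement is the Gagliardo–Nirenberg interpolation inequality on a bounded two–dimensional Lipschitz domain (note that $n=2$, so the factor $\tfrac{m_1}{2}$ in the hypothesis is exactly $\tfrac{m_1}{n}$). My plan is to deduce it from the corresponding inequality on all of $\mathbb{R}^2$ by extension, and to prove the whole–space inequality by hand. Since $\Omega$ is Lipschitz, Stein's universal extension operator $E$ is bounded $W^{k,s}(\Omega)\to W^{k,s}(\mathbb{R}^2)$ \emph{simultaneously} for every integer $0\le k\le m_2$ and every $s$ in the finite set of exponents entering the estimate. Applying the whole–space inequality to $Eu$, restricting the left–hand side to $\Omega$, and absorbing the operator norms of $E$ into the constant then gives the claim; the only inputs used are $\|D^{m_1}u\|_{L^{s_1}(\Omega)}\le \|D^{m_1}(Eu)\|_{L^{s_1}(\mathbb{R}^2)}$ together with the two bounds $\|Eu\|_{W^{m_2,s_2}(\mathbb{R}^2)}\le c\|u\|_{W^{m_2,s_2}(\Omega)}$ and $\|Eu\|_{L^{s_3}(\mathbb{R}^2)}\le c\|u\|_{L^{s_3}(\Omega)}$.

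For the whole–space inequality I would work on $C_c^\infty(\mathbb{R}^2)$ (the general case following by density) and build up in two stages. The scaling–critical seed is the Gagliardo–Nirenberg–Sobolev estimate $\|v\|_{L^2(\mathbb{R}^2)}\le c\,\|\nabla v\|_{L^1(\mathbb{R}^2)}$, proved by the standard iterated one–dimensional fundamental–theorem–of–calculus and Hölder (Loomis–Whitney) argument. Applying this to $|v|^\gamma$ for a suitable $\gamma>1$ and using Hölder upgrades it to $\|v\|_{L^{r^\ast}}\le c\,\|\nabla v\|_{L^{r}}$ with $\tfrac{1}{r^\ast}=\tfrac{1}{r}-\tfrac12$; multiplicative interpolation between this endpoint and $L^{s_3}$ then produces the first–order inequality (the case $m_1=0$, $m_2=1$) with the $\theta$–dependence of the hypothesis. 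The higher–order case is reached by induction on $m_2$: at each step I would combine the first–order inequality with the intermediate–derivative estimate $\|\nabla v\|_{L^{p}}\le c\,\|v\|_{L^{p}}^{1/2}\,\|D^2 v\|_{L^{p}}^{1/2}$ (itself an integration by parts plus Cauchy–Schwarz) and with Lebesgue interpolation, so that the affine relation among $\tfrac{1}{s_1},\tfrac{1}{s_2},\tfrac{1}{s_3}$ in the hypothesis is precisely the exponent that the induction forces at every stage.

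The main obstacle is not any single inequality but the exponent bookkeeping: one must verify that the admissible range $\tfrac{m_1}{m_2}\le\theta\le1$ is exactly what the iteration tolerates and that the affine constraint is reproduced at each inductive step. A secondary subtlety is the borderline parameter values for which the \emph{homogeneous} Gagliardo–Nirenberg inequality is known to fail (e.g. when $m_2-m_1-\tfrac{2}{s_2}$ is a nonnegative integer and $\theta=\tfrac{m_1}{m_2}$); here these cause no trouble, because the right–hand side carries the \emph{full} norm $\|u\|_{W^{m_2,s_2}}$ rather than the top–order seminorm and $\Omega$ is bounded, so all intermediate derivatives are controlled and the estimate holds throughout the stated closed range.
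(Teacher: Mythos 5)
The paper offers no proof of this lemma: it is recalled in the Preliminaries as a classical result (Nirenberg's interpolation inequality, here with $n=2$) and used as a black box, so there is no in-paper argument to compare yours against step by step. Your outline --- Stein extension from the Lipschitz domain to $\mathbb{R}^2$, the $L^1\to L^2$ Sobolev seed via the iterated fundamental-theorem-of-calculus/Loomis--Whitney argument, the $|v|^\gamma$ bootstrap, and induction on $m_2$ through the intermediate-derivative inequality --- is the standard route and is sound in substance. One small caveat: the estimate $\|\nabla v\|_{L^p}^2\le c\,\|v\|_{L^p}\|D^2v\|_{L^p}$ does follow from integration by parts and H\"older when $p\ge 2$, but for $1\le p<2$ the weight $|\partial_i v|^{p-2}$ is singular and one has to fall back on the one-dimensional Landau--Kolmogorov inequality applied on lines (or a regularization) instead of the argument you describe.

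The one genuine flaw is your treatment of the borderline cases. The classical theorem excludes $\theta=1$ whenever $m_2-m_1-2/s_2$ is a nonnegative integer (you name the wrong endpoint, $\theta=m_1/m_2$), and your claim that the full inhomogeneous norm together with boundedness of $\Omega$ rescues these cases is false: taking $m_1=0$, $m_2=1$, $s_2=2$, $\theta=1$, the stated identity forces $s_1=\infty$, and $\|u\|_{L^\infty(\Omega)}\le c\|u\|_{W^{1,2}(\Omega)}$ fails on every planar domain (the usual iterated-logarithm counterexample). So the lemma as literally stated cannot be proved in full generality; the standard exception must be appended. This imprecision is inherited from the paper's formulation rather than introduced by you, and every invocation of the lemma in the paper stays away from that borderline, but your proof should either restrict to $\theta<1$ in the exceptional case or state the exclusion explicitly instead of arguing it away.
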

	
	To take into account the incompressibility of the fluid we consider the following solenoidal spaces
	\begin{align*}
		\bW^{m,s}_{0,\sigma} := \{ \bphi\in \bW_0^{m,s}(\Omega): \dive \bphi = 0 \text{ in }\Omega \}\text{ and }\bH_\sigma := \{ \bphi\in\bL^2(\Omega) : \dive\bphi = 0 \text{ in }L^2(\Omega), \bu\cdot\bn = 0 \text{ on }\partial\Omega \}.
	\end{align*}
	We also use the notation $\bV_\sigma := \bW^{1,2}_{0,\sigma}$. The pairs $(V,H)$ and $(\bV_\sigma,\bH_\sigma)$ are known to satisfy the Gelfand triple, i.e., the embeddings $\bV_\sigma\hookrightarrow \bH_\sigma \hookrightarrow \bV_\sigma^*$ and $V\hookrightarrow H\hookrightarrow V^* $ are dense, continuous and compact. In fact, the compactness follows from Rellich-Kondrachov embedding and Schauder theorems. 
	% We define the solenoidal space
	% \begin{equation}
		%     \mathcal{V}:=\Big\{ u\in C^\infty_0(\Omega)^2\Big\vert \text{div }u=0 \text{ in } \Omega\Big\}
		% \end{equation}
	The orthogonal complement of $\bH_\sigma$ can be characterized as $
	\bH_\sigma^\bot=\{\bpsi\in   \bL^2(\Omega):\,\psi = \nabla  q \text{ for some } q\in H^1(\Omega) \}$.
	The representation  $ \bL^2(\Omega)=  \bH_\sigma\oplus  \bH_\sigma^\bot$ is called \textit{the Helmholtz--Leray decomposition}, from which we define the \textit{Leray projection operator} $P:  \bL^2(\Omega) \to  \bH_\sigma$ by $P\bpsi = \bpsi_1$, where $\bpsi_1$ is the unique element of $\bH_\sigma$ such that $\bpsi-\bpsi_1$ belongs to $ \bH_\sigma^\bot$. 
	{The Stokes operator $\bA:\bV_\sigma\cap \bH^2(\Omega)\subset \bH_\sigma\to \bH_\sigma$ is then defined by $\bA \bpsi = -P\Delta \bpsi$ and can be identified as a linear operator from $\bV_\sigma$ to $\bV_\sigma^*$ by $\langle \bA\bpsi_1,\bpsi_2 \rangle_V = (\nabla\bpsi_1,\nabla\bpsi_2)_\Omega$ for any $\bpsi_1,\bpsi_2\in \bV_\sigma$.}
	\bigskip
	%\begin{equation}
	%   H_\sigma:=\Big\{u\in \bL^2(\Omega)\Big\vert \text{ div } u=0 \text{ in }\Omega,\ u\cdot\nu=0 \text{ on } \Omega\Big\}.
	%\end{eq
		
		Due to the time-dependence of our state equations, we consider also spaces of functions defined on the interval $I$ mapped to Banach spaces $X$. We begin with the space of functions from $I$ to $X$ that can be continuously extended on the closed interval $\overline{I}$, which we denote as $C(\overline{I};X)$. 
		The space $L^s(I; X)$ consists of functions $\psi:I\to X$ such that $t\to \|\psi(t)\|_{X}$ belongs to $L^s(0,T)$. 
		We see that the elements of $L^s(I;L^s(\Omega))$ can be identified as elements in $L^s(Q)$.
		The spaces that we are particularly interested in are the following 
		\begin{align*}
			& W^s(X):= \{v\in L^2(I;X): \partial_t v\in L^s(I;X^*) \},
			\quad W^{2,1}_{s} := \{\psi\in L^s(I;W^{2,s}(\Omega)\cap V) : \partial_t\psi \in L^s(Q) \}\\
			&\qquad\qquad\qquad \bW^{2,1}_{s,\sigma} := \{\bphi\in L^s(I;\bW^{2,s}(\Omega)\cap \bV_{\sigma}): \partial_t\bphi \in \bL^s(Q) \}.
		\end{align*}
		We see by Aubin-Lions-Simon embedding theorem that $W^2(V)\hookrightarrow C(\overline{I};H)$ and $W^2(\bV_\sigma)\hookrightarrow C(\overline{I}; \bH_\sigma)$. 
		From Amman embedding theorem \cite[Theorem 3]{amann2001}, we get the compact embeddings $W^{2,1}_s \hookrightarrow C(\overline{I};W^{2-2/s,s}_0(\Omega))$ and $\bW^{2,1}_{s,\sigma} \hookrightarrow C(\overline{I};\bW^{2-2/s,s}_{0,\sigma}(\Omega))$. 
		These embeddings compel us to assume that the initial data satisfy $\bu_0\in \bH_\sigma$ and $\theta_0\in H$ for the weak solution, while we assume $\bu_0\in \bW^{2-2/s,s}_{0,\sigma}(\Omega)$ and $\theta_0\in W^{2-2/s,s}_{0}(\Omega)$ for the strong solutions.
		Furthermore, due Rellich-Kondrachov embedding theorem we have the compact embeddings $W^{2,1}_s \hookrightarrow C(\overline{Q})$ and $\bW^{2,1}_{s,\sigma} \hookrightarrow C(\overline{Q})^2$ if $s>2$, and $W^{2,1}_s \hookrightarrow C(\overline{I};C^1(\overline{\Omega}))$ and $\bW^{2,1}_{s,\sigma} \hookrightarrow C(\overline{I};C^1(\overline{\Omega})^2)$ if $s>4$.

		\section{Analysis of the governing systems}\label{sec3}
		In this section, we provide existence results for the governing equations, including the linearization of \eqref{B1}--\eqref{B3} and a corresponding adjoint system.
		\subsection*{Solutions to the nonlinear system}
		We begin this section by providing the existence of solutions for the system
		\begin{align}
			\partial_t \bu - \nu\Delta\bu + (\bu\cdot\nabla)\bu + \nabla p & = \be_2\theta + \bF &&\text{in }Q,\label{Bg1}\\
			\dive\bu & = 0 &&\text{in }Q,\label{Bg2}\\
			\partial_t\theta -\kappa\Delta \theta + \bu\cdot\nabla\theta & = G &&\text{in }Q\label{Bg3},\\
			\bu = 0, \quad \theta & = 0 && \text{on }\Sigma,\label{Bg4}\\
			\bu(0,\cdot) = \bu_0, \quad \theta(0,\cdot) & = \theta_0 && \text{in }\Omega.\label{Bg5}
		\end{align}
		
		Let us define the notion of a weak solution. We say that $(\bu,\theta):Q \to \mathbb{R}^2\times\mathbb{R}$ is a weak solution of the system \eqref{Bg1}--\eqref{Bg5} if the following variational system is satisfied for a.e. $t\in (0,T)$
		\begin{align}
			\langle \partial_t\bu(t),\bphi\rangle_{\bV_\sigma^*} + \nu(\nabla\bu(t),\nabla\bphi) + ( (\bu(t)\cdot\nabla)\bu(t),\bphi ) & = (\be_2\theta(t),\bphi) + \langle \bF(t),\bphi \rangle_{\bV_\sigma^*} &&\forall \bphi\in\bV_\sigma,\label{var:Bg1}\\
			\langle \partial_t\theta(t),\psi\rangle_{V^*} + \kappa(\nabla\theta(t),\nabla\psi) + (\bu(t)\cdot\nabla\theta(t),\psi) & = \langle G(t),\psi\rangle_{V^*} &&\forall\psi\in V,\label{var:Bg2}
		\end{align}
		and that the initial conditions $\bu(0)=\bu_0$ and $\theta(0)=\theta_0$ are satisfied in $\bH_\sigma$ and $H$, respectively.%\footnote{We define the spaces later, but here we used $\bV_\sigma := \overline{\mathcal{V}}^{H_0^1(\Omega)^2}$, $V = H_0^1(\Omega)$, $\bH_\sigma := \overline{\mathcal{V}}^{L^2(\Omega)^2}$, $H = L^2(\Omega)$, $W^s(X):= \{v\in L^2(I;X)\vert \partial_t v\in L^s(I;X^*) \}$}
		
		The following theorem gives us the existence of weak solution in the sense defined above.
		\begin{theorem}\label{theorem:weakBouss}
			Let $\bF\in L^2(I;\bV_\sigma^*)$, $G\in L^2(I;V^*)$, $\bu_0\in \bH_\sigma$ and $\theta_0\in H$. Then there exists a unique element $(\bu,\theta)\in  W^2(\bV_\sigma)\times W^2(V)$ that solves \eqref{var:Bg1}--\eqref{var:Bg2} and satisfies  the estimates
			\begin{align}
				& \|\bu\|_{L^\infty(\bH_\sigma)} + \|\bu\|_{L^2(\bV_\sigma)} + \|\theta\|_{L^\infty(H)} + \|\theta\|_{L^2(V)} \le c(\|\bF\|_{L^2(\bV_\sigma^*)} + \|G\|_{L^2(V)} + \|\bu_0\|_{\bH_\sigma} + \|\theta_0\|_{H})\label{estimate:weak}\\
				& \begin{aligned}
					\|\partial_t\bu\|_{L^2(\bV_\sigma^*)} + \|\partial_t\theta\|_{L^2(V^*)} \le &\, c\left(1 + \|\bF\|_{L^2(\bV_\sigma^*)} + \|G\|_{L^2(V^*)} +\|\bu_0\|_{\bH_\sigma} + \|\theta_0\|_{H} \right)\\
					&\times\left(  \|\bF\|_{L^2(\bV_\sigma^*)} + \|G\|_{L^2(V^*)} +\|\bu_0\|_{\bH_\sigma} + \|\theta_0\|_{H} \right).
				\end{aligned}
			\end{align}
			for some constant $c:=c(\Omega,T)>0$.
		\end{theorem}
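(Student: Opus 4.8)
The plan is to construct the solution through a Faedo--Galerkin scheme, derive the two stated estimates as uniform bounds on the approximations, pass to the limit by compactness, and finally prove uniqueness by a Gronwall argument that is specific to two space dimensions. First I would fix the spectral bases $\{\bphi_k\}\subset\bV_\sigma$ of eigenfunctions of the Stokes operator $\bA$ and $\{\psi_k\}\subset V$ of eigenfunctions of the Dirichlet Laplacian; these are orthonormal in $\bH_\sigma$, $H$ and orthogonal in $\bV_\sigma$, $V$. Seeking $\bu_n(t)=\sum_{k=1}^n a_k(t)\bphi_k$ and $\theta_n(t)=\sum_{k=1}^n b_k(t)\psi_k$ and projecting \eqref{var:Bg1}--\eqref{var:Bg2} onto these subspaces yields a coupled system of ODEs for the coefficients, whose local-in-time solvability follows from standard ODE theory; the a priori bounds below then extend the solution to all of $I$.

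For the energy estimate I would test the velocity equation with $\bu_n$ and the temperature equation with $\theta_n$. The structural cancellations $((\bu_n\cdot\nabla)\bu_n,\bu_n)=0$ and $(\bu_n\cdot\nabla\theta_n,\theta_n)=0$, which follow from $\dive\bu_n=0$ and integration by parts, eliminate the convection terms entirely. The only surviving coupling is the buoyancy pairing $(\be_2\theta_n,\bu_n)$, controlled by Young's inequality, while $\langle\bF,\bu_n\rangle$ and $\langle G,\theta_n\rangle$ are split so that a fraction of the dissipation $\|\nabla\bu_n\|^2$, $\|\nabla\theta_n\|^2$ is absorbed. Gronwall's inequality then produces uniform bounds on $\bu_n$ in $L^\infty(\bH_\sigma)\cap L^2(\bV_\sigma)$ and on $\theta_n$ in $L^\infty(H)\cap L^2(V)$, which is exactly \eqref{estimate:weak}. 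To bound the time derivatives I would test against $\bphi$, $\psi$ in the unit balls of $\bV_\sigma$, $V$; here the trilinear term is estimated, after integration by parts, through the two-dimensional Gagliardo--Nirenberg (Ladyzhenskaya) inequality $\|\bv\|_{\bL^4}^2\le c\|\bv\|_{\bH_\sigma}\|\bv\|_{\bV_\sigma}$, giving $\|(\bu_n\cdot\nabla)\bu_n\|_{\bV_\sigma^*}\le c\|\bu_n\|_{\bH_\sigma}\|\bu_n\|_{\bV_\sigma}$. Squaring, integrating in time, and inserting the energy bounds yields the quadratic-type right-hand side of the second estimate.

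With these bounds in hand I would extract, by reflexivity, a subsequence with $\bu_n\rightharpoonup\bu$ in $L^2(\bV_\sigma)$, $\theta_n\rightharpoonup\theta$ in $L^2(V)$, weak-$*$ convergence in the $L^\infty$ spaces, and weak convergence of $\partial_t\bu_n$, $\partial_t\theta_n$ in $L^2(\bV_\sigma^*)$, $L^2(V^*)$. The Aubin--Lions--Simon lemma, using the embedding $W^2(\bV_\sigma)\hookrightarrow C(\overline I;\bH_\sigma)$ together with the compact inclusion $\bV_\sigma\hookrightarrow\bH_\sigma$ of the Gelfand triple, upgrades this to strong convergence $\bu_n\to\bu$ in $L^2(\bH_\sigma)$ and $\theta_n\to\theta$ in $L^2(H)$. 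This strong convergence is precisely what permits passing to the limit in the convection terms $(\bu_n\cdot\nabla)\bu_n$ and $\bu_n\cdot\nabla\theta_n$ and in the buoyancy coupling; a density argument then lifts the finite-dimensional test functions to all of $\bV_\sigma$, $V$, and the initial conditions are recovered from continuity in time.

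The main obstacle is uniqueness, and it is here that the two-dimensional setting is indispensable. Given two solutions, I would set $\bw=\bu_1-\bu_2$ and $\eta=\theta_1-\theta_2$ and test the differenced equations with $\bw$ and $\eta$. The difference of the convection terms leaves $((\bw\cdot\nabla)\bu_2,\bw)$ and $(\bw\cdot\nabla\theta_2,\eta)$, which I would estimate via the same $\bL^4$ interpolation inequality, absorbing the resulting gradient norms into the dissipation at the cost of an integrable-in-time coefficient built from $\|\bu_2\|_{\bV_\sigma}$ and $\|\theta_2\|_V$; the buoyancy difference $(\be_2\eta,\bw)$ is handled by Young's inequality. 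This produces a differential inequality of the form $\tfrac{d}{dt}\big(\|\bw\|_{\bH_\sigma}^2+\|\eta\|_H^2\big)\le\phi(t)\big(\|\bw\|_{\bH_\sigma}^2+\|\eta\|_H^2\big)$ with $\phi\in L^1(I)$, whence Gronwall forces $\bw\equiv 0$ and $\eta\equiv 0$. The delicate point throughout is that these convection estimates close only because the critical $\bL^4$ embedding holds with the borderline exponents available in dimension two.
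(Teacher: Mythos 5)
Your proposal is correct and follows essentially the same route as the paper: a Faedo--Galerkin construction whose core is the energy estimate obtained by testing with $(\bu,\theta)$ (using the cancellation of the trilinear terms and Young's inequality on the buoyancy coupling), the time-derivative bound via the two-dimensional Ladyzhenskaya inequality, and passage to the limit by Aubin--Lions compactness. You additionally spell out the uniqueness argument, which the paper's proof leaves implicit among the ``routine steps''; your 2D Gronwall argument for the difference of two solutions is the standard and correct way to close it.
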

		\begin{proof}
			The proof of the theorem above shall utilize a Galerkin method wherein we discretize the system and show that the solutions of the discretized system converge to a solution to the original system. To do this one relies on the \textit{a priori} estimates to get precisely the uniform bounds on the spaces stated in the theorem. From there, compactness and density arguments will establish the wanted convergence. Since this follows a series of routine steps, we only show the very crucial part, which is the establishment of \textit{a priori} estimates.
			
			Indeed, by taking $\bphi=\bu(t)$ and $\psi = \theta(t)$, and applying H{\"o}lder and Young inequalities to \eqref{Bg1} and \eqref{Bg2} yields
			\begin{align}
				\begin{aligned}
					&\frac{1}{2}\frac{d}{dt}\Big( \|\bu(t)\|_{\bH_\sigma}^2 + \|\theta(t)\|_{H}^2  \Big) + \frac{\nu}{2}\|\bu(t)\|_{\bV_\sigma}^2 + \frac{\kappa}{2}\|\theta(t)\|_{V}^2 \\
					& \le \frac{c_{\be_2}}{2} \Big( \|\bu(t)\|_{\bH_\sigma}^2+\|\theta(t)\|_H^2  \Big)+ \frac{1}{\nu}\|\bF(t)\|_{\bV_\sigma^*}^2 + \frac{1}{\kappa}\|G(t)\|_{V^*},\label{tested}
				\end{aligned}
			\end{align}
			where $c_{\be_2}:= \max\left\{1,\|\be_2\|_{L^\infty}^2 \right\}>0$. Ignoring for the meantime the parts with the norms in $\bV_\sigma$ and $V$, Gronwall lemma implies that
			\begin{align}
				\|\bu\|_{L^\infty(\bH_\sigma)} + \|\theta\|_{L^\infty(H)} \le c \left(  \frac{1}{\sqrt{\nu}}\|\bF\|_{L^2(\bV_\sigma^*)} + \frac{1}{\sqrt{\kappa}}\|G\|_{L^2(V^*)} +\|\bu_0\|_{\bH_\sigma} + \|\theta_0\|_{H} \right), \label{Linftyestimate}
			\end{align}
			where $c\approx \exp{\left(c_{\be_2}T/2\right)}>$. Integrating \eqref{tested} over $(0,T)$ and using \eqref{Linftyestimate} thus give us 
			\begin{align}
				\nu\|\bu\|_{L^2(\bV_\sigma)} + \kappa\|\theta\|_{L^2(V)} \le c \left(  \frac{1}{\sqrt{\nu}}\|\bF\|_{L^2(\bV_\sigma^*)} + \frac{1}{\sqrt{\kappa}}\|G\|_{L^2(V^*)} +\|\bu_0\|_{\bH_\sigma} + \|\theta_0\|_{H} \right), \label{L2estimate}
			\end{align}
			where $c \approx \max\{1,\sqrt{c_{\be_2}T}\exp{(c_{\be_2}T/2)}\}$. We see that on the discretized level, \eqref{Linftyestimate} and \eqref{L2estimate}  provide the uniform boundedness from which we extract the solution.
			
			For the time derivatives, we see that by definition
			\begin{align*}
				& \|\partial_t\bu\|_{L^2(\bV_\sigma^*)} \le %\sup_{\bphi\in L^2(I;\bV_\sigma) \atop \|\bphi\|_{L^2(\bV_\sigma)} = 1} 
				\left(\nu\|\bu\|_{L^2(\bV_\sigma)} + \|\bu\|_{L^4}^2  + c_{P,\be_2}\|\theta\|_{L^2(H)} + \|\bF\|_{L^2(\bV_\sigma^*)}\right),\\%\|\bphi\|_{L^2(\bV_\sigma)}
				& \|\partial_t\theta\|_{L^2(V^*)} \le
				\left( \kappa\|\theta\|_{L^2(V)} + \|\bu\|_{L^4}\|\theta\|_{L^4} + \|G\|_{L^2(V^*)} \right),
			\end{align*}
			where $c_{P,\be_2}:= c_P\|\be_2\|_{L^\infty}>0$ and $c_P>0$ is the Poincar{\'e} constant.
			
			For any variable $\varphi\in L^2(I;V)\cap L^\infty(I;L^2(\Omega))$ we see by virtue of the Ladyzhenskaya inequality that
			\begin{align*} 
				\|\varphi\|_{L^4} \le c_L\left( \int_0^T \|\varphi(t)\|_{L^2}^2\|\varphi(t)\|_{V}^2 \du t \right)^{1/4} \le c_L \|\varphi\|_{L^\infty(L^2)}^{1/2}\|\varphi\|_{L^2(V)}^{1/2},
			\end{align*}
			where $c_L>0$ is the Ladyzhenskaya constant. From this, we see that
			\begin{align*}
				\begin{aligned}
					\|\partial_t\bu\|_{L^2(\bV_\sigma^*)} + \|\partial_t\theta\|_{L^2(V^*)} \le &\, c\left(1 + \|\bF\|_{L^2(\bV_\sigma^*)} + \|G\|_{L^2(V^*)} +\|\bu_0\|_{\bH_\sigma} + \|\theta_0\|_{H} \right)\\
					&\times\left(  \|\bF\|_{L^2(\bV_\sigma^*)} + \|G\|_{L^2(V^*)} +\|\bu_0\|_{\bH_\sigma} + \|\theta_0\|_{H} \right),
				\end{aligned}
			\end{align*}
			where $c>0$ is some constant not dependent on $\bu$ and $\theta$. From the estimate above Aubin-Lions embedding gives us strong convergences on the discretized level. Furthermore, the estimate above implies that $(\bu,\theta)\in W^2(\bV_\sigma^*)\times W^2(V^*)$ through which the compact embedding $W^2(\bV_\sigma^*)\times W^2(V^*)\hookrightarrow C(\overline{I}; \bH_\sigma\times H)$ validates the spaces where we take the initial conditions.
			
		\end{proof}

		Let us now discuss the regularity of the solution, given that we assume better regularity for the data. In particular, we consider solutions whose external forces and heat source are in some $L^p$ space, for some $p\ge 2$. Although the analysis that follows is based on the work of C. Gerhardt \cite{gerhardt1978}, to the best of our knowledge, the following regularity result is a novelty.
		% In particular, we consider solutions whose external forces and heat source are in some $L^p$ space. Here we say that the pair $(\bu,\theta)$ is a strong solution to \eqref{B1}--\eqref{B5} if the following system is satisfied
		% %\footnote{$\bA$-- stokes operator, $A$-- Laplace--Dirichlet operator}
		% \begin{align}
			%     (\partial_t \bu,\bphi) + \nu(\bA\bu,\bphi) + ( (\bu\cdot\nabla)\bu,\bphi ) & = (\be_2\theta,\bphi) + (\bF,\bphi) &&\forall \bphi\in \bL^{s'}_\sigma\!(Q) \label{var:Bstr1}\\
			%    ( \partial_t \theta ,\psi )+ \kappa(A\theta,\psi) + (\bu\cdot\nabla\theta,\psi) & = ( G,\psi) &&\forall \psi\in L^{r'}(Q),\label{var:Bstr2}
			% \end{align}
		% %\footnote{$\bL^{s}_\sigma\!(Q) : = L^s(I;\bL^s_\sigma(\Omega))$ $ \bL^s_\sigma(\Omega) : = \overline{\mathcal{V}}^{\bL^s(\Omega)}$}
		% where $s,r\ge 2$. 
		The existence of a unique strong solution to \eqref{B1}--\eqref{B5} is summarized in the theorem below.
		
		\begin{theorem}\label{theorem:strongLP}
			%  \footnote{$\bW^{m,s}_{0,\sigma}(\Omega):= \overline{\mathcal{V}}^{\bW^{m,s}_{0}(\Omega)}$, $\bW^{2,1}_{s,\sigma} := \{\bphi\in L^2(I;\bW^{2,1}(\Omega)\cap \bV_{s,\sigma}) \,\vert\, \partial_t\bphi \in \bL^s(Q) \}$, $\bV_{s,\sigma} := \overline{\mathcal{V}}^{\bW^{1,s}(\Omega)}$}
			%  \footnote{$W^{2,1}_{s} := \{\psi\in L^2(I;W^{2,1}(\Omega)\cap V_{s}) \,\vert\, \partial_t\psi \in L^s(Q) \}$, $V_s:= W^{1,s}_0(\Omega)$.}
			Let $s\ge2$, $\bF\in \bL^s(Q)$, $G\in L^s(Q)$, $\bu_0\in \bW^{2-1/s,s}_{0,\sigma}(\Omega)$ and $\theta_0\in W^{2-1/s,s}_{0}(\Omega)$. Then, the weak solution of \eqref{Bg1}--\eqref{Bg5} satisfies $(\bu,\theta)\in \bW^{2,1}_{s,\sigma}\times W^{2,1}_{s}$. Furthermore, there exists $p\in L^s(I;W^{1,s}(\Omega)/\mathbb{R})$ satisfying -- together with $(\bu,\theta)$ -- the energy estimate
			\begin{align}
				\begin{aligned}
					&\|\bu\|_{\bW^{2,1}_{s,\sigma}} + \|\theta\|_{W^{2,1}_{s}} + \|p\|_{L^s(I;W^{1,s}(\Omega)/\mathbb{R})}\\
					& \le c \left(\|\bF\|_{\bL^s} + \|G\|_{L^s} + \|\bu_0\|_{\bW^{2-1/s,s}_{0,\sigma}(\Omega)} +  \|\theta_0\|_{W^{2-1/s,s}_{0}(\Omega)}\right),
				\end{aligned}\label{estimate:strongLp}
			\end{align}
			for some constant $c>0$ independent of $(\bu,\theta)$ and $p$.
		\end{theorem}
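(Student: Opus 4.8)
The plan is to upgrade the regularity of the \emph{already unique} weak solution furnished by Theorem~\ref{theorem:weakBouss}, rather than to construct a new object. The guiding idea is to read \eqref{Bg1}--\eqref{Bg3} as two \emph{linear} problems---a Stokes system for $\bu$ and a heat equation for $\theta$---whose right-hand sides carry the quadratic convection and the buoyancy coupling:
\begin{align*}
	\partial_t\bu + \nu\bA\bu &= P\bigl(\be_2\theta + \bF - (\bu\cdot\nabla)\bu\bigr),\\
	\partial_t\theta - \kappa\Delta\theta &= G - \bu\cdot\nabla\theta .
\end{align*}
The main linear tool will be maximal parabolic $L^s$-regularity: for any right-hand side in $\bL^s(Q)$ and initial datum in $\bW^{2-2/s,s}_{0,\sigma}(\Omega)$ the Stokes problem has a unique solution in $\bW^{2,1}_{s,\sigma}$, together with a pressure in $L^s(I;W^{1,s}(\Omega)/\mathbb{R})$, whose norms are controlled linearly by the data; likewise for $\partial_t-\kappa\Delta$ with homogeneous Dirichlet conditions. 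Note that the stated hypotheses $\bu_0\in\bW^{2-1/s,s}_{0,\sigma}$ and $\theta_0\in W^{2-1/s,s}_{0}$ are \emph{stronger} than these estimates require, so the initial data will be admissible at every exponent used below. The whole difficulty is therefore to show that the convective terms actually lie in $\bL^s(Q)$ and $L^s(Q)$, which I would establish by a bootstrap.

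The crux is a \emph{strong energy estimate} that, exploiting two-dimensionality, jumps directly from the energy class to $L^2(I;\bH^2(\Omega))$. Testing the momentum equation with $\bA\bu$ and the temperature equation with $-\Delta\theta$ gives, schematically,
\begin{align*}
	\tfrac12\tfrac{d}{dt}\bigl(\|\nabla\bu\|_{\bL^2}^2+\|\nabla\theta\|_{H}^2\bigr) + \nu\|\bA\bu\|_{\bH_\sigma}^2 + \kappa\|\Delta\theta\|_{H}^2 = \mathcal{R},
\end{align*}
where $\mathcal{R}$ collects $-((\bu\cdot\nabla)\bu,\bA\bu)$, $-(\bu\cdot\nabla\theta,\Delta\theta)$, the buoyancy term and the data. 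In two dimensions the Ladyzhenskaya inequality $\|\bv\|_{\bL^4}\le c\|\bv\|_{\bL^2}^{1/2}\|\nabla\bv\|_{\bL^2}^{1/2}$ together with the Agmon-type bound $\|\nabla\bv\|_{\bL^4}\le c\|\nabla\bv\|_{\bL^2}^{1/2}\|\bA\bv\|_{\bH_\sigma}^{1/2}$ lets me dominate the convective contributions by $\varepsilon(\|\bA\bu\|_{\bH_\sigma}^2+\|\Delta\theta\|_{H}^2)+c_\varepsilon(\ldots)$, absorbing the leading second-order norms on the left and leaving a term of the form $c\,\phi(t)\,(\|\nabla\bu\|_{\bL^2}^2+\|\nabla\theta\|_{H}^2)$ with $\phi\in L^1(I)$ supplied by the weak estimate \eqref{estimate:weak}. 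Gronwall's lemma then yields $\bu\in L^\infty(\bV_\sigma)\cap L^2(I;\bH^2(\Omega))$ and $\theta\in L^\infty(V)\cap L^2(I;H^2(\Omega))$, using $\bu_0\in\bV_\sigma$, $\theta_0\in V$, which follow from the trace hypotheses. \textbf{This absorption step is the main obstacle}: the naive decoupled bootstrap stalls at the subcritical exponent $\tfrac43$, and only the planar structure renders the quadratic terms absorbable.

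With both variables in the strong energy class, the remaining bootstrap to arbitrary $s$ is driven by the planar embeddings $H^1\hookrightarrow L^q(\Omega)$ and $H^2\hookrightarrow W^{1,q}(\Omega)$ for every finite $q$. First, the buoyancy term obeys $\be_2\theta\in L^\infty(I;L^q)\subset L^s(Q)$ at once. For the convection, Hölder in space with $\bu\in L^\infty(I;L^q)$ together with a Gagliardo--Nirenberg interpolation of $\nabla\bu$ between $L^\infty(I;H)$ and $L^2(I;H^1)$ places $(\bu\cdot\nabla)\bu$ and $\bu\cdot\nabla\theta$ in $L^{s_0}(Q)$ for every $s_0<4$; since $s_0>2$ exceeds the parabolic critical exponent in dimension two, maximal regularity gives $(\bu,\theta)\in\bW^{2,1}_{s_0,\sigma}\times W^{2,1}_{s_0}$, which embeds into $C(\overline Q)$ with gradients in $L^{r}(Q)$ for $r$ arbitrarily large. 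Feeding this back, $(\bu\cdot\nabla)\bu$ and $\bu\cdot\nabla\theta$ then lie in $L^{r}(Q)$ with $\bu,\theta$ bounded, so one further application of maximal regularity with exponent $\min\{r,s\}=s$ produces $(\bu,\theta)\in\bW^{2,1}_{s,\sigma}\times W^{2,1}_{s}$. The two equations are advanced together at each stage, the improved bound on $\bu$ feeding the convective term $\bu\cdot\nabla\theta$.

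Finally, the residual $\partial_t\bu-\nu\Delta\bu+(\bu\cdot\nabla)\bu-\be_2\theta-\bF$ is, by construction, a gradient in $\bL^s(Q)$; de Rham's theorem recovers $p\in L^s(I;W^{1,s}(\Omega)/\mathbb{R})$ and the Stokes pressure estimate controls its norm. Chaining the maximal-regularity bounds from the last bootstrap step with the weak a priori estimate \eqref{estimate:weak} and the strong energy estimate then yields \eqref{estimate:strongLp}; the constant $c$ will depend on $\Omega$, $T$ and on lower-order norms of the data (through the weak and strong energy a priori bounds), all of which are in turn dominated by the right-hand side of \eqref{estimate:strongLp}.
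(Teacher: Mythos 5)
Your proposal is correct and follows essentially the same route as the paper: an $H^2$-level energy estimate obtained by testing with $\bA\bu$ and $-\Delta\theta$ and closing via Ladyzhenskaya/Gagliardo--Nirenberg and Gr\"onwall, followed by a bootstrap through the intermediate range $2<s<4$ and then to arbitrary $s$ using the embedding $\bW^{2,1}_{s_0,\sigma}\hookrightarrow C(\overline{Q})^2$, with maximal parabolic $L^s$-regularity for the linear Stokes and heat problems (Solonnikov, Ladyzhenskaya--Solonnikov) supplying the final estimates and the pressure.
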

		
		\begin{proof}
			Inspired by the work of C. Gerhardt \cite{gerhardt1978}, we prove the regularity in three steps: i. $s=2$; ii. $2<s< 4$; and iii. $4\le s <\infty$.
			
			\noindent\textit{Step I.} Let us take the $L^2$-inner product of \eqref{Bg1} and \eqref{Bg3} with $\varphi = \bA\bu$ and $\psi = -\Delta\theta$, respectively. This gives us
			\begin{align}
				\begin{aligned}
					& \frac{1}{2}\frac{d}{dt}\Big( \|\bu(t)\|_{\bV_\sigma}^2 + \|\theta(t) \|_{V}^2 \Big) + \frac{5\nu}{8}\|\bA\bu(t)\|_{\bH_\sigma}^2 + \frac{4\kappa}{6}\|\Delta\theta(t)\|_{H}^2\\ 
					& \le c\Big( \|\be_2\|_{\bL^\infty}\|\theta(t)\|_{H}^2 + 
					\|\bF(t)\|_{\bL^2}^2 + \|G(t)\|_{L^2}^2 + \|(\bu(t)\cdot\nabla)\bu(t)\|_{\bL^2}^2 + \|\bu(t)\cdot\nabla\theta(t) \|_{L^2}^2
					\Big).
				\end{aligned}\label{estimate:initweak}
			\end{align}
			From H{\"o}lder, Galiardo-Nirenberg and Young inequalities we thus get
			\begin{align*}
				&c\|(\bu(t)\cdot\nabla)\bu(t)\|_{\bL^2}^2 \le c\|\bu(t)\|_{\bH_\sigma}^2\|\bu(t)\|_{\bV_\sigma}^4 + \frac{\nu}{8}\|\bA\bu(t)\|_{\bH_\sigma}^2,\\
				&c\|\bu(t)\cdot\nabla\theta(t)\|_{\bL^2}^2 \le c\|\bu(t)\|_{\bH_\sigma}^2\|\bu(t)\|_{\bV_\sigma}^2\|\theta(t)\|_V^2 + \frac{\kappa}{6}\|A\theta(t)\|_{H}^2.
			\end{align*}
			These give us
			\begin{align*}
				\begin{aligned}
					& \Big( \|\bu(t)\|_{\bV_\sigma}^2 + \|\theta(t) \|_{V}^2 \Big) +{\nu}\|\bA\bu\|_{L^2(\bH_\sigma)}^2 + {\kappa}\|\Delta\theta\|_{L^2(H)}^2\\ 
					& \le c\Big(  
					\|\bF\|_{\bL^2}^2 + \|G\|_{L^2}^2 + \|\bu_0\|_{\bV_\sigma}^2 + \|\theta_0\|_V^2
					+ \int_0^T\|\bu(t)\|_{\bH_\sigma}^2 \|\bu(t)\|_{\bV_\sigma}^2 \left(\|\bu(t)\|_{\bV_\sigma}^2 + \|\theta(t)\|_V^2 \right)\du t
					\Big).
				\end{aligned}
			\end{align*}
			Since we have 
			\begin{align*}
				\int_0^T\|\bu(t)\|_{\bH_\sigma}^2 \|\bu(t)\|_{\bV_\sigma}^2\du t \le \|\bu\|_{L^\infty(\bH_\sigma)}^2 \|\bu\|_{L^2(\bV_\sigma)}^2 < \infty,
			\end{align*}
			we can employ Gr{\"o}nwall inequality which yields
			\begin{align*}
				\|\bu\|_{L^\infty(\bV_\sigma)}^2 + \|\theta \|_{L^\infty(V)}^2 \le c\Big(\|\bF\|_{\bL^2}^2 + \|G\|_{L^2}^2 + \|\bu_0\|_{\bV_\sigma}^2 + \|\theta_0\|_V^2 \Big).
			\end{align*}
			Using the estimate above, we further get 
			\begin{align*}
				{\nu}\|\bA\bu\|_{L^2(\bH_\sigma)}^2 + {\kappa}\|\Delta\theta\|_{L^2(H)}^2\le c\Big(\|\bF\|_{\bL^2}^2 + \|G\|_{L^2}^2 + \|\bu_0\|_{\bV_\sigma}^2 + \|\theta_0\|_V^2 \Big).
			\end{align*}
			One can get the estimate for the time derivative by taking $\varphi = \partial_t\bu$ and $\psi = \partial_t\theta$ and following the same steps. We get
			\begin{align}
				\begin{aligned}
					&\|\bu\|_{\bW^{2,1}_{2,\sigma}} + \|\theta\|_{W^{2,1}_{2}}  \le c \left(\|\bF\|_{\bL^2} + \|G\|_{L^2} + \|\bu_0\|_{\bW^{2,2-1/2}_{0,\sigma}(\Omega)} +  \|\theta_0\|_{W^{2,2-1/2}_{0}(\Omega)}\right).
				\end{aligned}\label{estimate:L2}
			\end{align}

			\noindent\textit{Step II.} In this case, it follows that $(\bu,\theta)\in \bW^{2,1}_{2,\sigma}\times W^{2,1}_2$. This implies, by Sobolev embedding, that $(\bu,\theta)\in \bW^{2,1}_{2,\sigma}\times W^{2,1}_2 \hookrightarrow \bL^{q_1}(Q)\times L^{q_2}(Q)$ for any $1\le q_1,q_2 <\infty$. With these observations, we establish our result by showing that $(\bu\cdot\nabla)\bu \in \bL^s(Q)$ and $\bu\cdot\nabla\theta \in L^s(Q)$.

			Let $\bar{s} \ge 1$ be such that $\frac{1}{4} + \frac{1}{\bar{s}} \le \frac{1}{s}$. By utilizing H{\"o}lder and Gagliardo-Nirenberg inequalities we get
			\begin{align*}
				\begin{aligned}
					\|(\bu\cdot\nabla)\bu\|_{\bL^s} & \le \|\bu\|_{\bL^{\bar{s}}}\|\nabla\bu\|_{L^4}  \le \|\bu\|_{L^2(\bV_\sigma)}^{1/2}\|\bu\|_{\bW^{2,1}_{2,\sigma}}^{3/2}.
				\end{aligned}
			\end{align*}
			Using similar arguments, we can estimate the transport term for the heat as follows:
			\begin{align*}
				\begin{aligned}
					\|\bu\cdot\nabla\theta\|_{\bL^s} & \le \|\bu\|_{\bL^{\bar{s}}}\|\nabla\theta\|_{L^4}  \le \|\bu\|_{\bW^{2,1}_{2,\sigma}}\|\theta\|_{L^2(V)}^{1/2}\|\theta\|_{W^{2,1}_{2}}^{1/2}.
				\end{aligned}
			\end{align*}
			As every term on the right-hand side of the two inequalities above is bounded, we thus find by \cite[Theorem 1.1]{solonnikov2001} and \cite[Theorem 4.9.1]{ladyzhenskaya1988} the existence of unique strong solutions. From \eqref{estimate:weak} and \eqref{estimate:L2}, one also get an estimate of the form \eqref{estimate:strongLp}.
			
			\noindent\textit{Step III.} 
			Due to the assumption on the powers of the data, we now have $(\bu,\theta)\in \bW^{2,1}_{s_1,\sigma}\times W^{2,1}_{r_1}$ for any $ s_1,r_1\in(2,4)$, and thus we have the compact embedding $\bW^{2,1}_{s_1,\sigma}\times W^{2,1}_{r_1}\hookrightarrow C(\overline{Q})^2\times C(\overline{Q})$. In this final step, we point out that the crucial part in the proof of the previous cases was estimating the nonlinear parts where we used a \textit{bootstrap} argument. For now, we only establish $(\bu\cdot\nabla)\bu \in \bL^s(Q)$ as the other nonlinearity can be handled similarly. From H{\"o}lder and Gagliardo-Nirenberg inequalities , and by choosing $s_1 = 3$ and letting $\theta = (5s - 6)/6s$ we have
			\begin{align*}
				\|(\bu\cdot\nabla)\bu\|_{\bL^s} \le \|\bu\|_{\bW^{2,1}_{3,\sigma}}\|\nabla\bu\|_{\bL^s} \le c \|\bu\|_{\bW^{2,1}_{3,\sigma}} \|\bu\|_{L^3(\bW^{2,3})}^\theta \|\bu\|_{\bL^3}^{1-\theta}.
			\end{align*}
			All terms on the right-hand side of the inequality above are bounded due to Step II.
			
			Again, \eqref{estimate:weak} and \eqref{estimate:L2} implies an estimate of the form \eqref{estimate:strongLp}.
		\end{proof}
		
		\begin{remark}
			For that case $s=r=2$, it is enough to assume that $\bu_0\in \bV_\sigma$ and $\theta_0\in V$.
		\end{remark}

		\subsection*{Solutions to the linearized system}
		Given $\bu_1,\bu_2\in  W^2(\bV_\sigma)$, $\theta\in W^2(V)$, $(\bF,G)\in \bV^*\times V$ we consider the system
		\begin{align}
			\partial_t \bv - \nu\Delta\bv + (\bu_1\cdot\nabla)\bv + (\bv\cdot\nabla)\bu_2 + \nabla q & = \be_2\vartheta + \bF &&\text{in }Q,\label{Blg1}\\
			\dive\bv & = 0 &&\text{in }Q,\label{Blg2}\\
			\partial_t\vartheta -\kappa\Delta \vartheta + \bu_1\cdot\nabla\vartheta + \bv\cdot\nabla\theta & = G &&\text{in }Q\label{Blg3},\\
			\bv = 0, \quad \vartheta & = 0 && \text{on }\Sigma,\label{Blg4}\\
			\bv(0,\cdot) = \bv_0, \quad \vartheta(0,\cdot) & = \vartheta_0 && \text{in }\Omega.\label{Blg5}
		\end{align}
		
		To establish the existence of weak solutions we point out that the only crucial part are the terms $((\bv\cdot\nabla)\bu_2,\bv)$ and $(\bv\cdot\nabla\theta,\vartheta)$ after multiplying $\bv$ and $\vartheta$ to \eqref{Blg1} and \eqref{Blg3}, respectively. To eliminate all the tedium, we now present how we treat such terms. Using H{\"o}lder, Gagliardo-Nirenberg and Young inequalities, we get for any $\varepsilon>0$
		\begin{align*}
			\begin{aligned}
				|((\bv\cdot\nabla) \bu_2,\bv) | & \le \|\bv\|_{\bL^4}^2\|\bu_2\|_{\bV_\sigma} \le c \|\bv\|_{\bH_\sigma}\|\bv\|_{\bV_\sigma}\|\bu_2\|_{\bV_\sigma}\\
				& \le c(\varepsilon)\|\bu_2\|_{\bV_\sigma}^2\|\bv\|_{\bH_\sigma}^2 + \varepsilon\|\bv\|_{\bV_\sigma}^2.
			\end{aligned}
		\end{align*}
		Here, $c(\varepsilon)>0$ results from the Young inequality. Similarly, for arbitrary $\varepsilon_1,\varepsilon_2>0$ we have
		\begin{align*}
			\begin{aligned}
				|(\bv\cdot\nabla\theta, \vartheta)| & \le \|\bv\|_{\bL^4}\|\theta\|_{V}\|\vartheta\|_{L^4} \le c \|\bv\|_{\bH_\sigma}^{1/2}\|\bv\|_{\bV_\sigma}^{1/2}\|\theta\|_{V}\|\vartheta\|_{H}^{1/2}\|\vartheta\|_{V}^{1/2}\\
				& \le c(\varepsilon_1,\varepsilon_2)\|\theta\|_{V}^2\left(\|\bv\|_{\bH_\sigma}^2 + \|\vartheta\|_{H}^2 \right) + \varepsilon_1\|\bv\|_{\bV_\sigma}^2 + \varepsilon_2\|\vartheta\|_{V}^2,
			\end{aligned}
		\end{align*}
		where again $c(\varepsilon)>0$ results from the Young inequality.
		
		Using the same arguments as in \Cref{theorem:weakBouss}, we have the following theorem.
		\begin{theorem}\label{theorem:weaklBouss}
			Let $\bF\in L^2(I;\bV_\sigma^*)$, $H\in L^2(I;V^*)$, $\bv_0\in \bH_\sigma$ and $\vartheta_0\in H$. Then the unique weak solution $(\bv,\vartheta)\in  W^2(\bV_\sigma))\times  W^2(V))$ of \eqref{Blg1}--\eqref{Blg5} exists and satisfies  the estimates
			\begin{align}
				& \|\bv\|_{L^\infty(\bH_\sigma)} + \|\bv\|_{L^2(\bV_\sigma)} + \|\vartheta\|_{L^\infty(H)} + \|\vartheta\|_{L^2(V)} \le c(\|\bF\|_{L^2(\bV_\sigma^*)} + \|G\|_{L^2(V)} + \|\bv_0\|_{\bH_\sigma} + \|\vartheta_0\|_{H})\label{estimate:lweak}\\
				& \begin{aligned}
					\|\partial_t\bv\|_{L^2(\bV_\sigma^*)} + \|\partial_t\vartheta\|_{L^2(V^*)} \le &\, c\left(1 + \|\bF\|_{L^2(\bV_\sigma^*)} + \|G\|_{L^2(V^*)} +\|\bv_0\|_{\bH_\sigma} + \|\vartheta_0\|_{H} \right)\\
					&\times\left(  \|\bF\|_{L^2(\bV_\sigma^*)} + \|G\|_{L^2(V^*)} +\|\bv_0\|_{\bH_\sigma} + \|\vartheta_0\|_{H} \right).
				\end{aligned}
			\end{align}
			for some constant $c:=c(\Omega,T,\|\bu_1\|^2_{\bV_\sigma},\|\bu_2\|^2_{\bV_\sigma},\|\theta\|_V^2)>0$.
		\end{theorem}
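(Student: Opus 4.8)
The plan is to run the Galerkin scheme of \Cref{theorem:weakBouss} essentially verbatim, the decisive simplification being that \eqref{Blg1}--\eqref{Blg5} is \emph{linear} in the unknown $(\bv,\vartheta)$. Projecting onto the span of the first $n$ Stokes eigenfunctions (for $\bv$) and Laplace--Dirichlet eigenfunctions (for $\vartheta$)—both divergence-free/zero-trace—produces a \emph{linear} ODE system with $L^1$-in-time coefficients, so existence and uniqueness of the Galerkin solutions on all of $I$ is immediate from linear ODE theory with integrable coefficients, with no fixed-point argument needed. As before, the whole content of the theorem lies in deriving the two displayed \textit{a priori} bounds uniformly in $n$, after which compactness and density close the argument.

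For the first estimate I would test the $\bv$-equation with $\bv$ and the $\vartheta$-equation with $\vartheta$ at the discrete level. The two drift-linear transport terms $((\bu_1\cdot\nabla)\bv,\bv)$ and $(\bu_1\cdot\nabla\vartheta,\vartheta)$ vanish identically since $\dive\bu_1=0$ and $\bv,\vartheta$ have zero trace, so only the two ``crucial'' terms $((\bv\cdot\nabla)\bu_2,\bv)$ and $(\bv\cdot\nabla\theta,\vartheta)$ survive—precisely the terms bounded in the two displays preceding the statement. Choosing $\varepsilon,\varepsilon_1,\varepsilon_2$ small enough to absorb the $\|\bv\|_{\bV_\sigma}^2$ and $\|\vartheta\|_V^2$ contributions into the dissipative terms $\tfrac{\nu}{2}\|\bv\|_{\bV_\sigma}^2+\tfrac{\kappa}{2}\|\vartheta\|_V^2$ on the left, and treating $(\be_2\vartheta,\bv)$ by Young's inequality, yields a differential inequality
\[
\frac{d}{dt}\big(\|\bv\|_{\bH_\sigma}^2+\|\vartheta\|_H^2\big)+\nu\|\bv\|_{\bV_\sigma}^2+\kappa\|\vartheta\|_V^2 \le g(t)\big(\|\bv\|_{\bH_\sigma}^2+\|\vartheta\|_H^2\big)+\text{data},
\]
with $g(t)=c\big(\|\bu_2(t)\|_{\bV_\sigma}^2+\|\theta(t)\|_V^2\big)+c_{\be_2}$. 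The decisive point is that $\bu_2\in L^2(I;\bV_\sigma)$ and $\theta\in L^2(I;V)$ force $g\in L^1(I)$, so Grönwall applies with an \emph{integrable} (not bounded) coefficient, producing the $L^\infty(\bH_\sigma)\cap L^2(\bV_\sigma)$ and $L^\infty(H)\cap L^2(V)$ bounds with constant behaving like $\exp(\tfrac12\|g\|_{L^1})$—exactly the source of the dependence on $\|\bu_2\|_{L^2(\bV_\sigma)}^2$ and $\|\theta\|_{L^2(V)}^2$ recorded in the statement.

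For the time-derivative bound I would estimate $\partial_t\bv$ in $\bV_\sigma^*$ and $\partial_t\vartheta$ in $V^*$ by duality against unit-norm test functions. Here the drift terms do \emph{not} cancel: $(\bu_1\cdot\nabla)\bv$ must be controlled in $\bV_\sigma^*$ and $\bv\cdot\nabla\theta$ in $V^*$, and it is this step that introduces $\|\bu_1\|$ into the constant. Using the Ladyzhenskaya interpolation $\|\cdot\|_{\bL^4}\le c\|\cdot\|_{\bH_\sigma}^{1/2}\|\cdot\|_{\bV_\sigma}^{1/2}$ (as in \Cref{theorem:weakBouss}) to convert the trilinear pairings into products of $L^\infty(\bH_\sigma)$ and $L^2(\bV_\sigma)$ norms, and then inserting the bounds from the previous step, gives the second displayed estimate after integration in time. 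I regard this dual-norm control of the transport terms, together with the Grönwall bookkeeping under a merely integrable coefficient, as the only genuinely delicate points; everything else is routine.

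Finally, the uniform bounds let me extract weak limits in $L^2(I;\bV_\sigma)\times L^2(I;V)$ and weak-$\ast$ limits in the $L^\infty$ spaces, while Aubin--Lions supplies strong $L^2(Q)$ convergence; since every remaining term is now \emph{linear} in $(\bv,\vartheta)$, passing to the limit in the variational identities is straightforward and the limit inherits both estimates by weak lower semicontinuity. Uniqueness is then immediate from linearity: the difference of two solutions solves the homogeneous problem, and the first \textit{a priori} estimate with zero data forces it to vanish. The initial conditions are attained in $\bH_\sigma$ and $H$ through the embedding $W^2(\bV_\sigma)\times W^2(V)\hookrightarrow C(\overline{I};\bH_\sigma\times H)$, exactly as in \Cref{theorem:weakBouss}.
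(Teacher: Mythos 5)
Your proposal is correct and follows essentially the same route as the paper: the paper reduces the proof to the two "crucial" terms $((\bv\cdot\nabla)\bu_2,\bv)$ and $(\bv\cdot\nabla\theta,\vartheta)$, bounds them exactly as you do, and then invokes the Galerkin/a priori/compactness machinery of Theorem \ref{theorem:weakBouss}. Your additional observations — skew-symmetry of the $\bu_1$-drift terms, Grönwall with a merely integrable coefficient $g\in L^1(I)$, and uniqueness by linearity — are precisely the details the paper leaves implicit.
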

		
		For the analysis of strong solutions, we additionally assume $\bu_1,\bu_2\in \bW^{2,1}_{s,\sigma}$, $\theta\in W^{2,1}_s$, $(\bF,G)\in \bL^s(Q)\times L^s(Q)$ and $(\bv_0,\vartheta_0)\in \bW^{2-1/s,s}_{0,\sigma}(\Omega)\times W^{2-1/s,s}_{0}(\Omega)$. 
		Since the convection and transport terms are linear with respect to the unknown variable $(\bv,\vartheta)$, the analysis providing the strong solutions is more accessible. We skip the proof, which follows the same arguments we used for the nonlinear case. 
		
		\begin{theorem}\label{theorem:linearLP}
			Let $s\ge2$, $\bF\in \bL^s(Q)$, $G\in L^s(Q)$, $\bu_1,\bu_2\in \bW^{2,1}_{s,\sigma}$, $\theta\in W^{2,1}_s$, $\bv_0\in \bW^{2-1/s,s}_{0,\sigma}(\Omega)$ and $\vartheta_0\in W^{2-1/s,s}_{0}(\Omega)$. Then, the weak solution of \eqref{Blg1}--\eqref{Blg5} satisfies $(\bv,\vartheta)\in \bW^{2,1}_{s,\sigma}\times W^{2,1}_{s}$. Furthermore, there exists $q\in L^s(I;W^{1,s}(\Omega)/\mathbb{R})$ satisfying the energy estimate
			\begin{align}
				\begin{aligned}
					&\|\bv\|_{\bW^{2,1}_{s,\sigma}} + \|\vartheta\|_{W^{2,1}_{s}} + \|q\|_{L^s(I;W^{1,s}(\Omega)/\mathbb{R})}\\
					& \le c \left(\|\bF\|_{\bL^s} + \|G\|_{L^s} + \|\bv_0\|_{\bW^{2,2-1/p}_{0,\sigma}(\Omega)} +  \|\vartheta_0\|_{W^{2-1/s,s}_{0}(\Omega)}\right),
				\end{aligned}\label{estimate:stronglinLp}
			\end{align}
			for some constant $c:=c(\Omega,T,\|\bu\|_{\bW^{2,1}_{s,\sigma}},\|\theta\|_{W^{2,1}_s})>0$ independent of $(\bv,\vartheta)$ and $q$.
		\end{theorem}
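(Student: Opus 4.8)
The decisive feature here is that the convective and coupling terms are \emph{linear} in the unknown pair $(\bv,\vartheta)$, the fields $\bu_1,\bu_2,\theta$ entering only as fixed coefficients. My plan is therefore to bootstrap the regularity of the weak solution furnished by \Cref{theorem:weaklBouss}, following the three-step scheme of \Cref{theorem:strongLP} but exploiting this linearity so that no fixed-point argument is needed: each problematic product is split so that one factor is a given coefficient (controlled through its $\bW^{2,1}_{s,\sigma}$- or $W^{2,1}_s$-norm) and the other factor is the unknown, whose norm is already available from the preceding step.

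\emph{Step I ($s=2$).} I would test \eqref{Blg1} with $\bA\bv$ and \eqref{Blg3} with $-\Delta\vartheta$, exactly as in Step I of \Cref{theorem:strongLP}. The terms requiring care are $((\bu_1\cdot\nabla)\bv,\bA\bv)$, $((\bv\cdot\nabla)\bu_2,\bA\bv)$, $(\bu_1\cdot\nabla\vartheta,-\Delta\vartheta)$ and $(\bv\cdot\nabla\theta,-\Delta\vartheta)$. Each is handled by H\"older, Gagliardo--Nirenberg and Young inequalities so as to absorb a small multiple of $\|\bA\bv\|_{\bH_\sigma}^2$ and $\|\Delta\vartheta\|_{H}^2$ into the left-hand side, leaving lower-order quantities weighted by the coefficient norms $\|\bu_1\|_{\bV_\sigma}^2$, $\|\bu_2\|_{\bV_\sigma}^2$ and $\|\theta\|_V^2$, which are integrable in time since $\bu_1,\bu_2\in W^2(\bV_\sigma)$, $\theta\in W^2(V)$. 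A Gr\"onwall argument then closes the estimate and yields $(\bv,\vartheta)\in\bW^{2,1}_{2,\sigma}\times W^{2,1}_2$ together with a bound of the form \eqref{estimate:stronglinLp} for $s=2$; the estimate on $\partial_t\bv,\partial_t\vartheta$ follows by testing instead with $\partial_t\bv$ and $\partial_t\vartheta$.

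\emph{Steps II and III (bootstrap to $s>2$).} I would rewrite the system as a decoupled Stokes problem for $(\bv,q)$ and a heat equation for $\vartheta$ with right-hand sides
\begin{align*}
  \widetilde{\bF} := \bF + \be_2\vartheta - (\bu_1\cdot\nabla)\bv - (\bv\cdot\nabla)\bu_2, \qquad
  \widetilde{G} := G - \bu_1\cdot\nabla\vartheta - \bv\cdot\nabla\theta,
\end{align*}
and invoke the maximal $L^s$-regularity results of Solonnikov \cite{solonnikov2001} and Ladyzhenskaya \cite{ladyzhenskaya1988} quoted above, which also produce the pressure $q\in L^s(I;W^{1,s}(\Omega)/\mathbb{R})$. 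The whole task reduces to showing $\widetilde{\bF}\in\bL^s(Q)$ and $\widetilde{G}\in L^s(Q)$. For $2<s<4$, using the $s=2$ regularity just obtained one has $(\bv,\vartheta)\in\bW^{2,1}_{2,\sigma}\times W^{2,1}_2\hookrightarrow\bL^{q}(Q)\times L^{q}(Q)$ for every $q<\infty$ and $\nabla\bv,\nabla\vartheta\in\bL^4(Q)$; splitting for instance $\|(\bu_1\cdot\nabla)\bv\|_{\bL^s}\le\|\bu_1\|_{\bL^{\bar s}}\|\nabla\bv\|_{\bL^4}$ and $\|(\bv\cdot\nabla)\bu_2\|_{\bL^s}\le\|\bv\|_{\bL^{\bar s}}\|\nabla\bu_2\|_{\bL^4}$ with $\tfrac14+\tfrac1{\bar s}\le\tfrac1s$, and treating the temperature transport terms identically, bounds every factor by a known quantity. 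For $4\le s<\infty$ the outputs of Step II give $(\bv,\vartheta)\in\bW^{2,1}_{s_1,\sigma}\times W^{2,1}_{r_1}\hookrightarrow C(\overline Q)^2\times C(\overline Q)$ for $s_1,r_1\in(2,4)$, and I would repeat the Gagliardo--Nirenberg interpolation of Step III of \Cref{theorem:strongLP} (e.g. the choice $s_1=3$, $\theta=(5s-6)/6s$) to close $\|(\bu_1\cdot\nabla)\bv\|_{\bL^s}$ and the remaining terms.

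\emph{Main obstacle.} The only genuine work lies in choosing the Gagliardo--Nirenberg exponents in the parabolic interpolation so that the products land in $L^s(Q)$ and so that the top-order terms in Step~I are absorbed with a sufficiently small constant; the resulting estimate constant inherits the dependence on $\|\bu_1\|_{\bW^{2,1}_{s,\sigma}}$, $\|\bu_2\|_{\bW^{2,1}_{s,\sigma}}$ and $\|\theta\|_{W^{2,1}_s}$ recorded in the statement. Crucially, because the coefficients are fixed these are genuine \emph{a priori} estimates for the already-existing weak solution rather than a contraction to be iterated, which is exactly the simplification announced before the theorem.
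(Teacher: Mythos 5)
Your proposal is correct and follows essentially the route the paper intends: the paper omits this proof explicitly, stating that it ``follows the same arguments we used for the nonlinear case,'' i.e.\ the three-step bootstrap of \Cref{theorem:strongLP} (testing with $\bA\bv$ and $-\Delta\vartheta$ for $s=2$, then moving the convection/transport terms to the right-hand side and invoking the maximal $L^s$-regularity of \cite{solonnikov2001} and \cite{ladyzhenskaya1988}). Your write-up fills in precisely those details, with the correct observation that linearity in $(\bv,\vartheta)$ removes any need for iteration.
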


		\subsection*{Solutions to the adjoint system}
		The analysis of the optimal control problem will be aided by the so-called adjoint method. For this reason we consider, given $\bu_1,\bu_2\in  W^2(\bV_\sigma)$,  $\theta\in W^2(V)$, $(\bF,G)\in \bV^*\times V$, the following adjoint system
		\begin{align}
			-\partial_t \bw - \nu\Delta\bw - (\bu_1\cdot\nabla)\bw + (\nabla\bu_2)^\top\bw + \Psi\nabla\theta + \nabla r & =  \bF &&\text{in }Q,\label{Badg1}\\
			\dive\bv & = 0 &&\text{in }Q,\label{Badg2}\\
			-\partial_t\Psi -\kappa\Delta \Psi - \bu_1\cdot\nabla \Psi & = \be_2\cdot\bw + G &&\text{in }Q\label{Badg3},\\
			\bw = 0, \quad \Psi & = 0 && \text{on }\Sigma,\label{Badg4}\\
			\bw(T,\cdot) = \bw_T, \quad \Psi(T,\cdot) & = \Psi_T && \text{in }\Omega.\label{Badg5}
		\end{align}
		
		The weak solution of the system above can be easily proven to exist using the same flow as we did for the nonlinear system. The only differences are: 
		
		\noindent$\bullet\ \ $ i. to get the $L^\infty(I;L^2(\Omega))$ estimates, we mention that we get an estimate of the form \eqref{estimate:initweak} but have a negative sign on the part with the time derivative. To handle this, we integrate over the interval $(t,T)$ for $t\in [0,T)$.
		
		\noindent$\bullet\ \ $ ii. the time derivates will satisfy $\partial_t\bw\in  L^{4/3}(I;\bV^*)$ and $\partial_t \Psi \in L^{4/3}(I;V^*)$. The reason is that we will encounter the terms $((\bphi\cdot\nabla)\bu_2,\bw)$ and $(\bphi\cdot\nabla\theta, \Psi)$ where $\bphi\in\bV_\sigma$ and $\psi\in V$ are such that $\max\{\|\bphi\|_{\bV_\sigma},\|\psi\|_V\} \le 1$. We note that in this part of the proof, we should have been able to prove that $\bw \in L^2(I;\bV_\sigma)\cap L^\infty(I;\bH_\sigma)$ and $\Psi\in L^2(I;V)\cap L^\infty(I;H)$. Thus, to handle such terms, we have the following computation
		\begin{align*}
			\begin{aligned}
				\int_0^T|((\bphi\cdot\nabla)\bu_2, \bw)|^{4/3}\du t & \le \int_0^T \|\bu_2\|_{V}^{4/3}\|\bw\|_{\bH_\sigma}^{2/3}\|\bw\|_{\bV_\sigma}^{2/3} \du t\\
				& \le c\int_0^T \Big( \|\bu_2\|_{\bV_\sigma}^2 + \|\bw\|_{\bH_\sigma}^{2}\|\bw\|_{\bV_\sigma}^{2} \Big) \du t\\
				& \le c\Big( \|\bu_2\|_{L^2(\bV_\sigma)} + \|\bw\|_{L^\infty(\bH_\sigma)}\|\bw\|_{L^2(\bV_\sigma)}\Big).
			\end{aligned}
		\end{align*}
		The same arguments can be applied to the other term. From these, we prove our claim.
		
		The summary of the existence of a weak solution to the adjoint system is written below.

		\begin{theorem}\label{theorem:weakadjBouss}
			Let $\bF\in L^2(I;\bV_\sigma^*)$, $H\in L^2(I;V^*)$, $\bw_T\in \bH_\sigma$ and $\vartheta_T\in H$. Then at least one weak solution $(\bw,\Psi)\in  W^{4/3}(\bV_\sigma)\times  W^{4/3}(V)$ of \eqref{Badg1}--\eqref{Badg5} exists and satisfies  the estimates
			\begin{align}
				& \|\bw\|_{L^\infty(\bH_\sigma)} + \|\bw\|_{L^2(\bV_\sigma)} + \|\Psi\|_{L^\infty(H)} + \|\Psi\|_{L^2(V)} \le c(\|\bF\|_{L^2(\bV_\sigma^*)} + \|G\|_{L^2(V)} + \|\bw_T\|_{\bH_\sigma} + \|\Psi_T\|_{H})\label{estimate:adjweak}\\
				& \begin{aligned}
					\|\partial_t\bw\|_{L^{4/3}(\bV_\sigma^*)} + \|\partial_t\Psi\|_{L^{4/3}(V^*)} \le &\, c\left(1 + \|\bF\|_{L^2(\bV_\sigma^*)} + \|G\|_{L^2(V^*)} +\|\bw_T\|_{\bH_\sigma} + \|\Psi_T\|_{H} \right)\\
					&\times\left(  \|\bF\|_{L^2(\bV_\sigma^*)} + \|G\|_{L^2(V^*)} +\|\bw_T\|_{\bH_\sigma} + \|\Psi_T\|_{H} \right).
				\end{aligned}
			\end{align}
			for some constant $c:=c(\Omega,T,\|\bu\|^2_{\bV_\sigma},\|\theta\|_V^2)>0$.
		\end{theorem}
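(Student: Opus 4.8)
The plan is to mirror the Galerkin construction used for \Cref{theorem:weakBouss}, exploiting that \eqref{Badg1}--\eqref{Badg5} is linear in the unknown pair $(\bw,\Psi)$, with $(\bu_1,\bu_2,\theta)$ entering only as prescribed coefficients; consequently no fixed-point argument is needed, and existence will follow once uniform a priori bounds are in hand and one passes to the limit. Two structural features distinguish the present system from the forward problems: it is posed backward in time (terminal data at $t=T$), and, as anticipated in the discussion preceding the theorem, the time derivatives will only be controlled in $L^{4/3}$. Conceptually one may reverse time via $\tau:=T-t$ to recast the system as a forward one with time-reflected coefficients; I will instead work directly, integrating the differential inequalities over $(t,T)$ as indicated in the excerpt.

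First I would fix Galerkin bases adapted to the Gelfand triples $(\bV_\sigma,\bH_\sigma)$ and $(V,H)$, project \eqref{Badg1} and \eqref{Badg3} onto the finite-dimensional spaces, and solve the resulting backward ODE system from the terminal data $\bw_T,\Psi_T$. Testing the projected equations with the discrete solution $(\bw_m,\Psi_m)$, the transport terms $((\bu_1\cdot\nabla)\bw_m,\bw_m)$ and $(\bu_1\cdot\nabla\Psi_m,\Psi_m)$ vanish owing to $\dive\bu_1=0$, while the reaction and coupling terms $((\nabla\bu_2)^\top\bw_m,\bw_m)$, $(\Psi_m\nabla\theta,\bw_m)$ and $(\be_2\cdot\bw_m,\Psi_m)$ are absorbed using the Ladyzhenskaya and Young inequalities exactly as in the estimates displayed before \Cref{theorem:weaklBouss}. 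Because of the sign of $\partial_t$, the resulting differential inequality must be integrated over $(t,T)$ and closed with a backward Gr\"onwall argument anchored at the terminal data, producing the bound \eqref{estimate:adjweak} in $L^\infty(\bH_\sigma)\cap L^2(\bV_\sigma)$ and $L^\infty(H)\cap L^2(V)$.

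For the time-derivative estimate I would test against arbitrary $\bphi\in\bV_\sigma$, $\psi\in V$ of unit norm and move the convection terms by parts onto the test functions, so that the delicate contributions become $((\bphi\cdot\nabla)\bu_2,\bw)$ and $(\bphi\cdot\nabla\theta,\Psi)$. The computation already recorded in the excerpt shows that Gagliardo-Nirenberg yields a bound of order $\|\bu_2\|_{\bV_\sigma}\|\bw\|_{\bH_\sigma}^{1/2}\|\bw\|_{\bV_\sigma}^{1/2}$, whose $4/3$-power is time-integrable against the bounds just obtained---whereas its square is not, since the coefficients $\bu_2,\theta$ are only available in $L^2(\bV_\sigma)$ and $L^2(V)$. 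This is precisely the mechanism forcing the exponent $4/3$ rather than $2$, giving $\partial_t\bw\in L^{4/3}(\bV_\sigma^*)$ and $\partial_t\Psi\in L^{4/3}(V^*)$, hence $(\bw,\Psi)\in W^{4/3}(\bV_\sigma)\times W^{4/3}(V)$, together with the product estimate claimed in the theorem.

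Finally, the uniform bounds deliver a weak-$*$ convergent subsequence, and the Aubin-Lions-Simon theorem applied with the compact embedding $\bV_\sigma\hookrightarrow\bH_\sigma$ together with $\bH_\sigma\hookrightarrow\bV_\sigma^*$ (and its scalar analogue) yields strong $\bL^2(Q)$ convergence, which by linearity suffices to pass to the limit in every term. The terminal conditions are then recovered from the weak continuity $\bw\in C_w(\overline{I};\bH_\sigma)$, $\Psi\in C_w(\overline{I};H)$ implied by $\bw\in L^\infty(\bH_\sigma)$ with $\partial_t\bw\in L^{4/3}(\bV_\sigma^*)$. I expect the main obstacle to be exactly this reduced time-derivative regularity: verifying that $4/3$ is the correct exponent, that it still suffices both for the compactness and for attaining the terminal data in a meaningful sense, and noting that this loss is what obstructs the usual energy-based uniqueness argument, since one cannot pair $\partial_t\bw\in L^{4/3}(\bV_\sigma^*)$ with $\bw\in L^2(\bV_\sigma)$. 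This is consistent with the theorem asserting only the existence of at least one weak solution rather than uniqueness.
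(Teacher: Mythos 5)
Your proposal is correct and follows essentially the same route as the paper: a Galerkin scheme reusing the forward-problem estimates, with the energy inequality integrated over $(t,T)$ and closed by a backward Gr\"onwall argument, and the time derivatives controlled only in $L^{4/3}$ because the terms $((\bphi\cdot\nabla)\bu_2,\bw)$ and $(\bphi\cdot\nabla\theta,\Psi)$ admit an integrable $4/3$-power (via Young, against $\|\bu_2\|_{L^2(\bV_\sigma)}^2$ and $\|\bw\|_{L^\infty(\bH_\sigma)}\|\bw\|_{L^2(\bV_\sigma)}$) but not an integrable square. Your closing remarks on why this only yields existence of at least one solution, and on recovering the terminal data from weak continuity, match the paper's intent.
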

		
		% \begin{remark}
			%     We note that the Theorems \ref{theorem:weakBouss}, \ref{theorem:weaklBouss} and \eqref{theorem:weakadjBouss} hold even when $\bF\in \bL^2(Q)$ and $G\in L^2(Q)$. The estimates \eqref{estimate:weak}, \eqref{estimate:lweak} and \eqref{estimate:adjweak} can then be bounded by the norms of $\bF$ and $G$ in $\bL^2(Q)$ and $L^2(Q)$, respectively.
			% \end{remark}

		Even though we find some differences between the weak solutions of the adjoint system and the other two systems we considered --- such as uniqueness and the regularity of the time derivative --- additional regularity on the given variables for the adjoint system will lead to the same nice regularity we previously got.
		\begin{theorem}\label{theorem:adjointLP}
			Let $s\ge2$, $\bF\in \bL^s(Q)$, $G\in L^s(Q)$, $\bu_1,\bu_2\in \bW^{2,1}_{s,\sigma}$, $\theta\in W^{2,1}_s$, $\bw_T\in \bW^{2-1/s,s}_{0,\sigma}(\Omega)$ and $\Psi_T\in W^{2-1/s,s}_{0}(\Omega)$. Then, the weak solution of \eqref{Badg1}--\eqref{Badg5} is unique and satisfies $(\bw,\Psi)\in \bW^{2,1}_{s,\sigma}\times W^{2,1}_{s}$. Furthermore, there exists $r\in L^s(I;W^{1,s}(\Omega)/\mathbb{R})$ such that the solution $(\bw,\Psi)$ and $r$ satisfy the energy estimate
			\begin{align}
				\begin{aligned}
					&\|\bw\|_{\bW^{2,1}_{s,\sigma}} + \|\Psi\|_{W^{2,1}_{s}} + \|r\|_{L^s(I;W^{1,s}(\Omega)/\mathbb{R})}\\
					& \le c \left(\|\bF\|_{\bL^s} + \|G\|_{L^r} + \|\bw_T\|_{\bW^{2-1/s,s}_{0,\sigma}(\Omega)} +  \|\Psi_T\|_{W^{2-1/s,s}_{0}(\Omega)}\right),
				\end{aligned}\label{estimate:strongadjLp}
			\end{align}
			for some constant $c:=c(\Omega,T,\|\bu\|_{\bW^{2,1}_{s,\sigma}},\|\theta\|_{W^{2,1}_s})>0$ independent of $(\bv,\vartheta)$ and $q$.
		\end{theorem}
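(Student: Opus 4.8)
The plan is to reduce the statement to the forward-in-time regularity theory already developed for the nonlinear and linearized systems. First I would perform the time reversal $t \mapsto T-t$, which turns \eqref{Badg1}--\eqref{Badg5} into a forward Stokes/heat system with initial data $\bw_T,\Psi_T$ prescribed at $t=0$; after this change the linear parabolic regularity results \cite[Theorem 1.1]{solonnikov2001} and \cite[Theorem 4.9.1]{ladyzhenskaya1988} apply verbatim, exactly as in the proofs of \Cref{theorem:strongLP} and \Cref{theorem:linearLP}. The essential structural point is that the system is \emph{linear} in the unknown $(\bw,\Psi)$, so the terms $-(\bu_1\cdot\nabla)\bw$, $(\nabla\bu_2)^\top\bw$, $\Psi\nabla\theta$, $\bu_1\cdot\nabla\Psi$ and $\be_2\cdot\bw$ may all be transferred to the right-hand side and treated as forcing, whence the whole argument becomes a three-step bootstrap ($s=2$, $2<s<4$, $4\le s<\infty$) in which at each step one verifies that the collected forcing lies in $\bL^s(Q)\times L^s(Q)$.

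For the base case $s=2$ I would test the momentum equation with $\bA\bw$ and the heat equation with $-\Delta\Psi$, as in Step I of \Cref{theorem:strongLP}. Because the system runs backward in time, the resulting differential inequality must be integrated over $(t,T)$ rather than $(0,t)$; this is the sign remark already recorded for the weak theory preceding \Cref{theorem:weakadjBouss}. The genuinely coupled contributions $\Psi\nabla\theta$ and $\be_2\cdot\bw$, together with the convective pieces, are then controlled by Hölder, Gagliardo--Nirenberg and Young inequalities using the assumed regularity $\bu_1,\bu_2\in\bW^{2,1}_{s,\sigma}$ and $\theta\in W^{2,1}_s$. After absorbing the leading second-order terms $\|\bA\bw\|_{\bH_\sigma}^2$ and $\|\Delta\Psi\|_{H}^2$, a Grönwall argument yields $(\bw,\Psi)\in\bW^{2,1}_{2,\sigma}\times W^{2,1}_2$ together with the estimate \eqref{estimate:strongadjLp} for $s=2$.

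The two remaining cases are the bootstrap. Once $(\bw,\Psi)\in\bW^{2,1}_{2,\sigma}\times W^{2,1}_2$, the Sobolev embeddings recorded in \Cref{sec2} give $(\bw,\Psi)\in\bL^{q}(Q)\times L^{q}(Q)$ for every finite $q$ (and eventually into $C(\overline{Q})$), while the data regularity places $\bu_1$, $\nabla\bu_2$ and $\nabla\theta$ in high-integrability $L^q$ spaces (indeed in $\bL^\infty$ once $s>4$). Splitting each product by Hölder, for instance $\|(\bu_1\cdot\nabla)\bw\|_{\bL^s}\le\|\bu_1\|_{\bL^{p_1}}\|\nabla\bw\|_{\bL^{p_2}}$, $\|(\nabla\bu_2)^\top\bw\|_{\bL^s}\le\|\nabla\bu_2\|_{\bL^{p_1}}\|\bw\|_{\bL^{p_2}}$ and $\|\Psi\nabla\theta\|_{\bL^s}\le\|\Psi\|_{L^{p_1}}\|\nabla\theta\|_{\bL^{p_2}}$, with Gagliardo--Nirenberg used to interpolate $\nabla\bw$ and $\nabla\Psi$ exactly as in Steps II--III of \Cref{theorem:strongLP}, one checks that every forcing term lies in $\bL^s(Q)\times L^s(Q)$. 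Then \cite[Theorem 1.1]{solonnikov2001} and \cite[Theorem 4.9.1]{ladyzhenskaya1988} upgrade the regularity to $\bW^{2,1}_{s,\sigma}\times W^{2,1}_s$ and supply the pressure $r\in L^s(I;W^{1,s}(\Omega)/\mathbb{R})$. Uniqueness, unavailable at the weak level, now follows at once: the difference of two strong solutions with identical data is regular enough to serve as its own test function, and the $s=2$ energy identity forces it to vanish.

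The step I expect to be the main obstacle is the base case $s=2$, specifically the interplay between the backward time direction and the zeroth-order coupling $(\nabla\bu_2)^\top\bw$. Unlike the forward linearized problem, where the analogous convective term carries the solution's own gradient and integrates favorably, here the term is a multiplier by $\nabla\bu_2$; one must confirm that the regularity $\bu_2\in\bW^{2,1}_{s,\sigma}$ (so that $\nabla\bu_2$ sits in a sufficiently high-integrability space by \Cref{sec2}) is enough to close the energy estimate after integrating over $(t,T)$ and before the leading terms can be absorbed. Once this borderline balancing is secured, the higher-integrability steps are routine repetitions of the forward argument.
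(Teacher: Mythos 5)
Your proposal is correct and follows essentially the same route the paper intends: the paper omits the proof of this theorem, deferring implicitly to the three-step bootstrap of \Cref{theorem:strongLP} (testing with $\bA\bw$ and $-\Delta\Psi$ in the base case, then Hölder/Gagliardo--Nirenberg to place the lower-order couplings in $\bL^s\times L^s$ and invoking \cite[Theorem 1.1]{solonnikov2001} and \cite[Theorem 4.9.1]{ladyzhenskaya1988}) together with the backward-in-time integration over $(t,T)$ already recorded before \Cref{theorem:weakadjBouss}. Your reconstruction, including the observation that linearity in $(\bw,\Psi)$ lets all coupling terms be treated as forcing and that uniqueness follows from the $s=2$ energy identity once the solution is regular enough to be its own test function, matches that intended argument.
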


		To end the section we give a stability result for the linear and the adjoint systems to facilitate some of the analyses we shall delve into later.
		\begin{lemma}\label{lemma:LSL1}
			Let $s > 2$, $\bF\in \bL^s(Q)$, $G\in L^s(Q)$, $(\bu,\theta)\in \bW^{2,1}_{s,\sigma}\times W^{2,1}_s$, $\bv_0 = 0$ and $\vartheta_0 = 0$. Then for any $\bar{s} \in [1,2)$ there exists $c>0$ such that
			\begin{align}
				\|\bv\|_{\bL^{\bar{s}}} + \|\vartheta\|_{L^{\bar{s}}} \le c( \|\bF\|_{\bL^1} + \|G\|_{L^1} ).\label{estimate:estLsL1}
			\end{align}
		\end{lemma}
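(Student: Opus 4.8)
The plan is to use a duality (transposition) argument, testing the linearized system against a strong solution of the adjoint system \eqref{Badg1}--\eqref{Badg5}. Since $Q$ has finite measure, $L^{\bar{s}}(Q)$ embeds continuously into $L^1(Q)$, so it suffices to prove \eqref{estimate:estLsL1} for $\bar{s}\in(1,2)$; the case $\bar{s}=1$ then follows by this embedding. Fix such an $\bar{s}$ and let $\bar{s}'\in(2,\infty)$ be its conjugate exponent. Because the data satisfy $\bF\in\bL^s(Q)$, $G\in L^s(Q)$ with $s>2$ and $\bv_0=\vartheta_0=0$, Theorem \ref{theorem:linearLP} (with $\bu_1=\bu_2=\bu$) guarantees that the weak solution $(\bv,\vartheta)$ is in fact strong, i.e. $(\bv,\vartheta)\in\bW^{2,1}_{s,\sigma}\times W^{2,1}_s$, so all the integrations by parts below are rigorous.

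Next, for arbitrary test data $\bg\in\bL^{\bar{s}'}(Q)$ and $k\in L^{\bar{s}'}(Q)$ I would solve the adjoint system \eqref{Badg1}--\eqref{Badg5} with right-hand sides $(\bg,k)$, the same coefficients $\bu_1=\bu_2=\bu$ and $\theta$, and zero terminal data $\bw_T=0$, $\Psi_T=0$. Since $\bar{s}'>2$, Theorem \ref{theorem:adjointLP} furnishes a unique strong solution $(\bw,\Psi)\in\bW^{2,1}_{\bar{s}',\sigma}\times W^{2,1}_{\bar{s}'}$, and the embedding $\bW^{2,1}_{\bar{s}',\sigma}\times W^{2,1}_{\bar{s}'}\hookrightarrow C(\overline{Q})^2\times C(\overline{Q})$ valid for $\bar{s}'>2$ yields
\[
\|\bw\|_{\bL^\infty}+\|\Psi\|_{L^\infty}\le c\big(\|\bg\|_{\bL^{\bar{s}'}}+\|k\|_{L^{\bar{s}'}}\big),
\]
with $c$ depending on $\|\bu\|_{\bW^{2,1}_{s,\sigma}}$ and $\|\theta\|_{W^{2,1}_s}$.

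The core of the argument is the duality identity. Testing the momentum and temperature equations of the linearized system with $\bw$ and $\Psi$, and the momentum and temperature equations of the adjoint system with $\bv$ and $\vartheta$, then integrating by parts in space and time, I would check that the temporal boundary terms vanish (since $\bv(0)=\vartheta(0)=0$ and $\bw(T)=\Psi(T)=0$), the pressure terms vanish by incompressibility, the convection terms transfer sign via $\dive\bu=0$, and $(\bv\cdot\nabla)\bu$ pairs with $\bw$ to produce exactly $(\nabla\bu)^\top\bw$. The decisive point is that the two coupling contributions $\int_Q\bv\cdot(\Psi\nabla\theta)$ and $\int_Q\vartheta\,(\be_2\cdot\bw)$ cancel between the two pairings; this is precisely the cancellation for which the adjoint system was designed, and securing it with the correct sign and transpose bookkeeping is the main obstacle. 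The outcome is
\[
\int_Q\bv\cdot\bg\,\du x\du t+\int_Q\vartheta\,k\,\du x\du t=\int_Q\bF\cdot\bw\,\du x\du t+\int_Q G\,\Psi\,\du x\du t.
\]

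Finally I would estimate the right-hand side by the $L^1$--$L^\infty$ Hölder inequality together with the adjoint bound,
\[
\left|\int_Q\bv\cdot\bg+\int_Q\vartheta\,k\right|\le\big(\|\bF\|_{\bL^1}+\|G\|_{L^1}\big)\big(\|\bw\|_{\bL^\infty}+\|\Psi\|_{L^\infty}\big)\le c\big(\|\bF\|_{\bL^1}+\|G\|_{L^1}\big)\big(\|\bg\|_{\bL^{\bar{s}'}}+\|k\|_{L^{\bar{s}'}}\big).
\]
Taking the supremum over $\bg$ in the unit ball of $\bL^{\bar{s}'}(Q)$ with $k=0$ identifies $\|\bv\|_{\bL^{\bar{s}}}$, and over $k$ in the unit ball of $L^{\bar{s}'}(Q)$ with $\bg=0$ identifies $\|\vartheta\|_{L^{\bar{s}}}$, by $L^{\bar{s}}$--$L^{\bar{s}'}$ duality; adding the two bounds gives \eqref{estimate:estLsL1}. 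Everything apart from the sign accounting in the convection and buoyancy terms is routine.
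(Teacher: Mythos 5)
Your proof is correct and follows essentially the same duality argument as the paper: both test the linearized system against a strong solution of the adjoint system \eqref{Badg1}--\eqref{Badg5} with $\bL^{\bar{s}'}\times L^{\bar{s}'}$ data and zero terminal condition, relying on \Cref{theorem:adjointLP}, the embedding into $C(\overline{Q})$, and the same integration-by-parts identity. The only difference is cosmetic: the paper takes the specific dual datum $(|\bv|^{\bar{s}-2}\bv,|\vartheta|^{\bar{s}-2}\vartheta)$ and divides out a power of the norm at the end, whereas you take arbitrary $(\bg,k)$ and invoke $L^{\bar{s}}$--$L^{\bar{s}'}$ norm duality, which incidentally handles the case $\bar{s}=1$ more cleanly.
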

		
		\begin{proof}
			By virtue of \Cref{theorem:linearLP} and the embedding $\bW^{2,1}_{s,\sigma}\times W^{2,1}_{s}\hookrightarrow C(\overline{Q})^2\times C(\overline{Q})$, we get $(\bv,\vartheta)\in C(\overline{Q})^2\times C(\overline{Q})$. This implies that $(|\bv|^{\bar{s}-2}\bv, |\vartheta|^{\bar{r}-2}\vartheta ) \in \bL^{\bar{s}'}(Q)\times L^{\bar{r}'}(Q)$, where $\bar{s}' > 2$ is the H{\"o}lder conjugates of $\bar{s}$.
			
			By letting $(\bw,\Psi)\in \bW^{2,1}_{\bar{s}',\sigma}\times W^{2,1}_{\bar{s}'}$ to be the adjoint variable solving \eqref{Badg1}--\eqref{Badg5} with $\bF = |\bv|^{\bar{s}-2}\bv$, $G = |\vartheta|^{\bar{s}-2}\vartheta$, $\bw_T = 0$ and $\Psi_T = 0$ we get
			\begin{align}\label{e1}
				\|\bw\|_{\bW^{2,1}_{\bar{s}',\sigma}} + \|\Psi\|_{W^{2,1}_{\bar{s}'}} \le c ( \|\bv\|_{\bL^{\bar{s}}}^{\bar{s}-1} + \|\vartheta\|_{L^{\bar{s}}}^{\bar{s}-1} ).
			\end{align}
			
			From the adjoint and linear systems, by using H{\"o}lder inequality, and \eqref{e1} we get 
			\begin{align*}
				\begin{aligned}
					& \|\bv\|_{\bL^{\bar{s}}}^{\bar{s}} + \|\vartheta\|_{L^{\bar{s}}}^{\bar{s}}  = \int_Q |\bv|^{\bar{s}-2}\bv\cdot\bv + |\vartheta|^{\bar{s}-2}\vartheta\cdot \vartheta \du x\du t\\
					&  = \int_Q \left(-\partial_t \bw - \nu\Delta\bw - (\bu\cdot\nabla)\bw + (\nabla\bu)^\top\bw + \Psi\nabla\theta + \nabla r  \right)\cdot\bv \du x\du t\\
					& \ \ \ + \int_Q \left( -\partial_t\Psi -\kappa\Delta \Psi + \bu\cdot\nabla \Psi - \be_2\cdot\bw \right)\vartheta \du x\du t\\
					& = \int_Q \bw\cdot \left(\partial_t \bv - \nu\Delta\bv + (\bu\cdot\nabla)\bv + (\bv\cdot\nabla)\bu + \nabla q - \be_2\vartheta\right) \du x\du t\\
					& \ \ \ + \int_Q \Psi \left( \partial_t\vartheta -\kappa\Delta \vartheta + \bu\cdot\nabla\vartheta + \bv\cdot\nabla\theta \right) \du x\du t\\
					& = \int_Q \bw\cdot\bF + \Psi G \du x\du t \le c\max\left\{\|\bv\|_{\bL^{\bar{s}}}^{\bar{s}-1},\|\vartheta\|_{L^{\bar{s}}}^{\bar{s}-1}\right\}( \|\bF\|_{\bL^1} + \|G\|_{L^1} ).
				\end{aligned}
			\end{align*}
			
			Now, if $\|\bv\|_{\bL^{\bar{s}}} \le \|\vartheta\|_{L^{\bar{s}}}$ we have
			\begin{align*}
				\|\vartheta\|_{L^{\bar{s}}}^{\bar{s}} \le \|\vartheta\|_{L^{\bar{s}}}^{\bar{s}-1}c( \|\bF\|_{\bL^1} + \|G\|_{L^1} ).
			\end{align*}
			On the other hand if $\|\vartheta\|_{L^{\bar{s}}} \le \|\bv\|_{\bL^{\bar{s}}}$ we get
			\begin{align*}
				\|\bv\|_{\bL^{\bar{s}}}^{\bar{s}} \le \|\bv\|_{\bL^{\bar{s}}}^{\bar{s}-1}c( \|\bF\|_{\bL^1} + \|G\|_{L^1} ).
			\end{align*}
			In both cases, we get \eqref{estimate:estLsL1}.
			
		\end{proof}
		
		\section{Analysis of the optimal control problem}\label{sec4}
		Here, we prove the existence of at least one global optimal solution and introduce the optimality conditions.
		Before we begin, we recall
		\begin{align*}
			\mathcal{U}=\Big\{\bro:= (\bq,\Theta)\in \bL^\infty(I\times\omega_q)\times L^\infty(I\times\omega_h) \Big \vert \ \underline{\bq} \comeq \bq \comeq \overline{\bq},\, \underline{\Theta}\leq \Theta \leq \overline{\Theta} \Big\}.
		\end{align*}
		As a convention, whenever we use scripts on an element of $\mathcal{U}$, we also assume that such notation is carried over its arguments, e.g., $\bro^{\star} = (\bq^{\star},\Theta^{\star})$. For any $s,r\in [1,+\infty]$ we also use the notation
		\begin{align*}
			\|\bro\|_{\bL^s\times L^r} := \|\bq\|_{\bL^s((0,T)\times\omega_q)} + \|\Theta\|_{L^r((0,T)\times\omega_h)} \qquad\forall \bro\in \mathcal{U}.
		\end{align*}
		
		By the definition of $\mathcal{U}$, we see that there exists $M_{\mathcal{U}}>0$ such that
		\begin{align}\label{boundcon}
			\|\bro\|_{\bL^\infty\times L^\infty} \le M_{\mathcal{U}}\qquad \forall\bro\in\mathcal{U}.
		\end{align}
		For this reason, we can observe that even though the energy estimates for the solutions of the nonlinear, linear, and adjoint systems may be dependent on the norms of the controls with more than one power, we can instead assume linear dependence on the said norms.
		
		Let us reiterate our optimal control problem:
		\begin{align}\label{optcon}
			\min_{\bro\in\mathcal{U}} J(\bro) \text{ subject to }\eqref{B1}-\eqref{B2}.\tag{P}
		\end{align}

		For the analysis of the optimal control problem in this section, we give the following assumptions.
		\begin{assumption}\label{standingassumps}
			The following statements hold:
			\begin{itemize}
				\item[i.] The set $\Omega$ is an open connected  bounded subset of $\mathbb R^2$ with  boundary $\partial\Omega$ of class $C^3$;
				\item[ii.] the initial data $\bu_0\in \bW^{2-2/s,s}_{0,\sigma}(\Omega)^2$ and $\theta_0\in W^{2-2/s,s}_{0}(\Omega)^2$ for arbitrary $s>2$ are fixed;
				\item[iii.] the external force $\blf\in \bL^\infty(Q)$ and heat source $h\in L^\infty(Q)$ are given.
			\end{itemize}
		\end{assumption}
		
		With the assumptions laid out, we present a \textit{continuity} of the control-to-state operator, which we shall introduce later.
		\begin{lemma}
			Suppose that $s>2$ and that \Cref{standingassumps} hold. Let $\{\bro_n\}_n \subset \mathcal{U}$ be a sequence converging weakly to $\bro\in\mathcal{U}$ in $\bL^s(I\times\omega_q)\times L^s(I\times\omega_h)$. Then $(\bu_n,\theta_n)\to (\bu_{\bro},\theta_{\bro})$ in $C(\overline{Q})^2\times C(\overline{Q})$, where $(\bu_n,\theta_n)$ and $(\bu_{\bro},\theta_{\bro})$ are the strong solutions of \eqref{B1}--\eqref{B5} with $\bro_n$ and $\bro$ as controls, respectively.
		\end{lemma}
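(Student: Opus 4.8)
The plan is to argue by a compactness-and-uniqueness scheme, exploiting that the admissible controls are uniformly bounded in $L^\infty$ even though they converge only weakly. Write $\bro_n = (\bq_n,\Theta_n)$ and set $\bF_n := \blf + \bq_n\chi_{\omega_q}$ and $G_n := h + \Theta_n\chi_{\omega_h}$, so that $(\bu_n,\theta_n)$ is the strong solution of \eqref{B1}--\eqref{B5} with these data. Since $\{\bro_n\}\subset\mathcal U$, the bound \eqref{boundcon} together with $\blf\in\bL^\infty(Q)$ and $h\in L^\infty(Q)$ (\Cref{standingassumps}) shows that $\{\bF_n\}$ and $\{G_n\}$ are bounded in $\bL^s(Q)$ and $L^s(Q)$, uniformly in $n$. \Cref{theorem:strongLP} and estimate \eqref{estimate:strongLp} then furnish a uniform bound
\[
\|\bu_n\|_{\bW^{2,1}_{s,\sigma}} + \|\theta_n\|_{W^{2,1}_{s}} \le C,
\]
with $C$ independent of $n$.

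First I would extract a limit. Because $s>2$, the compact embedding $\bW^{2,1}_{s,\sigma}\times W^{2,1}_{s}\hookrightarrow C(\overline Q)^2\times C(\overline Q)$ recorded in \Cref{sec2} yields a subsequence (not relabeled) and a pair $(\hat\bu,\hat\theta)$ with $(\bu_n,\theta_n)\to(\hat\bu,\hat\theta)$ strongly in $C(\overline Q)^2\times C(\overline Q)$; by reflexivity the same subsequence converges weakly in $\bW^{2,1}_{s,\sigma}\times W^{2,1}_{s}$, so that in particular $\nabla\bu_n\rightharpoonup\nabla\hat\bu$ and $\partial_t\bu_n\rightharpoonup\partial_t\hat\bu$ in $\bL^s(Q)$, and likewise for $\theta_n$.

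The main step is to identify $(\hat\bu,\hat\theta)$ as the state for the control $\bro$ by passing to the limit in the space--time form of \eqref{var:Bg1}--\eqref{var:Bg2}, tested against a fixed pair $(\bphi,\psi)\in L^2(I;\bV_\sigma)\times L^2(I;V)$ and integrated over $I$. The linear terms pass by the weak convergences just listed; the buoyancy term and the data terms pass using the uniform convergence of $\theta_n$ and the assumed weak convergence $\bq_n\chi_{\omega_q}\rightharpoonup\bq\chi_{\omega_q}$ and $\Theta_n\chi_{\omega_h}\rightharpoonup\Theta\chi_{\omega_h}$ in the respective $\bL^s$ spaces. The genuinely nonlinear terms are the obstacle: for the fixed test pair I would integrate by parts, using $\dive\bu_n=0$ and the homogeneous boundary data, to rewrite the convective contribution as $-\int_I((\bu_n\cdot\nabla)\bphi,\bu_n)\,\du t$ and the transport contribution as $-\int_I(\bu_n\theta_n,\nabla\psi)\,\du t$. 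Since $\bu_n\to\hat\bu$ and $\theta_n\to\hat\theta$ uniformly on $\overline Q$ and are uniformly bounded there, the products $\bu_n\otimes\bu_n$ and $\bu_n\theta_n$ converge uniformly, hence in $L^2(Q)$; tested against the fixed $\nabla\bphi$ and $\nabla\psi$ they converge to the corresponding expressions for $(\hat\bu,\hat\theta)$. The initial conditions survive because the $C(\overline I;\cdot)$ embedding makes the trace at $t=0$ continuous, so $\hat\bu(0)=\bu_0$ and $\hat\theta(0)=\theta_0$. Thus $(\hat\bu,\hat\theta)$ is a strong solution of \eqref{B1}--\eqref{B5} with control $\bro$.

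Finally, \Cref{theorem:strongLP} guarantees uniqueness of this strong solution, so $(\hat\bu,\hat\theta)=(\bu_{\bro},\theta_{\bro})$, a limit independent of the chosen subsequence. A standard subsequence argument --- every subsequence of $\{(\bu_n,\theta_n)\}$ admits a further subsequence converging in $C(\overline Q)^2\times C(\overline Q)$ to the same $(\bu_{\bro},\theta_{\bro})$ --- then upgrades the convergence to the full sequence, which is the claim. I expect the only delicate point to be the limit passage in the convective and transport terms; the integration-by-parts reformulation above reduces it to the uniform convergence supplied by the compact embedding, which is precisely where the hypothesis $s>2$ enters.
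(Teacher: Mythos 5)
Your argument is correct and is essentially a fully worked-out version of the paper's own (one-line) proof, which simply invokes \Cref{theorem:strongLP} for the uniform bound and the compact embedding $\bW^{2,1}_{s,\sigma}\times W^{2,1}_s\hookrightarrow C(\overline{Q})^2\times C(\overline{Q})$, with the limit identification and subsequence principle left implicit. Your careful treatment of the nonlinear terms and the uniqueness step supplies exactly the details the paper omits; no changes are needed.
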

		\begin{proof}
			This is a direct consequence of \Cref{theorem:strongLP} and the compact embedding $\bW^{2,1}_{s,\sigma}\times W^{2,1}_s \hookrightarrow C(\overline{Q})^2\times C(\overline{Q})$.
		\end{proof}
		
		The lemma above can be used to establish the existence of an optimal control $\bro\in\mathcal{U}$. Since this follows a routine procedure we skip the proof, but one can refer to the arguments used in \cite[Theorem 2.1]{dwachsmuth2006}.
		\begin{theorem}
			The optimal control problem \eqref{optcon} has at least one solution.
		\end{theorem}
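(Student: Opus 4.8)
The plan is to apply the direct method of the calculus of variations, with the continuity lemma just established doing essentially all of the work. First I would observe that, since all weight parameters $\alpha_1,\alpha_2,\beta_1,\beta_2\ge 0$, the functional $J$ in \eqref{objectivfunk} is a sum of nonnegative terms and is therefore bounded below by $0$; hence the infimum $m:=\inf_{\bro\in\mathcal{U}} J(\bro)$ is a finite nonnegative number. I would then fix a minimizing sequence $\{\bro_n\}_n\subset\mathcal{U}$ satisfying $J(\bro_n)\to m$.

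Next I would extract a weakly convergent subsequence. By \eqref{boundcon} the set $\mathcal{U}$ is bounded in $\bL^\infty\times L^\infty$, and consequently bounded in $\bL^s(I\times\omega_q)\times L^s(I\times\omega_h)$ for every finite $s$. Fixing some $s>2$, the reflexivity of $L^s$ yields a subsequence (not relabeled) and a limit $\bro\in \bL^s\times L^s$ with $\bro_n\rightharpoonup\bro$. Since $\mathcal{U}$ is defined by the componentwise box constraints in \eqref{box}, it is convex and strongly closed in $\bL^s\times L^s$, hence weakly closed by Mazur's theorem, so the limit satisfies $\bro\in\mathcal{U}$.

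The key step is then passing to the limit in $J$. Applying the continuity lemma to $\bro_n\rightharpoonup\bro$ gives strong convergence of the associated states $(\bu_n,\theta_n)\to(\bu_{\bro},\theta_{\bro})$ in $C(\overline{Q})^2\times C(\overline{Q})$. Because $J$ depends only on the states, and uniform convergence on $\overline{Q}=[0,T]\times\overline{\Omega}$ implies convergence of all four quadratic tracking terms — the two space-time integrals directly, and the two terminal integrals since uniform convergence on $\overline{Q}$ controls the traces at $t=T$ — I obtain $J(\bro_n)\to J(\bro)$. Combined with $J(\bro_n)\to m$, this forces $J(\bro)=m$, so $\bro$ is a global minimizer and the proof concludes.

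I do not anticipate a genuine obstacle here. The only point requiring care is that $\bL^\infty$ fails to be reflexive, which is circumvented by working in the reflexive space $\bL^s\times L^s$ for a finite $s>2$ — the same range of $s$ required by the continuity lemma. The strong $C(\overline{Q})$-convergence supplied by that lemma is what makes the passage to the limit immediate: it upgrades the usual weak lower semicontinuity argument to full continuity of $J$ along the minimizing sequence, so no separate lower-semicontinuity estimate is needed.
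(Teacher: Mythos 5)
Your argument is correct and is precisely the routine direct-method proof the paper alludes to (it skips the details, citing the continuity lemma and the argument of Wachsmuth's Theorem 2.1): bounded minimizing sequence, weak compactness in a reflexive $\bL^s\times L^s$, weak closedness of the convex box-constrained set $\mathcal{U}$, and the continuity lemma upgrading weak control convergence to uniform state convergence so that $J$ passes to the limit. No gaps; the handling of the non-reflexivity of $L^\infty$ and of the terminal-time terms is exactly right.
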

		
		The optimal control problem is nonlinear and nonconvex; we may have local minima besides at least one global minimum. We recall the two notions of local minimizers from \cite[Definition 1.6]{BBS} which we adapted to the problem in this paper.
		
		\begin{definition}
			% Let us recall the meaning of strong and weak local minimizers.
			Let $\bro^\star\in \mathcal{U}$ and $(\bu^\star,\theta^\star)\in \bW^{2,1}_{s,\sigma}\times W^{2,1}_s$ be the strong solution of \eqref{B1}--\eqref{B5} with $\bro^\star$ as the control.
			\begin{itemize}
				\item 
				We say that $\bro^{\star} \in \mathcal{U}$ is a weak local minimizer of \eqref{optcon}, if there exists a positive constant $\alpha$ such that
				\begin{equation}\label{mini}
					J(\bro )-J(\bro^{\star} )\geq 0,
				\end{equation}
				for all $\bro \in \mathcal{U}$ with $\|\bq- \bq^\star \|_{L^1(Q)}+\|\Theta- \Theta^\star \|_{L^1(Q)}<\alpha$.
				\item
				We say that $\bro^\star\in \mathcal{U}$ is a strong local minimizer of \eqref{optcon}, if there exists a positive  constant $\alpha$ such that \eqref{mini} holds for all
				$\bro\in \mathcal{U}$ with $\|\bu -  \bu^\star \|_{L^\infty(Q)}+\|\theta - \theta^\star \|_{L^\infty(Q)}<\alpha$, where $(\bu,\theta)$ is the strong solution of \eqref{B1}--\eqref{B5} with $\bro$ as control.
			\end{itemize}
		\end{definition}

		To facilitate the analysis of the optimality conditions we introduce the map 
		\begin{align*}
			\mathcal{F}:&\ \bW^{2,1}_{s,\sigma}\times W^{2,1}_s \times \bW^{2-1/s,s}_{0,\sigma}(\Omega)\times W^{2-1/s,s}_0(\Omega) \times L^s(I;\bL^s(\omega_q)\times L^s(\omega_h))\\
			& \to \bL^s_\sigma(Q)\times L^s(Q) \times \bW^{2-1/s,s}_{0,\sigma}(\Omega)\times W^{2-1/s,s}_0(\Omega)
		\end{align*} 
		defined as 
		\begin{align}\label{themap}
			&\mathcal{F}(\bu,\theta,\bu_0,\theta_0,\bq,\Theta) :=  \left[ 
			\begin{aligned}
				\partial_t\bu + \nu \bA\bu + (\bu\cdot\nabla)\bu & - \be_2\theta - \blf  - \bq\chi_{\omega_q}\\
				\partial_t\theta + \kappa A\theta + \bu\cdot\nabla\theta & - h - \Theta\chi_{\omega_h}\\
				\bu(0) & - \bu_0\\
				\theta(0) & - \theta_0
			\end{aligned}
			\right].
		\end{align}
		
		We define the data-to-state operator 
		\begin{align*}
			\widetilde{\mathcal{S}}: &\ \bW^{2-1/s,s}_{0,\sigma}(\Omega)\times W^{2-1/s,s}_0(\Omega) \times \bW^{2-1/s,s}_{0,\sigma}(\Omega)\times W^{2-1/s,s}_0(\Omega) \times L^s(I;\bL^s(\omega_q)\times L^s(\omega_h))\\
			&\to \bW^{2,1}_{s,\sigma}\times W^{2,1}_s 
		\end{align*}
		as 
		\begin{align}\label{defn:con-to-}
			\widetilde{\mathcal{S}}(\blf,h,\bu_0,\theta_0,\bq,\Theta) = (S_1(\blf,h,\bu_0,\theta_0,\bq,\Theta), S_2(\blf,h,\bu_0,\theta_0,\bq,\Theta)) = (\bu,\theta)
		\end{align}
		if and only if $\mathcal{F}(\bu,\theta,\bu_0,\theta_0,\bq,\Theta) = 0$. The well-definedness of the maps introduced is backed up by \Cref{theorem:strongLP}. As we assumed the initial data and the distributed external force and heat source to be fixed, we also introduce the control-to-state operator ${\mathcal{S}}: L^s(I;\bL^s(\omega_q)\times L^s(\omega_h)) \to \bW^{2,1}_{s,\sigma}\times W^{2,1}_s$ defined as ${\mathcal{S}}(\bq,\Theta) = \widetilde{\mathcal{S}}(\blf,h,\bu_0,\theta_0,\bq,\Theta)$.

		\begin{proposition}
			The control-to-state operator is of class $C^\infty$. The first-order Fr\'{e}chet derivative at $\bro\in L^s(I;\bL^s(\omega_q)\times L^s(\omega_h))$ in direction $\delta\bro\in L^s(I;\bL^s(\omega_q)\times L^s(\omega_h))$, denoted by 
			\[
			\mathcal{S}'(\bro)(\delta\bro)=:\left(\bv, \vartheta\right),
			\]
			is given as the solution to the linearized Boussinesq system \eqref{Blg1}--\eqref{Blg5} with $\bu_1 = \bu_2 = S_1(\bro)$ and $\theta = S_2(\bro)$, $\bF=\delta\bq\chi_{\omega_q}$, $G=\delta\Theta\chi_{\omega_h}$, $\bv_0=0$ and $\vartheta_0=0$.
			The second-order Fr\'{e}chet derivative at $\bro\in L^s(I;\bL^s(\omega_q)\times L^s(\omega_h))$ in direction $(\delta\bro_1,\delta \bro_2)\in L^s(I;\bL^s(\omega_q)\times L^s(\omega_h))^2$, denoted by 
			\begin{equation}
				\mathcal{S}''(\bro)[\delta\bro_1,\delta\bro_2]:=\big(\widetilde{\bv},\widetilde{\vartheta}\big),
			\end{equation}
			is the solution to the linearized Boussinesq system \eqref{Blg1}--\eqref{Blg5} with $\bu_1 = \bu_2 = S_1(\bro)$ and $\theta = S_2(\bro)$, $\bF=- [(\bv_1\cdot \nabla) \bv_2+(\bv_2\cdot \nabla) \bv_1]$, $G=-\left[\bv_1\cdot \nabla \vartheta_2 + \bv_2\cdot \nabla \vartheta_1\right]$, $\bv(0,\cdot)=0$ and $\vartheta(0,\cdot)=0$, where $(\bv_1,\vartheta_1) = \mathcal{S}'(\bro)(\delta\bro_1)$ and $(\bv_2,\vartheta_2) = \mathcal{S}'(\bro)(\delta\bro_2)$.
		\end{proposition}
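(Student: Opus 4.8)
The plan is to characterize $\mathcal{S}$ implicitly through the relation $\mathcal{F}(\mathcal{S}(\bro),\bu_0,\theta_0,\bro)=0$ (with the initial data held fixed) and to apply the Implicit Function Theorem. Granting that $\mathcal{F}$ is $C^\infty$ and that its partial derivative with respect to the state $(\bu,\theta)$ is a Banach-space isomorphism at every solution point, both the $C^\infty$-regularity of $\mathcal{S}$ and the announced derivative formulas will follow by implicit differentiation.

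First I would verify that $\mathcal{F}$ is of class $C^\infty$. Every term in \eqref{themap} is affine in $(\bu,\theta)$ except the convection term $(\bu\cdot\nabla)\bu$ and the transport term $\bu\cdot\nabla\theta$, which are quadratic; hence $\mathcal{F}$ is a polynomial of degree two in the state, and its Fr\'echet derivatives of order three and higher vanish. It therefore suffices to check that the underlying bilinear maps $(\bu,\bv)\mapsto(\bu\cdot\nabla)\bv$ and $(\bu,\theta)\mapsto\bu\cdot\nabla\theta$ are bounded into $\bL^s(Q)\times L^s(Q)$, which simultaneously secures the well-definedness of $\mathcal{F}$. Since $s>2$, the embedding $\bW^{2,1}_{s,\sigma}\hookrightarrow C(\overline{Q})^2$ furnishes an $L^\infty$-bound on the advecting field, while $\nabla\bv$ and $\nabla\theta$ belong to $\bL^s(Q)$; a H\"older estimate such as $\|(\bu\cdot\nabla)\bv\|_{\bL^s}\le\|\bu\|_{L^\infty}\|\nabla\bv\|_{\bL^s}\le c\,\|\bu\|_{\bW^{2,1}_{s,\sigma}}\|\bv\|_{\bW^{2,1}_{s,\sigma}}$ then yields the required boundedness.

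Next I would compute the partial derivative $\partial_{(\bu,\theta)}\mathcal{F}$ at a solution point. In a direction $(\bv,\vartheta)$ it equals the left-hand side of the linearized system \eqref{Blg1}--\eqref{Blg5} with $\bu_1=\bu_2=\bu$ and the given $\theta$, paired with the initial traces $(\bv(0),\vartheta(0))$. By \Cref{theorem:linearLP}, for every datum $(\bF,G,\bv_0,\vartheta_0)$ in the codomain this linear system is uniquely solvable in $\bW^{2,1}_{s,\sigma}\times W^{2,1}_s$ with the estimate \eqref{estimate:stronglinLp}; thus $\partial_{(\bu,\theta)}\mathcal{F}$ is a bounded linear bijection with bounded inverse, i.e. an isomorphism, at every solution point. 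Applying the Implicit Function Theorem yields that $\mathcal{S}$ is $C^\infty$.

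It remains to read off the derivative formulas from $\mathcal{F}(\mathcal{S}(\bro),\bu_0,\theta_0,\bro)\equiv 0$. Since $\partial_\bro\mathcal{F}\,\delta\bro=(-\delta\bq\chi_{\omega_q},-\delta\Theta\chi_{\omega_h},0,0)$, the chain rule gives $\partial_{(\bu,\theta)}\mathcal{F}\,\mathcal{S}'(\bro)\delta\bro=(\delta\bq\chi_{\omega_q},\delta\Theta\chi_{\omega_h},0,0)$, which is precisely the linearized system with $\bF=\delta\bq\chi_{\omega_q}$, $G=\delta\Theta\chi_{\omega_h}$ and null initial data. Differentiating once more in a second direction $\delta\bro_2$ and using that $\partial_\bro\mathcal{F}$ is constant, one obtains $\partial_{(\bu,\theta)}\mathcal{F}\,\mathcal{S}''(\bro)[\delta\bro_1,\delta\bro_2]=-\partial^2_{(\bu,\theta)}\mathcal{F}[\mathcal{S}'(\bro)\delta\bro_1,\mathcal{S}'(\bro)\delta\bro_2]$, and evaluating the second partial of the two quadratic terms produces exactly the symmetric expressions $(\bv_1\cdot\nabla)\bv_2+(\bv_2\cdot\nabla)\bv_1$ and $\bv_1\cdot\nabla\vartheta_2+\bv_2\cdot\nabla\vartheta_1$, hence the stated $\bF$ and $G$ for the second derivative. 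The principal obstacle is the first step, namely establishing that the quadratic nonlinearities map boundedly into the $L^s$-codomain; the invertibility of the linearization, by contrast, is handed to us directly by \Cref{theorem:linearLP}.
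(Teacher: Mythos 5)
Your proposal is correct and follows essentially the same route as the paper: implicit differentiation of $\mathcal{F}(\mathcal{S}(\bro),\bro)=0$, with the invertibility of the state-linearization supplied by \Cref{theorem:linearLP} and the derivative formulas read off via the chain rule, using that $\mathcal{F}$ is quadratic in the state. The only (welcome) addition is that you spell out the boundedness of the bilinear convection and transport maps into $\bL^s(Q)\times L^s(Q)$ via the embedding $\bW^{2,1}_{s,\sigma}\hookrightarrow C(\overline{Q})^2$, a point the paper leaves implicit.
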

		\begin{proof}
			The differentiability follows from the fact the $\mathcal{F}$ is at most quadratic and from the Implicit Function Theorem, see, e.g., \cite{pata2019}. We move on to deriving the system that the derivatives solve.

			We introduce the operator $L:\bW^{2,1}_{s,\sigma}\times W^{2,1}_s \to \mathcal{L}(\bW^{2,1}_{s,\sigma}\times W^{2,1}_s, \bL^s(Q)\times L^s(Q)\times \bW^{2-1/s,s}_{0,\sigma}(\Omega)\times W^{2-1/s,s}_0(\Omega))$ as 
			\begin{align}
				L(\bu,\theta)(\bv,\vartheta) =  \left[ 
				\begin{aligned}
					\partial_t\bv + \nu \bA\bv + (\bu\cdot\nabla)\bv &+ (\bv\cdot\nabla)\bu  - \be_2\vartheta\\
					\partial_t\vartheta + \kappa A\vartheta + \bu\cdot\nabla\vartheta &+ \bv\cdot\nabla\theta \\
					\bv(0) & \\
					\vartheta(0) &
				\end{aligned}
				\right].
			\end{align}
			We note that by \eqref{theorem:linearLP} $L(\bu,\theta)\in \mathcal{L}(\bW^{2,1}_{s,\sigma}\times W^{2,1}_s, \bL^s(Q)\times L^s(Q)\times \bW^{2-1/s,s}_{0,\sigma}(\Omega)\times W^{2-1/s,s}_0(\Omega))$ is an isomorphism. We use the chain rule over $\mathcal{F}(\mathcal{S}(\bro),\bro) = 0$, where we omitted the initial data for the meantime. Hence, we have $L(\mathcal{S}(\bro))\mathcal{S}'(\bro)\delta\bro + \frac{\partial\mathcal{F}}{\partial \bro} \delta\bro = 0$. Since $\frac{\partial\mathcal{F}}{\partial \bro}\delta\bro = -(\delta\bq\chi_{\omega_q},\delta\Theta\chi_{\omega_h},0,0)^\top$, we get that $\mathcal{S}'(\bro)\delta\bro = L(\mathcal{S}(\bro))^{-1}\left(\delta\bq\chi_{\omega_q},\delta\Theta\chi_{\omega_h},0,0 \right)^\top$ is the unique solution of \eqref{Blg1}--\eqref{Blg5} with $\bF = \delta\bq\chi_{\omega_q}$, $G = \delta\Theta\chi_{\omega_h}$, $\bv_0 = 0$ and $\vartheta(0) = 0$.
			
			Using the chain rule again, we get $L'(\mathcal{S}(\bro))[\mathcal{S}'(\bro)\delta\bro_1,\mathcal{S}'(\bro)\delta\bro_2] + L(\mathcal{S}(\bro))(\mathcal{S}''(\bro)[\delta\bro_1,\delta\bro_2]) = 0$. Note that for any $(\bv_1,\theta_1),(\bv_2,\theta_2)\in \bW^{2,1}_{s,\sigma}\times W^{2,1}_r$, we have $$L'(\bu,\theta)[(\bv_1,\theta_1),(\bv_2,\theta_2)] = ( (\bv_2\cdot\nabla)\bv_2 + (\bv_2\cdot\nabla)\bv_1, \bv_2\cdot\nabla\vartheta_1 + \bv_1\cdot\nabla\vartheta_2, 0, 0)^\top.$$
			This implies that $\mathcal{S}''(\bro)[\delta\bro_1,\delta\bro_2] = -L'(\mathcal{S}(\bro))^{-1}\left( L'(\mathcal{S}(\bro))[\mathcal{S}'(\bro)\delta\bro_1,\mathcal{S}'(\bro)\delta\bro_2]\right)$ solves \eqref{Blg1}--\eqref{Blg5} with $\bF$, $G$, $\bv_0$ and $\vartheta(0)$ as mentioned in the proposition.
		\end{proof}
		
		% \begin{remark}
			%     For the second derivative of the control-to-state operator, if $\delta\bro_1 = \delta\bro_2$, we use the notation $\mathcal{S}''(\bro)[\delta\bro,\delta\bro] =: \left( \bv^{\bro}_{[\delta\bro]},\vartheta^{\bro}_{[\delta\bro]}\right)$.
			% \end{remark}

		Let us define the operator $D:\bL^s(Q)\times L^s(Q)\to  \bL^s(Q)\times L^s(Q)\times \bW^{2-1/s,s}_{0,\sigma}(\Omega)\times W^{2-1/s,s}_0(\Omega)$ by $D(\bro) = (\bq\chi_{\omega_q},\Theta\chi_{\omega_h},0,0)$, its dual can then be defined as $D^*(\bq,\Theta,\bphi,\psi) = (\bq\chi_{\omega_q},\Theta\chi_{\omega_h}) $ for any $(\bphi,\psi)\in \bW^{2-1/s,s}_{0,\sigma}(\Omega)\times W^{2-1/s,s}_0(\Omega)$. From these operators, we see that $\mathcal{S}'(\bro) = L(\mathcal{S}(\bro))^{-1}D$. We thus define the operator $\mathcal{S}'(\bro)^* = D^*\!\left(L(\mathcal{S}(\bro))^{-1}\right)^*$ as the adjoint of  $\mathcal{S}'(\bro)$. Now, define $g_T : \bW^{2,1}_{s,\sigma}\times W^{2,1}_s \to \bW^{2-1/s,s}_{0,\sigma}(\Omega)^*\times W^{2-1/s,s}_0(\Omega)^*$ by 
		$$\langle g_T(\bphi,\psi), (\bv,\vartheta)\rangle = (\bphi,\bv(T)) + (\psi,\vartheta(T)).$$ 
		From this we see that the adjoint variable $(\bw,\Psi) = \mathcal{S}'(\bro)^*( \delta\bro + g_T(\bw_T,\Psi_T) )$ solves \eqref{Badg1}--\eqref{Badg5} with $\bu_1 = \bu_2 = {S}_1(\bro)$ and $\theta = {S}_2(\bro)$, $\bF = \delta{\bq}$, $G = \delta \Theta$, $\bw(T) = \bw_T$ and $\Psi(T) = \Psi_T$. Indeed, for any $\bro^{\star}\in \mathcal{U}$ we have
		\begin{align}
			\langle \bro^{\star}, (\bw,\Psi) \rangle = \langle \mathcal{S}'(\bro)\bro^{\star}, \delta\bro + g_T(\bw_T,\Psi_T) \rangle = \langle (\bv,\vartheta), \delta\bro\rangle + (\bv(T),\bw_T) + (\vartheta(T),\Psi_T),
		\end{align}
		where $(\bv,\vartheta) = \mathcal{S}'(\bro)\bro^{\star}$ and the left-most term should be understood as $\langle \bro^{\star}, (\bw,\Psi) \rangle = (\bq^{\star}\chi_{\omega_q}, \bw ) + (\Theta^{\star}\chi_{\omega_h},\Psi)$.
		On the other hand, since $(\bv, \vartheta)$ solves \eqref{Blg1}--\eqref{Blg5} with $\bu_1 = \bu_2 = S_1(\bro)$ and $\theta = S_2(\bro)$, $\bF = \bq^{\star}\chi_q$, $G = \Theta^{\star}\chi_h$, $\bv_0 = 0$ and $\vartheta_0 = 0$, we get
		\begin{align*}
			\begin{aligned}
				\langle \bro^{\star}, (\bw,\Psi) \rangle 
				& = \int_Q \left(\partial_t \bv - \nu\Delta\bv + (\bu\cdot\nabla)\bv + (\bv\cdot\nabla)\bu + \nabla q - \be_2\vartheta \right)\cdot \bw \du x\du t\\
				&\quad + \int_Q \left(\partial_t\vartheta -\kappa\Delta \vartheta + \bu\cdot\nabla\vartheta + \bv\cdot\nabla\theta \right)\Psi \du x\du t\\
				& = \int_Q \bv\cdot \left(  -\partial_t \bw - \nu\Delta\bw - (\bu\cdot\nabla)\bw + (\nabla\bu)^\top\bw + \Psi\nabla\theta + \nabla r \right)\du x\du t\\
				& \quad \int_Q \vartheta\left( -\partial_t\Psi -\kappa\Delta \Psi + \bu\cdot\nabla \Psi - \be_2\cdot\bw \right) \du x\du t + (\bv(T),\bw(T)) + (\vartheta(T),\Psi(T)).
			\end{aligned}
		\end{align*}
		The arbitrary nature of $\bro^{\star}$ supports our claim. From this, we finally define the control-to-adjoint operator as $$\mathcal{D}(\bro) = \mathcal{S}'(\bro)^*( ( \alpha_1(S_1(\bro) - \bu_d) , \alpha_2(S_2(\bro) - \theta_d))   + g_T( \beta_1(S_1(\bro)(T) - \bu_T), \beta_2(S_2(\bro) - \theta_T) ) ),$$ where $S_1$ and $S_2$ are the components of the control-to-state operator as defined in \eqref{defn:con-to-}.
		For convenience, we define the map
		\begin{align}\label{themapadj}
			&\mathcal{G}(\bu,\theta,\bw, \Psi ) :=  \left[ 
			\begin{aligned}
				-\partial_t \bw - \nu\Delta\bw - (\bu\cdot\nabla)\bw + (\nabla\bu)^\top\bw & + \Psi\nabla\theta + \nabla r - \alpha_1(\bu - \bu_d) \\
				-\partial_t\Psi -\kappa\Delta \Psi - \bu \cdot\nabla \Psi & - \be_2\cdot\bw - \alpha_2(\theta - \theta_d)\\
				\bw(T) &- \beta_1(\bu(T) - \bu_T)\\
				\Psi(T)& - \beta_2(\theta(T) - \theta_T)
			\end{aligned}
			\right].
		\end{align}

		\begin{theorem}
			The objective functional is of class $C^\infty$. Furthermore, the first and second variations can be calculated as stated below:
			\begin{align}\label{jderivative}
				\begin{aligned}
					J'(\bro)(\delta\bro)&=\alpha_1\int_Q(\bu -\bu_d)\cdot\bv \du x\du t+\alpha_2\int_Q ( \theta -\theta_d)\vartheta \du x\du t\\
					&+\beta_1\int_\Omega (\bu(T)-\bu_T)\cdot\bv(T) \du x + \beta_2\int_\Omega (\theta(T)-\theta_T) \vartheta(T)\du x\\
					&= \int_0^T\int_{\omega_q} \bw\cdot\delta\bq \du x\du t + \int_0^T\int_{\omega_h} \Psi\cdot\delta\Theta \du x\du t
				\end{aligned}
			\end{align}
			\begin{align}
				\begin{aligned}
					J''(\bro)(\delta\bro, \delta\bro)&= \alpha_1 \|\bv\|_{\bL^2}^2 + \alpha_2\|\vartheta \|_{L^2}^2 + \beta_1 \|\bv(T) \|_{\bL^2}^2 + \beta_2\|\vartheta(T) \|_{L^2}^2\\
					&\quad +\alpha_1\int_Q(\bu -\bu_d)\cdot\widetilde{\bv}\du x\du t + \alpha_2\int_Q ( \theta -\theta_d)\widetilde{\vartheta}\du x\du t\\
					&\quad +\beta_1\int_\Omega (\bu(T)-\bu_T)\cdot\widetilde{\bv}(T) \du x + \beta_2\int_\Omega ( \theta(T)-\theta_T)\widetilde{\vartheta}(T) \du x\\
					&= \alpha_1 \|\bv\|_{\bL^2}^2 + \alpha_2\|\vartheta\|_{L^2}^2 + \beta_1 \|\bv(T) \|_{\bL^2}^2 + \beta_2\|\vartheta(T) \|_{L^2}^2\\
					&\quad - 2( (\bv \cdot\nabla)\bv, \bw )_Q - 2( \bv\cdot\nabla \vartheta, \Psi)_Q.
				\end{aligned}
			\end{align}
			where $(\bu,\theta) := \mathcal{S}(\bro)$, $(\bv,\vartheta):=\mathcal{S}'(\bro)(\delta\bro)$, $(\bw,\Psi):=  \mathcal{D}(\bro)$ and $(\widetilde{\bv},\widetilde{\vartheta}) = \mathcal{S}''(\bro)[\delta\bro,\delta\bro]$.
		\end{theorem}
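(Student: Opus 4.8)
The plan is to exploit that $J$ factors as $J=\hat{J}\circ\mathcal{S}$, where $\mathcal{S}$ is the control-to-state operator already shown to be of class $C^\infty$ in the preceding proposition, and $\hat{J}:\bW^{2,1}_{s,\sigma}\times W^{2,1}_s\to\mathbb{R}$ is the quadratic tracking functional
\begin{align*}
\hat{J}(\bu,\theta)=\frac{\alpha_1}{2}\|\bu-\bu_d\|_{\bL^2}^2+\frac{\alpha_2}{2}\|\theta-\theta_d\|_{L^2}^2+\frac{\beta_1}{2}\|\bu(T)-\bu_T\|_{\bL^2}^2+\frac{\beta_2}{2}\|\theta(T)-\theta_T\|_{L^2}^2 .
\end{align*}
Since $\hat{J}$ is a continuous quadratic plus lower-order form, it is itself $C^\infty$, and therefore $J$ is $C^\infty$ by the chain rule. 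The derivatives of $\hat J$ of order higher than two vanish, which is what makes the second-order formula close in the clean way stated.

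For the first variation I would apply the chain rule $J'(\bro)(\delta\bro)=\hat{J}'(\mathcal{S}(\bro))(\mathcal{S}'(\bro)\delta\bro)$. Writing $(\bu,\theta)=\mathcal{S}(\bro)$ and $(\bv,\vartheta)=\mathcal{S}'(\bro)\delta\bro$, differentiating the quadratic $\hat{J}$ produces exactly the first displayed expression in \eqref{jderivative}. To pass to the adjoint representation in the last line, I would invoke the duality identity established immediately before the statement: with $(\bw,\Psi)=\mathcal{D}(\bro)$, the definition $\mathcal{D}(\bro)=\mathcal{S}'(\bro)^*\big((\alpha_1(\bu-\bu_d),\alpha_2(\theta-\theta_d))+g_T(\beta_1(\bu(T)-\bu_T),\beta_2(\theta(T)-\theta_T))\big)$ gives $\hat{J}'(\mathcal{S}(\bro))(\bv,\vartheta)=\langle\mathcal{D}(\bro),\delta\bro\rangle=\int_{\omega_q}\bw\cdot\delta\bq\,\du x\du t+\int_{\omega_h}\Psi\,\delta\Theta\,\du x\du t$, which is the sought expression.

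For the second variation, differentiating once more and using the product/chain rule together with the quadratic nature of $\hat{J}$ yields
\begin{align*}
J''(\bro)(\delta\bro,\delta\bro)=\hat{J}''(\mathcal{S}(\bro))(\bv,\vartheta;\bv,\vartheta)+\hat{J}'(\mathcal{S}(\bro))(\widetilde{\bv},\widetilde{\vartheta}),
\end{align*}
where $(\widetilde{\bv},\widetilde{\vartheta})=\mathcal{S}''(\bro)[\delta\bro,\delta\bro]$. The constant bilinear form $\hat{J}''$ produces the four squared-norm terms $\alpha_1\|\bv\|_{\bL^2}^2+\alpha_2\|\vartheta\|_{L^2}^2+\beta_1\|\bv(T)\|_{\bL^2}^2+\beta_2\|\vartheta(T)\|_{L^2}^2$, and the second summand is precisely the intermediate form written in the statement. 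To collapse that summand into the compact bilinear expression, I would again apply the adjoint duality: in the coincident-direction case of the second-order proposition, $(\widetilde{\bv},\widetilde{\vartheta})$ solves the linearized system \eqref{Blg1}--\eqref{Blg5} with right-hand sides $\bF=-2(\bv\cdot\nabla)\bv$ and $G=-2\,\bv\cdot\nabla\vartheta$, so testing the adjoint system against $(\widetilde{\bv},\widetilde{\vartheta})$ gives $\hat{J}'(\mathcal{S}(\bro))(\widetilde{\bv},\widetilde{\vartheta})=\int_Q\bw\cdot\bF+\Psi G\,\du x\du t=-2((\bv\cdot\nabla)\bv,\bw)_Q-2(\bv\cdot\nabla\vartheta,\Psi)_Q$.

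I expect the main obstacle to be the bookkeeping in this last duality step. One must integrate by parts in the coupled linearized/adjoint pair exactly as in the derivation of $\mathcal{D}$, verifying that the terminal contributions from $g_T$ match the coefficients $\beta_1,\beta_2$ and that the convective and transport contributions cancel so that only the source pairing $\int_Q\bw\cdot\bF+\Psi G$ remains, with the sign and the factor of two correctly inherited from the source terms of $\mathcal{S}''(\bro)[\delta\bro,\delta\bro]$.
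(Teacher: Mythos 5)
Your proposal is correct and follows essentially the same route as the paper: a chain-rule differentiation of the quadratic tracking functional composed with the $C^\infty$ control-to-state map, followed by the integration-by-parts duality between the linearized system (with the coincident-direction forcing $\bF=-2(\bv\cdot\nabla)\bv$, $G=-2\,\bv\cdot\nabla\vartheta$) and the adjoint system defining $\mathcal{D}(\bro)$ to obtain both the adjoint representation of $J'$ and the compact form of $J''$. The explicit factorization $J=\hat{J}\circ\mathcal{S}$ is only a cosmetic repackaging of the paper's computation.
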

		\begin{proof}
			We use the chain rule to prove the theorem. For the first-order derivative, we have
			\begin{align*}
				\begin{aligned}
					J'(\bro)\delta\bro & = \alpha_1\left( S_1(\bro) - \bu_d, S_1'(\bro)\delta\bro \right)_Q + \alpha_2 \left( S_2(\bro) - \theta_d, S_2'(\bro)\delta\bro  \right)_Q\\
					&\quad + \beta_1(  S_1(\bro)(T) - \bu_T, S_1'(\bro)\delta\bro(T) )_\Omega + \beta_2\left( S_2(\bro)(T) - \theta_T, S_2'(\bro)\delta\bro  (T)\right)_\Omega\\ 
					& = \langle ( \alpha_1(S_1(\bro) - \bu_d),\alpha_2(S_2(\bro) - \theta_d) ), \mathcal{S}'(\bro)\delta\bro \rangle\\
					&\quad + \langle g_T( \beta_1(  S_1(\bro)(T) - \bu_T),\beta_2( S_2(\bro)(T) - \theta_T)),\mathcal{S}'(\bro)\delta\bro \rangle\\
					& = \langle (\bw,\Psi),\delta\bro  \rangle = (\bw,\delta\bq  )_{\omega_q} + (\Psi,\delta\Theta)_{\omega_f}.
				\end{aligned}
			\end{align*}

			Using the chain rule once again, we get
			\begin{align*}
				\begin{aligned}
					J''(\bro)[\delta\bro,\delta\bro] & = \alpha_1\|S_1'(\bro)\delta\bro\|_{\bL^2}^2 + \alpha_2\|S_2'(\bro)\delta\bro\|_{L^2}^2 + \beta_1\|S_1'(\bro)\delta\bro(T)\|_{\bL^2}^2 +
					\beta_2\|S_2'(\bro)\delta\bro(T)\|_{L^2}^2\\
					&\quad + \alpha_1\left( S_1(\bro) - \bu_d, S_1''(\bro)[\delta\bro,\delta\bro] \right)_Q + \alpha_2 \left( S_2(\bro) - \theta_d, S_2''(\bro)[\delta\bro,\delta\bro]   \right)_Q\\
					&\quad + \beta_1(  S_1(\bro)(T) - \bu_T, S_1''(\bro)[\delta\bro,\delta\bro] (T) )_\Omega + \beta_2\left( S_2(\bro)(T) - \theta_T, S_2''(\bro)[\delta\bro,\delta\bro]   (T)\right)_\Omega
				\end{aligned}
			\end{align*}
			Denoting for simplicity for now $(\bw,\Psi) = \mathcal{D}(\bro)$, $(\bu,\theta) = \mathcal{S}(\bro)$ and $(\bv,\vartheta) = \mathcal{S}''(\bro)[\delta\bro,\delta\bro]$ we further get
			\begin{align*}
				\begin{aligned}
					J''(\bro)[\delta\bro,\delta\bro] & = \alpha_1\|S_1'(\bro)\delta\bro\|_{\bL^2}^2 + \alpha_2\|S_2'(\bro)\delta\bro\|_{L^2}^2 + \beta_1\|S_1'(\bro)\delta\bro(T)\|_{\bL^2}^2 +
					\beta_2\|S_2'(\bro)\delta\bro(T)\|_{L^2}^2\\
					&\quad + \left( -\partial_t \bw - \nu\Delta\bw - (\bu\cdot\nabla)\bw + (\nabla\bu)^\top\bw + \Psi\nabla\theta + \nabla r, \bv \right)_Q\\
					&\quad + \alpha_2 \left(-\partial_t\Psi -\kappa\Delta \Psi + \bu\cdot\nabla \Psi - \be_2\cdot\bw, \vartheta  \right)_Q + (  \bw(T), \bv(T) )_\Omega + \left( \Psi(T), \vartheta(T)\right)_\Omega\\
					& = \alpha_1\|S_1'(\bro)\delta\bro\|_{\bL^2}^2 + \alpha_2\|S_2'(\bro)\delta\bro\|_{L^2}^2 + \beta_1\|S_1'(\bro)\delta\bro(T)\|_{\bL^2}^2 +
					\beta_2\|S_2'(\bro)\delta\bro(T)\|_{L^2}^2\\
					&\quad + \left( \bw, \partial_t \bv - \nu\Delta\bv + (\bu\cdot\nabla)\bv + (\bv\cdot\nabla)\bu + \nabla q - \be_2\vartheta \right)_Q\\
					&\quad + \alpha_2 \left(\Psi, \partial_t\vartheta -\kappa\Delta \vartheta + \bu\cdot\nabla\vartheta + \bv\cdot\nabla\theta \right)_Q\\
					& = \alpha_1\|S_1'(\bro)\delta\bro\|_{\bL^2}^2 + \alpha_2\|S_2'(\bro)\delta\bro\|_{L^2}^2 + \beta_1\|S_1'(\bro)\delta\bro(T)\|_{\bL^2}^2 +
					\beta_2\|S_2'(\bro)\delta\bro(T)\|_{L^2}^2\\
					&\quad - 2( (\bv \cdot\nabla)\bv, \bw)_Q - 2( \bv\cdot\nabla \vartheta, \Psi)_Q.
				\end{aligned}
			\end{align*}
		\end{proof}

		We can now give the first-order necessary condition whose proof is straightforward.
		\begin{theorem}\label{firstoderoptcon}
			Let $\bro^\star\in \mathcal U$ be a local minimizer of problem \eqref{box}-\eqref{B5} and $(\bw^\star,\Psi^\star) = \mathcal{D}(\bro^{\star})$. Then for all $\bro\in \mathcal U$ it holds
			\begin{equation}
				\int_0^T\left[ \int_{\omega_q} \bw^\star\cdot (\bq- \bq^\star) \du x + \int_{\omega_h} \Psi^\star (\Theta- \Theta^\star) \text{ d}x \right]\text{d}t\geq 0.
			\end{equation}
			Further, testing with controls of the form $\bro=(\bq,\Theta^\star)$ and $\bro=(\bq^\star,\Theta)$ we find that it holds for all $\bro\in \mathcal U$
			\begin{equation}
				\int_0^T \int_{\omega_q} \bw^\star\cdot (\bq- \bq^\star) \du x \text{d}t\geq 0, \text{ and } \int_0^T \int_{\omega_h} \Psi^\star (\Theta- \Theta^\star) \text{ d}x \text{d}t\geq 0.
			\end{equation}
			Even stronger, we have for $i\in \{1,2\}$
			\begin{equation}
				\bw^\star\cdot (\bq- \bq^\star)\geq 0 \text{ a.e. on } [0,T]\times \omega_q, \  \text{ and } \ \  \Psi^\star (\Theta- \Theta^\star) \geq 0 \text{ a.e. on } [0,T]\times \omega_h.
			\end{equation}
		\end{theorem}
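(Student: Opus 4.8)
The plan is to obtain the optimality conditions from the convexity of the admissible set $\mathcal{U}$ together with the local optimality of $\bro^\star$, and then to upgrade the resulting variational inequality to its pointwise form by a measurable localization argument. The essential tool is the adjoint representation of the first variation already recorded in \eqref{jderivative}, which expresses $J'(\bro)(\delta\bro)$ as the integral of the co-states $(\bw,\Psi) = \mathcal{D}(\bro)$ against the control increment $(\delta\bq,\delta\Theta)$; this eliminates all explicit dependence on the states and linearized states and leaves a derivative that is manifestly linear in the increment.

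First I would derive the variational inequality. Fix $\bro = (\bq,\Theta)\in\mathcal{U}$ and set $\bro_t := \bro^\star + t(\bro-\bro^\star)$ for $t\in[0,1]$, with components $(\bq_t,\Theta_t)$. Since the box constraints in \eqref{box} are convex, $\bro_t\in\mathcal{U}$. As $\|\bq_t-\bq^\star\|_{\bL^1}+\|\Theta_t-\Theta^\star\|_{\bL^1} = t\,(\|\bq-\bq^\star\|_{\bL^1}+\|\Theta-\Theta^\star\|_{\bL^1})\to 0$ when $t\to 0^+$, the perturbation $\bro_t$ eventually lies in the $\bL^1$-neighborhood from the definition of a weak local minimizer, so $J(\bro_t)\ge J(\bro^\star)$; for a strong local minimizer the same conclusion follows because $\bro_t\to\bro^\star$ forces the associated states to converge in $C(\overline Q)$ by the control-to-state continuity lemma. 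Dividing by $t$ and letting $t\to0^+$, the $C^\infty$ (hence $C^1$) regularity of $J$ gives $J'(\bro^\star)(\bro-\bro^\star)\ge 0$, and substituting \eqref{jderivative} at $\bro=\bro^\star$ yields the first displayed inequality.

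To split it, I would use that the constraints on $\bq$ and on $\Theta$ in \eqref{box} are independent, so that $(\bq,\Theta^\star)\in\mathcal{U}$ and $(\bq^\star,\Theta)\in\mathcal{U}$ for every admissible $\bq$ and $\Theta$. Testing the variational inequality with $\bro=(\bq,\Theta^\star)$ annihilates the $\omega_h$-integral and leaves the first separate inequality, while $\bro=(\bq^\star,\Theta)$ leaves the second.

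For the pointwise conclusion I would argue by contradiction with a measurable localization. Fix $\bq$ and suppose the measurable set $A:=\{(t,x)\in(0,T)\times\omega_q:\ \bw^\star\cdot(\bq-\bq^\star)<0\}$ satisfies $\meas(A)>0$. Put $\tilde\bq:=\bq\chi_A+\bq^\star\chi_{A^c}$; because $\underline{\bq}\comeq\bq\comeq\overline{\bq}$ is a pointwise constraint and both $\bq,\bq^\star$ obey it, so does $\tilde\bq$, whence $(\tilde\bq,\Theta^\star)\in\mathcal{U}$. Applying the separate inequality to $\tilde\bq$ gives $\int_A\bw^\star\cdot(\bq-\bq^\star)\,\du x\du t\ge0$, contradicting $\meas(A)>0$; thus $A$ is null and $\bw^\star\cdot(\bq-\bq^\star)\ge0$ a.e., and the identical argument on $\omega_h$ produces $\Psi^\star(\Theta-\Theta^\star)\ge0$ a.e. I expect no real obstacle here: the only points demanding care are verifying that the segment $\bro_t$ enters the local-optimality neighborhood (immediate from the $\bL^1$-scaling, or from state continuity in the strong case) and that the localized control $\tilde\bq$ remains admissible (immediate from the pointwise nature of the box constraints); everything else is a direct substitution of the adjoint formula \eqref{jderivative}.
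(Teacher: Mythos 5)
Your proposal is correct and is exactly the standard argument the paper has in mind: the paper omits the proof entirely, remarking only that it is ``straightforward,'' and your three steps (variational inequality from convexity of $\mathcal{U}$ plus the adjoint representation \eqref{jderivative}, splitting via $(\bq,\Theta^\star)$ and $(\bq^\star,\Theta)$, and the measurable localization $\tilde\bq=\bq\chi_A+\bq^\star\chi_{A^c}$ for the pointwise form) are precisely the standard route. The only care points --- that $\bro_t$ enters the local-optimality neighborhood and that the localized control stays admissible --- are handled correctly.
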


		To help us analyze the second-order sufficient conditions, we present the following lemmata.
		\begin{lemma}\label{lemma:diffstab}
			Let $\bro,\bro^\star \in L^s(I;\bL^s(\omega_q)\times L^s(\omega_h))$, $(\bu,\theta) = \mathcal{S}(\bro),(\bu^\star,\theta^\star) = \mathcal{S}(\bro^*) \in \bW^{2,1}_{s,\sigma}\times W^{2,1}_s$. Then there exists a constant $c>0$ such that 
			\begin{align}\label{stcongap}
				\|\bu - \bu^\star \|_{\bW^{2,1}_{s,\sigma}} + \|\theta-\theta^\star\|_{W^{2,1}_s} \le c( \|\bq-\bq^{\star}\|_{\bL^s(\omega_q)} + \|\Theta - \Theta^{\star}\|_{L^s(\omega_h)} )
			\end{align}
		\end{lemma}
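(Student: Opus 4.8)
The plan is to reduce this nonlinear stability estimate to the linear regularity result of \Cref{theorem:linearLP} by exhibiting the difference of the two states as the solution of a suitable linearized Boussinesq system. First I would set $\bv := \bu - \bu^\star$, $\vartheta := \theta - \theta^\star$, and $q := p - p^\star$, where $(\bu,\theta,p)$ and $(\bu^\star,\theta^\star,p^\star)$ are the strong solutions associated with $\bro$ and $\bro^\star$ guaranteed by \Cref{theorem:strongLP}. Subtracting the two momentum equations and the two heat equations, the only difficulty lies in the nonlinear terms, which split by the elementary identities $(\bu\cdot\nabla)\bu - (\bu^\star\cdot\nabla)\bu^\star = (\bu\cdot\nabla)\bv + (\bv\cdot\nabla)\bu^\star$ and $\bu\cdot\nabla\theta - \bu^\star\cdot\nabla\theta^\star = \bu\cdot\nabla\vartheta + \bv\cdot\nabla\theta^\star$.

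Inserting these identities shows that $(\bv,\vartheta,q)$ solves precisely the linearized system \eqref{Blg1}--\eqref{Blg5} with the coefficient choices $\bu_1 = \bu$, $\bu_2 = \bu^\star$ and $\theta = \theta^\star$, right-hand sides $\bF = (\bq-\bq^\star)\chi_{\omega_q}$ and $G = (\Theta-\Theta^\star)\chi_{\omega_h}$, and zero initial data $\bv_0 = 0$, $\vartheta_0 = 0$; the latter holds because $\bro$ and $\bro^\star$ share the same fixed initial data $\bu_0,\theta_0$ by \Cref{standingassumps}. The coefficients are admissible for \Cref{theorem:linearLP}, since $\bu,\bu^\star\in\bW^{2,1}_{s,\sigma}$ and $\theta^\star\in W^{2,1}_s$ by \Cref{theorem:strongLP}, while $\bF\in\bL^s(Q)$ and $G\in L^s(Q)$ because $\bq-\bq^\star\in\bL^s(I\times\omega_q)$ and $\Theta-\Theta^\star\in L^s(I\times\omega_h)$. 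Applying the energy estimate \eqref{estimate:stronglinLp} and discarding the vanishing initial-data contributions then yields $\|\bv\|_{\bW^{2,1}_{s,\sigma}} + \|\vartheta\|_{W^{2,1}_s} \le c(\|(\bq-\bq^\star)\chi_{\omega_q}\|_{\bL^s} + \|(\Theta-\Theta^\star)\chi_{\omega_h}\|_{L^s})$, which is exactly \eqref{stcongap} once the integrals are restricted to the supports $\omega_q$ and $\omega_h$.

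The one step requiring genuine care --- and the main obstacle --- is the uniformity of the constant $c$. By \Cref{theorem:linearLP} the constant in \eqref{estimate:stronglinLp} depends on $\|\bu\|_{\bW^{2,1}_{s,\sigma}}$ and $\|\theta^\star\|_{W^{2,1}_s}$, so a naive application would produce a constant depending on the particular controls through their states. To obtain a single constant as claimed, I would invoke the a priori bound \eqref{estimate:strongLp} of \Cref{theorem:strongLP} together with the uniform control bound \eqref{boundcon}: for controls ranging over $\mathcal{U}$, the $\bL^\infty$-norms of $\bq,\Theta$ are bounded by $M_{\mathcal{U}}$, hence the $\bW^{2,1}_{s,\sigma}\times W^{2,1}_s$-norms of both $(\bu,\theta)$ and $(\bu^\star,\theta^\star)$ are controlled solely by the fixed data $\blf,h,\bu_0,\theta_0$, so the constant can be chosen independently of the individual controls. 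For controls taken from an arbitrary bounded subset of $L^s(I;\bL^s(\omega_q)\times L^s(\omega_h))$ the same reasoning applies, the constant then depending only on the radius of that subset.
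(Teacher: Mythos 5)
Your proof is correct and follows essentially the same route as the paper: write the difference $(\bu-\bu^\star,\theta-\theta^\star)$ as the solution of the linearized system \eqref{Blg1}--\eqref{Blg5} with right-hand side $((\bq-\bq^\star)\chi_{\omega_q},(\Theta-\Theta^\star)\chi_{\omega_h})$ and zero initial data, then invoke \Cref{theorem:linearLP} (your choice $\bu_1=\bu$, $\bu_2=\bu^\star$ versus the paper's $\bu_1=\bu^\star$, $\bu_2=\bu$ is an immaterial symmetric variant of the same splitting). Your additional remark on the uniformity of the constant, obtained from \eqref{estimate:strongLp} and \eqref{boundcon}, is a point the paper leaves implicit and is handled correctly.
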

		\begin{proof}
			Knowing that $\bv = \bu - \bu^\star \in \bW^{2,1}_{s,\sigma}$ and $\vartheta = \theta-\theta^\star\in W^{2,1}_s$  solves a linear system of the form \eqref{Blg1}--\eqref{Blg5} with $\bu_1 = \bu^\star$, $\bu_2 = \bu$, $\bF = (\bro - \bro^\star)\chi_{\omega_q}$, $G = (\Theta - \Theta^{\star})\chi_{\omega_h}$, $\bv_0 = 0$, $\vartheta_0 = 0$. Utilizing \Cref{theorem:linearLP} gives us the desired estimate.
		\end{proof}

		\begin{lemma}\label{lemma:supp1}
			Let $\bro,\bro^\star \in L^s(I;\bL^s(\omega_q)\times L^s(\omega_h))$, $(\bu,\theta) = \mathcal{S}(\bro),(\bu^\star,\theta^\star) = \mathcal{S}(\bro^*) \in \bW^{2,1}_{s,\sigma}\times W^{2,1}_s$ and $(\bv^\star,\vartheta^\star) = \mathcal{S}'(\bro^{\star})(\bro-\bro^\star)\in \bW^{2,1}_{s,\sigma}\times W^{2,1}_s$. Then there exists $c>0$ such that 
			\begin{align}
				&\begin{aligned}
					\|\bu - &\bu^\star - \bv^{\star}\|_{L^\infty(\bL^2)} + \|\theta-\theta^\star - \vartheta^\star\|_{L^\infty(L^2)}
					\le  c \|\bu - \bu^\star\|_{\bL^\infty} %+ \| \theta_{\bro} - \theta_{\bro^\star}\|_{L^\infty}
					\Big( \|\bu - \bu^\star\|_{\bL^2} + \| \theta - \theta^\star \|_{L^2}\Big)
				\end{aligned} \label{diffvslin}\\
				&\begin{aligned}
					\|\nabla(\bu - &\bu^\star - \bv^{\star} )\|_{\bL^2} + \|\nabla(\theta-\theta^\star - \vartheta^\star) \|_{L^2}
					\le  c \|\bu - \bu^\star\|_{\bL^\infty} %+ \| \theta_{\bro} - \theta_{\bro^\star}\|_{L^\infty}
					\Big( \|\bu - \bu^\star\|_{\bL^2} + \| \theta - \theta^\star\|_{L^2}\Big).
				\end{aligned}
			\end{align}
		\end{lemma}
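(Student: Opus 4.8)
The plan is to derive the partial differential equation satisfied by the second-order remainder
\begin{align*}
	\bz := \bu - \bu^\star - \bv^\star, \qquad \zeta := \theta - \theta^\star - \vartheta^\star,
\end{align*}
and to recognize it as an instance of the linearized system \eqref{Blg1}--\eqref{Blg5}, so that the linear estimate \eqref{estimate:lweak} can be invoked directly. Substituting $\bu = \bu^\star + (\bu-\bu^\star)$ and $\theta = \theta^\star + (\theta-\theta^\star)$ into \eqref{B1}--\eqref{B5}, subtracting the nonlinear system for $(\bu^\star,\theta^\star)$ and then subtracting the linearized system defining $(\bv^\star,\vartheta^\star)$ (whose coefficients are $\bu_1=\bu_2=\bu^\star$, $\theta=\theta^\star$), the control terms cancel and the quadratic parts of the convection and transport reorganize. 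One finds that $(\bz,\zeta)$ solves \eqref{Blg1}--\eqref{Blg5} with $\bu_1=\bu_2=\bu^\star$, $\theta=\theta^\star$, with zero initial data $\bz(0)=0$, $\zeta(0)=0$ (since $\bu,\bu^\star$ share the fixed initial datum while $\bv^\star$ starts from zero), and with right-hand sides
\begin{align*}
	\bF = -\big((\bu-\bu^\star)\cdot\nabla\big)(\bu-\bu^\star), \qquad G = -(\bu-\bu^\star)\cdot\nabla(\theta-\theta^\star).
\end{align*}

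With this identification both asserted inequalities follow at once, provided $\bF$ and $G$ are estimated in the dual norms $L^2(I;\bV_\sigma^*)$ and $L^2(I;V^*)$: indeed the left-hand side of \eqref{estimate:lweak} already controls $\|\bz\|_{L^\infty(\bL^2)}+\|\zeta\|_{L^\infty(L^2)}$ together with $\|\bz\|_{L^2(\bV_\sigma)}+\|\zeta\|_{L^2(V)}$, and the latter coincides up to the Poincar\'e equivalence of norms with $\|\nabla\bz\|_{\bL^2}+\|\nabla\zeta\|_{L^2}$. Since $\bu,\bu^\star\in\bW^{2,1}_{s,\sigma}\hookrightarrow C(\overline{Q})^2$ for $s>2$, the amplitude factor $\|\bu-\bu^\star\|_{\bL^\infty}$ appearing on the right-hand sides is finite.

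The main point, and the only genuinely delicate step, is to bound the quadratic data in these negative-order norms so that no derivative falls on $\bu-\bu^\star$. For $\bphi\in\bV_\sigma$ and $\psi\in V$, I would integrate by parts and use $\dive(\bu-\bu^\star)=0$ together with the homogeneous Dirichlet condition $\bu-\bu^\star=0$ on $\Sigma$ to rewrite
\begin{align*}
	\langle\bF,\bphi\rangle = \big(((\bu-\bu^\star)\cdot\nabla)\bphi,\,\bu-\bu^\star\big), \qquad \langle G,\psi\rangle = \big((\bu-\bu^\star)\cdot\nabla\psi,\,\theta-\theta^\star\big),
\end{align*}
whence $\|\bF(t)\|_{\bV_\sigma^*}\le \|(\bu-\bu^\star)(t)\|_{\bL^\infty}\|(\bu-\bu^\star)(t)\|_{\bL^2}$ and $\|G(t)\|_{V^*}\le\|(\bu-\bu^\star)(t)\|_{\bL^\infty}\|(\theta-\theta^\star)(t)\|_{L^2}$. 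Bounding the time-dependent supremum by $\|\bu-\bu^\star\|_{\bL^\infty}$ and integrating in time gives $\|\bF\|_{L^2(\bV_\sigma^*)}\le\|\bu-\bu^\star\|_{\bL^\infty}\|\bu-\bu^\star\|_{\bL^2}$ and $\|G\|_{L^2(V^*)}\le\|\bu-\bu^\star\|_{\bL^\infty}\|\theta-\theta^\star\|_{L^2}$. Inserting these into \eqref{estimate:lweak} and using $\sqrt{a^2+b^2}\le a+b$ yields both \eqref{diffvslin} and the gradient estimate, the constant $c$ depending on the fixed reference solution $(\bu^\star,\theta^\star)$ through the coefficients in \Cref{theorem:weaklBouss}.

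Equivalently, one may avoid the dual-norm form and test the remainder system directly with $\bz$ and $\zeta$: the transport terms $((\bu^\star\cdot\nabla)\bz,\bz)$ and $(\bu^\star\cdot\nabla\zeta,\zeta)$ vanish by incompressibility; the terms $((\bz\cdot\nabla)\bu^\star,\bz)$, $(\be_2\zeta,\bz)$ and $(\bz\cdot\nabla\theta^\star,\zeta)$ are absorbed into the viscous and Gronwall contributions after Ladyzhenskaya and Young inequalities; and the very same integration by parts converts the quadratic forcing into $\|\bu-\bu^\star\|_{\bL^\infty}\|\bu-\bu^\star\|_{\bL^2}\|\nabla\bz\|_{\bL^2}$ and $\|\bu-\bu^\star\|_{\bL^\infty}\|\theta-\theta^\star\|_{L^2}\|\nabla\zeta\|_{L^2}$, which are again absorbed. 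A Gronwall argument, finite because $\bu^\star\in L^2(\bV_\sigma)$ and $\theta^\star\in L^2(V)$, then gives the $L^\infty(\bL^2)$ bound, and integrating the resulting differential inequality over $I$ gives the gradient bound. Either route makes the essential difficulty the same: extracting $\|\bu-\bu^\star\|_{\bL^\infty}$ from the quadratic nonlinearity without producing a gradient of $\bu-\bu^\star$ on the right-hand side.
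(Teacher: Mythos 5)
Your proposal is correct and follows essentially the same route as the paper: identify $(\bu-\bu^\star-\bv^\star,\theta-\theta^\star-\vartheta^\star)$ as the solution of the linearized system \eqref{Blg1}--\eqref{Blg5} with coefficients $\bu_1=\bu_2=\bu^\star$, $\theta=\theta^\star$, zero initial data and quadratic right-hand sides $\bF=-((\bu-\bu^\star)\cdot\nabla)(\bu-\bu^\star)$, $G=-(\bu-\bu^\star)\cdot\nabla(\theta-\theta^\star)$, then apply \eqref{estimate:lweak} and bound the forcing in $L^2(I;\bV_\sigma^*)\times L^2(I;V^*)$ by integrating by parts so that the derivative lands on the test function, which is exactly the paper's argument. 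The alternative direct energy-testing route you sketch is a valid variant but not needed.
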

		
		\begin{proof}
			Note that the variables $\bv = \bu - \bu^\star - \bv^\star\in \bW^{2,1}_{s,\sigma}$ and $\vartheta = \theta - \theta^\star - \vartheta^\star\in W^{2,1}_{s}$ solve a system of the form \eqref{Blg1}--\eqref{Blg5} with $\bu_1 = \bu_2 = \bu^*$, $\theta = \theta^\star$, $\bv_0 = 0$, $\vartheta_0 = 0$, $\bF = -((\bu -\bu^{\star})\cdot\nabla)(\bu - \bu^{\star})$ and $G =-(\bu -\bu^{\star})\cdot\nabla(\theta - \theta^\star)$. Estimate \eqref{estimate:lweak} thus gives us
			\begin{align*}
				\begin{aligned}
					&\|\bv\|_{L^\infty(\bL^2)} + \|\nabla\bv\|_{L^2(\bL^2)} + \|\vartheta\|_{L^\infty(L^2)} + \|\nabla\vartheta\|_{L^2(L^2)} \\
					& \le c(\|((\bu - \bu^{\star})\cdot\nabla)(\bu -\bu^{\star})\|_{L^2(\bV_\sigma^*)} + \|(\bu -\bu^{\star})\cdot\nabla(\theta - \theta^\star)\|_{L^2(V^*)})\\
					& \le \sup_{(\bphi,\psi)\in L^2(\bV)\times L^2(V)\atop \| \bphi\|_{L^2(\bV)} = \|\psi\|_{L^2(V)} = 1} c\Big( \Big|\big(((\bu -\bu^{\star})\cdot\nabla)\bphi,\bu -\bu^{\star}\big)\Big| + \Big|\big(  (\bu -\bu^{\star})\cdot\nabla\psi,\theta - \theta^\star \big) \Big|\Big)\\
					& \le c\|\bu - \bu^\star\|_{\bL^\infty} \Big(  \|\bu - \bu^\star\|_{\bL^2} + \| \theta - \theta^\star\|_{L^2}\Big).
				\end{aligned}
			\end{align*}
		\end{proof}

		\begin{lemma}\label{lemma:supp1b}
			Let $\bro,\bro^\star \in L^s(I;\bL^s(\omega_q)\times L^s(\omega_h))$, $(\bu,\theta) = \mathcal{S}(\bro),(\bu^\star,\theta^\star) = \mathcal{S}(\bro^*) \in \bW^{2,1}_{s,\sigma}\times W^{2,1}_s$ and $(\bv^{\star},\vartheta^{\star}) = \mathcal{S}'(\bro^{\star})(\bro-\bro^\star)\in \bW^{2,1}_{s,\sigma}\times W^{2,1}_s$. Then there exists $c>0$ such that 
			\begin{align}\label{nablavest}
				\|\nabla\bv^{\star}\|_{\bL^2} + \|\nabla\vartheta^{\star}\|_{L^2} \le  c \|\bro - \bro^{\star}\|_{\bL^2\times L^2}^{1/2}\Big(\|\bu       - \bu^\star\|_{\bL^2}^{1/2} + \|\theta - \theta^\star\|_{L^2}^{1/2}  \Big)
			\end{align}
		\end{lemma}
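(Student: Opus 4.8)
The plan is to run the standard energy estimate on the linearized system solved by $(\bv^{\star},\vartheta^{\star})$ and then convert the resulting quadratic quantities into the asymmetric product appearing on the right-hand side of \eqref{nablavest}. Recall that $(\bv^{\star},\vartheta^{\star})=\mathcal{S}'(\bro^{\star})(\bro-\bro^{\star})$ solves \eqref{Blg1}--\eqref{Blg5} with $\bu_1=\bu_2=\bu^{\star}$, $\theta=\theta^{\star}$, forcing $\bF=(\bq-\bq^{\star})\chi_{\omega_q}$, $G=(\Theta-\Theta^{\star})\chi_{\omega_h}$ and $\bv_0=\vartheta_0=0$. First I would test the momentum equation with $\bv^{\star}$ and the temperature equation with $\vartheta^{\star}$ and integrate over $(0,T)$. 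The transport terms $((\bu^{\star}\cdot\nabla)\bv^{\star},\bv^{\star})$ and $(\bu^{\star}\cdot\nabla\vartheta^{\star},\vartheta^{\star})$ vanish because $\dive\bu^{\star}=0$ and $\dive\bv^{\star}=0$, while the zero initial data makes the time-derivative contribution nonnegative; this leaves $\nu\|\nabla\bv^{\star}\|_{\bL^2}^2+\kappa\|\nabla\vartheta^{\star}\|_{L^2}^2$ on the left.

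For the remaining terms I would integrate by parts in the coupling terms, writing $((\bv^{\star}\cdot\nabla)\bu^{\star},\bv^{\star})=-((\bv^{\star}\cdot\nabla)\bv^{\star},\bu^{\star})$ and $(\bv^{\star}\cdot\nabla\theta^{\star},\vartheta^{\star})=-(\bv^{\star}\cdot\nabla\vartheta^{\star},\theta^{\star})$. Since $s>2$ gives the embedding into $C(\overline{Q})$, so that $\bu^{\star},\theta^{\star}$ are bounded, these are controlled by $\|\bu^{\star}\|_{\bL^\infty}\|\bv^{\star}\|_{\bL^2}\|\nabla\bv^{\star}\|_{\bL^2}$ and $\|\theta^{\star}\|_{L^\infty}\|\bv^{\star}\|_{\bL^2}\|\nabla\vartheta^{\star}\|_{L^2}$, and Young's inequality absorbs the gradient factors into the left-hand side. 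The buoyancy term $(\be_2\vartheta^{\star},\bv^{\star})$ and the absorbed remainders contribute $c(\|\bv^{\star}\|_{\bL^2}^2+\|\vartheta^{\star}\|_{L^2}^2)$, while H\"older's inequality bounds the forcing terms by $\|\bro-\bro^{\star}\|_{\bL^2\times L^2}(\|\bv^{\star}\|_{\bL^2}+\|\vartheta^{\star}\|_{L^2})$. Altogether this yields
\[
\|\nabla\bv^{\star}\|_{\bL^2}^2+\|\nabla\vartheta^{\star}\|_{L^2}^2 \le c\Big[\big(\|\bv^{\star}\|_{\bL^2}+\|\vartheta^{\star}\|_{L^2}\big)^2 + \|\bro-\bro^{\star}\|_{\bL^2\times L^2}\big(\|\bv^{\star}\|_{\bL^2}+\|\vartheta^{\star}\|_{L^2}\big)\Big].
\]

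The decisive step, where the product structure of the claim is produced, is to estimate the factor $\|\bv^{\star}\|_{\bL^2}+\|\vartheta^{\star}\|_{L^2}$ in two complementary ways. On one hand, \Cref{lemma:supp1} together with the triangle inequality and the boundedness of $\|\bu-\bu^{\star}\|_{\bL^\infty}$ gives $\|\bv^{\star}\|_{\bL^2}+\|\vartheta^{\star}\|_{L^2}\le c(\|\bu-\bu^{\star}\|_{\bL^2}+\|\theta-\theta^{\star}\|_{L^2})$. On the other hand, applying the weak linear estimate \eqref{estimate:lweak} to $(\bv^{\star},\vartheta^{\star})$ and using the continuous embedding $\bL^2(\Omega)\hookrightarrow\bV_\sigma^*$ gives $\|\bv^{\star}\|_{\bL^2}+\|\vartheta^{\star}\|_{L^2}\le c\|\bro-\bro^{\star}\|_{\bL^2\times L^2}$. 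Using the first bound on the forcing term and on one factor of the quadratic term, and the second bound on the remaining factor of the quadratic term, every summand on the right is dominated by $c\|\bro-\bro^{\star}\|_{\bL^2\times L^2}(\|\bu-\bu^{\star}\|_{\bL^2}+\|\theta-\theta^{\star}\|_{L^2})$; taking square roots and using $\sqrt{a+b}\le\sqrt a+\sqrt b$ delivers \eqref{nablavest}. The main obstacle is precisely this bookkeeping: a naive estimate would bound both factors by $c\|\bro-\bro^{\star}\|_{\bL^2\times L^2}$ and yield only the symmetric bound $\|\nabla\bv^{\star}\|_{\bL^2}\le c\|\bro-\bro^{\star}\|_{\bL^2\times L^2}$, so one must deliberately route one factor through \Cref{lemma:supp1} to expose the state difference demanded by the claim.
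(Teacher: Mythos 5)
Your proof is correct and rests on the same key ingredient as the paper's, namely converting one factor involving $\bv^\star$ into the state difference via \Cref{lemma:supp1}; but the bookkeeping is organized differently. The paper performs the conversion \emph{inside} the differential inequality: it rewrites the forcing pairing as $((\bq-\bq^\star)\chi_{\omega_q},\bv^\star)=-((\bq-\bq^\star)\chi_{\omega_q},\bu-\bu^\star-\bv^\star)+((\bq-\bq^\star)\chi_{\omega_q},\bu-\bu^\star)$, invokes \eqref{diffvslin} on the first piece, applies Gr{\"o}nwall to obtain $\|\bv^\star\|_{L^\infty(\bL^2)}^2+\|\vartheta^\star\|_{L^\infty(L^2)}^2\le \|\bro-\bro^\star\|_{\bL^2\times L^2}(\|\bu-\bu^\star\|_{\bL^2}+\|\theta-\theta^\star\|_{L^2})$, and only then reads off the gradient bound. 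You instead keep the energy inequality in its naive form and do the conversion \emph{a posteriori}, bounding the single quantity $\|\bv^\star\|_{\bL^2}+\|\vartheta^\star\|_{L^2}$ in two complementary ways --- by $c(\|\bu-\bu^\star\|_{\bL^2}+\|\theta-\theta^\star\|_{L^2})$ via \Cref{lemma:supp1} and by $c\|\bro-\bro^\star\|_{\bL^2\times L^2}$ via \eqref{estimate:lweak} --- and distributing one bound to each factor of the quadratic term. Your route avoids Gr{\"o}nwall altogether and is arguably cleaner to verify; the paper's route additionally yields the $L^\infty(\bL^2)$ product bound on $(\bv^\star,\vartheta^\star)$ as a by-product. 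Both arguments implicitly use that $\|\bu-\bu^\star\|_{\bL^\infty}$ is uniformly bounded (the paper writes $\|\bu-\bu^\star\|_{\bL^\infty}\le cM_{\mathcal{U}}^{1/s}$), so you are on the same footing there as well.
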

		\begin{proof}
			Recall that $(\bv^{\star},\vartheta^{\star}) = \mathcal{S}'(\bro^{\star})(\bro-\bro^\star)\in \bW^{2,1}_{s,\sigma}\times W^{2,1}_s$ solves the linearized Boussinesq system \eqref{Blg1}--\eqref{Blg5} with $\bu_1 = \bu_2 = \bu$, $\bF=(\bq - \bq^\star)\chi_{\omega_q}$, $G=(\Theta - \Theta^\star)\chi_{\omega_h}$, $\bv_0=0$ and $\vartheta_0=0$.
			Taking the $L^2$ inner product of such system with $(\bv^\star,\vartheta^\star)$ results to 
			\begin{align*}
				\begin{aligned}
					& \frac{1}{2}\frac{d}{dt}\Big( \|\bv^\star\|_{\bL^2}^2 + \|\vartheta^\star\|_{L^2}^2  \Big) + \nu \|\nabla\bv^\star\|_{\bL^2}^2 + \kappa \|\nabla\vartheta^\star\|_{L^2}^2
					\\
					&  \le c\Big( \|\be_2\|_{\bL^\infty}\|\vartheta^\star\|_{\bL^2}\|\bv^\star\|_{\bL^2} + \|\bu\|_{\bL^\infty}\|\bv^\star\|_{\bL^2}\|\nabla\bv^\star\|_{\bL^2}  + \|\theta\|_{L^\infty}\|\bv^\star\|_{\bL^2}\|\nabla\vartheta^\star\|_{\bL^2} \\
					&\quad  + \big| ( (\bq - \bq^\star)\chi_{\omega_q}, \bu - \bu^\star - \bv^\star ) \big| + \big| ( (\bq - \bq^\star)\chi_{\omega_q}, \bu - \bu^\star) \big|\\
					&\quad + \big| ( (\Theta - \Theta^\star)\chi_{\omega_h}, \theta - \theta^\star -  \vartheta^\star ) \big| + \big| ( (\Theta - \Theta^\star)\chi_{\omega_h}, \theta - \theta^\star) \big|\Big)\\
					& \le \mathfrak{c}\left(\|\be_2\|_{\bL^\infty}^2,\|\bu\|_{\bL^\infty}^2,\|\theta \|_{L^\infty}^2 \right)\Big(  \|\bv^\star\|_{\bL^2}^2 + \|\vartheta^\star\|_{L^2}^2 \Big) + \frac{\nu}{2} \|\nabla\bv^\star\|_{\bL^2}^2 + \frac{\kappa}{2} \|\nabla\vartheta^\star\|_{L^2}^2\\
					& \quad + \|\bq - \bq^\star\|_{\bL^2(\omega_q)}\|\bu - \bu^\star - \bv^\star\|_{\bL^2} +  \|\bq - \bq^\star\|_{\bL^2(\omega_q)}\|\bu - \bu^\star\|_{\bL^2}\\
					& \quad + \|\Theta - \Theta^\star\|_{L^2(\omega_h)}\|\theta - \theta^\star -  \vartheta^\star\|_{L^2} +  \|\Theta - \Theta^\star\|_{L^2(\omega_h)}\|\theta - \theta^\star\|_{L^2}.
				\end{aligned}
			\end{align*}
			Moving the terms with the gradient on the last line of the computation above the left-hand side, and using \eqref{diffvslin}, we get
			\begin{align}\label{prestimate}
				\begin{aligned}
					& \frac{1}{2}\Big( \|\bv^\star\|_{\bL^2}^2 + \|\vartheta^\star\|_{L^2}^2  \Big) +  \frac{\nu}{2} \|\nabla\bv^\star\|_{\bL^2}^2 + \frac{\kappa}{2} \|\nabla\vartheta^\star\|_{L^2}^2\\
					&\le  \int_0^T  \mathfrak{c}\left(\|\be_2\|_{\bL^\infty}^2,\|\bu\|_{\bL^\infty}^2,\|\theta\|_{L^\infty}^2 \right)\Big(  \|\bv^\star\|_{\bL^2}^2 + \|\vartheta^\star\|_{L^2}^2 \Big) \du t\\
					&\quad  + \int_0^T \|\bro - \bro^\star\|_{\bL^2\times L^2} \left(\|\bu - \bu^\star\|_{\bL^\infty} + 1 \right) \Big(\|\bu - \bu^\star\|_{\bL^2} + \|\theta - \theta^\star\|_{L^2}  \Big) \du t.
				\end{aligned}
			\end{align}
			
			The Gr{\"o}nwal lemma thus leads to
			\begin{align*}
				\|\bv^\star\|_{L^\infty(\bL^2)}^2 + \|\vartheta^\star\|_{L^\infty(L^2)}^2 \le \|\bro - \bro^{\star}\|_{\bL^2\times L^2}\Big(\|\bu - \bu^\star\|_{\bL^2} + \|\theta - \theta^\star\|_{L^2}  \Big),
			\end{align*}
			where we used the fact that $\|\bu - \bu^\star\|_{\bL^\infty} \le cM_{\mathcal{U}}^{1/s}$ where $s>2$. Plugging the estimate above into \eqref{prestimate} yields our desired estimate.
		\end{proof}
		
		\begin{lemma}\label{lemma:supp2}
			Let $\bro,\bro^\star \in L^s(I;\bL^s(\omega_q)\times L^s(\omega_h))$, $(\bu,\theta) = \mathcal{S}(\bro),(\bu^\star,\theta^\star) = \mathcal{S}(\bro^*) \in \bW^{2,1}_{s,\sigma}\times W^{2,1}_s$ and $(\bv^\star,\vartheta^\star) = \mathcal{S}'(\bro^{\star})(\bro-\bro^\star)\in \bW^{2,1}_{s,\sigma}\times W^{2,1}_s$. There exists $\delta>0$ such that whenever $\|\bro-\bro^\star\|_{\bL^1\times L^1}< \delta$ we have 
			\begin{align}\label{estimate:diff-lin}
				\|\bu - \bu^\star\|_{\bL^2} + \|\theta -\theta^\star\|_{L^2} \le 2\left(\| \bv^\star \|_{\bL^2} + \|\vartheta^\star\|_{L^2}\right)\le 3\Big(\|\bu - \bu^\star\|_{\bL^2} + \|\theta - \theta^\star\|_{L^2} \Big).
			\end{align}
		\end{lemma}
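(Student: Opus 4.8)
The plan is to show that the first-order term $(\bv^\star,\vartheta^\star)$ and the true state gap $(\bu-\bu^\star,\theta-\theta^\star)$ differ only by a second-order remainder, and then to \emph{absorb} that remainder once the control gap is small. Set $R_{\bu}:=\bu-\bu^\star-\bv^\star$ and $R_\theta:=\theta-\theta^\star-\vartheta^\star$. \Cref{lemma:supp1} already supplies
\begin{align*}
\|R_{\bu}\|_{\bL^2}+\|R_\theta\|_{L^2}\le c_0\,\|\bu-\bu^\star\|_{\bL^\infty}\big(\|\bu-\bu^\star\|_{\bL^2}+\|\theta-\theta^\star\|_{L^2}\big),
\end{align*}
so the entire claim reduces to making the prefactor $c_0\|\bu-\bu^\star\|_{\bL^\infty}$ smaller than $\tfrac12$; the two inequalities in \eqref{estimate:diff-lin} will then be immediate from the triangle inequality applied to $\bu-\bu^\star=\bv^\star+R_{\bu}$ and $\theta-\theta^\star=\vartheta^\star+R_\theta$.

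The crux is therefore to bound $\|\bu-\bu^\star\|_{\bL^\infty}$ by the $\bL^1$ control gap. First I would record the elementary interpolation $\|f\|_{L^s}\le\|f\|_{L^\infty}^{(s-1)/s}\|f\|_{L^1}^{1/s}$, which together with the uniform bound \eqref{boundcon} (so that $\|\bq-\bq^\star\|_{\bL^\infty}+\|\Theta-\Theta^\star\|_{L^\infty}\le 2M_{\mathcal{U}}$) yields
\begin{align*}
\|\bq-\bq^\star\|_{\bL^s(\omega_q)}+\|\Theta-\Theta^\star\|_{L^s(\omega_h)}\le c\,\|\bro-\bro^\star\|_{\bL^1\times L^1}^{1/s}.
\end{align*}
Inserting this into the strong-solution estimate \eqref{stcongap} of \Cref{lemma:diffstab} bounds $\|\bu-\bu^\star\|_{\bW^{2,1}_{s,\sigma}}+\|\theta-\theta^\star\|_{W^{2,1}_s}$ by $c\,\|\bro-\bro^\star\|_{\bL^1\times L^1}^{1/s}$, and since $s>2$ the compact embedding $\bW^{2,1}_{s,\sigma}\hookrightarrow C(\overline{Q})^2$ finally gives
\begin{align*}
\|\bu-\bu^\star\|_{\bL^\infty}\le c_1\,\|\bro-\bro^\star\|_{\bL^1\times L^1}^{1/s}.
\end{align*}

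With this in hand I would choose $\delta:=\big(1/(2c_0c_1)\big)^s$, so that $\|\bro-\bro^\star\|_{\bL^1\times L^1}<\delta$ forces $c_0\|\bu-\bu^\star\|_{\bL^\infty}\le\tfrac12$. The first inequality of \eqref{estimate:diff-lin} then follows by writing $\|\bu-\bu^\star\|_{\bL^2}+\|\theta-\theta^\star\|_{L^2}\le(\|\bv^\star\|_{\bL^2}+\|\vartheta^\star\|_{L^2})+(\|R_{\bu}\|_{\bL^2}+\|R_\theta\|_{L^2})$ and absorbing the second parenthesis via \Cref{lemma:supp1}; the reverse inequality follows symmetrically from $\|\bv^\star\|_{\bL^2}+\|\vartheta^\star\|_{L^2}\le(\|\bu-\bu^\star\|_{\bL^2}+\|\theta-\theta^\star\|_{L^2})+(\|R_{\bu}\|_{\bL^2}+\|R_\theta\|_{L^2})$, which produces the factor $\tfrac32$ and hence the constants $2$ and $3$ appearing in the statement.

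The only genuinely delicate point is the middle paragraph: converting the $\bL^1$-smallness of the control gap into $\bL^\infty$-smallness of the state gap. Everything there hinges on the boundedness of the controls (which makes the $L^1$–$L^\infty$ interpolation available) combined with the $\bL^s$ maximal-regularity estimate for the linearized difference and the embedding into $C(\overline{Q})$. I expect no further obstruction, since the remaining steps are purely the triangle inequality and the absorption of a term whose coefficient has been forced below $\tfrac12$.
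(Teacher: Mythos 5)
Your proposal is correct and follows essentially the same route as the paper's proof: the same decomposition $\bu-\bu^\star=\bv^\star+R_{\bu}$ with the remainder controlled by \Cref{lemma:supp1}, the same $L^1$--$L^\infty$ interpolation of the control gap (using the box constraints) fed into \eqref{stcongap} and the embedding $\bW^{2,1}_{s,\sigma}\hookrightarrow C(\overline{Q})^2$ to make $\|\bu-\bu^\star\|_{\bL^\infty}$ small, and the same choice of $\delta$ to absorb the remainder and produce the constants $2$ and $3$.
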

		\begin{proof}
			From \eqref{diffvslin} we see that
			\begin{align*}
				\begin{aligned}
					\|\bu - \bu^\star\|_{\bL^2} + &\|\theta -\theta_{\bro^\star}\|_{L^2}   \le  \|\bu - \bu^\star - \bv^\star\|_{\bL^2} + \|\bv^\star\|_{\bL^2} + \|\theta - \theta^\star - \vartheta^\star\|_{L^2} + \|\vartheta^\star\|_{L^2}\\
					& \le c_1 \|\bu  - \bu^\star\|_{\bL^\infty} %+ \| \theta_{\bro} - \theta_{\bro^\star}\|_{L^\infty}
					\Big( \|\bu - \bu^\star\|_{\bL^2} + \| \theta - \theta^\star\|_{L^2}\Big) + \|\bv^\star\|_{\bL^2} + \|\vartheta^\star\|_{L^2}.
				\end{aligned}
			\end{align*}
			%%%%%%%%%%% DONE 10.12 18:20
			Meanwhile, \eqref{stcongap} and the embedding $\bW^{2,1}_{s,\sigma}\hookrightarrow C(\overline{Q})^2$ imply that
			\begin{align*}
				\begin{aligned}
					\|\bu - \bu^\star\|_{\bL^\infty} & \le c( \|\bq-\bq^{\star}\|_{\bL^s(\omega_q)} + \|\Theta - \Theta^{\star}\|_{L^s(\omega_h)} ) \le c_2M_{\mathcal{U}}^{(s-1)/s}\|\bro-\bro^{\star}\|_{\bL^1\times L^1}^{1/s}.
				\end{aligned}
			\end{align*}
			By choosing $\delta >0 $ such that $\delta \le 1/ (2c_1c_2)^{s}M_{\mathcal{U}}^{s-1}$, we get that if $\|\bro-\bro^{\star}\|_{\bL^1\times L^1} < \delta$ then 
			\begin{align*}
				\frac{1}{2}\left(\|\bu - \bu^\star\|_{\bL^2} + \|\theta -\theta^\star\|_{L^2}\right) \le \|\bv^\star\|_{\bL^2} + \|\vartheta^\star\|_{L^2}.
			\end{align*}
			On the other hand, using similar arguments and similar choice of $\delta>0$ yield
			\begin{align*}
				\begin{aligned}
					&\|\bv^\star\|_{\bL^2} + \|\vartheta^\star\|_{L^2} \le \|\bu - \bu^\star - \bv^\star\|_{\bL^2} + \|\theta -\theta^\star - \vartheta^\star\|_{L^2} + \|\bu - \bu^\star \|_{\bL^2} + \|\theta -\theta^\star\|_{L^2}\\
					&\le c_1c_2M_{\mathcal{U}}^{(s-1)/s}\|\bro-\bro^{\star}\|_{\bL^1\times L^1}^{1/s}\Big( \|\bu - \bu^\star\|_{\bL^2} + \| \theta - \theta^\star\|_{L^2}\Big) + \|\bu - \bu^\star \|_{\bL^2} + \|\theta -\theta^\star\|_{L^2}\\
					&\le \frac{3}{2}\Big( \|\bu - \bu^\star\|_{\bL^2} + \| \theta - \theta^\star\|_{L^2}\Big).
				\end{aligned}
			\end{align*}
		\end{proof}
		
		\begin{lemma}\label{lemma:supp3}
			Let $\bro,\bro^\star,\delta\bro \in L^s(I;\bL^s(\omega_q)\times L^s(\omega_h))$, $(\bu,\theta) = \mathcal{S}(\bro),(\bu^\star,\theta^\star) = \mathcal{S}(\bro^*) \in \bW^{2,1}_{s,\sigma}\times W^{2,1}_s$ and $(\bv,\vartheta) = \mathcal{S}'(\bro)\delta\bro,(\bv^\star,\vartheta^\star) = \mathcal{S}'(\bro^{\star})\delta\bro\in \bW^{2,1}_{s,\sigma}\times W^{2,1}_s$. Then there exists $c>0$ such that 
			\begin{align}\label{linstgap}
				\begin{aligned}
					&\|\bv -\bv^\star\|_{L^\infty(\bL^2)} + \|\vartheta -\vartheta^\star\|_{L^\infty(L^2)} + \|\bv -\bv^\star\|_{W^2(\bV_\sigma)} + \|\vartheta -\vartheta^\star\|_{W^2(V)}\\
					&\le c\Big( \|\bu^\star -\bu \|_{\bL^\infty} + \|\theta^\star -\theta \|_{L^\infty}\Big)\Big( \|\bv^\star \|_{\bL^2} + \|\vartheta^\star \|_{L^2} \Big).
				\end{aligned}
			\end{align}
			As a consequence, one can find a $\delta>0$ such that whenever $\|\bro - \bro^\star\|_{\bL^1\times L^1} < \delta$ we have
			\begin{align}
				&\|\bv^\star \|_{\bL^2} + \|\vartheta^\star\|_{L^2} \le 2\Big( \|\bv \|_{\bL^2} + \|\vartheta \|_{L^2} \Big) \le 3\Big(  \|\bv^\star \|_{\bL^2} + \|\vartheta^\star \|_{L^2} \Big).\label{estimate:linearcomp}%\\
				%&\|\bv^\star\|_{C(\overline{I};\bH_\sigma)} + \|\vartheta^\star \|_{C(\overline{I};H)} \le 2\Big(\|\bv \|_{C(\overline{I};\bH_\sigma)} + \|\vartheta\|_{C(\overline{I};H)} \Big).\label{estimate:linearcomp2}
			\end{align}
		\end{lemma}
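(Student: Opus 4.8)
The plan is to show that the pair $(\bv-\bv^\star,\vartheta-\vartheta^\star)$ itself solves a linearized Boussinesq system of the form \eqref{Blg1}--\eqref{Blg5}, and then to apply the weak estimate of \Cref{theorem:weaklBouss} after bounding its data in the dual norms $L^2(\bV_\sigma^*)$ and $L^2(V^*)$. The key structural fact is that $(\bv,\vartheta)=\mathcal{S}'(\bro)\delta\bro$ and $(\bv^\star,\vartheta^\star)=\mathcal{S}'(\bro^\star)\delta\bro$ solve \eqref{Blg1}--\eqref{Blg5} with the \emph{same} data $\bF=\delta\bq\chi_{\omega_q}$, $G=\delta\Theta\chi_{\omega_h}$ and zero initial values, differing only in the coefficient fields $(\bu,\theta)$ versus $(\bu^\star,\theta^\star)$.

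First I would subtract the two systems; the control data cancel. Using the algebraic identities $(\bu\cdot\nabla)\bv-(\bu^\star\cdot\nabla)\bv^\star=(\bu\cdot\nabla)(\bv-\bv^\star)+((\bu-\bu^\star)\cdot\nabla)\bv^\star$, $(\bv\cdot\nabla)\bu-(\bv^\star\cdot\nabla)\bu^\star=((\bv-\bv^\star)\cdot\nabla)\bu+(\bv^\star\cdot\nabla)(\bu-\bu^\star)$, and their temperature analogues, one checks that $(\bv-\bv^\star,\vartheta-\vartheta^\star)$ solves \eqref{Blg1}--\eqref{Blg5} with coefficients $\bu_1=\bu_2=\bu$, transport field $\theta$, zero initial data, and right-hand sides $\bF=-((\bu-\bu^\star)\cdot\nabla)\bv^\star-(\bv^\star\cdot\nabla)(\bu-\bu^\star)$ and $G=-((\bu-\bu^\star)\cdot\nabla)\vartheta^\star-\bv^\star\cdot\nabla(\theta-\theta^\star)$.

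Second, I would bound $\bF$ and $G$ in the dual norms, which is the technical heart. For fixed $t$ and any $\bphi\in\bV_\sigma$, I move the gradient off $\bv^\star$ (respectively off $\bu-\bu^\star$) onto $\bphi$ by integration by parts, which is legitimate because $\bu-\bu^\star$ and $\bv^\star$ are divergence free and vanish on $\partial\Omega$; this yields $\|\bF(t)\|_{\bV_\sigma^*}\le c\,\|(\bu-\bu^\star)(t)\|_{\bL^\infty}\|\bv^\star(t)\|_{\bL^2}$ and $\|G(t)\|_{V^*}\le c\big(\|(\bu-\bu^\star)(t)\|_{\bL^\infty}\|\vartheta^\star(t)\|_{L^2}+\|(\theta-\theta^\star)(t)\|_{L^\infty}\|\bv^\star(t)\|_{\bL^2}\big)$. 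Integrating in $t$ and feeding both estimates of \Cref{theorem:weaklBouss} into the difference system produces \eqref{linstgap}; the factor $(1+\cdots)$ in the time-derivative estimate is harmless because, by \eqref{boundcon} and the energy bounds for the states, all coefficient norms and the data $\|\bF\|_{L^2(\bV_\sigma^*)}+\|G\|_{L^2(V^*)}$ are uniformly bounded, so the dependence reduces to the desired linear one.

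Finally, \eqref{estimate:linearcomp} follows as in \Cref{lemma:supp2}. From \eqref{stcongap}, the embedding $\bW^{2,1}_{s,\sigma}\times W^{2,1}_s\hookrightarrow C(\overline{Q})^2\times C(\overline{Q})$, and the interpolation $\|f\|_{\bL^s}\le M_{\mathcal{U}}^{(s-1)/s}\|f\|_{\bL^1}^{1/s}$ one gets $\|\bu-\bu^\star\|_{\bL^\infty}+\|\theta-\theta^\star\|_{L^\infty}\le c\,M_{\mathcal{U}}^{(s-1)/s}\|\bro-\bro^\star\|_{\bL^1\times L^1}^{1/s}$; together with $\|\cdot\|_{\bL^2(Q)}\le\sqrt{T}\,\|\cdot\|_{L^\infty(\bL^2)}$, estimate \eqref{linstgap} gives $\|\bv-\bv^\star\|_{\bL^2}+\|\vartheta-\vartheta^\star\|_{L^2}\le\tfrac12\big(\|\bv^\star\|_{\bL^2}+\|\vartheta^\star\|_{L^2}\big)$ once $\delta$ is chosen so that the prefactor is at most $1/2$, and two triangle inequalities yield both bounds. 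I expect the main obstacle to be precisely the dual-norm estimates of $\bF$ and $G$: one must arrange the integration by parts so that the state differences $\bu-\bu^\star,\theta-\theta^\star$ carry the $\bL^\infty$ norm while $\bv^\star,\vartheta^\star$ are measured only in $L^2$, since this asymmetry is exactly what generates the product structure on the right-hand side of \eqref{linstgap}.
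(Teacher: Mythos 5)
Your proposal is correct and follows essentially the same route as the paper: the difference $(\bv-\bv^\star,\vartheta-\vartheta^\star)$ is identified as the solution of the linearized system with $\bu_1=\bu_2=\bu$, zero initial data and right-hand sides $\bF=((\bu^\star-\bu)\cdot\nabla)\bv^\star+(\bv^\star\cdot\nabla)(\bu^\star-\bu)$, $G=(\bu^\star-\bu)\cdot\nabla\vartheta^\star+\bv^\star\cdot\nabla(\theta^\star-\theta)$, the dual norms are estimated exactly by the integration-by-parts/duality trick that places the $\bL^\infty$ norm on the state differences and the $L^2$ norm on $(\bv^\star,\vartheta^\star)$, and \eqref{estimate:linearcomp} follows from \eqref{stcongap} and the interpolation $\|\bu-\bu^\star\|_{\bL^\infty}\le cM_{\mathcal U}^{(s-1)/s}\|\bro-\bro^\star\|_{\bL^1\times L^1}^{1/s}$ together with the triangle inequality, just as in the paper.
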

		\begin{proof}
			The elements $\overline{\bv} = \bv -\bv^\star \in\bW^{2,1}_{s,\sigma}$ and $\overline{\vartheta} = \vartheta -\vartheta^\star\in W^{2,1}_s$ solves a system of the form \eqref{Blg1}--\eqref{Blg5} with $\bu_1 = \bu_2 = \bu$, $\bF = ((\bu^\star - \bu)\cdot\nabla)\bv^\star + (\bv^\star\cdot\nabla)(\bu^\star - \bu)$, and $G = (\bu^\star - \bu)\cdot\nabla\vartheta^\star + \bv^\star\cdot\nabla(\theta^\star-\theta)$, with initial data both equal to zero. Using the same arguments as in the proof of \Cref{lemma:supp1} with the help of \Cref{theorem:weaklBouss} we get the following computation
			\begin{align}
				\begin{aligned}
					&\|\overline{\bv}\|_{L^\infty(\bL^2)} + \|\overline{\bv}\|_{W^2(\bV_\sigma)} + \|\overline{\vartheta}\|_{L^\infty(L^2)} + \|\overline{\vartheta}\|_{W^2(V)} \\
					& \le c\Big(\|((\bu^\star - \bu)\cdot\nabla)\bv^\star\|_{L^2(\bV_\sigma^*)} + \|(\bv^\star\cdot\nabla)(\bu^\star - \bu)\|_{L^2(\bV_\sigma^*)}\\
					&\qquad + \|(\bu - \bu)\cdot\nabla\vartheta^\star\|_{L^2(V^*)} + \|\bv^\star\cdot\nabla(\theta^\star -\theta)\|_{L^2(V^*)}\Big)\\
					& \le c\Big( \|\bu^\star - \bu\|_{\bL^\infty}\left(2\|\bv^\star\|_{\bL^2} + \|\vartheta^\star\|_{L^2}   \right) +  \|\theta^\star - \theta\|_{L^\infty}\|\bv^\star\|_{\bL^2} \Big)\\
					& \le c\Big( \|\bu^\star - \bu\|_{\bL^\infty} + \|\theta^\star -\theta\|_{L^\infty}\Big)\Big( \|\bv^\star \|_{\bL^2} + \|\vartheta^\star \|_{L^2} \Big).
				\end{aligned}\label{estimate1}
			\end{align}
			From \eqref{estimate1} we thus infer that
			\begin{align*}
				\begin{aligned}
					& \|\bv^\star\|_{\bL^2} + \|\vartheta^\star\|_{L^2}\le \|\bv - \bv^\star\|_{\bL^2} + \|\vartheta - \vartheta^\star\|_{L^2} + \|\bv\|_{\bL^2} + \|\vartheta\|_{L^2} \\
					& \le c\Big( \|\bu^\star - \bu \|_{\bL^\infty} + \|\theta^\star -\theta \|_{L^\infty}\Big)\Big( \|\bv^\star \|_{\bL^2} + \|\vartheta^\star \|_{L^2} \Big) + \|\bv\|_{\bL^2} + \|\vartheta\|_{L^2}.
				\end{aligned}
			\end{align*}
			Using same argument as we did in \Cref{lemma:supp1}, we can find an $\delta>0$ such that if $\|\bro - \bro^\star\|_{\bL^1\times L^1} < \delta$ then $c\Big( \|\bu^\star-\bu \|_{\bL^\infty} + \|\theta^\star - \theta \|_{L^\infty}\Big) \le \frac{1}{2}$. It holds
			\begin{align*}
				\begin{aligned}
					\|\bv^\star\|_{\bL^2} + \|\vartheta^\star\|_{L^2} \le 2\Big(\|\bv\|_{\bL^2} + \|\vartheta\|_{L^2}\Big).
				\end{aligned}
			\end{align*}
			Similar steps can be done to get the other inequality in \eqref{estimate:linearcomp}. %On the other hand, to get \eqref{estimate:linearcomp2} one should use the embeddings $W^{2}(V)\hookrightarrow C(\overline{I};H)$ and $W^{2}(\bV_\sigma)\hookrightarrow C(\overline{I};\bH_\sigma)$.
		\end{proof}
		%%%%%%%%%%% DONE 10.12 19:43
		
		\begin{remark}\label{remark:toweak}
			We mention that \Cref{lemma:supp2} and the second part of \Cref{lemma:supp3} can be established with a weaker assumption, i.e. there exists $\delta>0$ such that whenever $\|\bu - \bu^\star\|_{\bL^\infty} + \|\theta - \theta^\star\|_{L^\infty} < \delta$ we get \eqref{estimate:diff-lin} and \eqref{estimate:linearcomp}.
		\end{remark}
		
		\begin{lemma}\label{lemma:adjstability}
			Let $s>2$, $\bro,\bro^\star \in \mathcal{U}$, $(\bu,\theta) = \mathcal{S}(\bro),(\bu^\star,\theta^\star) = \mathcal{S}(\bro^*) \in \bW^{2,1}_{s,\sigma}\times W^{2,1}_s$ and $(\bw,\Psi) = \mathcal{D}(\bro),(\bw^\star,\Psi^\star) = \mathcal{D}(\bro^{\star})\in \bW^{2,1}_{s,\sigma}\times W^{2,1}_s$. There exists $c>0$ such that
			\begin{align}\label{nablaadjgap}
				\begin{aligned}
					&\|\nabla(\bw - \bw^\star)\|_{L^\infty(\bL^2)} + \|\nabla(\Psi - \Psi^\star )\|_{L^\infty(L^2)}\\
					&\le c\Big(\|\bu -\bu^\star\|_{\bL^2} + \|\theta - \theta^\star\|_{L^2} + \beta_1 \|\bu-\bu^\star\|_{\bW^{2,1}_{2,\sigma}} + \beta_2\| \theta - \theta^\star\|_{W^{2,1}_2}\Big).
				\end{aligned}
			\end{align}
		\end{lemma}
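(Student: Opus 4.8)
The plan is to derive the system solved by the differences $\overline{\bw}:=\bw-\bw^\star$ and $\overline{\Psi}:=\Psi-\Psi^\star$, and then run an energy estimate at the level of the gradients, integrating backward in time as dictated by the terminal conditions of the adjoint system. Subtracting \eqref{Badg1}--\eqref{Badg5} written for $(\bw,\Psi)$ (coefficients $\bu,\theta$) and for $(\bw^\star,\Psi^\star)$ (coefficients $\bu^\star,\theta^\star$), one verifies that $(\overline{\bw},\overline{\Psi})$ solves an adjoint-type system with coefficients $\bu,\theta$, terminal data $\overline{\bw}(T)=\beta_1(\bu(T)-\bu^\star(T))$ and $\overline{\Psi}(T)=\beta_2(\theta(T)-\theta^\star(T))$, and right-hand sides
\begin{align*}
	\bF &= \alpha_1(\bu-\bu^\star) + ((\bu-\bu^\star)\cdot\nabla)\bw^\star - (\nabla(\bu-\bu^\star))^\top\bw^\star - \Psi^\star\nabla(\theta-\theta^\star),\\
	G &= \alpha_2(\theta-\theta^\star) + ((\bu-\bu^\star)\cdot\nabla)\Psi^\star,
\end{align*}
the extra terms arising precisely from the coefficient differences. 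I would then test the $\overline{\bw}$-equation with $\bA\overline{\bw}$ and the $\overline{\Psi}$-equation with $-\Delta\overline{\Psi}$, exactly as in Step I of \Cref{theorem:strongLP}, and integrate over $(t,T)$.

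The principal part produces $\tfrac12(\|\nabla\overline{\bw}(t)\|_{\bL^2}^2+\|\nabla\overline{\Psi}(t)\|_{L^2}^2)$ together with the coercive contributions $\nu\|\bA\overline{\bw}\|^2$ and $\kappa\|\Delta\overline{\Psi}\|^2$, while the terminal term $\tfrac12(\beta_1^2\|\nabla(\bu-\bu^\star)(T)\|_{\bL^2}^2+\beta_2^2\|\nabla(\theta-\theta^\star)(T)\|_{L^2}^2)$ is controlled by $c(\beta_1^2\|\bu-\bu^\star\|_{\bW^{2,1}_{2,\sigma}}^2+\beta_2^2\|\theta-\theta^\star\|_{W^{2,1}_2}^2)$ through the embedding $\bW^{2,1}_{2,\sigma}\hookrightarrow C(\overline I;\bV_\sigma)$. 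The coefficient-coupling terms $-(\bu\cdot\nabla)\overline{\bw}$, $(\nabla\bu)^\top\overline{\bw}$, $\overline{\Psi}\nabla\theta$, $-\be_2\cdot\overline{\bw}$ and $-\bu\cdot\nabla\overline{\Psi}$ are estimated by H\"older and Young, using the a priori $\bL^\infty$ bounds on $\bu,\theta$ and their gradients from \Cref{theorem:strongLP} (choosing $s>4$ in \Cref{standingassumps}); one half of each $\|\bA\overline{\bw}\|^2$ and $\|\Delta\overline{\Psi}\|^2$ is absorbed on the left, and the remainders, together with the lower-order quantities $\|\overline{\bw}\|_{\bL^2}^2,\|\overline{\Psi}\|_{L^2}^2$ (bounded either by Poincar\'e or by a preliminary first-order energy estimate), feed a backward Gr\"onwall argument.

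The \emph{delicate point}, and the crux of the proof, is the forcing: the terms $-(\nabla(\bu-\bu^\star))^\top\bw^\star$ and $-\Psi^\star\nabla(\theta-\theta^\star)$ carry first derivatives of $\bu-\bu^\star$ and $\theta-\theta^\star$, which \eqref{nablaadjgap} does not allow (these must be bounded by bare $L^2$-norms, with no $\beta$-weight). The resolution is that the test functions $\bA\overline{\bw}\in\bH_\sigma$ and $-\Delta\overline{\Psi}$ permit an integration by parts: since $\bu-\bu^\star$ and $\theta-\theta^\star$ vanish on $\Sigma$ and $\dive(\bA\overline{\bw})=0$, the boundary and divergence contributions drop and
\begin{align*}
	((\nabla(\bu-\bu^\star))^\top\bw^\star,\bA\overline{\bw}) &= -\big(\bu-\bu^\star,(\bA\overline{\bw}\cdot\nabla)\bw^\star\big),\\
	(\Psi^\star\nabla(\theta-\theta^\star),\bA\overline{\bw}) &= -\big(\theta-\theta^\star,\nabla\Psi^\star\cdot\bA\overline{\bw}\big),
\end{align*}
transferring each derivative onto $\bw^\star,\Psi^\star$. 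As $\nabla\bw^\star,\nabla\Psi^\star\in\bL^\infty(Q)$ by \Cref{theorem:adjointLP} and the embedding $\bW^{2,1}_{s,\sigma}\hookrightarrow C(\overline I;C^1(\overline\Omega)^2)$ for $s>4$, each right-hand side is bounded by $c\|\bu-\bu^\star\|_{\bL^2}\|\bA\overline{\bw}\|_{\bL^2}$ (respectively with $\theta-\theta^\star$), and Young's inequality absorbs $\|\bA\overline{\bw}\|^2$ while leaving only the clean terms $\|\bu-\bu^\star\|_{\bL^2}^2$ and $\|\theta-\theta^\star\|_{L^2}^2$. The remaining forcing terms $\alpha_1(\bu-\bu^\star)$ and $((\bu-\bu^\star)\cdot\nabla)\bw^\star$ in $\bF$, and $\alpha_2(\theta-\theta^\star)$ and $((\bu-\bu^\star)\cdot\nabla)\Psi^\star$ in $G$, already carry the derivative on $\bw^\star,\Psi^\star$ (or none) and are treated directly in the same fashion. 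Collecting the bounds, applying the backward Gr\"onwall inequality, and taking square roots (using $\sqrt{a+b}\le\sqrt a+\sqrt b$) yields \eqref{nablaadjgap}.
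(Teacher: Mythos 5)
Your proof is correct and follows essentially the same route as the paper: write the difference system, apply the gradient-level energy estimate for the adjoint system (testing with $\bA\overline{\bw}$ and $-\Delta\overline{\Psi}$), transfer the derivatives off $\bu-\bu^\star$ and $\theta-\theta^\star$ onto the adjoint state via its uniform $C^1$ bound $M_{\mathcal A}$, and control the terminal data through $\bW^{2,1}_{2,\sigma}\times W^{2,1}_2\hookrightarrow C(\overline I;\bV_\sigma\times V)$. The only (immaterial) difference is that you place the coefficient-difference forcing on $\bw^\star,\Psi^\star$ while the paper places it on $\bw,\Psi$; both enjoy the same $L^\infty$ gradient bound, and your explicit integration by parts is exactly the duality argument the paper uses to estimate $\|\bF\|_{\bL^2}$ and $\|G\|_{L^2}$.
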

		
		\begin{proof}
			Following the same arguments as in the proof of \Cref{theorem:strongLP} we can get
			\begin{align*}
				\begin{aligned}
					& \|\nabla(\bw - \bw^\star)\|_{L^\infty(\bL^2)} + \|\nabla(\Psi - \Psi^\star)\|_{L^\infty(L^2)}
					\le c \left(\|\bF\|_{\bL^2} + \|G\|_{L^2} + \|\bw_T\|_{\bV_\sigma} +  \|\Psi_T\|_{V}\right),
				\end{aligned}
			\end{align*}
			where 
			\begin{align*}
				\begin{aligned}
					& \bF = ( (\bu - \bu^\star)\cdot\nabla )\bw + (\nabla(\bu^\star - \bu))^\top\bw + \Psi\nabla(\theta^\star - \theta) + \alpha_1(\bu - \bu^\star),\\
					& G = (\bu^\star - \bu)\cdot\nabla\Psi + \alpha_2(\theta - \theta^\star),\\
					& \bw_T = \beta_1(\bu(T) - \bu^\star(T)),\text{ and }\Psi_T =  \beta_2(\theta(T)-\theta^\star(T)).
				\end{aligned}
			\end{align*}
			
			Before we move forward in estimating $\bF$ and $G$, we take note that from \Cref{theorem:adjointLP}, for any $s>4$ we have
			\begin{align}
				\begin{aligned}\label{gradadest}
					&\|\bw\|_{\bW^{2,1}_{s,\sigma}} + \|\Psi\|_{W^{2,1}_{s}}\\ & \le c\Big( \alpha_1\|\bu - \bu_d\|_{\bL^{s}} + \alpha_2\|\theta - \theta_d\|_{L^{s}} + \beta_1\|\bu(T) - \bu_T\|_{\bW^{2-1/s,s}_{0,\sigma}} + \beta_2\|\theta(T) - \theta_T\|_{W^{2-1/s,s}_{0}} \Big)\\
					& \le c\Big(  \|\bu\|_{W^{2,1}_{s,\sigma}} + \|\theta\|_{W^{2,1}_{s}} + \|\bu_d\|_{\bL^{s}} + \|\theta_d\|_{L^{s}} + \|\bu_T\|_{\bW^{2-1/s,s}_{0,\sigma}} + \|\theta_T\|_{W^{2-1/s,s}_{0}} \Big)\\
					& \le c\Big(  \|\bro\|_{\bL^s\times L^{s}} + \|\bu_0\|_{\bW^{2-1/s,s}_{0,\sigma}} + \|\theta_0\|_{W^{2-1/s,s}_{0}} + \|\bu_d\|_{\bL^{s}} + \|\theta_d\|_{L^{s}} + \|\bu_T\|_{\bW^{2-1/s,s}_{0,\sigma}} + \|\theta_T\|_{W^{2-1/s,s}_{0}} \Big)\\
					& \le c\Big(  M_{\mathcal{U}}^{1/s} + \|\bu_0\|_{\bW^{2-1/s,s}_{0,\sigma}} + \|\theta_0\|_{W^{2-1/s,s}_{0}} + \|\bu_d\|_{\bL^{s}} + \|\theta_d\|_{L^{s}} + \|\bu_T\|_{\bW^{2-1/s,s}_{0,\sigma}} + \|\theta_T\|_{W^{2-1/s,s}_{0}} \Big)
				\end{aligned}
			\end{align}
			Since all of the terms in the last line are fixed, we can denote the last line of the inequality as $M_{\mathcal{A}}$, and as a consequence of the embedding $\bW^{2,1}_{s,\sigma}\times W^{2,1}_{s}\hookrightarrow C(\overline{I};C^1(\overline{\Omega})^2)\times C(\overline{I};C^1(\overline{\Omega}))$ we get
			\begin{align}\label{uniboundadj}
				\|\nabla\bw\|_{\bL^\infty} + \|\nabla\Psi\|_{L^\infty} \le M_{\mathcal{A}}.
			\end{align}
			
			The boundedness above helps us in estimating $\bF$ and $G$. In fact, we have for any $\bphi\in \bL^{2}(Q)$ with $\|\bphi\|_{\bL^{2}} \le 1$
			\begin{align*}
				\begin{aligned}
					\|\bF\|_{\bL^2} & \le (\|\nabla\bw\|_{\bL^\infty} + \alpha_1) \|\bu - \bu^\star\|_{\bL^2} + \left|\left( (\bphi\cdot\nabla)(\bu - \bu^\star),\bw_{\bro} \right)\right| + | (\bphi\cdot\nabla(\theta^\star - \theta), \Psi ) | \\
					& \le (M_{\mathcal{A}} + \alpha_1)\|\bu - \bu^\star\|_{\bL^2} + M_{\mathcal{A}}\|\bphi\|_{\bL^{2}}(\|\bu - \bu^\star\|_{\bL^2} + \|\theta - \theta^\star\|_{L^2})\\
					& \le (2M_{\mathcal{A}} + \alpha_1)\|\bu - \bu^\star\|_{\bL^2} + M_{\mathcal{A}}\|\theta - \theta^\star\|_{L^2}.
				\end{aligned}
			\end{align*}
			Similarly, using H{\"o}lder inequality 
			\begin{align*}
				\begin{aligned}
					\|G\|_{L^2} \le M_{\mathcal{A}}\|\bu - \bu^\star\|_{\bL^2} + \alpha_1\|\theta - \theta^\star\|_{L^2}.
				\end{aligned}
			\end{align*}
			
			Finally, from the embedding $\bW^{2,1}_{2,\sigma}\times W^{2,1}_2 \hookrightarrow C(\overline{I};\bV_\sigma\times V) $ we get the estimates needed for $\bw_T$ and $\Psi_T$.  
		\end{proof}

		\begin{lemma}\label{secestc}
			Let $\bro^\star\in \mathcal{U}$. There exists $\mu\in [1,2)$ such that for every $\varepsilon>0 $ one finds a $\delta>0$ such that
			\begin{align}
				\vert [ J''(\bro ) & - J''(\bro^\star)](\bro-\bro^\star)^2\vert \leq \varepsilon \Big ( \|\bro-\bro^\star\|_{\bL^1\times L^1}^{1+\mu} + \beta_1\| \bv^\star(T)\|_{\bL^2}^2+\beta_2\| \vartheta^\star(T)\|_{L^2}^2 \Big),
			\end{align}
			for all $\bro\in \mathcal{U}$ with  $\|\bro-\bro^\star\|_{\bL^1\times L^1}<\delta$, where $(\bv^\star,\vartheta^\star) = \mathcal{S}'(\bro^\star)(\bro-\bro^\star)$.
		\end{lemma}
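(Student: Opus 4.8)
The plan is to begin from the closed-form expression for $J''$ obtained above and expand $[J''(\bro)-J''(\bro^\star)](\bro-\bro^\star)^2$ into three groups of differences: the interior tracking terms $\alpha_1(\|\bv\|_{\bL^2}^2-\|\bv^\star\|_{\bL^2}^2)+\alpha_2(\|\vartheta\|_{L^2}^2-\|\vartheta^\star\|_{L^2}^2)$, the final-time terms $\beta_1(\|\bv(T)\|_{\bL^2}^2-\|\bv^\star(T)\|_{\bL^2}^2)+\beta_2(\|\vartheta(T)\|_{L^2}^2-\|\vartheta^\star(T)\|_{L^2}^2)$, and the trilinear terms $((\bv\cdot\nabla)\bv,\bw)_Q-((\bv^\star\cdot\nabla)\bv^\star,\bw^\star)_Q$ together with the heat analogue involving $\Psi$. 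Here $(\bv^\star,\vartheta^\star)=\mathcal S'(\bro^\star)(\bro-\bro^\star)$, $(\bv,\vartheta)=\mathcal S'(\bro)(\bro-\bro^\star)$, and $(\bw,\Psi),(\bw^\star,\Psi^\star)$ are the adjoint states at $\bro,\bro^\star$. In every group I would telescope the difference of products so that exactly one \emph{stability gap} factor --- one of $\bv-\bv^\star$, $\vartheta-\vartheta^\star$, $\bw-\bw^\star$, $\Psi-\Psi^\star$, or $\bu-\bu^\star$ --- is isolated and multiplies quantities that are already controlled.

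The quantitative core is to attach to each factor the correct power of $\rho:=\|\bro-\bro^\star\|_{\bL^1\times L^1}$. The gap factors are small: \Cref{lemma:supp3} bounds $\|\bv-\bv^\star\|$ and $\|\nabla(\bv-\bv^\star)\|$ by $c(\|\bu-\bu^\star\|_{\bL^\infty}+\|\theta-\theta^\star\|_{L^\infty})(\|\bv^\star\|_{\bL^2}+\|\vartheta^\star\|_{L^2})$, while \Cref{lemma:supp2} gives $\|\bu-\bu^\star\|_{\bL^\infty}+\|\theta-\theta^\star\|_{L^\infty}\le c\,\rho^{1/s}$. The size factors are pinned down by \Cref{lemma:LSL1}: interpolating its $\bL^{\bar s}$-bound against the strong bound from \Cref{theorem:linearLP}, I would obtain $\|\bv^\star\|_{\bL^2(Q)}+\|\vartheta^\star\|_{L^2(Q)}\le c\,\rho^{1-\eta}$ with $\eta>0$ as small as desired, and \Cref{lemma:supp1b} supplies $\|\nabla\bv^\star\|_{\bL^2}\le c\,\rho^{3/4}$. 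Together with Hölder and the uniform $\bL^\infty(Q)$ bounds on $\bw,\Psi$ from \Cref{theorem:adjointLP} and $\bW^{2,1}_{s,\sigma}\hookrightarrow C(\overline Q)^2$, every interior-tracking and trilinear contribution is then of the form $c\,\rho^{P}$ with $P>2$ (for $s$ in a suitable range); the boost of the size factor to power $1-\eta$ via \Cref{lemma:LSL1} is exactly what lifts these powers above $2$.

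The delicate group is the final-time one, and it is the reason the terms $\beta_1\|\bv^\star(T)\|_{\bL^2}^2+\beta_2\|\vartheta^\star(T)\|_{L^2}^2$ must sit on the right-hand side. For the genuine final-time quadratic differences I would write $\|\bv(T)\|\le\|\bv^\star(T)\|+\|\bv(T)-\bv^\star(T)\|$ and invoke Young's inequality, so the cross term yields precisely $\varepsilon\beta_1\|\bv^\star(T)\|_{\bL^2}^2$ plus a remainder of order $\rho^{P}$ with $P>2$, using $\|\bv(T)-\bv^\star(T)\|^2\le c\,\rho^{2/s}(\|\bv^\star\|_{\bL^2}+\|\vartheta^\star\|_{L^2})^2$. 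The same device is forced inside the trilinear terms: by \Cref{lemma:adjstability} together with \Cref{lemma:supp1} the adjoint gap $\bw-\bw^\star$ carries a final-time datum equal, up to higher order, to $\beta_1(\bu(T)-\bu^\star(T))\approx\beta_1\bv^\star(T)$, whose contribution I would again split by Young into $\varepsilon\beta_1\|\bv^\star(T)\|_{\bL^2}^2$ and a remainder of order strictly above $2$. I expect this final-time adjoint contribution to be the main obstacle, being the factor with the weakest decay in $\rho$; controlling it is precisely what dictates the admissible range of $s$.

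Finally I would set $\mu=1$ (any $\mu\in[1,2)$ below the smallest exponent $P-1$ encountered is admissible). All remainders are then of the form $c\,\rho^{P}$ or $\tfrac c\varepsilon\rho^{P}$ with $P>1+\mu$, so given $\varepsilon>0$ I would choose $\delta>0$ small enough that on $\{\rho<\delta\}$ each prefactor $\rho^{P-(1+\mu)}$, and each smallness factor $\rho^{1/s}$, lies below the threshold needed; summing the bounds delivers the stated inequality.
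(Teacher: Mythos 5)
Your proposal is correct and follows essentially the same route as the paper's proof: the same decomposition into interior-tracking, final-time, and trilinear groups, the same telescoping to isolate a single stability gap per term, the same interpolation of the $\bL^{\bar s}$--$L^1$ stability of \Cref{lemma:LSL1} against the uniform bounds to push the powers of $\|\bro-\bro^\star\|_{\bL^1\times L^1}$ strictly above $2$, and the same device of retaining $\beta_1\|\bv^\star(T)\|_{\bL^2}^2+\beta_2\|\vartheta^\star(T)\|_{L^2}^2$ on the right with a small prefactor absorbed by the choice of $\delta$. The only cosmetic difference is that for the final-time datum inside the adjoint gap the paper bounds $\beta_1\|\bu-\bu^\star\|_{\bW^{2,1}_{2,\sigma}}$ directly by a power of $\|\bro-\bro^\star\|_{\bL^2\times L^2}$ via \Cref{lemma:diffstab} rather than routing it through $\|\bv^\star(T)\|_{\bL^2}$ and Young's inequality, but both yield admissible remainders.
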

		\begin{proof}
			Let $(\bv,\vartheta) = \mathcal{S}'(\bro)(\bro-\bro^\star)$, $(\bu^\star,\theta^\star) = \mathcal{S}(\bro^\star)$, $(\bu,\theta) = \mathcal{S}(\bro)$, $(\bw,\Psi) = \mathcal{D}(\bro)$ and $(\bw^\star,\Psi^\star) = \mathcal{D}(\bro^\star)$. Then
			\begin{align*}
				&\Big[J''(\bro) - J''(\bro^\star)\Big](\bro-\bro^\star)^2= \alpha_1\big( \|\bv \|_{\bL^2}^2-\|\bv^\star\|_{\bL^2}^2\Big) + \alpha_2\Big(\|\vartheta\|_{L^2}^2-\|\vartheta^\star\|_{L^2}^2\Big)\\
				&+ \beta_1\Big(\|\bv(T) \|_{\bL^2}^2 - \|\bv^\star(T) \|_{\bL^2}^2\Big) +\beta_2\Big(\|\vartheta(T) \|_{L^2}^2-\|\vartheta^\star(T) \|_{L^2}^2\Big)\\
				&+2\Big[-( (\bv \cdot\nabla)\bv, \bw )_Q - ( \bv\cdot\nabla \vartheta, \Psi)_Q + ( (\bv^\star \cdot\nabla)\bv^\star, \bw^\star )_Q + ( \bv^\star\cdot\nabla \vartheta^\star, \Psi^\star)_Q\Big]\\
				&=I_1+I_2+I_3+I_4+I_5.
			\end{align*}
			We estimate the terms $I_1$ and  $I_2$ by virtue of H{\"o}lder inequality and \eqref{estimate:linearcomp}
			\begin{align*}
				&\begin{aligned}
					\vert I_1 \vert&\leq \alpha_1\|\bv + \bv^\star \|_{\bL^2}\|\bv - \bv^\star\|_{\bL^2} \leq \alpha_1(\|\bv\|_{\bL^2}+\|\bv^\star\|_{\bL^2})\|\bv-\bv^\star\|_{\bL^2}\\
					&\leq 2\alpha_1c (\|\bv\|_{\bL^2}^2 + \|\vartheta\|_{\bL^2}^2)\| \bu^\star-\bu\|_{\bL^\infty}.
				\end{aligned}\\
				&\begin{aligned}
					\vert I_2 \vert&\leq \alpha_2\|\vartheta+\vartheta^\star \|_{L^2}\|\vartheta -\vartheta^\star \|_{L^2} \leq \alpha_2(\|\vartheta\|_{L^2}+\|\vartheta^\star\|_{L^2})\|\vartheta -\vartheta^\star\|_{L^2}\\
					&\leq 2\alpha_2c (\|\bv \|_{\bL^2}^2 + \|\vartheta\|_{\bL^2}^2) \| \theta^\star -  \theta\|_{L^\infty}.
				\end{aligned}
			\end{align*}
			Taking the sum of these two estimates, using \eqref{stcongap} and \eqref{estimate:estLsL1} would then give us for any $\tilde{s}\in (3/2,2)$
			\begin{align*}
				\begin{aligned}
					&|I_1| + |I_2|  \le c\Big(\|\bv\|_{\bL^2}^2 + \|\vartheta \|_{\bL^2}^2 \Big)\Big(\| \bu^\star-\bu\|_{\bL^\infty} + \| \theta^\star -\theta\|_{L^\infty} \Big)\\
					& \le c\Big(\|\bv \|_{\bL^{\tilde{s}}}^{\tilde{s}}\|\bv\|_{\bL^{\infty}}^{2-\tilde{s}} + \|\vartheta\|_{L^{\tilde{s}}}^{\tilde{s}}\|\vartheta \|_{L^{\infty}}^{2-\tilde{s}} \Big)\| \bro^\star - \bro \|_{\bL^{\tilde{s}'}\times L^{\tilde{s}'}}\\
					& \le c\| \bro^\star - \bro \|_{\bL^{1}\times L^{1}}^{\tilde{s}}\| \bro^\star - \bro \|_{\bL^{\tilde{s}'}\times L^{\tilde{s}'}}^{2-\tilde{s}}\| \bro^\star - \bro \|_{\bL^{\tilde{s}'}\times L^{\tilde{s}'}} \le c\| \bro^\star - \bro \|_{\bL^{1}\times L^{1}}^{\frac{3-\tilde{s}}{\tilde{s}'} + \tilde{s}}
				\end{aligned}
			\end{align*}
			
			For $I_3$ and $I_4$ we utilize the embedding $W^2(\bV_\sigma)\times W^2(V) \hookrightarrow C(\overline{I};\bH_\sigma\times H)$ and \eqref{linstgap}
			\begin{align*}
				&\begin{aligned}
					\vert I_3 \vert & \leq \beta_1\|\bv(T)+\bv^\star(T)\|_{\bL^2}\|\bv(T) - \bv^\star(T)\|_{\bL^2} \leq \beta_1(\|\bv(T)\|_{\bL^2} + \|\bv^\star(T)\|_{\bL^2})\|\bv-\bv^\star\|_{W^{2}(\bV_\sigma)}\\
					&\leq c \beta_1( \|\bv(T) - \bv^\star(T) \|_{\bL^2} + 2\|\bv^\star(T)\|_{\bL^2})\|\bv-\bv^\star \|_{W^{2}(\bV_\sigma)}\\
					&\leq c \beta_1( \|\bv  - \bv^\star\|_{W^{2}(\bV_\sigma)} + 2\|\bv^\star(T)\|_{\bL^2})\|\bv-\bv^\star \|_{W^{2}(\bV_\sigma)}\\
					& \leq c\beta_1\Big[ \Big( \|\bu^\star-\bu\|_{\bL^\infty} + \|\theta^\star-\theta\|_{L^\infty}\Big)\Big( \|\bv^\star \|_{\bL^2} + \|\vartheta^\star \|_{L^2} \Big) + 2\|\bv^\star(T)\|_{\bL^2} \Big]\|\bv-\bv^\star \|_{W^{2}(\bV_\sigma)}
				\end{aligned}\\
				&\begin{aligned}
					\vert I_4 \vert & \leq \beta_2\|\vartheta(T)+\vartheta^\star(T)\|_{L^2}\|\vartheta(T)-\vartheta^\star(T) \|_{L^2} \leq \beta_2(\|\vartheta(T)\|_{L^2} + \|\vartheta^\star(T)\|_{L^2})\|\vartheta-\vartheta^\star\|_{W^{2}(V)}\\
					&\leq c\beta_2( \|\vartheta(T) -\vartheta^\star(T)\|_{L^2} + 2\|\vartheta^\star(T)\|_{L^2})\|\vartheta-\vartheta^\star\|_{W^{2}(V)}\\
					& \leq c\beta_2( \|\vartheta -\vartheta^\star\|_{W^{2}(V)} + 2\|\vartheta^\star(T)\|_{L^2})\|\vartheta-\vartheta^\star \|_{W^{2}(V)}\\
					& \leq c\beta_2\Big[\Big( \|\bu^\star - \bu \|_{\bL^\infty} + \|\theta^\star - \theta\|_{L^\infty}\Big)\Big( \|\bv^\star \|_{\bL^2} + \|\vartheta^\star \|_{L^2} \Big) + 2\|\vartheta^\star(T)\|_{L^2} \Big]\|\vartheta -\vartheta^\star\|_{W^{2}(V)}.
				\end{aligned}
			\end{align*}
			The sum of the two estimates above can then be further majorized by using \eqref{linstgap}, \eqref{stcongap}, and \eqref{estimate:estLsL1}.
			\begin{align}
				\begin{aligned}
					\vert I_3 \vert &  + \vert I_4 \vert 
					\\ 
					\le &\,  c\Big[\Big(\beta_1\|\bv^\star(T)\|_{\bL^2} + \beta_2\|\vartheta^\star(T)\|_{L^2} \Big) + \Big( \|\bu^\star-\bu\|_{\bL^\infty} + \|\theta^\star-\theta \|_{L^\infty}\Big)\Big( \|\bv^\star\|_{\bL^2} + \|\vartheta^\star\|_{L^2} \Big) \Big]\\
					&\quad \times \Big(\|\bv -\bv^\star \|_{W^{2}(\bV_\sigma)} +  \|\vartheta-\vartheta^\star\|_{W^{2}(V)}\Big)\\
					\le & \, c\Big[ \Big(\beta_1\|\bv^\star(T)\|_{\bL^2} + \beta_2\|\vartheta^\star(T)\|_{L^2} \Big) + \|\bro^\star - \bro\|_{\bL^{\tilde{s}'}\times L^{\tilde{s}'}}\Big( \|\bv\|_{\bL^2} + \|\vartheta \|_{L^2} \Big)\Big]\\
					&\times \Big( \|\bu^\star-\bu\|_{\bL^\infty} + \|\theta^\star-\theta \|_{L^\infty}\Big)\Big( \|\bv^\star\|_{\bL^2} + \|\vartheta^\star\|_{L^2} \Big)\\
					\le & \,c\Big(\beta_1\|\bv^\star(T)\|_{\bL^2} + \beta_2\|\vartheta^\star(T)\|_{L^2} \Big)\|\bro^\star - \bro\|_{\bL^{\tilde{s}'}\times L^{\tilde{s}'}}\Big( \|\bu^\star -\bu \|_{\bL^2} + \|\theta^\star-\theta \|_{L^2}\Big)\\
					& +  c\|\bro^\star - \bro\|_{\bL^{\tilde{s}'}\times L^{\tilde{s}'}}^2\Big( \|\bv \|_{\bL^2}^2 + \|\vartheta\|_{L^2}^2 \Big) \\
					\le & \, c\Big(\beta_1\|\bv^\star(T)\|_{\bL^2}^2 + \beta_2\|\vartheta(T)\|_{L^2}^2 \Big)\|\bro^\star - \bro\|_{\bL^{1}\times L^{1}}^{1/\tilde{s}'}\|\bro^\star - \bro\|_{\bL^{2}\times L^{2}}\\
					& + c\|\bro^\star - \bro\|_{\bL^{\tilde{s}'}\times L^{\tilde{s}'}}^2\Big( \|\bv \|_{\bL^2}^2 + \|\vartheta\|_{L^2}^2 \Big)\\
					\le & \, c\| \bro^\star - \bro \|_{\bL^{1}\times L^{1}}^{\frac{3-\tilde{s}}{\tilde{s}'} + \tilde{s} + 1} + c\Big(\beta_1\|\bv^\star(T)\|_{\bL^2}^2 + \beta_2\|\vartheta^\star(T)\|_{L^2}^2 \Big)\|\bro^\star - \bro\|_{\bL^{1}\times L^{1}}^{1/\tilde{s}' + 1/2}.
				\end{aligned}        
			\end{align}

			To estimate the term $I_5$, let us first rewrite $I_5$ as
			\begin{align*}
				\frac{1}{2} I_5&=\Big[( (\bv^\star \cdot\nabla)\bv^\star, \bw^\star )_Q-( (\bv \cdot\nabla)\bv, \bw)_Q\Big]+ \Big[( \bv^\star\cdot\nabla \vartheta^\star, \Psi^\star)_Q- ( \bv\cdot\nabla \vartheta, \Psi)_Q\Big]\\
				& = \Big[ ( ((\bv^\star -\bv)\cdot\nabla)\bv^\star, \bw^\star )_Q + ( (\bv\cdot\nabla)(\bv^\star -\bv), \bw^\star )_Q + ( (\bv\cdot\nabla)\bv, \bw^\star-\bw)_Q\Big]\\
				&+\Big[( (\bv^\star -\bv)\cdot\nabla\vartheta^\star, \Psi^\star )_Q + ( \bv\cdot\nabla(\vartheta^\star-\vartheta), \Psi^\star )_Q + ( \bv\cdot\nabla\vartheta, \Psi^\star-\Psi )_Q\Big].
			\end{align*}
			Using the antisymmetry of the trilinear forms, the H{\"o}lder and Gagliardo-Nirenberg inequalities, and the estimates \eqref{nablavest}, \eqref{linstgap}, \eqref{nablaadjgap} and \eqref{uniboundadj}, we get
			\begin{align*}
				\begin{aligned}
					&\frac{1}{2} |I_5|  \le  \|\nabla\bw\|_{\bL^\infty}\|\bv^\star - \bv\|_{\bL^2}\Big(\|\bv^\star\|_{\bL^2} + \|\bv\|_{\bL^2} \Big) + \|\nabla( \bw^\star - \bw) \|_{L^\infty(\bL^2)}\|\bv \|_{\bL^4}^2\\
					&\quad + \|\nabla\Psi^\star\|_{\bL^\infty}\Big( \|\bv^\star - \bv\|_{\bL^2}\|\vartheta^\star\|_{L^2} + \|\bv\|_{\bL^2} \|\vartheta^\star - \vartheta\|_{L^2} \Big) + \|\nabla( \Psi^\star - \Psi) \|_{L^\infty(L^2)}\|\bv \|_{\bL^4}\|\vartheta \|_{L^4}\\
					& \le M_{\mathcal{A}}\Big( \|\bv^\star - \bv\|_{\bL^2} + \|\vartheta^\star - \vartheta\|_{L^2} \Big)\Big(  \|\bv\|_{\bL^2} + \|\vartheta\|_{L^2} \Big)+ \Big(\|\nabla( \bw^\star - \bw ) \|_{L^\infty(\bL^2)} +  \|\nabla( \Psi^\star - \Psi) \|_{L^\infty(L^2)}\Big)\\
					& \quad \times \Big(  \|\bv\|_{\bL^2} + \|\vartheta\|_{L^2} \Big)\Big(  \|\nabla\bv\|_{\bL^2} + \|\nabla\vartheta\|_{\bL^2} \Big)\\
					& \le M_{\mathcal{A}}\Big( \|\bu^\star - \bu\|_{\bL^\infty} + \|\theta^\star - \theta\|_{L^\infty} \Big)\Big(  \|\bv\|_{\bL^2}^2 + \|\vartheta\|_{L^2}^2 \Big) + \Big( \|\bv\|_{\bL^2} + \|\vartheta\|_{L^2} \Big) \Big(  \|\bu^\star - \bu\|_{\bL^2}^{1/2} + \|\theta^\star - \theta\|_{L^2}^{1/2} \Big)\\
					&\quad \times \Big(  \|\bu^\star - \bu\|_{\bL^2} + \|\theta^\star - \theta\|_{L^2} +  \beta_1\|\bu^\star - \bu\|_{\bW^{2,1}_{2,\sigma}} + \beta_2\|\theta^\star - \theta\|_{W^{2,1}_{2}} \Big)\|\bro - \bro^\star \|_{\bL^2\times L^2}^{1/2}.
				\end{aligned}
			\end{align*}        
			After some rearrangement, and by vitue of \eqref{estimate:diff-lin} and \eqref{estimate:linearcomp}, we see that if $\|\bro-\bro^\star\|_{\bL^1\times L^1}<\min\{ \delta_1, \delta_2 \}$ -- where $\delta_1,\delta_2>0$ are as prescribed in \Cref{lemma:supp2} and \Cref{lemma:supp3} --- then
			\begin{align*}
				\begin{aligned} 
					\frac{1}{2} |I_5|  & \le M_{\mathcal{A}}\Big( \|\bu^\star - \bu\|_{\bL^\infty} + \|\theta^\star - \theta\|_{L^\infty} \Big)\Big(  \|\bv\|_{\bL^2}^2 + \|\vartheta\|_{L^2}^2 \Big) + \|\bro - \bro^\star \|_{\bL^2\times L^2}^{1/2}\Big(  \|\bv\|_{\bL^2}^{5/2} + \|\vartheta\|_{L^2}^{5/2} \Big)\\
					& \quad + \|\bro - \bro^\star \|_{\bL^2\times L^2}^{1/2}\Big(  \beta_1\|\bu^\star - \bu\|_{\bW^{2,1}_{2,\sigma}} + \beta_2\|\theta^\star - \theta\|_{W^{2,1}_{2}} \Big)\Big(  \|\bv\|_{\bL^2}^{3/2} + \|\vartheta\|_{L^2}^{3/2} \Big).
				\end{aligned}
			\end{align*}
			Furthermore, \eqref{stcongap} and \eqref{estimate:estLsL1} imply that
			\begin{align*}
				\begin{aligned} 
					\frac{1}{2} |I_5|  & \le M_{\mathcal{A}}\Big( \|\bu^\star - \bu\|_{\bL^\infty} + \|\theta^\star - \theta\|_{L^\infty} \Big)\Big(  \|\bv\|_{\bL^2}^2 + \|\vartheta\|_{L^2}^2 \Big)\\
					& \quad + \|\bro - \bro^\star \|_{\bL^2\times L^2}^{1/2}\Big(  \|\bu^\star - \bu\|_{\bL^2}^{1/2} + \|\theta^\star - \theta\|_{L^2}^{1/2} \Big)\Big(  \|\bv\|_{\bL^2}^2 + \|\vartheta\|_{L^2}^2 \Big)\\
					& \quad + \|\bro - \bro^\star \|_{\bL^2\times L^2}^{1/2}\Big(  \beta_1\|\bu^\star - \bu\|_{\bW^{2,1}_{2,\sigma}} + \beta_2\|\theta^\star - \theta\|_{W^{2,1}_{2}} \Big)\Big(  \|\bv\|_{\bL^2}^{3/2} + \|\vartheta\|_{L^2}^{3/2} \Big)\\
					& \le M_{\mathcal{A}}\Big( \|\bu^\star - \bu\|_{\bL^\infty} + \|\theta^\star - \theta\|_{L^\infty} \Big)\Big(  \|\bv\|_{\bL^2}^2 + \|\vartheta\|_{L^2}^2 \Big)\\
					& \quad + \|\bro - \bro^\star \|_{\bL^2\times L^2}^{1/2}\Big(  \|\bv\|_{\bL^2}^{5/2} + \|\vartheta\|_{L^2}^{5/2} \Big) + \|\bro - \bro^\star \|_{\bL^2\times L^2}^{3/2}\Big(  \|\bv\|_{\bL^2}^{3/2} + \|\vartheta\|_{L^2}^{3/2} \Big)\\
					& \le c\Big( \| \bro^\star - \bro \|_{\bL^{1}\times L^{1}}^{\frac{3-\tilde{s}}{\tilde{s}'} + \tilde{s}} + \| \bro^\star - \bro \|_{\bL^{1}\times L^{1}}^{\frac{16\tilde{s}-10}{4\tilde{s}}} + \| \bro^\star - \bro \|_{\bL^{1}\times L^{1}}^{\frac{12\tilde{s}-6}{4\tilde{s}}} \Big).
				\end{aligned}
			\end{align*}
			
			To summarize, we have the following estimate for the difference between the second derivatives
			\begin{align*}
				&\Big\vert\Big[J''(\bro_t) - J''(\bro^\star)\Big](\bro-\bro^\star)^2\Big\vert \le  c\Big[\| \bro^\star - \bro \|_{\bL^{1}\times L^{1}}^{\frac{3-\tilde{s}}{\tilde{s}'} + \tilde{s} + 1} + \| \bro^\star - \bro \|_{\bL^{1}\times L^{1}}^{\frac{3-\tilde{s}}{\tilde{s}'} + \tilde{s}} \\
				&  + \| \bro^\star - \bro \|_{\bL^{1}\times L^{1}}^{\frac{16\tilde{s}-10}{4\tilde{s}}} + \| \bro^\star - \bro \|_{\bL^{1}\times L^{1}}^{\frac{12\tilde{s}-6}{4\tilde{s}}} + \Big(\beta_1\|\bv^\star(T)\|_{\bL^2}^2 + \beta_2\|\vartheta^\star(T)\|_{L^2}^2 \Big)\|\bro^\star - \bro\|_{\bL^{1}\times L^{1}}^{1/\tilde{s}'+1/2}\Big].
			\end{align*}
			Since $\tilde{s}\in (3/2,2)$ one finds $\ell_1,\ell_2,\ell_3,\ell_4,\ell_5 >0$ such that $\frac{3-\tilde{s}}{\tilde{s}'} + \tilde{s} = 2 + \ell_1 + \ell_2$, $\frac{3-\tilde{s}}{\tilde{s}'} + \tilde{s} + 1 = 2 + \ell_1 + \ell_3$, $\frac{16\tilde{s}-10}{4\tilde{s}} = 2 + \ell_1 + \ell_4$, $\frac{12\tilde{s}-6}{4\tilde{s}} = 2 + \ell_1 + \ell_5$. If we then choose $$\delta \le \min\{ (\varepsilon/4c)^{\frac{1}{\ell_2}},(\varepsilon/4c)^{\frac{1}{\ell_3}},(\varepsilon/4c)^{\frac{1}{\ell_4}},(\varepsilon/4c)^{\frac{1}{\ell_5}},(\varepsilon/c)^{2\tilde{s}'/(\tilde{s}'+2)}, \delta_1,\delta_2 \},$$
			we get 
			\begin{align*}
				&\Big \vert\Big[J''(\bro) - J''(\bro^\star)\Big](\bro-\bro^\star)^2 \Big\vert\le  \varepsilon\Big(\| \bro^\star - \bro \|_{\bL^{1}\times L^{1}}^{1 + \mu} + \beta_1\|\bv^\star(T)\|_{\bL^2}^2 + \beta_2\|\vartheta^\star(T)\|_{L^2}^2   \Big)
			\end{align*}
			whenever $\| \bro^\star - \bro \|_{\bL^{1}\times L^{1}} < \delta$, where $\mu = 1+\ell_1$.
		\end{proof}

		\begin{lemma}\label{secests}
			Let $\bro^\star\in \mathcal{U}$, $\beta_1 = \beta_2 = 0$ and $(\bu^\star,\theta^\star) = \mathcal{S}(\bro^\star)$. For every $\varepsilon>0 $ there exists a $\delta>0$ such that
			\begin{align}
				&\Big \vert \Big[ J''(\bro) - J''(\bro^\star)\Big](\bro-\bro^\star)^2\Big\vert\leq \varepsilon \Big ( \|\bu -  \bu^\star \|_{\bL^2}^2 +\|\theta - \theta^\star\|_{L^2}^2 \Big),
			\end{align}
			for all $\bro\in \mathcal{U}$ with $\|\bu -  \bu^\star \|_{\bL^\infty} +\|\theta - \theta^\star\|_{L^\infty} < \delta$, where $(\bu,\theta) = \mathcal{S}(\bro)$.
		\end{lemma}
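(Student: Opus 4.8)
The plan is to reuse verbatim the decomposition
$[J''(\bro) - J''(\bro^\star)](\bro-\bro^\star)^2 = I_1 + I_2 + I_3 + I_4 + I_5$
introduced in the proof of \Cref{secestc}, with $(\bv,\vartheta) = \mathcal{S}'(\bro)(\bro-\bro^\star)$, $(\bv^\star,\vartheta^\star) = \mathcal{S}'(\bro^\star)(\bro-\bro^\star)$, $(\bw,\Psi) = \mathcal{D}(\bro)$ and $(\bw^\star,\Psi^\star) = \mathcal{D}(\bro^\star)$. Since $\beta_1=\beta_2=0$, the terminal terms vanish, i.e. $I_3=I_4=0$, so only $I_1,I_2,I_5$ survive. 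The two structural changes relative to \Cref{secestc} are that the target on the right-hand side is now the squared state gap $\|\bu-\bu^\star\|_{\bL^2}^2 + \|\theta-\theta^\star\|_{L^2}^2$, and that smallness is assumed in $\|\bu-\bu^\star\|_{\bL^\infty} + \|\theta-\theta^\star\|_{L^\infty}$ rather than in $\|\bro-\bro^\star\|_{\bL^1\times L^1}$. By \Cref{remark:toweak}, the equivalences \eqref{estimate:diff-lin} and \eqref{estimate:linearcomp} remain valid under this weaker $\bL^\infty$-smallness; combined, they yield the bridging estimate $\|\bv\|_{\bL^2} + \|\vartheta\|_{L^2} \le c\big(\|\bu-\bu^\star\|_{\bL^2} + \|\theta-\theta^\star\|_{L^2}\big)$, which converts every linearized quantity into a genuine state gap.

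For $I_1$ and $I_2$ I would copy the bound from \Cref{secestc}, namely $|I_1|+|I_2| \le c\big(\|\bu^\star-\bu\|_{\bL^\infty} + \|\theta^\star-\theta\|_{L^\infty}\big)\big(\|\bv\|_{\bL^2}^2 + \|\vartheta\|_{L^2}^2\big)$, which follows from \eqref{linstgap} and \eqref{estimate:linearcomp}. Feeding in the bridging estimate turns the quadratic factor into $\|\bu-\bu^\star\|_{\bL^2}^2 + \|\theta-\theta^\star\|_{L^2}^2$, so that the leading factor $\|\bu^\star-\bu\|_{\bL^\infty} + \|\theta^\star-\theta\|_{L^\infty} < \delta$ renders $|I_1|+|I_2|$ absorbable into $\tfrac{\varepsilon}{3}$ times the target once $\delta$ is small.

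The main work, and the main obstacle, is $I_5$. Starting from the same rewriting and the same intermediate bound derived in \Cref{secestc} (after exploiting antisymmetry of the trilinear forms together with \eqref{nablavest}, \eqref{linstgap}, \eqref{nablaadjgap} and the uniform adjoint bound \eqref{uniboundadj}), the case $\beta_1=\beta_2=0$ reduces to
\[
\tfrac12|I_5| \le M_{\mathcal{A}}\big(\|\bu^\star-\bu\|_{\bL^\infty} + \|\theta^\star-\theta\|_{L^\infty}\big)\big(\|\bv\|_{\bL^2}^2 + \|\vartheta\|_{L^2}^2\big) + c\,\|\bro-\bro^\star\|_{\bL^2\times L^2}^{1/2}\big(\|\bv\|_{\bL^2}^{5/2} + \|\vartheta\|_{L^2}^{5/2}\big).
\]
The decisive new observation is that, since $\bro,\bro^\star\in\mathcal{U}$ are bounded in $\bL^\infty\times L^\infty$ by \eqref{boundcon}, the factor $\|\bro-\bro^\star\|_{\bL^2\times L^2}^{1/2}$ is here merely a fixed constant, not a source of smallness as it was in \Cref{secestc}. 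The first summand is handled exactly as $I_1,I_2$. For the second, I would apply the bridging estimate and write $\big(\|\bu-\bu^\star\|_{\bL^2} + \|\theta-\theta^\star\|_{L^2}\big)^{5/2} = \big(\cdots\big)^{1/2}\big(\cdots\big)^2$, then use the embedding $\bL^\infty(Q)\hookrightarrow\bL^2(Q)$ to bound $\big(\|\bu-\bu^\star\|_{\bL^2} + \|\theta-\theta^\star\|_{L^2}\big)^{1/2} \le c\big(\|\bu-\bu^\star\|_{\bL^\infty} + \|\theta-\theta^\star\|_{L^\infty}\big)^{1/2} \le c\delta^{1/2}$, leaving a clean $\|\bu-\bu^\star\|_{\bL^2}^2 + \|\theta-\theta^\star\|_{L^2}^2$ factor. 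The point to watch is precisely that $I_5$ carries total degree $5/2>2$ in the state gap, so the surplus half-power supplies the required smallness while the overshoot beyond quadratic is absorbed into the target.

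Collecting the three pieces gives a bound of the form $c(\delta+\delta^{1/2})\big(\|\bu-\bu^\star\|_{\bL^2}^2 + \|\theta-\theta^\star\|_{L^2}^2\big)$, and choosing $\delta$ so small that $c(\delta+\delta^{1/2}) \le \varepsilon$ yields the claim. The only genuine subtlety, which I would flag explicitly, is the reversal of roles compared with \Cref{secestc}: there the $\bL^2$-control-gap factors provided the smallness, whereas here they are harmless constants and the smallness must instead be extracted from the super-quadratic dependence on the state gap through the $\bL^\infty\hookrightarrow\bL^2$ embedding. This is exactly why the hypothesis is phrased in the $\bL^\infty$ norm of the states, and why the restriction $\beta_1=\beta_2=0$ is needed, as it kills the terminal terms $I_3,I_4$ whose contributions $\|\bv^\star(T)\|_{\bL^2}^2$ and $\|\vartheta^\star(T)\|_{L^2}^2$ cannot be controlled by the interior $\bL^2(Q)$ state gap alone.
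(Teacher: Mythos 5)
Your proposal is correct and follows essentially the same route as the paper: the paper likewise reuses the $I_1+I_2+I_5$ decomposition from \Cref{secestc} (with $I_3=I_4=0$ since $\beta_1=\beta_2=0$), treats $\|\bro-\bro^\star\|_{\bL^2\times L^2}^{1/2}$ as a bounded constant via \eqref{boundcon}, extracts the smallness from the half-power of the state gap through $\|\cdot\|_{L^2}\le c\|\cdot\|_{L^\infty}$, and finishes with \Cref{lemma:supp2}, \Cref{lemma:supp3} and \Cref{remark:toweak} to convert $\|\bv\|_{\bL^2}^2+\|\vartheta\|_{L^2}^2$ into the squared state gap. Your explicit flagging of the role reversal of the smallness source relative to \Cref{secestc} is a point the paper leaves implicit, but the argument is the same.
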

		
		\begin{proof}
			Using similar notations and arguments as in the proof of \Cref{secestc}, we see that
			\begin{align*}
				\begin{aligned}
					&\Big \vert \Big[ J''(\bro) - J''(\bro^\star)\Big](\bro-\bro^\star)^2\Big\vert \le |I_1| + |I_2| + |I_5|\\
					&\le  c\Big(\|\bv\|_{\bL^2}^2 + \|\vartheta\|_{\bL^2}^2 \Big)\Big(\| \bu^\star - \bu\|_{\bL^\infty} + \| \theta^\star -\theta\|_{L^\infty} \Big)\\
					& \quad + M_{\mathcal{A}}\Big( \|\bu^\star - \bu\|_{\bL^\infty} + \|\theta^\star - \theta\|_{L^\infty} \Big)\Big(  \|\bv\|_{\bL^2}^2 + \|\vartheta\|_{L^2}^2 \Big)\\
					& \quad + \|\bro - \bro^\star \|_{\bL^2\times L^2}^{1/2}\Big(  \|\bu^\star - \bu\|_{\bL^2}^{1/2} + \|\theta^\star - \theta\|_{L^2}^{1/2} \Big)\Big(  \|\bv\|_{\bL^2}^2 + \|\vartheta\|_{L^2}^2 \Big)\\
					&\le  c\Big[\Big(\| \bu^\star -\bu\|_{\bL^\infty} + \| \theta^\star - \theta\|_{L^\infty} \Big) + \Big(\| \bu^\star - \bu\|_{\bL^\infty}^{1/2} + \| \theta^\star - \theta\|_{L^\infty}^{1/2} \Big) \Big]\Big(  \|\bv\|_{\bL^2}^2 + \|\vartheta\|_{L^2}^2 \Big),
				\end{aligned}
			\end{align*}
			where $c = \max\left\{c,M_{\mathcal{A}}, \sqrt{2M_{\mathcal{U}}} \right\}$. Choosing $\delta \le \min\{ \varepsilon/2c, (\varepsilon/2c)^2 \}$ and utilizing \Cref{lemma:supp2} and \Cref{lemma:supp3} gives us
			\begin{align*}
				&\Big \vert \Big[ J''(\bro) - J''(\bro^\star)\Big](\bro-\bro^\star)^2\Big\vert\leq \varepsilon \Big ( \|\bu-  \bu^\star \|_{\bL^2}^2 +\|\theta - \theta^\star\|_{L^2}^2 \Big)
			\end{align*}
			whenever $\|\bu-  \bu^\star \|_{\bL^\infty} +\|\theta - \theta^\star\|_{L^\infty} < \delta$.
		\end{proof}
		
		We consider two distinct growth conditions that guarantee strict local optimality.
		\begin{assumption}\label{growth1}
			Let $\mu\in [1,2)$, $\tau\in \{1/2,1\}$ and $\bro^\star\in \mathcal{U}$. There exist positive constants $c$ and  $\alpha$ such that
			\begin{equation}\label{growth:controls}
				J'(\bro^\star)(\bro-\bro^\star)+ \tau J''(\bro^\star)(\bro-\bro^\star)^2\geq c\Big (\|\bro-\bro^\star\|_{\bL^1\times L^1}^{1+\mu} + \beta_1\| \bv^\star(T)\|_{\bL^2}^2+\beta_2\| \vartheta^\star(T)\|_{L^2}^2 \Big)
			\end{equation}
			for all $\bro\in \mathcal{U}$ with $\|\bro- \bro^\star \|_{\bL^1\times L^1}<\alpha $, where $(\bv^\star,\vartheta^\star) = \mathcal{S}'(\bro^\star)(\bro - \bro^\star)$.
		\end{assumption}
		
		Assumption \ref{growth1} implies the optimal controls to be bang-bang, see \cite{dominguez2022}. 
		We also mention that this assumption was first considered in the stability analysis in ODE optimal control in \cite{OV} and in \cite{dominguez2022, jork2023} for semilinear elliptic and parabolic PDEs. 
		Furthermore, as proved in \cite[Proposition 2.12]{ADW2023}, bang-bang optimal controls for affine optimal control problems are shown to necessarily satisfy a growth condition of the same spirit as Assumption \ref{growth1}.
		%Furthermore, for affine optimal control problems with bang-bang optimal controls, a certain growth is even necessary and the appearance of bang-bang optimal control is generic, see \cite[Proposition 2.12]{ADW2023}, which supports the feasibility of Assumption \ref{growth1}.
		
		The following condition which is weaker than Assumption \ref{growth1} allows us to consider optimal controls that are not of bang-bang structure. However, we can only provide error estimates for the optimal states under this assumption.
		\begin{assumption}\label{growth2}
			Let $\tau\in \{1/2,1\}$, $\bro^\star\in \mathcal{U}$ and $(\bu^\star,\theta^\star) = \mathcal{S}(\bro^\star)$. There exist positive constants $c$ and $\alpha$ such that
			\begin{equation}
				J'(\bro^\star)(\bro-\bro^\star)+ \tau J''(\bro^\star)(\bro-\bro^\star)^2\geq c\Big ( \|\bu - \bu^\star \|_{\bL^2}^2 +\|\theta - \theta^\star\|_{L^2}^2  \Big),
			\end{equation}
			for all $\bro\in \mathcal{U}$ with $\|\bu -\bu^\star \|_{\bL^\infty} +\|\theta - \theta^\star\|_{L^\infty} <\alpha$, where $(\bu,\theta) = \mathcal{S}(\bro)$.
		\end{assumption}
		Assumptions \ref{growth1} and \ref{growth2} imply strict weak or strong local optimality.
		
		\begin{theorem}\label{theorem:sufficientoptimality}
			Let $\bro^\star\in\mathcal{U}$ satisfy Assumption \ref{growth1} with $\tau = 1/2$, then there exist $\delta,\sigma > 0$ such that
			\begin{equation*}
				J(\bro)-J(\bro^\star)\geq \sigma\Big ( \|\bro- \bro^\star \|_{\bL^1}^{1+\mu} + \beta_1\| \bv^\star(T)\|_{\bL^2}^2 + \beta_2\| \vartheta^\star(T)\|_{L^2}^2 \Big)
			\end{equation*}
			for all $\bro\in \mathcal{U}$ with $\|\bro- \bro^\star \|_{\bL^1\times L^1}<\delta $, where $(\bv^\star,\vartheta^\star) = \mathcal{S}'(\bro^\star)(\bro - \bro^\star)$.
			Suppose that $\bro^\star\in\mathcal{U}$ and $(\bu^\star,\theta^\star) = \mathcal{S}(\bro^\star)$ satisfy Assumption \ref{growth2} and $\beta_1 = \beta_2 = 0$, one finds  $\delta,\sigma > 0$ such that
			\begin{equation*}
				J(\bro)-J(\bro^\star)\geq \sigma\Big ( \|\bu -  \bu^\star\|_{\bL^2}^2 +\|\theta - \theta^\star\|_{L^2}^2 \Big),
			\end{equation*}
			for all $\bro\in \mathcal{U}$ with $\|\bu -  \bu^\star\|_{\bL^\infty} +\|\theta - \theta^\star\|_{L^\infty} <\delta$, where $(\bu,\theta) = \mathcal{S}(\bro)$.
		\end{theorem}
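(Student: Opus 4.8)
The plan is to prove both growth estimates by a second-order Taylor expansion of the objective around $\bro^\star$, isolating the second-order sufficient term controlled by the respective assumption and absorbing the remainder using the perturbation estimates just established. Since $J$ is of class $C^\infty$, reducing to the one-dimensional function $t\mapsto J(\bro^\star+t(\bro-\bro^\star))$ and applying Taylor's theorem with Lagrange remainder yields, for each admissible $\bro$, a point $\bro_\theta=\bro^\star+\theta(\bro-\bro^\star)$ with $\theta\in(0,1)$ such that
\[
J(\bro)-J(\bro^\star)=J'(\bro^\star)(\bro-\bro^\star)+\tfrac12 J''(\bro_\theta)(\bro-\bro^\star)^2.
\]
Adding and subtracting $\tfrac12 J''(\bro^\star)(\bro-\bro^\star)^2$ gives the decomposition
\[
J(\bro)-J(\bro^\star)=\Big[J'(\bro^\star)(\bro-\bro^\star)+\tfrac12 J''(\bro^\star)(\bro-\bro^\star)^2\Big]+\tfrac12\big[J''(\bro_\theta)-J''(\bro^\star)\big](\bro-\bro^\star)^2,
\]
in which the first bracket is exactly the quantity bounded below by the growth assumption (with $\tau=1/2$), and the second is the remainder to be absorbed.

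For the first assertion I would invoke \Cref{growth1} to bound the first bracket from below by $c\,\mathcal{R}$, where $\mathcal{R}:=\|\bro-\bro^\star\|_{\bL^1\times L^1}^{1+\mu}+\beta_1\|\bv^\star(T)\|_{\bL^2}^2+\beta_2\|\vartheta^\star(T)\|_{L^2}^2$ and $(\bv^\star,\vartheta^\star)=\mathcal{S}'(\bro^\star)(\bro-\bro^\star)$; this holds whenever $\|\bro-\bro^\star\|_{\bL^1\times L^1}<\alpha$. For the remainder I apply \Cref{secestc} (in the intermediate-point form in which it is proven, i.e.\ for $\bro_\theta$ on the segment) with the choice $\varepsilon=c$, obtaining a $\delta_0>0$ with $\big|[J''(\bro_\theta)-J''(\bro^\star)](\bro-\bro^\star)^2\big|\le c\,\mathcal{R}$; here one uses $\|\bro_\theta-\bro^\star\|_{\bL^1\times L^1}=\theta\|\bro-\bro^\star\|_{\bL^1\times L^1}<\delta_0$, so the estimate is valid along the whole segment. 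The decisive point is that the growth bound and the remainder bound carry the \emph{identical} right-hand side $\mathcal{R}$, built on the same linearized variable $(\bv^\star,\vartheta^\star)$ at $\bro^\star$, so they combine cleanly: setting $\delta:=\min\{\alpha,\delta_0\}$, for $\|\bro-\bro^\star\|_{\bL^1\times L^1}<\delta$ one gets $J(\bro)-J(\bro^\star)\ge\big(c-\tfrac{c}{2}\big)\mathcal{R}=\tfrac{c}{2}\mathcal{R}$, which is the claim with $\sigma=c/2$.

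The second assertion follows the same template with $\beta_1=\beta_2=0$, replacing \Cref{growth1} by \Cref{growth2} and \Cref{secestc} by \Cref{secests}; the controlling quantity is now $\mathcal{R}':=\|\bu-\bu^\star\|_{\bL^2}^2+\|\theta-\theta^\star\|_{L^2}^2$ and the admissible neighborhood is measured in the state norm $\|\bu-\bu^\star\|_{\bL^\infty}+\|\theta-\theta^\star\|_{L^\infty}<\delta$. I expect the only genuine subtlety — and hence the main obstacle — to be justifying that the remainder lemma applies at the intermediate point $\bro_\theta$ rather than at $\bro$ itself: in the strong-minimizer case this requires that smallness of $\|\bu-\bu^\star\|_{\bL^\infty}+\|\theta-\theta^\star\|_{L^\infty}$ transfers to smallness of the corresponding state distance at $\bro_\theta$, which one secures from the continuity of the control-to-state map (\Cref{lemma:diffstab}), the uniform bound $\|\bro\|_{\bL^\infty\times L^\infty}\le M_{\mathcal U}$ on $\mathcal U$, and \Cref{remark:toweak}, the latter guaranteeing that the support estimates feeding \Cref{secests} already hold under the $\bL^\infty$-state smallness hypothesis. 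With this in hand, choosing $\varepsilon=c$ and $\sigma=c/2$ closes the argument exactly as in the weak case.
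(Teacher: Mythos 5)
Your proposal is correct and follows essentially the same route as the paper's proof: a Taylor expansion with Lagrange remainder, adding and subtracting $\tfrac12 J''(\bro^\star)(\bro-\bro^\star)^2$, bounding the first bracket by Assumption \ref{growth1} (resp.\ \ref{growth2}) and absorbing the remainder via Lemma \ref{secestc} (resp.\ Lemma \ref{secests}) after rescaling the intermediate-point quantities back to $(\bv^\star,\vartheta^\star)$ by linearity of $\mathcal{S}'(\bro^\star)$. The only cosmetic difference is that the paper makes the $1/t^2$ rescaling and the transfer $\|\bu_t-\bu^\star\|\leftrightarrow\|\bv_t^\star\|$ via Lemma \ref{lemma:supp2} fully explicit, whereas you compress these steps, but the substance is identical.
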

		
		\begin{proof}
			By Taylor's theorem, there exists an $t\in(0,1)$ such that
			\begin{align*}
				&J(\bro)-J(\bro^\star)=J'(\bro^\star)(\bro-\bro^\star)+\frac{1}{2}J''(\bro_t)(\bro-\bro^\star)^2\\
				&\geq J'(\bro^\star)(\bro-\bro^\star)+\frac{1}{2}J''(\bro^\star)(\bro-\bro^\star)^2 + \frac{1}{2t^2}\Big( \Big[ J''(\bro_t)-J''(\bro^\star)\Big](\bro_t-\bro^\star)^2\Big).
			\end{align*}
			where $\bro_t = \bro^\star + t(\bro - \bro^\star)$.
			
			\noindent{\it Part 1.} From \Cref{growth1} and \Cref{secestc} we get
			\begin{align*}
				J(\bro)-J(\bro^\star) &\geq c \Big ( \|\bro-\bro^\star\|_{\bL^1\times L^1}^{1+\mu} + \beta_1\| \bv^\star(T)\|_{\bL^2}^2+\beta_2\| \vartheta^\star(T)\|_{L^2}^2 \Big)\\
				&\quad - \varepsilon \Big ( t\|\bro -\bro^\star\|_{\bL^1\times L^1}^{1+\mu} + \beta_1\| \bv_t^\star(T)\|_{\bL^2}^2 + \beta_2\| \vartheta_t^\star(T)\|_{L^2}^2 \Big).
			\end{align*}
			whenever $ \|\bro -\bro^\star\|_{\bL^1\times L^1} < \delta := \min\{\alpha,\delta_1 \}$, where $\delta_1>0$ is as in \Cref{secestc}, and $(\bv_t^\star,\vartheta_t^\star) = \mathcal{S}'(\bro^\star)(\bro_t-\bro^\star)$. By uniqueness of solution to the linearized Boussinesq system and since $\delta \bro_t = t\delta\bro$, we see that $\| \bv_t^\star(T)\|_{\bL^2}^2 + \| \vartheta_t^\star(T)\|_{L^2}^2 = t\| \bv^\star(T)\|_{\bL^2}^2 + t\| \vartheta^\star(T)\|_{L^2}^2  $. Thus, because $t\in[0,1]$ and by choosing $\varepsilon = c/2$, we have 
			\begin{align*}
				J(\bro)-J(\bro^\star) &\geq \sigma \Big ( \|\bro-\bro^\star\|_{\bL^1\times L^1}^{1+\mu} + \beta_1\| \bv^\star(T)\|_{\bL^2}^2+\beta_2\| \vartheta^\star(T)\|_{L^2}^2 \Big).
			\end{align*}
			%The claim follows from applying Assumption \ref{growth1} and Lemma \ref{secestc} or Assumption \ref{growth2} and Lemma \ref{secests}.
			
			\noindent{\it Part 2.} Using \Cref{growth2} and \Cref{secests} imply
			\begin{align*}
				J(\bro)-J(\bro^\star) &\geq c\Big ( \|\bu -   \bu^\star \|_{\bL^2}^2 +\|\theta - \theta^\star\|_{L^2}^2  \Big) - \varepsilon \Big ( \|\bu_t -  \bu^\star \|_{\bL^2}^2 +\|\theta_t - \theta^\star\|_{L^2}^2  \Big).
			\end{align*}
			From \Cref{lemma:supp2} and because $\|\bro_t-  \bro^\star \|_{\bL^1\times L^1} \le \|\bro -  \bro^\star \|_{\bL^1\times L^1}$, we get that whenever $\|\bro-  \bro^\star \|_{\bL^1\times L^1} < \delta_1$
			\begin{align*}
				J(\bro)-J(\bro^\star) &\geq c\Big ( \|\bu -  \bu^\star\|_{\bL^2}^2 +\|\theta - \theta^\star\|_{L^2}^2  \Big) - \frac{2\varepsilon}{3} \Big ( \| \bv_t^\star\|_{\bL^2}^2 + \| \vartheta_t^\star\|_{L^2}^2  \Big).
			\end{align*}
			From uniqueness of solution to \eqref{Blg1}--\eqref{Blg5} we get $\| \bv_t^\star\|_{\bL^2}^2 + \| \vartheta_t^\star\|_{L^2}^2 \le \| \bv^\star\|_{\bL^2}^2 + \| \vartheta^\star\|_{L^2}^2$. Thus, from applying \Cref{lemma:supp2} again we get taking $\sigma  = c/3$
			\begin{align*}
				J(\bro)-J(\bro^\star) &\geq \sigma\Big ( \|\bu -  \bu^\star \|_{\bL^2}^2 +\|\theta - \theta^\star\|_{L^2}^2  \Big)
			\end{align*}
			whenever $\|\bu -  \bu^\star\|_{\bL^\infty} +\|\theta - \theta^\star\|_{L^\infty} <\delta$ where $\delta = \min\left\{ \alpha, (2M_{\mathcal{U}})^{(s-1)/s}\delta_1^{1/s}\right\}$.
		\end{proof}

		Let us discuss the recovery of the assumptions above. It turns out that for the type of optimal control problem -- with $\beta_1 = \beta_2 = 0$ -- considered in this paper, given that the data is tracked well enough, the growth condition in \Cref{growth2} appears to be natural.
		
		\begin{theorem}\label{suffgro2}
			Let $\bro^\star\in\mathcal{U}$ be a local minimizer of \eqref{optcon}. Then there exists $\delta>0$, such that if $(\bu^\star, \theta^\star) = \mathcal{S}(\bro^{\star})$ satisfies
			\begin{align}\label{trackingclose}
				\|\bu^\star - \bu_d\|_{\bL^s(Q)} + \|\theta^\star -\theta_d \|_{L^s(Q)} < \frac{\min\{\alpha_1,\alpha_2 \}}{\delta} \quad\text{for }s\ge 4,
			\end{align}
			then \Cref{growth2} is satisfied.
		\end{theorem}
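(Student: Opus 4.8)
The plan is to represent the second variation through the adjoint state and to exploit that, for a tracking-type functional, the adjoint is controlled by the tracking residual $(\bu^\star-\bu_d,\theta^\star-\theta_d)$. Throughout $\beta_1=\beta_2=0$. Writing $\delta\bro:=\bro-\bro^\star$, $(\bv^\star,\vartheta^\star):=\mathcal{S}'(\bro^\star)(\delta\bro)$ and $(\bw^\star,\Psi^\star):=\mathcal{D}(\bro^\star)$, the adjoint form of the second derivative reduces to
\begin{align*}
    J''(\bro^\star)(\delta\bro)^2=\alpha_1\|\bv^\star\|_{\bL^2}^2+\alpha_2\|\vartheta^\star\|_{L^2}^2-2\big((\bv^\star\cdot\nabla)\bv^\star,\bw^\star\big)_Q-2\big(\bv^\star\cdot\nabla\vartheta^\star,\Psi^\star\big)_Q.
\end{align*}
Since $\bro^\star$ is a local minimizer, \Cref{firstoderoptcon} gives $J'(\bro^\star)(\delta\bro)\ge0$, so it will suffice to bound $J''(\bro^\star)(\delta\bro)^2$ from below by a multiple of $\|\bu-\bu^\star\|_{\bL^2}^2+\|\theta-\theta^\star\|_{L^2}^2$; the assertion then holds for either value $\tau\in\{1/2,1\}$.

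First I would neutralise the two trilinear terms. Using $\dive\bv^\star=0$ and the homogeneous boundary values, the antisymmetry of the convective form yields $((\bv^\star\cdot\nabla)\bv^\star,\bw^\star)_Q=-((\bv^\star\cdot\nabla)\bw^\star,\bv^\star)_Q$ and, after integration by parts, $(\bv^\star\cdot\nabla\vartheta^\star,\Psi^\star)_Q=-(\bv^\star\cdot\nabla\Psi^\star,\vartheta^\star)_Q$, so that
\begin{align*}
    2\big|((\bv^\star\cdot\nabla)\bv^\star,\bw^\star)_Q\big|+2\big|(\bv^\star\cdot\nabla\vartheta^\star,\Psi^\star)_Q\big|\le c_1\big(\|\nabla\bw^\star\|_{\bL^\infty}+\|\nabla\Psi^\star\|_{L^\infty}\big)\big(\|\bv^\star\|_{\bL^2}^2+\|\vartheta^\star\|_{L^2}^2\big).
\end{align*}
Because $(\bw^\star,\Psi^\star)$ solves \eqref{Badg1}--\eqref{Badg5} with $\bF=\alpha_1(\bu^\star-\bu_d)$, $G=\alpha_2(\theta^\star-\theta_d)$ and $\bw_T=\Psi_T=0$, \Cref{theorem:adjointLP} combined with the embedding $\bW^{2,1}_{s,\sigma}\times W^{2,1}_s\hookrightarrow C(\overline I;C^1(\overline\Omega)^2)\times C(\overline I;C^1(\overline\Omega))$ (valid for $s>4$) furnishes
\begin{align*}
    \|\nabla\bw^\star\|_{\bL^\infty}+\|\nabla\Psi^\star\|_{L^\infty}\le c_0\big(\alpha_1\|\bu^\star-\bu_d\|_{\bL^s}+\alpha_2\|\theta^\star-\theta_d\|_{L^s}\big),
\end{align*}
with $c_0$ independent of the particular minimizer, since the constant in \Cref{theorem:adjointLP} depends only on $\Omega,T$ and on $\|\bu^\star\|_{\bW^{2,1}_{s,\sigma}},\|\theta^\star\|_{W^{2,1}_s}$, which are bounded uniformly on $\mathcal{U}$ through \Cref{theorem:strongLP} and $\|\bro^\star\|_{\bL^\infty\times L^\infty}\le M_{\mathcal{U}}$.

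Combining these bounds gives
\begin{align*}
    J''(\bro^\star)(\delta\bro)^2\ge\Big(\min\{\alpha_1,\alpha_2\}-c_1c_0\big(\alpha_1\|\bu^\star-\bu_d\|_{\bL^s}+\alpha_2\|\theta^\star-\theta_d\|_{L^s}\big)\Big)\big(\|\bv^\star\|_{\bL^2}^2+\|\vartheta^\star\|_{L^2}^2\big).
\end{align*}
Bounding $\alpha_1\|\bu^\star-\bu_d\|_{\bL^s}+\alpha_2\|\theta^\star-\theta_d\|_{L^s}\le\max\{\alpha_1,\alpha_2\}(\|\bu^\star-\bu_d\|_{\bL^s}+\|\theta^\star-\theta_d\|_{L^s})$ and setting $\delta:=2c_1c_0\max\{\alpha_1,\alpha_2\}$, the tracking hypothesis \eqref{trackingclose} forces the parenthesis to exceed $\tfrac12\min\{\alpha_1,\alpha_2\}$, whence $J''(\bro^\star)(\delta\bro)^2\ge\tfrac12\min\{\alpha_1,\alpha_2\}(\|\bv^\star\|_{\bL^2}^2+\|\vartheta^\star\|_{L^2}^2)$. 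Finally \Cref{lemma:supp2}, in the $\bL^\infty$ form recorded in \Cref{remark:toweak}, supplies a radius $\alpha>0$ so that $\|\bu-\bu^\star\|_{\bL^2}^2+\|\theta-\theta^\star\|_{L^2}^2\le8(\|\bv^\star\|_{\bL^2}^2+\|\vartheta^\star\|_{L^2}^2)$ whenever $\|\bu-\bu^\star\|_{\bL^\infty}+\|\theta-\theta^\star\|_{L^\infty}<\alpha$; inserting this and adding the nonnegative term $J'(\bro^\star)(\delta\bro)$ establishes \Cref{growth2} with $c=\tau\min\{\alpha_1,\alpha_2\}/16$. The hard part will be precisely this bookkeeping: the adjoint estimate irreducibly couples $\alpha_1$ and $\alpha_2$, so one must absorb the extraneous $\max\{\alpha_1,\alpha_2\}$ into the threshold $\delta$ and verify that the coercivity is genuinely governed by the $\min\{\alpha_1,\alpha_2\}$ of \eqref{trackingclose}, keeping every constant uniform over $\bro^\star\in\mathcal{U}$.
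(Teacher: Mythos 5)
Your proposal is correct and follows essentially the same route as the paper: write $J''(\bro^\star)$ in adjoint form, absorb the trilinear terms via $\|\nabla\bw^\star\|_{\bL^\infty}+\|\nabla\Psi^\star\|_{L^\infty}$, bound these by the tracking residual through \Cref{theorem:adjointLP} and the $C^1$-embedding, and use \eqref{trackingclose} to keep the coefficient above $\tfrac12\min\{\alpha_1,\alpha_2\}$. Your explicit final step invoking \Cref{lemma:supp2} (in the form of \Cref{remark:toweak}) to pass from $\|\bv^\star\|_{\bL^2}^2+\|\vartheta^\star\|_{L^2}^2$ to $\|\bu-\bu^\star\|_{\bL^2}^2+\|\theta-\theta^\star\|_{L^2}^2$ is a detail the paper leaves implicit but is exactly what is needed.
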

		\begin{proof}
			Since $\bro^\star\in\mathcal{U}$ is a local minimizer, it holds that $J'(\bro^\star)(\bro-\bro^\star)\geq 0$ for all $\bro \in \mathcal U$. Further, we can estimate the second variation from below by
			\begin{align*}
				J''(\bro^\star)(\bro-\bro^\star)^2&= \alpha_1 \|\bv^\star \|_{\bL^2}^2 + \alpha_2\|\vartheta^\star \|_{L^2}^2 + \beta_1 \|\bv^\star(T) \|_{\bL^2}^2 + \beta_2\|\vartheta^\star(T) \|_{L^2}^2\\
				&\quad - 2( (\bv^\star \cdot\nabla)\bv^\star, \bw^\star )_Q - 2( \bv^\star\cdot\nabla \vartheta^\star, \Psi^\star)_Q\\
				&\ge \alpha_1 \|\bv^\star\|_{\bL^2}^2 + \alpha_2\|\vartheta^\star\|_{L^2}^2 - 2c(\|\bv^\star \|_{\bL^2}^2 + \|\vartheta^\star\|_{L^2}^2)(\|\nabla \bw^\star\|_{\bL^\infty} + \|\nabla \Psi^\star\|_{L^\infty})\\
				&\ge \{\min\{\alpha_1,\alpha_2\} - \delta( \|\bu^\star - \bu_d\|_{\bL^s(Q)} + \|\theta^\star -\theta_d \|_{L^s(Q)})\}(\|\bv^\star \|_{\bL^2}^2 + \|\vartheta^\star\|_{L^2}^2).
				%&\geq \int_Q (\bv^{\bro^\star}_{\delta\bro})^2(\alpha_1-2c\|\nabla \bw_{\bro^\star}\|_{\bL^\infty}) \text{ d}x\text{d}t+ \int_Q (\vartheta^{\bro^\star}_{\delta\bro})^2(\alpha_2-2c\|\nabla \Psi_{\bro^\star}\|_{L^\infty}) \text{ d}x\text{d}t. \nonumber
				%&+ \beta_1 \|\bv_{\delta\bro}^{\bro^\star}(T) \|_{\bL^2}^2 + \beta_2\|\vartheta_{\delta\bro}^{\bro^\star}(T) \|_{L^2}^2.\nonumber
			\end{align*}
			Now it is clear that due to the estimate \eqref{gradadest} Assumption \ref{growth2} holds.
		\end{proof}
		If we invoke the so-called structural assumption on the adjoint state, as it is often done for affine optimal control problems without a Tikhonov term, we can guarantee the growth of the first order derivative of $J$ with the same spirit as in Assumption \ref{growth1}.
		\begin{proposition}\label{prop:structasu}
			Let there exist positive constants $c$ and $c_i$, $i\in \{1,2\}$ such that the adjoint states $(\bw^\star, \Psi^\star) = \mathcal{D}(\bro^\star) $ corresponding to the optimal control $\bro^\star\in\mathcal{U}$ satisfy for some $\mu\in [1,2)$, and all $\varepsilon>0$
			\begin{equation}\label{measurecond1}
				\vert \{(t,x) \in \ Q : \vert\bw^\star_{i} \vert\leq \varepsilon \}\vert \leq c_i \varepsilon^\mu \text{ and } \vert \{(t,x) \in \ Q : \vert \Psi^\star \vert \leq \varepsilon \}\vert \leq c \varepsilon^\mu,\text{ where }\bw^\star = (\bw^\star_1,\bw^\star_2).
			\end{equation}
			Then there exists $\widetilde{c}>0$ such that $J'(\bro^\star)(\bro-\bro^\star) \ge \widetilde{c} \|\bro-\bro^\star\|_{\bL^1\times L^1}^{1+\mu}$.
			% Assumption \ref{growth1} with $\beta_1=\beta_2=0$ holds.
		\end{proposition}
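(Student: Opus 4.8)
The plan is to convert the structural measure condition \eqref{measurecond1} into the advertised growth of the first variation by first extracting the bang-bang structure that the pointwise optimality conditions impose on $\bro^\star$, and then running a level-set (layer-cake) estimate on each resulting scalar integral. First I would pass to the adjoint representation of the first variation from \eqref{jderivative},
\[
J'(\bro^\star)(\bro-\bro^\star)=\int_0^T\!\!\int_{\omega_q}\bw^\star\cdot(\bq-\bq^\star)\,\du x\du t+\int_0^T\!\!\int_{\omega_h}\Psi^\star(\Theta-\Theta^\star)\,\du x\du t,
\]
so that the claim reduces to a lower bound on these two integrals.

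Next I would extract the bang-bang structure. Testing the variational inequality of \Cref{firstoderoptcon} with controls that differ from $\bro^\star$ in a single component, and using the box constraints defining $\mathcal{U}$, shows componentwise that $\bq^\star_i=\underline{q}_i$ a.e.\ on $\{\bw^\star_i>0\}$ and $\bq^\star_i=\overline{q}_i$ a.e.\ on $\{\bw^\star_i<0\}$, with the analogous dichotomy for $\Psi^\star$ and $\Theta^\star$. From this I obtain the pointwise identities $\bw^\star_i(\bq_i-\bq^\star_i)=|\bw^\star_i|\,|\bq_i-\bq^\star_i|$ and $\Psi^\star(\Theta-\Theta^\star)=|\Psi^\star|\,|\Theta-\Theta^\star|$, valid for every admissible $\bro$, whence
\[
J'(\bro^\star)(\bro-\bro^\star)=\sum_{i=1}^2\int_{(0,T)\times\omega_q}|\bw^\star_i|\,|\bq_i-\bq^\star_i|\,\du x\du t+\int_{(0,T)\times\omega_h}|\Psi^\star|\,|\Theta-\Theta^\star|\,\du x\du t,
\]
a sum of integrals with nonnegative integrands.

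The core of the argument is then a single scalar estimate applied to each term. Fix one pair $(w,g)$ with $w\in\{\bw^\star_1,\bw^\star_2,\Psi^\star\}$ and $g$ the corresponding control difference; by \eqref{boundcon} one has $|g|\le 2M_{\mathcal{U}}=:M$, and since $\omega_q,\omega_h\subset\Omega$ the hypothesis \eqref{measurecond1} yields $\big|\{|w|\le\varepsilon\}\big|\le C_0\varepsilon^\mu$ on the relevant subdomain. Splitting the domain at the level $\varepsilon$ gives, for every $\varepsilon>0$,
\[
\int |w|\,|g|\,\du x\du t\ \ge\ \varepsilon\!\!\int_{\{|w|>\varepsilon\}}\!\!|g|\,\du x\du t\ \ge\ \varepsilon\Big(\|g\|_{L^1}-M\,\big|\{|w|\le\varepsilon\}\big|\Big).
\]
Bounding $\big|\{|w|\le\varepsilon\}\big|$ via \eqref{measurecond1} and optimizing the free level $\varepsilon$ — balancing the linear gain $\varepsilon\|g\|_{L^1}$ against the loss coming from the measure condition — produces a bound of the form $\int|w|\,|g|\ge\kappa\,\|g\|_{L^1}^{1+\mu}$ with $\kappa=\kappa(C_0,M,\mu)>0$.

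Finally I would reassemble the pieces: applying the scalar estimate to $\bw^\star_1,\bw^\star_2$ and $\Psi^\star$ and summing gives a lower bound in terms of $\|\bq_i-\bq^\star_i\|_{L^1}^{1+\mu}$ and $\|\Theta-\Theta^\star\|_{L^1}^{1+\mu}$; the elementary inequality $(a_1+a_2+a_3)^{1+\mu}\le C(\mu)\,(a_1^{1+\mu}+a_2^{1+\mu}+a_3^{1+\mu})$ for finitely many nonnegative quantities then upgrades this to $J'(\bro^\star)(\bro-\bro^\star)\ge\widetilde c\,\|\bro-\bro^\star\|_{\bL^1\times L^1}^{1+\mu}$, as required. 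The step I expect to be most delicate is the scalar level-set estimate, and specifically the exponent bookkeeping in the $\varepsilon$-optimization, i.e.\ the precise interplay between the measure exponent in \eqref{measurecond1} and the target power $1+\mu$; once that is pinned down, the bang-bang reduction and the final power-sum inequality are routine, relying only on \Cref{firstoderoptcon} and the uniform bound \eqref{boundcon}.
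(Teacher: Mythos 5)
Your proposal is correct and follows essentially the same route as the paper: reduce to the adjoint representation of $J'$, use the pointwise first-order condition to replace $\bw^\star_i(\bq_i-\bq^\star_i)$ and $\Psi^\star(\Theta-\Theta^\star)$ by $|\bw^\star_i|\,|\bq_i-\bq^\star_i|$ and $|\Psi^\star|\,|\Theta-\Theta^\star|$, split each integral at a level $\varepsilon_i\sim\|g\|_{L^1}^{\mu}$, control the small-level set by \eqref{measurecond1}, and finish with the power-sum inequality. The only cosmetic difference is that you phrase the level choice as an optimization over $\varepsilon$ (which in fact yields the slightly stronger exponent $1+1/\mu$, recovered as $1+\mu$ via the uniform bound \eqref{boundcon}), whereas the paper fixes $\varepsilon_i=\kappa_i\|\bq_i-\bq_i^\star\|_{L^1}^{\mu}$ outright.
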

		
		\begin{proof}
			Suppose that \eqref{measurecond1} holds, and let us  take positive constants $\kappa_1$ and $\kappa_2$ such that
			\begin{equation*}
				\kappa_1<\Big(\frac{1}{c_1\| \overline{\bq}-\underline{\bq}\|_{L^\infty}}\Big)^{1/\mu} \text{ and }  \kappa_2<\Big(\frac{1}{c_2\| \overline{\bq}-\underline{\bq}\|_{L^\infty}}\Big)^{1/\mu}.
			\end{equation*}
			For brevity, let us define $A_i:=\{(t,x) \in \ Q : \vert \bw^\star_i \vert > \varepsilon \}$. Now we take $\varepsilon_i:=\kappa_i \|\bq_i-\bq^\star_i \|_{L^1}^{\mu}$, where the subscripts stand for the components of the vectors $\bq$ and $\bq^\star$,
			\begin{align}
				&\int_Q  \bw^\star\cdot (\bq-\bq^\star) \text{ d} x  \text{d}t=\int_Q  \vert \bw^\star_{1}\vert \vert \bq_1-\bq_1^\star\vert + \vert \bw^\star_{2}\vert \vert\bq_2-\bq_2^\star\vert \text{ d} x  \text{d}t\nonumber\\
				&\geq \int_{ A_1} \varepsilon  \vert\bq_1-\bq_1^\star \vert \text{ d} x  \text{d}t + \int_{A_2}\varepsilon \vert\bq_2-\bq_2^\star\vert \text{ d} x  \text{d}t\nonumber\\
				&\geq \varepsilon_1\int_Q  \vert \bq_1-\bq_1^\star\vert \text{ d} x  \text{d}t- \varepsilon_1 \| \overline{\bq}-\underline{\bq}\|_{\bL^\infty} \vert Q\setminus A_1\vert+\varepsilon_2\int_Q  \vert \bq_2-\bq_2^\star\vert \text{ d} x  \text{d}t- \varepsilon_2\| \overline{\bq}-\underline{\bq}\|_{\bL^\infty} \vert Q\setminus A_2\vert\nonumber\\
				&\geq \hat c\Big( \|\bq_1-\bq_1^\star \|_{L^1}^{1+\mu}+\|\bq_2-\bq_2^\star \|_{L^1}^{1+\mu}\Big)\geq \tilde c\|\bro-\bro^\star\|_{\bL^1\times L^1}^{1+\mu}.\nonumber
			\end{align}
			The proof of the claim for $\Psi^\star_{\bro^\star}$ follows the same arguments.
		\end{proof}
		
		\begin{remark}
			The implication of the proposition above is that one can relax the assumption on the second-order derivative of the objective functional to obtain a sufficient optimality condition. It allows us to have the second variation of the objective functional attain negative values, i.e., we can assume $$J''(\bro^\star)(\bro-\bro^\star)^2 \ge -{c} \|\bro-\bro^\star\|_{\bL^1\times L^1}^{1+\mu}$$ where $c < \widetilde{c}$, which would then imply Assumption \ref{growth1} and thus the optimality of $\bro^\star\in\mathcal{U}$.
		\end{remark}
		
		As it would, in most cases, be hard to verify if \eqref{measurecond1} indeed holds, the next result gives us a property from which we can recover \eqref{measurecond1}. The proof is based on the proof of \cite[Lemma 3.2]{DH2012}.
		\begin{proposition}\label{prop:enoughreggrowth}
			Let $\bro\in\mathcal{U}$ and $s>4$. Suppose that for any $t\in [0,T]$ the adjoint $(\bw,\psi) = \mathcal{D}(\bro) \in \bW^{2,1}_{s,\sigma}\times W^{2,1}_s$ satisfy
			\begin{equation}\label{eq:bound1}
				\begin{aligned}
					&\min\left\{ \min_{K_\bw} \vert \nabla \bw(x,t) \vert , \min_{K_\psi} \vert \nabla \psi(x,t) \vert \right\} > 0,
				\end{aligned}
			\end{equation}
			where $K_{\bw}:=\{x\in \Omega : \bw(x,\cdot) =0\}$ and $ K_{\psi}:=\{x\in \Omega\vert \ \psi(x,\cdot)=0\}$.
			Then $\bro\in\mathcal{U}$ satisfies \eqref{measurecond1} for $\mu=1$.
		\end{proposition}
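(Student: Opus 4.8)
The plan is to reduce the claim to a scalar, per–time–slice statement and then to a local \emph{slab} estimate near the zero set, exactly in the spirit of \cite{DH2012}. Fix one scalar component $w\in\{\bw_1,\bw_2,\Psi\}$ and write $A_\varepsilon:=\{(t,x)\in Q:\ |w(t,x)|\le\varepsilon\}$; it suffices to prove $|A_\varepsilon|\le c\,\varepsilon$ for all small $\varepsilon$ and to apply the result to each of the three components. Since $s>4$, the embeddings $\bW^{2,1}_{s,\sigma}\hookrightarrow C(\overline I;C^1(\overline\Omega)^2)$ and $W^{2,1}_s\hookrightarrow C(\overline I;C^1(\overline\Omega))$ from Section~\ref{sec2} guarantee that $w$ and its spatial gradient $\nabla_x w$ are continuous on the compact set $\overline Q$. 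I read \eqref{eq:bound1} in its natural component- and slice-wise form: at every space–time point with $w(t,x)=0$ one has $\nabla_x w(t,x)\neq 0$, i.e. $\nabla_x w$ does not vanish on the closed (hence compact) zero set $Z:=\{(t,x)\in\overline Q:\ w(t,x)=0\}$ (if $Z=\emptyset$ the claim is trivial, since then $|w|$ is bounded below on $\overline Q$ and $A_\varepsilon=\emptyset$ for small $\varepsilon$).

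For the estimate I proceed in three steps. \emph{Step 1 (uniform nondegeneracy):} I upgrade the pointwise hypothesis to a uniform collar bound, namely that there are $\varepsilon_0>0$ and $m>0$ with $|\nabla_x w(t,x)|\ge m$ whenever $|w(t,x)|\le\varepsilon_0$; were this false, a sequence $(t_k,x_k)$ with $w(t_k,x_k)\to 0$ and $\nabla_x w(t_k,x_k)\to 0$ would, by compactness of $\overline Q$ and continuity, subconverge to a point of $Z$ where $\nabla_x w$ vanishes, a contradiction. \emph{Step 2 (local slab estimate):} fixing $(t_0,x_0)\in Z$, by Step~1 some spatial partial derivative, say $\partial_{x_1}w(t_0,x_0)$, has modulus $\ge m/\sqrt2$, so by continuity there is a space–time box $U$ on which $|\partial_{x_1}w|\ge m/2$; on each fixed slice $x_1\mapsto w$ is then strictly monotone, $\{x_1:\ |w|\le\varepsilon\}$ is an interval of length $\le 4\varepsilon/m$, and integrating over the finite-measure cross-section yields $|U\cap A_\varepsilon|\le C_U\,\varepsilon$. \emph{Step 3 (covering):} I cover the compact set $Z$ by finitely many such boxes $U_1,\dots,U_N$, set $V=\bigcup_j U_j$, and note that $\{|w|\le\varepsilon\}\subset V$ once $\varepsilon$ is small (otherwise points of $\{|w|\le 1/k\}\setminus V$ accumulate at a point of $Z\setminus V=\emptyset$); summing the slab estimates gives $|A_\varepsilon|\le\big(\sum_j C_{U_j}\big)\varepsilon=:c\,\varepsilon$ for small $\varepsilon$, and enlarging $c$ handles the remaining range since $|Q|<\infty$. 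Equivalently, Steps~2–3 may be replaced by the coarea formula $m\,|A_\varepsilon^{t}|\le\int_{-\varepsilon}^{\varepsilon}\mathcal H^1(\{x:\ w(t,x)=\tau\})\,d\tau$ on each slice $A_\varepsilon^{t}:=\{x:\ |w(t,x)|\le\varepsilon\}$, integrated in $t$. Applying the bound to $w=\bw_1,\bw_2,\Psi$ produces \eqref{measurecond1} with $\mu=1$.

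The main obstacle is the passage from the pointwise hypothesis to constants $\varepsilon_0,m$ and a finite cover whose slab constants are summable \emph{uniformly in} $t\in[0,T]$ and across levels $|\tau|\le\varepsilon_0$. This is precisely where the $C^1$-in-space regularity on all of $\overline Q$ (granted by $s>4$ through the Amann and Rellich–Kondrachov embeddings of Section~\ref{sec2}) together with the compactness of $\overline Q$ and of $Z$ are indispensable: without them the local slab widths $\varepsilon/m$ or the number of charts could degenerate as $t$ varies, and the coarea route would fail to yield a uniform bound on the level-set lengths. A secondary, bookkeeping-level point is the reduction to scalar components and the slice-wise reading of \eqref{eq:bound1}, since the vectorial bound $|\nabla\bw|>0$ by itself does not control the individual components $\bw_i$.
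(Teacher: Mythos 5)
Your proof is correct and, in its coarea variant, coincides with the paper's argument: the paper defines the level sets $K_{\bw}(\tau)$, extracts constants $c_1$ (lower bound on $|\nabla\bw|$ near the zero set) and $c_2$ (uniform bound on $\mathcal H(K_{\bw}(\tau))$) from \eqref{eq:bound1} and the embedding $\bW^{2,1}_{s,\sigma}\times W^{2,1}_s \hookrightarrow C(\overline I;C^1(\overline\Omega)^2)\times C(\overline I;C^1(\overline\Omega))$, and then applies the coarea formula slice-wise in $t$ to get $|\{|\bw_i|\le\varepsilon\}|\le 2c_2T\varepsilon/c_1$. Your primary route (uniform collar bound by compactness, local slab estimate via monotonicity in one coordinate, finite cover of the zero set) is a more elementary substitute for the coarea step, and your Step~1 makes explicit the compactness argument that the paper asserts in one line; what the paper's version buys is brevity, while yours buys a self-contained justification of the constants $c_1,c_2$ and of why the sublevel set is eventually contained in the collar. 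You are also right to flag that the hypothesis must be read componentwise for the conclusion on each $\bw_i$ in \eqref{measurecond1}; the paper's proof implicitly makes the same reading.
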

		\begin{proof}
			Let us define for $|\tau|\le \varepsilon$ the sets $K_{\bw}(\tau):=\{x\in \Omega : |\bw(x,\cdot)| = \tau\}$ and $ K_{\psi}(\tau):=\{x\in \Omega : \vert \psi(x,\cdot)\vert= \tau\}$. From \eqref{eq:bound1} and since $\bW^{2,1}_{s,\sigma}\times W^{2,1}_s \hookrightarrow C(\overline{I};C^1(\overline{\Omega})^2)\times C(\overline{I};C^1(\overline{\Omega}))$ we infer the existence of constants $c_1,c_2>0$ such that 
			$$|\nabla \bw| > c_1 \text{ on }K_{\bw}(s) \quad \text{and }\quad \mathcal{H}(K_{\bw}(\tau)) \le c_2,$$
			where $\mathcal{H}$ denotes the Hausdorff measure. By virtue of the co-area formula, we find for $i\in \{1,2\}$
			\begin{align*}
				\vert \{(x,t)\in Q : \vert \bw_i(x,t)\vert\leq \varepsilon\} \vert &\leq \frac{1}{c_1}\int_0^T \int_{\{x\in \Omega : \vert \bw_i(x,t)\vert\leq \varepsilon\}} \vert \nabla \bw \vert \du x\du t\le \frac{1}{c_1}  \int_0^T\int_{-\varepsilon}^\varepsilon \mathcal{H}(K_{\bw}(\tau)) \du \tau \du t \leq   \frac{2c_2T}{c_1}\varepsilon.
			\end{align*}
			Using the same arguments, we obtain the claim for the adjoint $\psi$.
		\end{proof}

		\section{Solution stability}\label{sec5}
		We study the stability properties of optimal controls and states under perturbation in the Boussinesq system and the objective functional.
		To prove the solution stability of the optimal control problem at local optimal controls satisfying the growth Assumptions \ref{growth1} or \ref{growth2}, let us introduce some terminology. The normal cone to $\mathcal{U}$ at $\widehat{\bro}$ is denoted and defined as
		\begin{equation*}
			\mathcal{N}_{\mathcal{U}}(\widehat{ \bro}):=\Bigg\{ (\bvsigma,\Lambda)\in L^1(I;\bL^1(\omega_q)\times L^1(\omega_h)) : \ \int_0^T\!\!\int_{\omega_q} \bvsigma \cdot (\bq -\widehat \bq) \du x + \int_{\omega_h} \Lambda (\Theta -\widehat \Theta) \text{ d}x \text{d}t \leq 0 \ \forall \bro \in \mathcal{U} \Bigg \}.
		\end{equation*}
		Let us fix an initial data $(\bu_0, \theta_0)$. Utilizing the notion of the normal cone and the mappings \eqref{themap}, \eqref{themapadj}, we can now define the set-valued optimality mapping.
		
		\begin{align*}
			&\digamma (\bu,\theta,\bw,\Psi,\bq,\Theta) =  \left[ 
			\begin{aligned}
				\mathcal{F}(\bu,\theta,\bu_0,\theta_0,\bq,\Theta)\\
				\mathcal{G}(\bu,\theta,\bw,\Psi)\\
				(\bw, \Psi)+\mathcal{N}_{\mathcal{U}}(\bro)
			\end{aligned}
			\right].
		\end{align*}

		If $\bro^\star\in\mathcal{U}$ is a local optimal control of \eqref{optcon} it holds -- according to \Cref{firstoderoptcon} -- that
		\begin{equation}
			0\in \digamma (\bu^\star, \theta^\star, \bw^\star, \Psi^\star, \bro^\star),
		\end{equation}
		where $(\bu^\star,\theta^\star) = \mathcal{S}(\bro^\star) $ and $(\bw^\star,\Psi^\star) = \mathcal{D}(\bro^\star)$. 
		We also use the perturbed control-to-state operator $\widehat{\mathcal{S}}:L^s(I;\bL^s(\omega_q)\times L^s(\omega_h)) \to \bW^{2,1}_{s,\sigma}\times W^{2,1}_s $ defined as $$\widehat{\mathcal{S}}(\bro) = \widetilde{\mathcal{S}}(\blf + \widehat{\blf},h + \widehat{h},\bu_0+\widehat{\bu}_0,\theta_0+\widehat{\theta}_0,\bro).$$
		Using similar arguments as in the previous section, we get the differentiability of the perturbed control-to-state operator. 
		\begin{proposition}
			The perturbed control-to-state operator is of class $C^\infty$. The first-order Fr\'{e}chet derivative at $\bro\in L^s(I;\bL^s(\omega_q)\times L^s(\omega_h))$ in direction $\delta\bro\in L^s(I;\bL^s(\omega_q)\times L^s(\omega_h))$, denoted by 
			\[
			\widehat{\mathcal{S}}'(\bro)(\delta\bro)=:(\widehat{\bv}, \widehat{\vartheta}),
			\]
			is given as the solution to the linearized Boussinesq system \eqref{Blg1}--\eqref{Blg5} with $\bu_1 = \bu_2 = \widehat{S}_1(\bro)$ and $\theta = \widehat{S}_2(\bro)$, $\bF=\delta\bq\chi_{\omega_q}$, $G=\delta\Theta\chi_{\omega_h}$, $\bv_0=0$ and $\vartheta_0=0$.
			The second-order Fr\'{e}chet derivative at $\bro\in L^s(I;\bL^s(\omega_q)\times L^s(\omega_h))$ in direction $(\delta\bro_1,\delta \bro_2)\in L^s(I;\bL^s(\omega_q)\times L^s(\omega_h))^2$, denoted by 
			\begin{equation}
				\widehat{\mathcal{S}}''(\bro)[\delta\bro_1,\delta\bro_2]:=\big(\widetilde{\bv},\widetilde{\vartheta}\big),
			\end{equation}
			is the solution to the linearized Boussinesq system \eqref{Blg1}--\eqref{Blg5} with $\bu_1 = \bu_2 = \widehat{S}_1(\bro)$ and $\theta = \widehat{S}_2(\bro)$, $\bF=- [(\widehat{\bv}_1\cdot \nabla) \widehat{\bv}_2+(\widehat{\bv}_2\cdot \nabla) \widehat{\bv}_1]$, $G=-\left[\widehat{\bv}_1\cdot \nabla \widehat{\vartheta}_2 + \widehat{\bv}_2\cdot \nabla \widehat{\vartheta}_1\right]$, $\bv(0,\cdot)=0$ and $\vartheta(0,\cdot)=0$, where $(\widehat{\bv}_1,\widehat{\vartheta}_1) = \widehat{\mathcal{S}}'(\bro)(\delta\bro_1)$ and $(\widehat{\bv}_2,\widehat{\vartheta}_2) = \widehat{\mathcal{S}}'(\bro)(\delta\bro_2)$.
		\end{proposition}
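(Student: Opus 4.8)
The plan is to transcribe, almost line for line, the proof of the unperturbed proposition, exploiting that the data perturbations $\widehat{\blf},\widehat{h},\widehat{\bu}_0,\widehat{\theta}_0$ are \emph{fixed} and enter \emph{additively}, so that differentiation with respect to the control $\bro$ sees exactly the same structure as before. Concretely, I would introduce the perturbed constraint map
\begin{align*}
\widehat{\mathcal{F}}(\bu,\theta,\bq,\Theta) := \mathcal{F}(\bu,\theta,\bu_0+\widehat{\bu}_0,\theta_0+\widehat{\theta}_0,\bq,\Theta) - (\widehat{\blf},\widehat{h},0,0)^\top,
\end{align*}
so that $\widehat{\mathcal{S}}(\bro)=(\bu,\theta)$ precisely when $\widehat{\mathcal{F}}(\bu,\theta,\bro)=0$. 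Since the added terms are constant in the state variables, they vanish under differentiation in $(\bu,\theta)$; hence the partial derivative $\partial\widehat{\mathcal{F}}/\partial(\bu,\theta)$ is exactly the operator $L(\bu,\theta)$ of the previous proposition, and $\partial\widehat{\mathcal{F}}/\partial\bro$ is again $\delta\bro\mapsto-(\delta\bq\chi_{\omega_q},\delta\Theta\chi_{\omega_h},0,0)^\top$.

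The first step would be to record that $\widehat{\mathcal{S}}$ is well defined with range in $\bW^{2,1}_{s,\sigma}\times W^{2,1}_s$; this is \Cref{theorem:strongLP} applied to the shifted data $\blf+\widehat{\blf}\in\bL^s(Q)$, $h+\widehat{h}\in L^s(Q)$, $\bu_0+\widehat{\bu}_0$, $\theta_0+\widehat{\theta}_0$. In particular the components $\widehat{S}_1(\bro)\in\bW^{2,1}_{s,\sigma}$ and $\widehat{S}_2(\bro)\in W^{2,1}_s$ carry exactly the regularity required for \Cref{theorem:linearLP} to apply, so that $L(\widehat{\mathcal{S}}(\bro))$ is an isomorphism onto $\bL^s(Q)\times L^s(Q)\times\bW^{2-1/s,s}_{0,\sigma}(\Omega)\times W^{2-1/s,s}_0(\Omega)$. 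Because $\widehat{\mathcal{F}}$ is a polynomial map (at most quadratic in $(\bu,\theta)$, affine in $\bro$) and $\partial\widehat{\mathcal{F}}/\partial(\bu,\theta)$ is invertible along the solution, the Implicit Function Theorem (see, e.g., \cite{pata2019}) then yields that $\widehat{\mathcal{S}}$ is of class $C^\infty$.

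Identifying the derivatives would then be a direct chain-rule computation. Differentiating $\widehat{\mathcal{F}}(\widehat{\mathcal{S}}(\bro),\bro)=0$ once gives $L(\widehat{\mathcal{S}}(\bro))\widehat{\mathcal{S}}'(\bro)\delta\bro=(\delta\bq\chi_{\omega_q},\delta\Theta\chi_{\omega_h},0,0)^\top$, so that $(\widehat{\bv},\widehat{\vartheta})=\widehat{\mathcal{S}}'(\bro)\delta\bro$ solves \eqref{Blg1}--\eqref{Blg5} with $\bu_1=\bu_2=\widehat{S}_1(\bro)$, $\theta=\widehat{S}_2(\bro)$, $\bF=\delta\bq\chi_{\omega_q}$, $G=\delta\Theta\chi_{\omega_h}$ and zero initial data. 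Differentiating a second time and noting that the quadratic part of $\widehat{\mathcal{F}}$ coincides with that of $\mathcal{F}$ (the perturbations being state-independent), the bilinear term $L'(\widehat{\mathcal{S}}(\bro))[\widehat{\mathcal{S}}'(\bro)\delta\bro_1,\widehat{\mathcal{S}}'(\bro)\delta\bro_2]$ delivers precisely the source pair $\bF=-[(\widehat{\bv}_1\cdot\nabla)\widehat{\bv}_2+(\widehat{\bv}_2\cdot\nabla)\widehat{\bv}_1]$, $G=-[\widehat{\bv}_1\cdot\nabla\widehat{\vartheta}_2+\widehat{\bv}_2\cdot\nabla\widehat{\vartheta}_1]$, which is the claimed characterization of $\widehat{\mathcal{S}}''(\bro)$. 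I do not anticipate a genuine obstacle: the sole point meriting care is that the linearization stay an isomorphism at the perturbed state, which is exactly why the perturbed strong-solution result \Cref{theorem:strongLP} must be invoked \emph{before} the Implicit Function Theorem; everything else is a verbatim repetition of the unperturbed argument.
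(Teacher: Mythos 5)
Your proposal is correct and matches the paper's intent exactly: the paper gives no separate proof here, stating only that the result follows ``using similar arguments as in the previous section,'' and your argument---absorbing the fixed, state-independent perturbations into a shifted constraint map, invoking \Cref{theorem:strongLP} with the perturbed data so that \Cref{theorem:linearLP} makes the linearization an isomorphism, and then repeating the Implicit Function Theorem and chain-rule computation verbatim---is precisely that argument spelled out.
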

		
		Analogously, we see that the adjoint variable $(\bw,\Psi) = \widehat{\mathcal{S}}'(\bro)^*( \delta\bro + g_T(\bw_T,\Psi_T) )$ solves \eqref{Badg1}--\eqref{Badg5} with $\bu_1 = \bu_2 = \widehat{S}_1(\bro)$ and $\theta = \widehat{S}_2(\bro)$, $\bF = \delta{\bq}$, $G = \delta \Theta$, $\bw(T) = \bw_T$ and $\Psi(T) = \Psi_T$. We also define the adjoint operator $$\widehat{\mathcal{D}}(\bro) = \widehat{\mathcal{S}}'(\bro)^*( (\alpha_1(\widehat{S}_1(\bro) - \bu_d) + \bta, \alpha_2(\widehat{S}_2(\bro) - \theta_d) + \eta) + g_T(\beta_1(\widehat{S}_1(\bro)(T) - \bu_T  ) , \beta_2(\widehat{S}_2(\bro)(T) - \theta_T  ) ) ).$$
		
		As we mentioned in the introduction, the goal is to study the stability of the optimal control problem with respect to several perturbations. For simplicity we denote by $$\mathcal{P} \subset \bL^s(Q)\times L^s(Q)\times \bW^{2-2/s,s}_{0,\sigma}\times W^{2-2/s,s}_0 \times \bL^\infty(Q)\times L^\infty(Q)\times L^\infty(I;\bL^\infty(\omega_q)\times L^\infty(\omega_h))$$
		the space of admissible perturbations which is a Banach space under the norm defined as the sum of the norms of its arguments. 
		
		For the set of feasible perturbations %tuple $(\widehat{\blf},\widehat{h},\widehat{\bu}_0,\widehat{\theta}_0,\bta,\eta,\bvsigma,\Lambda)\in \mathcal{P}$
		$\mathcal{P}$, we assume that there exists a constant $M_{\mathcal{P}}>0$ such that $$\|(\widehat{\blf},\widehat{h},\widehat{\bu}_0,\widehat{\theta}_0,\bta,\eta,\bvsigma,\Lambda)\|_{\mathcal{P}}\le M_{\mathcal{P}} \text{ for all } (\widehat{\blf},\widehat{h},\widehat{\bu}_0,\widehat{\theta}_0,\bta,\eta,\bvsigma,\Lambda)\in \mathcal{P}.$$
		To lighten the notation,  we write $\bzeta = (\widehat{\blf},\widehat{h},\widehat{\bu}_0,\widehat{\theta}_0,\bta,\eta,0,0,-\bvsigma,-\Lambda)$. Now we can formulate one of our main tasks of the paper, the study of the solution stability of the optimal controls and states under perturbations in the optimality map, that is
		\begin{equation}\label{inclusion:1}
			\bzeta \in \digamma (\bu,\theta,\bw,\Psi,\bq,\Theta).
		\end{equation}
		In fact, $\bzeta \in \digamma (\bu,\theta,\bw,\Psi,\bq,\Theta)$ implies $-(\bvsigma,\Lambda) \in \widehat{\mathcal{D}}(\bro) + \mathcal{N}_{\mathcal{U}}(\bro).$
		
		The following lemmata lays the foundation for us to be able to prove the stability results under Assumptions \ref{growth1} and \ref{growth2}.
		
		\begin{lemma}\label{estpersta}
			Let $\bro\in L^s(I;\bL^s(\omega_q)\times L^s(\omega_h))$ and $(\bu,\theta) ={\mathcal{S}}(\bro), (\widehat{\bu}, \widehat{\theta})  = \widehat{\mathcal{S}}(\bro)\in \bW^{2,1}_{s,\sigma}\times W^{2,1}_{s}$.
			Then it holds that
			\begin{align}
				&\| \bu - \widehat{\bu}\|_{\bW^{2,1}_{s,\sigma}}+ \| \theta - \widehat{\theta}\|_{W^{2,1}_s}\leq c\Big(\|\widehat{\bu}_0\|_{\bW^{2-2/s,s}_{0,\sigma}} + \|\widehat{\theta}_0\|_{W^{2-2/s,s}_{0}} + \|\widehat{\blf}\|_{\bL^s} + \|\widehat{h}\|_{L^s}\Big).
			\end{align}
		\end{lemma}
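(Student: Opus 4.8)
The plan is to recognize the difference $(\bv,\vartheta) := (\bu - \widehat{\bu},\, \theta - \widehat{\theta})$ as the solution of a linearized Boussinesq system and then to invoke the $L^s$-regularity theory of \Cref{theorem:linearLP}. First I would write down the two copies of the nonlinear system solved by $(\bu,\theta) = \mathcal{S}(\bro)$ and $(\widehat{\bu},\widehat{\theta}) = \widehat{\mathcal{S}}(\bro)$: they share the same control $\bro$ but the perturbed state carries the data $\blf + \widehat{\blf}$, $h + \widehat{h}$, $\bu_0 + \widehat{\bu}_0$ and $\theta_0 + \widehat{\theta}_0$. Subtracting the two momentum equations and using the algebraic identity $(\bu\cdot\nabla)\bu - (\widehat{\bu}\cdot\nabla)\widehat{\bu} = (\bu\cdot\nabla)\bv + (\bv\cdot\nabla)\widehat{\bu}$, together with the analogous identity $\bu\cdot\nabla\theta - \widehat{\bu}\cdot\nabla\widehat{\theta} = \bu\cdot\nabla\vartheta + \bv\cdot\nabla\widehat{\theta}$ for the heat equation, I would obtain that $(\bv,\vartheta)$ solves \eqref{Blg1}--\eqref{Blg5} with the choices $\bu_1 = \bu$, $\bu_2 = \widehat{\bu}$, transported temperature $\theta = \widehat{\theta}$, right-hand sides $\bF = -\widehat{\blf}$ and $G = -\widehat{h}$, and initial data $\bv_0 = -\widehat{\bu}_0$, $\vartheta_0 = -\widehat{\theta}_0$.

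Second, I would apply \Cref{theorem:linearLP} directly to this linear system. Its energy estimate \eqref{estimate:stronglinLp} yields
\begin{align*}
	\|\bv\|_{\bW^{2,1}_{s,\sigma}} + \|\vartheta\|_{W^{2,1}_s} \le c\Big( \|\widehat{\blf}\|_{\bL^s} + \|\widehat{h}\|_{L^s} + \|\widehat{\bu}_0\|_{\bW^{2-2/s,s}_{0,\sigma}} + \|\widehat{\theta}_0\|_{W^{2-2/s,s}_0} \Big),
\end{align*}
which is precisely the claimed inequality after recalling $\bv = \bu - \widehat{\bu}$ and $\vartheta = \theta - \widehat{\theta}$.

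The only point requiring care, and the one I regard as the main obstacle, is the uniformity of the constant $c$. By \Cref{theorem:linearLP} this constant depends on the strong norms $\|\bu_1\|_{\bW^{2,1}_{s,\sigma}}$, $\|\bu_2\|_{\bW^{2,1}_{s,\sigma}}$ and $\|\theta\|_{W^{2,1}_s}$ of the coefficient fields, that is, here on $\|\bu\|_{\bW^{2,1}_{s,\sigma}}$, $\|\widehat{\bu}\|_{\bW^{2,1}_{s,\sigma}}$ and $\|\widehat{\theta}\|_{W^{2,1}_s}$. To keep $c$ under control I would apply the a priori bound of \Cref{theorem:strongLP} to both the unperturbed and the perturbed state: for $\bro\in\mathcal{U}$ one has $\|\bro\|_{\bL^\infty\times L^\infty}\le M_{\mathcal{U}}$, and the admissible perturbations obey $\|\cdot\|_{\mathcal{P}}\le M_{\mathcal{P}}$, so the forcing terms $\blf + \widehat{\blf}$, $h + \widehat{h}$ and the initial data $\bu_0 + \widehat{\bu}_0$, $\theta_0 + \widehat{\theta}_0$ are bounded uniformly in $\bL^s(Q)$, $L^s(Q)$ and the respective trace spaces. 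Hence $\|\bu\|_{\bW^{2,1}_{s,\sigma}}$, $\|\widehat{\bu}\|_{\bW^{2,1}_{s,\sigma}}$ and $\|\widehat{\theta}\|_{W^{2,1}_s}$ are uniformly bounded, and $c$ can be chosen independently of the particular $\bro$ and of the chosen perturbation.

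The verification of the two convection identities and the bookkeeping of which velocity and temperature field plays the role of $\bu_1$, $\bu_2$ and of the transported temperature in \eqref{Blg1}--\eqref{Blg3} is routine, but it must be carried out carefully so that the correct linear system, with coefficients $\bu_1 = \bu$, $\bu_2 = \widehat{\bu}$ and $\theta = \widehat{\theta}$, is identified; an inadvertent swap would place a state norm on the wrong factor and break the uniformity argument. Once the system is correctly set up, the estimate is an immediate consequence of the linear theory.
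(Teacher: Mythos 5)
Your proposal is correct and follows essentially the same route as the paper: identify $(\bu-\widehat{\bu},\theta-\widehat{\theta})$ as a solution of the linearized system \eqref{Blg1}--\eqref{Blg5} with data $\mp\widehat{\blf},\mp\widehat{h},\mp\widehat{\bu}_0,\mp\widehat{\theta}_0$ and apply \Cref{theorem:linearLP}. The only differences are cosmetic — you use the splitting with $\bu_1=\bu$, $\bu_2=\widehat{\bu}$ while the paper uses $\bu_1=\widehat{\bu}$, $\bu_2=\bu$ (both are valid), and your extra paragraph on the uniformity of $c$ via $M_{\mathcal{U}}$ and $M_{\mathcal{P}}$ is a welcome addition the paper leaves implicit.
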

		\begin{proof}
			The functions $\bv:=\bu - \widehat{\bu}$ and $\vartheta:=\theta - \widehat{\theta}$ are a solution to \eqref{Blg1}-\eqref{Blg5} with $\bu_1=\widehat{\bu}$, $\bu_2=\bu$, $\theta=\theta$, $\bF=\widehat{\blf}$, $G=\widehat{\theta}$, $\bv_0=\widehat{\bu}_0$ and $\vartheta_0=\widehat{\theta}_0$.
			Then, applying Theorem \ref{theorem:linearLP} yields the claim.
		\end{proof}

		\begin{lemma}\label{lemma:pertlinsest}
			Let $\bro,\delta \bro\in L^s(I;\bL^s(\omega_q)\times L^s(\omega_h))$, $(\bu,\theta) = \mathcal{S}(\bro)$, $(\widehat{\bu},\widehat{\theta}) = \widehat{\mathcal{S}}(\bro)$, $(\bv,\vartheta) = \mathcal{S}'(\bro)\delta\bro$ and $(\widehat{\bv},\widehat{\vartheta}) = \widehat{\mathcal{S}}'(\bro)\delta\bro$.
			There exists a constant $c>0$ such that 
			\begin{align}\label{lineargap:pertononper}
				\begin{aligned}
					&\|\bv - \widehat{\bv}\|_{\bL^2} + \|\nabla(\bv - \widehat{\bv})\|_{\bL^2} + \|\vartheta - \widehat{\vartheta}\|_{L^2} + \|\nabla(\vartheta - \widehat{\vartheta})\|_{\bL^2} \\
					&\le c\big( \|\bu - \widehat{\bu}\|_{\bL^\infty} + \|\theta - \widehat{\theta}\|_{L^\infty} \big)\big( \|\bv\|_{\bL^2} + \|\vartheta\|_{L^2} \big)%\min\left\{\big( \|\bv\|_{\bL^2} + \|\vartheta\|_{L^2} \big),\big( \|\widehat{\bv}\|_{\bL^2} + \|\widehat{\vartheta}\|_{L^2} \big) \right\}.
				\end{aligned}
			\end{align}
		\end{lemma}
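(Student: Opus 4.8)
The plan is to exhibit the pair $\overline{\bv}:=\bv-\widehat{\bv}$, $\overline{\vartheta}:=\vartheta-\widehat{\vartheta}$ as the weak solution of a single linearized Boussinesq system with vanishing initial data, and then to apply the stability estimate \eqref{estimate:lweak}. By the two propositions describing the derivatives of $\mathcal{S}$ and $\widehat{\mathcal{S}}$, the pair $(\bv,\vartheta)$ solves \eqref{Blg1}--\eqref{Blg5} with coefficients $\bu_1=\bu_2=\bu$, $\theta=\theta$, right-hand sides $\bF=\delta\bq\chi_{\omega_q}$, $G=\delta\Theta\chi_{\omega_h}$ and zero initial data, whereas $(\widehat{\bv},\widehat{\vartheta})$ solves the same system with coefficients $\bu_1=\bu_2=\widehat{\bu}$, $\theta=\widehat{\theta}$ and the \emph{identical} right-hand sides and initial data. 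First I would subtract the two systems; the common forcing $\delta\bq\chi_{\omega_q},\delta\Theta\chi_{\omega_h}$ cancels, so $(\overline{\bv},\overline{\vartheta})$ retains only the discrepancy of the coefficients. Using the elementary splittings $(\bu\cdot\nabla)\bv-(\widehat{\bu}\cdot\nabla)\widehat{\bv}=(\widehat{\bu}\cdot\nabla)\overline{\bv}+((\bu-\widehat{\bu})\cdot\nabla)\bv$ and $(\bv\cdot\nabla)\bu-(\widehat{\bv}\cdot\nabla)\widehat{\bu}=(\overline{\bv}\cdot\nabla)\widehat{\bu}+(\bv\cdot\nabla)(\bu-\widehat{\bu})$, together with the analogous identities for the two transport terms, I would collect the terms carrying $\overline{\bv},\overline{\vartheta}$ on the left and move the rest to the right. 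This shows that $(\overline{\bv},\overline{\vartheta})$ solves \eqref{Blg1}--\eqref{Blg5} with coefficients $\bu_1=\bu_2=\widehat{\bu}$, $\theta=\widehat{\theta}$, zero initial data, and right-hand sides
\begin{align*}
	\bF=-((\bu-\widehat{\bu})\cdot\nabla)\bv-(\bv\cdot\nabla)(\bu-\widehat{\bu}),\qquad G=-(\bu-\widehat{\bu})\cdot\nabla\vartheta-\bv\cdot\nabla(\theta-\widehat{\theta}).
\end{align*}

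Next I would apply \Cref{theorem:weaklBouss} to this system to obtain
\begin{align*}
	\|\overline{\bv}\|_{L^\infty(\bH_\sigma)}+\|\overline{\bv}\|_{L^2(\bV_\sigma)}+\|\overline{\vartheta}\|_{L^\infty(H)}+\|\overline{\vartheta}\|_{L^2(V)}\le c\big(\|\bF\|_{L^2(\bV_\sigma^*)}+\|G\|_{L^2(V^*)}\big),
\end{align*}
the constant being admissible because the coefficients $\widehat{\bu},\widehat{\theta}$ have norms bounded uniformly (the base control $\bro$ is fixed and the perturbations are controlled by $M_{\mathcal{P}}$). Since $\|\overline{\bv}\|_{\bL^2(Q)}\le\sqrt{T}\,\|\overline{\bv}\|_{L^\infty(\bH_\sigma)}$ and $\|\nabla\overline{\bv}\|_{\bL^2(Q)}\le\|\overline{\bv}\|_{L^2(\bV_\sigma)}$, with the same bounds for $\overline{\vartheta}$, the whole left-hand side of \eqref{lineargap:pertononper} is dominated by the right-hand side of this energy estimate.

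The hard part is bounding $\bF$ and $G$ in $L^2(\bV_\sigma^*)$ and $L^2(V^*)$, and here I would reuse the duality/antisymmetry device already exploited in the proof of \Cref{lemma:supp1}. For a test field $\bphi\in L^2(I;\bV_\sigma)$, the divergence-free conditions $\dive(\bu-\widehat{\bu})=\dive\bv=0$ and the homogeneous boundary values allow integration by parts, giving $((( \bu-\widehat{\bu})\cdot\nabla)\bv,\bphi)_Q=-(((\bu-\widehat{\bu})\cdot\nabla)\bphi,\bv)_Q$ and $((\bv\cdot\nabla)(\bu-\widehat{\bu}),\bphi)_Q=-((\bv\cdot\nabla)\bphi,\bu-\widehat{\bu})_Q$, whence
\begin{align*}
	|(\bF,\bphi)_Q|\le c\,\|\bu-\widehat{\bu}\|_{\bL^\infty(Q)}\,\|\bv\|_{\bL^2(Q)}\,\|\bphi\|_{L^2(\bV_\sigma)},
\end{align*}
so that $\|\bF\|_{L^2(\bV_\sigma^*)}\le c\,\|\bu-\widehat{\bu}\|_{\bL^\infty(Q)}\|\bv\|_{\bL^2(Q)}$. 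The same manipulation of $G$ against $\psi\in L^2(I;V)$ yields $\|G\|_{L^2(V^*)}\le c\big(\|\bu-\widehat{\bu}\|_{\bL^\infty(Q)}\|\vartheta\|_{L^2(Q)}+\|\theta-\widehat{\theta}\|_{L^\infty(Q)}\|\bv\|_{\bL^2(Q)}\big)$. These steps are legitimate because \Cref{theorem:strongLP} places $\bu,\widehat{\bu}\in\bW^{2,1}_{s,\sigma}\hookrightarrow C(\overline{Q})^2$ and $\theta,\widehat{\theta}\in W^{2,1}_s\hookrightarrow C(\overline{Q})$ for $s>2$, making the $\bL^\infty(Q)$-norms finite. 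Inserting these two bounds into the energy estimate and absorbing both contributions into the common factor $\|\bv\|_{\bL^2}+\|\vartheta\|_{L^2}$ delivers \eqref{lineargap:pertononper}.

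I expect the only genuine subtlety to be the rearrangement of the convective and transport terms in the first step (getting the right split between the ``good'' part with coefficient $\widehat{\bu},\widehat{\theta}$ and the ``forcing'' part proportional to $\bu-\widehat{\bu}$, $\theta-\widehat{\theta}$), since everything afterwards is a verbatim repetition of the duality argument of \Cref{lemma:supp1}.
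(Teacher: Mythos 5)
Your proposal is correct and follows essentially the same route as the paper: identify $(\bv-\widehat{\bv},\vartheta-\widehat{\vartheta})$ as the solution of the linearized system with coefficients $\widehat{\bu},\widehat{\theta}$, zero initial data, and right-hand sides $((\widehat{\bu}-\bu)\cdot\nabla)\bv+(\bv\cdot\nabla)(\widehat{\bu}-\bu)$ and $(\widehat{\bu}-\bu)\cdot\nabla\vartheta+\bv\cdot\nabla(\widehat{\theta}-\theta)$, then apply the weak energy estimate \eqref{estimate:lweak} and bound the dual norms of the forcings by the antisymmetry/duality argument of \Cref{lemma:supp1}. The paper's proof is a terse version of exactly this, so no further comment is needed.
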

		\begin{proof}
			As the element $(\overline{\bv},\overline{\vartheta}) =( \bv - \widehat{\bv}, \vartheta - \widehat{\vartheta})\in \bW^{2,1}_{s,\sigma}\times W^{2,1}_s$ satisfies a system of the form \eqref{Blg1}--\eqref{Blg5} with $\bu_1 = \bu_2 = \widehat{\bu}$, $\theta = \widehat{\theta}$, $\bF = ( (\widehat{\bu} - {\bu})\cdot\nabla )\bv + (\bv\cdot\nabla)(\widehat{\bu} - {\bu})$, $G = (\widehat{\bu} - {\bu})\cdot\nabla\vartheta + \bv\cdot\nabla(\widehat{\theta}-{\theta})$, $\bv_0 = 0$, and $\vartheta_0 = 0$, \eqref{estimate:lweak} implies that
			\begin{align*}
				\|\bv - \widehat{\bv}\|_{\bL^2} + \|\nabla(\bv - \widehat{\bv})\|_{\bL^2} + \|\vartheta - \widehat{\vartheta}\|_{L^2} + \|\nabla(\vartheta - \widehat{\vartheta})\|_{\bL^2} \le c(\|\bF\|_{L^2(\bV^*_\sigma)} + \|G\|_{L^2(V^*)})
			\end{align*}
			Using similar arguments as in the proof of \Cref{lemma:supp1} gives us the estimate
			\begin{align*}
				\begin{aligned}
					&\|\bv - \widehat{\bv}\|_{\bL^2} + \|\nabla(\bv - \widehat{\bv})\|_{\bL^2} + \|\vartheta - \widehat{\vartheta}\|_{L^2} + \|\nabla(\vartheta - \widehat{\vartheta})\|_{\bL^2} \\
					&\le c\big( \|\bu - \widehat{\bu}\|_{\bL^\infty} + \|\theta - \widehat{\theta}\|_{L^\infty} \big)\big( \|\bv\|_{\bL^2} + \|\vartheta\|_{L^2} \big).
				\end{aligned}
			\end{align*}
			
			% On the other hand, one can see that $(\overline{\bv},\overline{\vartheta}) \in \bW^{2,1}_{s,\sigma}\times W^{2,1}_s$ solves \eqref{Blg1}--\eqref{Blg5} but now with $\bu_1 = \bu_2 = {\bu}$, $\theta = {\theta}$, $\bF = ( (\widehat{\bu} - {\bu})\cdot\nabla )\widehat{\bv} + (\widehat{\bv}\cdot\nabla)(\widehat{\bu} - {\bu})$, $G = (\widehat{\bu} - {\bu})\cdot\nabla\widehat{\vartheta} + \widehat{\bv}\cdot\nabla(\widehat{\theta}-{\theta})$. This implies that 
			% \begin{align}
				% \begin{aligned}
					%     &\|\bv - \widehat{\bv}\|_{\bL^2} + \|\nabla(\bv - \widehat{\bv})\|_{\bL^2} + \|\vartheta - \widehat{\vartheta}\|_{L^2} + \|\nabla(\vartheta - \widehat{\vartheta})\|_{\bL^2} \\
					%     &\le c\big( \|\bu - \widehat{\bu}\|_{\bL^\infty} + \|\theta - \widehat{\theta}\|_{L^\infty} \big)\big( \|\widehat{\bv}\|_{\bL^2} + \|\widehat{\vartheta}\|_{L^2} \big),
					% \end{aligned}
				% \end{align}
			% which finally gives us \eqref{lineargap:pertononper}. 
		\end{proof}
		
		\begin{lemma}\label{lemma:adjperest}
			Let $s>2$, $\bro\in\mathcal{U}$, $(\bw,\Psi)\in \mathcal{D}(\bro)$ and $(\widehat{\bw},\widehat{\Psi})\in\widehat{\mathcal{D}}(\bro)$. There exists $c>0$ such that
			\begin{align}
				&\| \bw - \widehat{\bw}\|_{\bW^{2,1}_{2,\sigma}}+ \| \Psi - \widehat{\Psi} \|_{W^{2,1}_2}\nonumber\\
				&\leq c\Big(\|\widehat{\bu}_0\|_{W^{2-2/s,s}_{0,\sigma}}+\|\widehat{\theta}_0\|_{W^{2-2/s,s}_{0,\sigma}}+\|\widehat{\blf}\|_{\bL^s}+\|\widehat{h}\|_{L^s} + \| \bta\|_{\bL^\infty}+\|\eta\|_{L^\infty} \Big).
			\end{align}
		\end{lemma}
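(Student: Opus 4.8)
The plan is to show that the difference of adjoints $(\overline{\bw},\overline{\Psi}):=(\bw-\widehat{\bw},\Psi-\widehat{\Psi})$ itself solves an adjoint system of the form \eqref{Badg1}--\eqref{Badg5} whose data are controlled by the perturbations, and then to invoke the strong-regularity estimate \Cref{theorem:adjointLP} with $s=2$. First I would write out the two defining systems. By the definition of $\mathcal{D}$ and the map \eqref{themapadj}, the pair $(\bw,\Psi)=\mathcal{D}(\bro)$ solves \eqref{Badg1}--\eqref{Badg5} with $\bu_1=\bu_2=\bu$, $\theta=\theta$, right-hand sides $\alpha_1(\bu-\bu_d)$, $\alpha_2(\theta-\theta_d)$ and terminal data $\beta_1(\bu(T)-\bu_T)$, $\beta_2(\theta(T)-\theta_T)$, where $(\bu,\theta)=\mathcal{S}(\bro)$; whereas $(\widehat{\bw},\widehat{\Psi})=\widehat{\mathcal{D}}(\bro)$ solves the analogous system with $(\bu,\theta)$ replaced by $(\widehat{\bu},\widehat{\theta})=\widehat{\mathcal{S}}(\bro)$ and the forcing augmented by $\bta$ and $\eta$.

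Subtracting the two systems and regrouping the bilinear terms so that the operator acts on $(\overline{\bw},\overline{\Psi})$ with the \emph{unperturbed} coefficients $(\bu,\theta)$ frozen (e.g. $-(\bu\cdot\nabla)\bw+(\widehat{\bu}\cdot\nabla)\widehat{\bw}=-(\bu\cdot\nabla)\overline{\bw}-((\bu-\widehat{\bu})\cdot\nabla)\widehat{\bw}$, and similarly for the remaining products), one finds that $(\overline{\bw},\overline{\Psi})$ solves \eqref{Badg1}--\eqref{Badg5} with $\bu_1=\bu_2=\bu$, $\theta=\theta$, terminal data $\overline{\bw}(T)=\beta_1(\bu(T)-\widehat{\bu}(T))$, $\overline{\Psi}(T)=\beta_2(\theta(T)-\widehat{\theta}(T))$, and forcing
\begin{align*}
\bF &= \alpha_1(\bu-\widehat{\bu}) - \bta + ((\bu-\widehat{\bu})\cdot\nabla)\widehat{\bw} - (\nabla(\bu-\widehat{\bu}))^\top\widehat{\bw} - \widehat{\Psi}\nabla(\theta-\widehat{\theta}),\\
G &= \alpha_2(\theta-\widehat{\theta}) - \eta + (\bu-\widehat{\bu})\cdot\nabla\widehat{\Psi}.
\end{align*}
Applying \Cref{theorem:adjointLP} with $s=2$ then yields
$\|\overline{\bw}\|_{\bW^{2,1}_{2,\sigma}}+\|\overline{\Psi}\|_{W^{2,1}_2}\le c(\|\bF\|_{\bL^2}+\|G\|_{L^2}+\|\overline{\bw}(T)\|_{\bW^{3/2,2}_{0,\sigma}}+\|\overline{\Psi}(T)\|_{W^{3/2,2}_0})$, with $c$ depending only on $\|\bu\|_{\bW^{2,1}_{s,\sigma}}$, $\|\theta\|_{W^{2,1}_s}$, which are uniformly bounded via \eqref{boundcon}.

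It remains to estimate the right-hand side. The linear terms $\alpha_1(\bu-\widehat{\bu})$, $\alpha_2(\theta-\widehat{\theta})$ are immediately controlled by the state gap, and $\|\bta\|_{\bL^2}+\|\eta\|_{L^2}\le c(\|\bta\|_{\bL^\infty}+\|\eta\|_{L^\infty})$ on the bounded domain. For the transport and reaction remainders I would pull out the perturbed adjoint in $\bL^\infty$: since the controls lie in $\bL^\infty$, both $(\widehat{\bu},\widehat{\theta})$ and the perturbed adjoint $(\widehat{\bw},\widehat{\Psi})$ belong to $\bW^{2,1}_{r,\sigma}\times W^{2,1}_r$ for every finite $r$, so the analogue of \eqref{uniboundadj} gives $\|\widehat{\bw}\|_{\bL^\infty}+\|\nabla\widehat{\bw}\|_{\bL^\infty}+\|\widehat{\Psi}\|_{L^\infty}+\|\nabla\widehat{\Psi}\|_{L^\infty}\le M_{\mathcal A}$ uniformly (using the perturbation bound $M_{\mathcal P}$). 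Hence each remainder is majorized by $M_{\mathcal A}$ times $\|\bu-\widehat{\bu}\|_{\bL^2}$, $\|\nabla(\bu-\widehat{\bu})\|_{\bL^2}$ or $\|\nabla(\theta-\widehat{\theta})\|_{\bL^2}$, i.e. by $\|\bu-\widehat{\bu}\|_{\bW^{2,1}_{s,\sigma}}+\|\theta-\widehat{\theta}\|_{W^{2,1}_s}$. For the terminal data I would use the Amann embedding $\bW^{2,1}_{s,\sigma}\hookrightarrow C(\overline{I};\bW^{2-2/s,s}_{0,\sigma})$ with $s>4$ so that $\bW^{2-2/s,s}_{0,\sigma}\hookrightarrow\bW^{3/2,2}_{0,\sigma}$, bounding $\|\bu(T)-\widehat{\bu}(T)\|_{\bW^{3/2,2}_{0,\sigma}}$ again by $\|\bu-\widehat{\bu}\|_{\bW^{2,1}_{s,\sigma}}$. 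Finally \Cref{estpersta} controls the whole state gap $\|\bu-\widehat{\bu}\|_{\bW^{2,1}_{s,\sigma}}+\|\theta-\widehat{\theta}\|_{W^{2,1}_s}$ by $\|\widehat{\bu}_0\|_{\bW^{2-2/s,s}_{0,\sigma}}+\|\widehat{\theta}_0\|_{W^{2-2/s,s}_{0}}+\|\widehat{\blf}\|_{\bL^s}+\|\widehat{h}\|_{L^s}$, which together with the $\bta,\eta$ contributions gives exactly the claimed bound.

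The main obstacle is the bookkeeping of the backward bilinear terms in the adjoint: one must regroup the convection, the transpose-gradient reaction, and the buoyancy coupling so that the frozen-coefficient adjoint operator acts cleanly on $(\overline{\bw},\overline{\Psi})$ while every residual is a product of a state difference with a factor of the \emph{perturbed} adjoint. The accompanying subtlety is that closing these products in $\bL^2$ requires $\bL^\infty$ control of $\widehat{\bw},\nabla\widehat{\bw},\widehat{\Psi},\nabla\widehat{\Psi}$, hence the higher integrability $s>4$ (and the $C^1$-in-space / trace embeddings), which is legitimate here only because the admissible controls are essentially bounded and thus upgrade the regularity of states and adjoints beyond the stated $s>2$.
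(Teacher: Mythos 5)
Your proposal is correct and follows essentially the same route as the paper: write the difference $(\bw-\widehat{\bw},\Psi-\widehat{\Psi})$ as the solution of an adjoint system with frozen coefficients, bound the residual forcing using the uniform $C^1$-bound $M_{\mathcal{A}}$ on the adjoint together with the state gap, and close with \Cref{theorem:adjointLP} and \Cref{estpersta}. The only (immaterial) difference is that you freeze the coefficients at the unperturbed state so the residuals carry the perturbed adjoint, whereas the paper freezes at the perturbed state and keeps the unperturbed adjoint in the residuals, estimating those terms by a duality/test-function argument rather than directly.
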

		\begin{proof}
			The element $(\overline{\bw},\overline{\Psi}) = (\bw -\widehat{\bw},\Psi - \widehat{\Psi})\in \bW^{2,1}_{2,\sigma}\times W^{2,1}_2$ solves a system of the form \eqref{Badg1}-\eqref{Badg5} with right hand side
			\begin{align*}
				& \bF = ( (\bu - \widehat{\bu})\cdot\nabla )\bw + (\nabla(\widehat{\bu} - \bu))^\top\bw + \Psi\nabla(\widehat{\theta} - \theta ) + \alpha_1(\bu - \widehat{\bu}) + \bta,\\
				& G = (\widehat{\bu} - \bu)\cdot\nabla\Psi + \alpha_2(\theta - \widehat{\theta}) + \eta,\\
				& \bw_T = \beta_1(\bu(T) - \widehat{\bu}(T)),\text{ and }\Psi_T =  \beta_2(\theta(T)-\widehat{\theta}(T)),
			\end{align*}
			where $(\bu,\theta) = \mathcal{S}(\bro)$ and $(\widehat{\bu},\widehat{\theta}) = \widehat{\mathcal{S}}(\bro)$.
			
			We obtain for any $\bphi\in \bL^{2'}(Q)$ with $\|\bphi\|_{\bL^{2'}} \le 1$ that
			\begin{align*}
				\begin{aligned}
					\|\bF\|_{\bL^2} & \le c(\|\nabla\bw\|_{\bL^\infty} + \alpha_1) \|\bu - \widehat{\bu}\|_{\bL^\infty} + \left|\left( (\bphi\cdot\nabla)(\bu - \widehat{\bu}),\bw \right)\right| + | (\bphi\cdot\nabla(\widehat{\theta} - \theta ), \Psi) |+ c\|\bta\|_{\bL^\infty} \\
					& \le c(\|\nabla\bw\|_{\bL^\infty} + \alpha_1) \|\bu - \widehat{\bu}\|_{\bL^\infty} + \left|\left( (\bphi\cdot\nabla)\bw,\bu - \widehat{\bu} \right)\right| + | (\bphi\cdot\nabla\Psi, \widehat{\theta} - \theta ) | + c\|\bta\|_{\bL^\infty}\\
					& \le c(\|\nabla\bw\|_{\bL^\infty} + \alpha_1) \|\bu - \widehat{\bu}\|_{\bL^\infty} + cM_{\mathcal{A}}\|\bphi\|_{\bL^{2'}}(\|\bu - \widehat{\bu}\|_{\bL^\infty} + \|\theta - \widehat{\theta}\|_{L^\infty}) + c\|\bta\|_{\bL^\infty}\\
					& \le c(\|\nabla\bw\|_{\bL^\infty} + \alpha_1) \|\bu - \widehat{\bu}\|_{\bL^\infty} + cM_{\mathcal{A}}\|\theta - \widehat{\theta}\|_{L^\infty} + c\|\bta\|_{\bL^\infty}
				\end{aligned}
			\end{align*}
			For $G$, we estimate 
			\begin{align*}
				\begin{aligned}
					\|G\|_{L^r} \le c\big(M_{\mathcal{A}}\|\bu - \widehat{\bu}\|_{\bL^\infty} + \alpha_2\|\theta - \widehat{\theta} \|_{L^\infty} + \|\eta\|_{\bL^\infty}\big)
				\end{aligned}
			\end{align*}
			Through the embedding $\bW^{2,1}_{s,\sigma}\times W^{2,1}_s \hookrightarrow C(\overline{I}; \bW^{2-1/2,2}_{0,\sigma}(\Omega)\times W^{2-1/2,2}_{0}(\Omega)) $  we get
			\begin{align*}
				\begin{aligned}
					\|\bw_T\|_{\bW^{2-1/2,2}_{0,\sigma}(\Omega)} +  \|\Psi_T\|_{W^{2-1/2,2}_{0}(\Omega)} \le \beta_1\|\bu - \widehat{\bu}\|_{\bW^{2,1}_{s,\sigma}} + \beta_2\|\theta - \widehat{\theta} \|_{W^{2,1}_s}.
				\end{aligned}
			\end{align*}
			From here on, it is clear that the claim follows due to Lemma \ref{estpersta}.
		\end{proof}
		
		\subsection{Stability under Assumption \ref{growth1}}\label{subsection4.17}
		As the title of this subsection suggests, in this portion of the paper we shall utilize Assumption \ref{growth1} to establish the stability of controls.
		As a preparation for the proof of the main statement, we consider perturbations on the normal cone.
		\begin{lemma}\label{perpcest}
			Let $\bro^\star\in \mathcal{U}$ satisfy Assumption \ref{growth1} with $\tau = 1$. Then there exist a positive constant $c$ and $\alpha$ such that
			\begin{equation}
				\| \bro-\bro^\star\|_{\bL^1\times L^1} \leq c (\|\bvsigma\|_{\bL^\infty(I\times\omega_q)}+ \|\Lambda\|_{L^\infty(I\times\omega_h)})^{1/\mu}.
			\end{equation}
			for all $(\bvsigma,\Lambda)\in \bL^\infty(I\times\omega_q)\times L^\infty(I\times\omega_h)$ and $\bro \in \mathcal U$ with $(\bvsigma,\Lambda)\in (\bw,\Psi)+\mathcal{N}_{\mathcal U}(\bro)$ and $\|\bro-\bro^\star\|_{\bL^1\times L^1}<\alpha$, where $(\bw,\Psi) = \mathcal{D}(\bro)$.
		\end{lemma}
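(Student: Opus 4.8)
The plan is to play the variational inequality hidden in the perturbed inclusion against the coercivity furnished by \Cref{growth1} and the second-order continuity of \Cref{secestc}, which is the standard route for this type of subregularity estimate. Throughout write $\delta\bro=\bro-\bro^\star$ and $(\bw,\Psi)=\mathcal{D}(\bro)$, and recall from \eqref{jderivative} that the pairing $\langle(\bw,\Psi),\delta\bro\rangle:=(\bw,\delta\bq)_{\omega_q}+(\Psi,\delta\Theta)_{\omega_h}$ equals $J'(\bro)(\delta\bro)$. First I would unpack the inclusion $(\bvsigma,\Lambda)\in(\bw,\Psi)+\mathcal{N}_{\mathcal U}(\bro)$, which says $(\bvsigma,\Lambda)-(\bw,\Psi)\in\mathcal{N}_{\mathcal U}(\bro)$. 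Testing the defining inequality of the normal cone with the admissible point $\bro^\star\in\mathcal{U}$ gives $\langle(\bvsigma,\Lambda)-(\bw,\Psi),\bro^\star-\bro\rangle\le 0$, and rearranging yields the one-sided bound
\begin{equation*}
J'(\bro)(\delta\bro)=\langle(\bw,\Psi),\delta\bro\rangle\le\langle(\bvsigma,\Lambda),\delta\bro\rangle\le\big(\|\bvsigma\|_{\bL^\infty(I\times\omega_q)}+\|\Lambda\|_{L^\infty(I\times\omega_h)}\big)\|\delta\bro\|_{\bL^1\times L^1},
\end{equation*}
where the last step is H\"older's inequality.

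Next I would pass from $J'(\bro)$ to the quantity controlled by the growth condition, namely $J'(\bro^\star)(\delta\bro)+J''(\bro^\star)(\delta\bro)^2$. Applying the fundamental theorem of calculus to $t\mapsto J'(\bro^\star+t\delta\bro)(\delta\bro)$ gives $J'(\bro)(\delta\bro)-J'(\bro^\star)(\delta\bro)=\int_0^1 J''(\bro_t)(\delta\bro)^2\,dt$ with $\bro_t=\bro^\star+t\delta\bro$, so that
\begin{equation*}
J'(\bro^\star)(\delta\bro)+J''(\bro^\star)(\delta\bro)^2=J'(\bro)(\delta\bro)+\int_0^1\big[J''(\bro^\star)-J''(\bro_t)\big](\delta\bro)^2\,dt.
\end{equation*}
The remainder integral is where \Cref{secestc} enters: since $\bro_t-\bro^\star=t\,\delta\bro$ and $\mathcal{S}'(\bro^\star)$ is linear, writing $[J''(\bro_t)-J''(\bro^\star)](\delta\bro)^2=t^{-2}[J''(\bro_t)-J''(\bro^\star)](\bro_t-\bro^\star)^2$ and using $(\bv^\star_t,\vartheta^\star_t)=t(\bv^\star,\vartheta^\star)$ bounds the integrand by $\varepsilon\big(t^{\mu-1}\|\delta\bro\|_{\bL^1\times L^1}^{1+\mu}+\beta_1\|\bv^\star(T)\|_{\bL^2}^2+\beta_2\|\vartheta^\star(T)\|_{L^2}^2\big)$. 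As $t\in[0,1]$ and $\mu\ge 1$ we have $t^{\mu-1}\le 1$, so the whole remainder is bounded in modulus by $\varepsilon\big(\|\delta\bro\|_{\bL^1\times L^1}^{1+\mu}+\beta_1\|\bv^\star(T)\|_{\bL^2}^2+\beta_2\|\vartheta^\star(T)\|_{L^2}^2\big)$.

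Finally I would close the argument. \Cref{growth1} with $\tau=1$ gives the lower bound $c\big(\|\delta\bro\|_{\bL^1\times L^1}^{1+\mu}+\beta_1\|\bv^\star(T)\|_{\bL^2}^2+\beta_2\|\vartheta^\star(T)\|_{L^2}^2\big)\le J'(\bro^\star)(\delta\bro)+J''(\bro^\star)(\delta\bro)^2$. Substituting the identity and the two bounds above, choosing $\varepsilon=c/2$ in \Cref{secestc} (which fixes the corresponding radius $\delta$, so that one takes $\alpha$ to be the minimum of $\delta$ and the constant $\alpha$ in \Cref{growth1}, making both results applicable on $\|\delta\bro\|_{\bL^1\times L^1}<\alpha$), and discarding the nonnegative terminal terms on the left gives $\tfrac{c}{2}\|\delta\bro\|_{\bL^1\times L^1}^{1+\mu}\le\big(\|\bvsigma\|_{\bL^\infty(I\times\omega_q)}+\|\Lambda\|_{L^\infty(I\times\omega_h)}\big)\|\delta\bro\|_{\bL^1\times L^1}$. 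Dividing by $\|\delta\bro\|_{\bL^1\times L^1}$ (the case $\delta\bro=0$ being trivial) and taking $\mu$-th roots yields the claim with $c=(2/c)^{1/\mu}$. The main obstacle I anticipate is the bookkeeping in the remainder step: one must check that the $t$-scaling produced by the quadratic homogeneity of $J''$ and the linearity of $\mathcal{S}'(\bro^\star)$ leaves a bound uniform in $t\in[0,1]$ — that the $L^1$ term carries the harmless factor $t^{\mu-1}\le 1$ while the terminal terms scale exactly like $t^2$, cancelling the $t^{-2}$ from the rescaling — together with the careful sign tracking when reducing the normal-cone inclusion to the one-sided bound on $J'(\bro)(\delta\bro)$.
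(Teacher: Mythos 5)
Your proposal is correct and follows essentially the same route as the paper's proof: test the normal-cone inclusion at $\bro^\star$ to bound $J'(\bro)(\bro-\bro^\star)$ by $(\|\bvsigma\|_{\bL^\infty}+\|\Lambda\|_{L^\infty})\|\bro-\bro^\star\|_{\bL^1\times L^1}$, pass to $J'(\bro^\star)+J''(\bro^\star)$ via a Taylor remainder, control that remainder with Lemma \ref{secestc} using the $t$-scaling of $\bro_t-\bro^\star$ and the linearity of $\mathcal{S}'(\bro^\star)$, and invoke Assumption \ref{growth1}. The only (harmless) deviations are that you use the integral form of the remainder where the paper uses the mean value theorem, and you conclude by dividing by $\|\bro-\bro^\star\|_{\bL^1\times L^1}$ where the paper absorbs the cross term via the Peter--Paul inequality; both close the argument.
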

		\begin{proof}
			From the definition of the normal cone, we see that
			\begin{align}\label{stab1}
				\begin{aligned}
					0&\geq   \int_0^T\left[ \int_{\omega_q} (\bw - \bvsigma)\cdot (\bq- \bq^\ast) \du x + \int_{\omega_h} (\Psi - \Lambda) (\Theta- \Theta^\star) \text{ d}x \right]\text{d}t\\
					&\geq J'(\bro^\star)(\bro-\bro^\star)+[J'(\bro)(\bro-\bro^\star)-J'(\bro^\star)(\bro-\bro^\star)]\\
					&\quad -\|\bvsigma\|_{\bL^\infty(I\times\omega_q)}\|\bq-\bq^\star\|_{\bL^1}-\|\Lambda\|_{L^\infty(I\times\omega_h)}\|\Theta-\Theta^\star\|_{L^1}%\\
					%&\geq J'(\bro^\star)(\bro-\bro^\star)+J''(\bro^\star)(\bro-\bro^\star)^2-\vert [J''(\bro_s)(\bro-\bro^\star)^2-J''(\bro^\star)(\bro-\bro^\star)^2]\vert\\
					%&-\|\brho\|_{\bL^\infty(\omega_q)}\|\bq-\bq^\star\|_{\bL^1}-\|\rho\|_{L^\infty(\omega_h)}\|\bq-\bq^\star\|_{L^1}.
				\end{aligned}
			\end{align}
			By virtue of the mean value theorem there exists $t\in (0,1)$ such that $$J''(\bro_t)(\bro - \bro^\star)^2 = J'(\bro)(\bro-\bro^\star)-J'(\bro^\star)(\bro-\bro^\star),$$ 
			where $\bro_t = \bro^\star + t (\bro - \bro^\star)$. This, together with  \Cref{growth1} and \Cref{secestc}, give us further estimation for \eqref{stab1}:
			\begin{align}\label{stab2}
				\begin{aligned}
					0 &\geq J'(\bro^\star)(\bro-\bro^\star)+J''(\bro^\star)(\bro-\bro^\star)^2 + [J''(\bro_t)(\bro-\bro^\star)^2 - J''(\bro^\star)(\bro-\bro^\star)^2]\\
					&\quad -\|\bvsigma\|_{\bL^\infty(I\times\omega_q)}\|\bq-\bq^\star\|_{\bL^1}-\|\Lambda\|_{L^\infty(I\times\omega_h)}\|\Theta-\Theta^\star\|_{L^1}\\
					&\geq J'(\bro^\star)(\bro-\bro^\star)+J''(\bro^\star)(\bro-\bro^\star)^2 + \frac{1}{t^2}[J''(\bro_t) - J''(\bro^\star)](\bro_t-\bro^\star)^2\\
					&\quad -\|\bvsigma\|_{\bL^\infty(I\times\omega_q)}\|\bq-\bq^\star\|_{\bL^1}-\|\Lambda\|_{L^\infty(I\times\omega_h)}\|\Theta-\Theta^\star\|_{L^1}\\
					&\geq  c\Big (\|\bro-\bro^\star\|_{\bL^1\times L^1}^{1+\mu} + \beta_1\| \bv^\star(T)\|_{\bL^2}^2+\beta_2\| \vartheta^\star(T)\|_{L^2}^2 \Big) + [J''(\bro_t) - J''(\bro^\star)](\bro_t-\bro^\star)^2\\
					&\quad -\|\bvsigma\|_{\bL^\infty(I\times\omega_q)}\|\bq-\bq^\star\|_{\bL^1}-\|\Lambda\|_{L^\infty(I\times\omega_h)}\|\Theta-\Theta^\star\|_{L^1}\\
					&\geq  (c-t\varepsilon)\|\bro-\bro^\star\|_{\bL^1\times L^1}^{1+\mu} + (c-\varepsilon)\Big (\beta_1\| \bv^\star(T)\|_{\bL^2}^2+\beta_2\| \vartheta^\star(T)\|_{L^2}^2 \Big)\\
					&\quad -\|\bvsigma\|_{\bL^\infty(I\times\omega_q)}\|\bq-\bq^\star\|_{\bL^1}-\|\Lambda\|_{L^\infty(I\times\omega_h)}\|\Theta-\Theta^\star\|_{L^1},
				\end{aligned}
			\end{align}
			where $(\bv^\star,\vartheta^\star) = \mathcal{S}'(\bro^\star)(\bro-\bro^\star)$. We note that we get the last two lines from the fact that $t\in (0,1)$ and by assuming that $\|\bro-\bro^\star\|_{\bL^1\times L^1} < \min\{\delta,\alpha \}$, where $\delta,\alpha>0$ are the constants from \Cref{growth1} and \Cref{secestc}. We can, furthermore, choose $\varepsilon = c$ which gives us
			\begin{align}\label{npp1}
				\|\bvsigma\|_{\bL^\infty(I\times\omega_q)}\|\bq-\bq^\star\|_{\bL^1}+\|\Lambda\|_{L^\infty(I\times\omega_h)}\|\Theta-\Theta^\star\|_{\bL^1}\geq c\|\bro-\bro^\star\|_{\bL^1\times L^1}^{1+\mu}.
			\end{align}
			We employ the Peter-Paul inequality to estimate
			\begin{equation}\label{npp2}
				\|\bvsigma\|_{\bL^\infty(I\times\omega_q)}\|\bq-\bq^\star\|_{\bL^1}\leq  \frac{\|\bvsigma\|_{\bL^\infty(I\times\omega_q)}^{(\mu+1)/\mu}}{c^{1/\mu}(\mu+1)/\mu}+\frac{c\|\bq-\bq^\star\|_{\bL^1}^{\mu+1}}{(\mu+1)},
			\end{equation}
			and
			\begin{equation}\label{npp3}
				\|\Lambda\|_{L^\infty(I\times\omega_h)}\|\Theta-\Theta^\star\|_{L^1}\leq  \frac{\|\Lambda\|_{L^\infty(I\times\omega_h)}^{(\mu+1)/\mu}}{c^{1/\mu}(\mu+1)/\mu}+\frac{c\|\Theta-\Theta^\star\|_{L^1}^{\mu+1}}{(\mu+1)}.
			\end{equation}
			Utilizing \eqref{npp2} and \eqref{npp3} and since $\mu\in [1,2)$, \eqref{npp1} can further be estimated as
			\begin{equation*}
				\frac{\mu}{c^{1/\mu}(\mu+1)}\Big(\|\bvsigma\|_{\bL^\infty(I\times\omega_q)} + \|\Lambda\|_{L^\infty(I\times\omega_h)} \Big)^{(\mu+1)/\mu}\geq  \frac{c}{2}\|\bro-\bro^\star\|_{\bL^1\times L^1}^{1+\mu}.
			\end{equation*}
			From this, we infer the existence of a positive constant $\hat c$ such that
			\begin{equation*}
				(\|\bvsigma\|_{\bL^\infty(I\times\omega_q)}+\|\Lambda\|_{\bL^\infty(I\times\omega_h)})^{1/\mu}\geq \hat c\|\bro-\bro^\star\|_{\bL^1\times L^1}^{1+\mu}.
			\end{equation*}
		\end{proof}

		Now, we can prove the strong metric Hölder subregularity of the optimality map. 
		\begin{theorem}\label{theorem:smhsr}
			Let $\bro^\star \in \mathcal{U}$ satisfy Assumption \ref{growth1} with $\tau = 1$. Then the optimality map is strong metric Hölder subregular at $\bro^\star$ with constants $c, \alpha$ and $1/\gamma$, that is
			\begin{equation}\label{estimate:controls}
				\| \bro-\bro^\star\|_{\bL^1\times L^1}\leq c\| \bzeta \|_{\mathcal{P}}^{1/\mu}
			\end{equation}
			and 
			\begin{equation}\label{estimate:states}
				\| \bu-\bu^\star\|_{\bL^2}+\| \theta - \theta^\star\|_{L^2}\leq c\| \bzeta \|_{\mathcal{P}}^{1/2\mu}
			\end{equation}
			for all $\bzeta \in \digamma(\widehat{\mathcal{S}}(\bro),\widehat{\mathcal{D}}(\bro),\bro)$ whenever $\bro \in \mathcal{U}$ satisfies $\|\bro-\bro^\star\|_{\bL^1\times L^1}<\alpha$, where $(\bu,\theta)\in \mathcal{S}(\bro), (\bu^\star,\theta^\star) = \mathcal{S}(\bro^\star)$.
		\end{theorem}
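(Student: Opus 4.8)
The plan is to reduce the control estimate \eqref{estimate:controls} to \Cref{perpcest} and then to deduce the state estimate \eqref{estimate:states} by interpolating the linear $\bL^1$-stability of the state difference. First I would decode the inclusion $\bzeta \in \digamma(\widehat{\mathcal{S}}(\bro),\widehat{\mathcal{D}}(\bro),\bro)$. By construction $(\widehat{\bu},\widehat{\theta})=\widehat{\mathcal{S}}(\bro)$ and $(\widehat{\bw},\widehat{\Psi})=\widehat{\mathcal{D}}(\bro)$ solve the perturbed state and adjoint systems, so the first two block-components of the inclusion are automatically consistent and only the normal-cone block carries information, namely $-(\bvsigma,\Lambda)-(\widehat{\bw},\widehat{\Psi})\in\mathcal{N}_{\mathcal{U}}(\bro)$, exactly as already observed after \eqref{inclusion:1}.

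The key idea is to rewrite this membership in terms of the \emph{unperturbed} adjoint $(\bw,\Psi)=\mathcal{D}(\bro)$. Setting $\bvsigma':=-\bvsigma-(\widehat{\bw}-\bw)$ and $\Lambda':=-\Lambda-(\widehat{\Psi}-\Psi)$, a direct computation gives $(\bvsigma'-\bw,\Lambda'-\Psi)=(-\bvsigma-\widehat{\bw},-\Lambda-\widehat{\Psi})\in\mathcal{N}_{\mathcal{U}}(\bro)$, i.e. $(\bvsigma',\Lambda')\in(\bw,\Psi)+\mathcal{N}_{\mathcal{U}}(\bro)$. Then \Cref{perpcest} (applicable since $\bro^\star$ satisfies \Cref{growth1} with $\tau=1$) yields $\|\bro-\bro^\star\|_{\bL^1\times L^1}\le c(\|\bvsigma'\|_{\bL^\infty}+\|\Lambda'\|_{L^\infty})^{1/\mu}$ on a ball $\|\bro-\bro^\star\|_{\bL^1\times L^1}<\alpha$. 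It remains to absorb the adjoint defect: $\|\bvsigma'\|_{\bL^\infty}+\|\Lambda'\|_{L^\infty}\le\|\bvsigma\|_{\bL^\infty}+\|\Lambda\|_{L^\infty}+\|\bw-\widehat{\bw}\|_{\bL^\infty}+\|\Psi-\widehat{\Psi}\|_{L^\infty}$, where the first two terms are $\le\|\bzeta\|_{\mathcal{P}}$ since $\bvsigma,\Lambda$ are components of $\bzeta$.

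The main obstacle is controlling the adjoint defect in $\bL^\infty$ rather than merely in $W^{2,1}_2$ as in \Cref{lemma:adjperest}; only the $\bL^\infty$ bound is compatible with the sharp exponent $1/\mu$, because it is paired with $\|\bq-\bq^\star\|_{\bL^1}$. I would therefore run the argument of \Cref{lemma:adjperest} in $\bW^{2,1}_{s,\sigma}\times W^{2,1}_s$ with $s>2$: the source terms of the system solved by $(\bw-\widehat{\bw},\Psi-\widehat{\Psi})$ are products of the uniformly bounded quantities $\|\nabla\bw\|_{\bL^\infty},\|\nabla\Psi\|_{L^\infty}\le M_{\mathcal{A}}$ (see \eqref{uniboundadj}) with the state defect, while the terminal data are multiples of $\bu(T)-\widehat{\bu}(T)$ and $\theta(T)-\widehat{\theta}(T)$; \Cref{estpersta} bounds all of them by $\|\bzeta\|_{\mathcal{P}}$ in the relevant $\bL^s$ and trace norms, and the embedding $\bW^{2,1}_{s,\sigma}\times W^{2,1}_s\hookrightarrow C(\overline{Q})^2\times C(\overline{Q})$ (valid for $s>2$) upgrades this to $\|\bw-\widehat{\bw}\|_{\bL^\infty}+\|\Psi-\widehat{\Psi}\|_{L^\infty}\le c\|\bzeta\|_{\mathcal{P}}$. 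Combining, $\|\bvsigma'\|_{\bL^\infty}+\|\Lambda'\|_{L^\infty}\le c\|\bzeta\|_{\mathcal{P}}$ and \eqref{estimate:controls} follows.

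Finally, for \eqref{estimate:states} I would note that $(\bu-\bu^\star,\theta-\theta^\star)$ solves a linearized Boussinesq system with right-hand side $(\bro-\bro^\star)$ and zero initial data (exactly as in \Cref{lemma:diffstab}), so \Cref{lemma:LSL1} with $\bar s=1$ gives $\|\bu-\bu^\star\|_{\bL^1}+\|\theta-\theta^\star\|_{L^1}\le c\|\bro-\bro^\star\|_{\bL^1\times L^1}$. Since the state difference is uniformly bounded in $\bL^\infty$ (bounded controls, \Cref{lemma:diffstab} and the embedding into $C(\overline{Q})$), the elementary interpolation $\|\cdot\|_{\bL^2}\le\|\cdot\|_{\bL^\infty}^{1/2}\|\cdot\|_{\bL^1}^{1/2}$ yields $\|\bu-\bu^\star\|_{\bL^2}+\|\theta-\theta^\star\|_{L^2}\le c\|\bro-\bro^\star\|_{\bL^1\times L^1}^{1/2}$; inserting \eqref{estimate:controls} gives the claimed rate $c\|\bzeta\|_{\mathcal{P}}^{1/2\mu}$. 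Throughout, $\alpha$ is taken as the minimum of the radii appearing in \Cref{perpcest}, \Cref{lemma:supp2} and \Cref{lemma:LSL1}, so that all invoked estimates hold simultaneously.
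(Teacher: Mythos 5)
Your proposal is correct and follows essentially the same route as the paper's proof: the shift $(\bB,B)=(\bw-\widehat{\bw}-\bvsigma,\Psi-\widehat{\Psi}-\Lambda)$ placing the defect into $(\bw,\Psi)+\mathcal{N}_{\mathcal{U}}(\bro)$, the application of \Cref{perpcest}, the $L^\infty$ bound on the adjoint defect via \Cref{lemma:adjperest} and Sobolev embedding, and the $1/2$-power passage to the states. Your version is in fact slightly more careful on two points the paper glosses over — insisting on $s>2$ so that the embedding into $C(\overline{Q})$ actually holds for the adjoint defect, and justifying $\|\bu-\bu^\star\|_{\bL^2}\le c\|\bro-\bro^\star\|_{\bL^1\times L^1}^{1/2}$ via \Cref{lemma:LSL1} and $L^1$--$L^\infty$ interpolation rather than asserting it.
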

		\begin{proof}
			By assumption, $-(\bvsigma,\Lambda) \in (\widehat{\bw},\widehat{\Psi}) + \mathcal{N}_{\mathcal{U}}(\bro)$, where $(\widehat{\bw},\widehat{\Psi}) = \widehat{D}(\bro)$, we find 
			\begin{align}\label{perfir}
				0\geq \int_0^T \left[\int_{\omega_q} (\widehat{\bw} + \bvsigma)\cdot({\bq} -\widetilde{\bq}) \text{ d}x + \int_{\omega_h} (\widehat{\Psi} 
				+ \Lambda)({\Theta} - \widetilde{\Theta}) \text{ d}x\right]\text{ d}t,
			\end{align}
			for arbitrary $\widetilde{\bro}\in \mathcal{U}$.
			We define
			\begin{equation*}
				\bB:= (\bw - \widehat{\bw} - \bvsigma), \text{  and  }   B:= (\Psi  - \widehat{\Psi} - \Lambda),
			\end{equation*}
			where $(\bw,\Psi) = \mathcal{D}(\bro)$.
			Thus \eqref{perfir} becomes
			\begin{equation*}
				0\geq \int_0^T\left[\int_{\omega_q} (\bw - \bB)\cdot({\bq} -\widetilde{\bq}) \text{ d}x+ \int_{\omega_h} (\Psi - B)\cdot({\Theta} - \widetilde{\Theta}) \text{ d}x\right]\text{ d}t.
			\end{equation*}
			This implies that $(\bB,B)\in (\bw,\Psi) + \mathcal{N}_{\mathcal{U}}(\bro)$. Now we apply Lemma \ref{perpcest} to conclude
			\begin{align*}
				\|\bro-\bro^\star\|_{\bL^1\times L^1}&\leq c (\|\bB\|_{\bL^\infty}+\|B\|_{L^\infty})^{1/\mu}.
			\end{align*}
			We estimate the right-hand side using Lemma \ref{lemma:adjperest} and the embedding $\bW^{2,1}_{s,\sigma}\times W^{2,1}_s \hookrightarrow \bL^\infty(Q)\times L^\infty(Q) $ and obtain
			\begin{align*}
				&\|\bB\|_{\bL^\infty}+\|B\|_{L^\infty}\leq \|\bvsigma\|_{\bL^\infty} + \|\Lambda\|_{L^\infty} + \|\widehat{\bw}-\bw\|_{\bL^\infty}+\|\widehat{\Psi} - \Psi\|_{L^\infty}\\
				&\leq \|\bvsigma\|_{\bL^\infty} + \|\Lambda\|_{L^\infty} + c\Big(\|\widehat{\bu}_0\|_{W^{2-2/s,s}_{0,\sigma}}+\|\widehat{\theta}_0\|_{W^{2-2/s,s}_{0,\sigma}}+\|\widehat{\blf}\|_{\bL^s}+\|\widehat{h}\|_{L^s} + \| \bta\|_{\bL^s} + \|\eta\|_{L^s}\Big).
			\end{align*}
			Thus, \eqref{estimate:controls} holds true. The estimate \eqref{estimate:states} follows from the estimate
			\begin{equation*}
				\| \bu - \widehat{\bu}\|_{\bL^2}+\| \theta - \widehat{\theta}\|_{L^2}\leq c\| \bro-\bro^\star \|_{\bL^1}^{1/2}.
			\end{equation*}
		\end{proof}

		\subsubsection{Solution with respect to linear perturbations}
		To demonstrate the meaning of \eqref{inclusion:1}, let us define the perturbed objective function
		\begin{align}
			&\begin{aligned}\label{pobjectivfunk}
				J_{\bzeta}(\bro):= \frac{\alpha_1}{2}\int_Q \vert \bu-\bu_{d}\vert^2\ \du x\du t + \frac{\alpha_2}{2}\int_Q \vert \theta-\theta_{d}\vert^2\ \du x\du t + \frac{\beta_1}{2}\int_\Omega \vert \bu(T)-\bu_{T}\vert^2 \ \text{d}x\\
				+ \frac{\beta_2}{2}\int_\Omega |\theta(T) - \theta_T|^2 \du x+\int_0^T \left[\int_{\omega_q}\bvsigma\cdot\, \bq\du x +\int_{\omega_h}\Lambda\Theta \du x\right] \du t+ \int_Q \bta\cdot\bu+ \eta\,\theta \du x\du t,
			\end{aligned}
		\end{align}
		while we also consider the perturbed state equations
		\begin{align}
			\partial_t \bu - \nu\Delta\bu + (\bu\cdot\nabla)\bu + \nabla p & = \be_2\theta + \blf + \widehat{\blf} + \bq\chi_{\omega_q} &&\text{in }Q,\label{pB1}\\
			\dive\bu & = 0 &&\text{in }Q,\label{pB2}\\
			\partial_t\theta -\kappa\Delta \theta + \bu\cdot\nabla\theta & = h + \widehat{h} + \Theta\chi_{\omega_h} &&\text{in }Q,\label{pB3}\\
			\bu = 0, \quad \theta & = 0 && \text{on }\Sigma,\label{pB4}\\
			\bu(0,\cdot) = \bu_0 + \widehat{\bu}_0, \quad \theta(0,\cdot) & = \theta_0+ \widehat{\theta}_0 && \text{in }\Omega,\label{pB5}
		\end{align}
		The goal is to investigate the behavior of the solution of the perturbed optimal control problem
		\begin{align}\label{peropticon}\tag{P$_{\bzeta}$}
			\min_{\bro\in \mathcal{U}} J_{\bzeta}(\bro) \text{ subject to }\eqref{pB1}-\eqref{pB5}
		\end{align}
		compared to the original problem \eqref{optcon}.
		
		We can easily find the derivatives of the perturbed objective functional just as in the case of the previous one. 
		\begin{theorem}
			The objective functional $J_{\bzeta}$ is of class $C^\infty$. Furthermore, the first and second variation can be calculated as stated below:
			\begin{align}\label{jderivative:per}
				\begin{aligned}
					&J_{\bzeta}'(\bro)(\delta\bro) =\alpha_1\int_Q(\widehat{\bu} -\bu_d)\cdot\widehat{\bv} \du x\du t+\alpha_2\int_Q ( \widehat{\theta} - \theta_d)\widehat{\vartheta} \du x\du t +\beta_1\int_\Omega (\widehat{\bu}(T)-\bu_T)\cdot\widehat{\bv}(T) \du x \\
					& + \beta_2\int_\Omega (\widehat{\theta}(T)-\theta_T)\widehat{\vartheta}(T)\du x+\int_0^T \left[\int_{\omega_q}\bvsigma\cdot\, \delta\bq\du x +\int_{\omega_h}\Lambda\delta\Theta \du x\right] \du t+ \int_Q \bta\cdot\widehat{\bv}+ \eta\,\widehat{\vartheta} \du x\du t\\
					&= \int_0^T\int_{\omega_q} (\widehat{\bw} + \bvsigma)\cdot\delta\bq \du x\du t + \int_0^T\int_{\omega_h} (\widehat{\Psi} + \Lambda)\delta\Theta \du x\du t
				\end{aligned}
			\end{align}
			\begin{align}
				\begin{aligned}
					J_{\bzeta}''(\bro)(\delta\bro, \delta\bro) &= \alpha_1 \|\widehat{\bv}\|_{\bL^2}^2 + \alpha_2\|\widehat{\vartheta}\|_{L^2}^2 + \beta_1 \|\widehat{\bv}(T) \|_{\bL^2}^2 + \beta_2\|\widehat{\vartheta}(T) \|_{L^2}^2\\
					&\quad - 2( (\widehat{\bv} \cdot\nabla)\widehat{\bv}, \widehat{\bw} )_Q - 2( \widehat{\bv}\cdot\nabla \widehat{\vartheta}, \widehat{\Psi})_Q.
				\end{aligned}
			\end{align}
			where $(\widehat{\bu},\widehat{\theta}):=\widehat{\mathcal{S}}(\bro)$, $(\widehat{\bv},\widehat{\vartheta}):=\widehat{\mathcal{S}}'(\bro)(\delta\bro)$, $(\widehat{\bw},\widehat{\Psi}):=  \widehat{\mathcal{D}}(\bro)$.
		\end{theorem}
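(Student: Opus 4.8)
The plan is to mirror the proof of the unperturbed theorem, exploiting that $J_{\bzeta}$ differs from $J$ only by terms that are affine in the control and affine in the state. The $C^\infty$ regularity is immediate: the perturbed control-to-state operator $\widehat{\mathcal{S}}$ is $C^\infty$ by the preceding proposition, the four tracking terms are smooth quadratic functionals precomposed with $\widehat{\mathcal{S}}$, the control-perturbation term $\int_0^T[\int_{\omega_q}\bvsigma\cdot\bq+\int_{\omega_h}\Lambda\Theta]\du t$ is linear in $\bro$, and the state-perturbation term $\int_Q\bta\cdot\bu+\eta\,\theta\,\du x\du t$ is linear in the state $\widehat{\mathcal{S}}(\bro)$; a composition of $C^\infty$ maps is $C^\infty$.

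For the first variation I would apply the chain rule term by term. Writing $(\widehat{\bu},\widehat{\theta})=\widehat{\mathcal{S}}(\bro)$ and $(\widehat{\bv},\widehat{\vartheta})=\widehat{\mathcal{S}}'(\bro)\delta\bro$, the tracking terms yield exactly $\alpha_1(\widehat{\bu}-\bu_d,\widehat{\bv})_Q+\alpha_2(\widehat{\theta}-\theta_d,\widehat{\vartheta})_Q$ together with the two terminal contributions; the control-perturbation term is linear in $\bro$ and differentiates directly to $\int_0^T[\int_{\omega_q}\bvsigma\cdot\delta\bq+\int_{\omega_h}\Lambda\,\delta\Theta]\du t$; and the state-perturbation term differentiates through the chain rule to $\int_Q\bta\cdot\widehat{\bv}+\eta\,\widehat{\vartheta}\,\du x\du t$. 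This produces the first displayed formula.

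The second equality follows from the adjoint identity. The crucial observation is that the combined source $(\alpha_1(\widehat{\bu}-\bu_d)+\bta,\ \alpha_2(\widehat{\theta}-\theta_d)+\eta)$ augmented by the terminal data $g_T(\beta_1(\widehat{\bu}(T)-\bu_T),\beta_2(\widehat{\theta}(T)-\theta_T))$ is \emph{precisely} the argument defining $\widehat{\mathcal{D}}(\bro)=(\widehat{\bw},\widehat{\Psi})$; in particular the state-linear perturbations $(\bta,\eta)$ are silently carried inside $(\widehat{\bw},\widehat{\Psi})$ through the adjoint source. Hence, pairing that functional against $\widehat{\mathcal{S}}'(\bro)\delta\bro$ and using $\widehat{\mathcal{S}}'(\bro)^*=D^*(L(\widehat{\mathcal{S}}(\bro))^{-1})^*$ gives
\[
\langle(\widehat{\bw},\widehat{\Psi}),\delta\bro\rangle=(\widehat{\bw},\delta\bq)_{\omega_q}+(\widehat{\Psi},\delta\Theta)_{\omega_h}.
\]
Adding back the undifferentiated control-perturbation contribution $\int_0^T[\int_{\omega_q}\bvsigma\cdot\delta\bq+\int_{\omega_h}\Lambda\,\delta\Theta]\du t$ merges it into $(\widehat{\bw}+\bvsigma,\widehat{\Psi}+\Lambda)$, exactly as claimed.

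For the second variation I would differentiate the first expression once more. The control-perturbation integrals are constant in $\bro$ and drop out; the tracking terms produce the squared norms $\alpha_1\|\widehat{\bv}\|_{\bL^2}^2+\alpha_2\|\widehat{\vartheta}\|_{L^2}^2+\beta_1\|\widehat{\bv}(T)\|_{\bL^2}^2+\beta_2\|\widehat{\vartheta}(T)\|_{L^2}^2$ together with second-order state terms, while $\int_Q\bta\cdot\widehat{\bv}+\eta\,\widehat{\vartheta}$ contributes the further second-order terms $\int_Q\bta\cdot\widetilde{\bv}+\eta\,\widetilde{\vartheta}$ with $(\widetilde{\bv},\widetilde{\vartheta})=\widehat{\mathcal{S}}''(\bro)[\delta\bro,\delta\bro]$. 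All of these second-order state terms assemble into the pairing of the $\widehat{\mathcal{D}}$-source with $\widehat{\mathcal{S}}''(\bro)[\delta\bro,\delta\bro]$; since the latter solves the linearized system with right-hand side $-[(\widehat{\bv}\cdot\nabla)\widehat{\bv},\ \widehat{\bv}\cdot\nabla\widehat{\vartheta}]$, testing against $(\widehat{\bw},\widehat{\Psi})$ exactly as in the unperturbed computation collapses them to $-2((\widehat{\bv}\cdot\nabla)\widehat{\bv},\widehat{\bw})_Q-2(\widehat{\bv}\cdot\nabla\widehat{\vartheta},\widehat{\Psi})_Q$. The argument presents no genuine obstacle, being structurally identical to the earlier theorem; the only point demanding care is the perturbation bookkeeping, namely verifying that $(\bta,\eta)$ enter via the adjoint source (and are thus absorbed into $(\widehat{\bw},\widehat{\Psi})$) whereas $(\bvsigma,\Lambda)$ enter additively and undifferentiated, so that the two displayed forms of $J_{\bzeta}'$ are consistent with the definition of $\widehat{\mathcal{D}}$.
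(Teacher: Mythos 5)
Your proposal is correct and follows essentially the same route the paper intends: the paper gives no separate proof here, remarking only that the derivatives are found "just as in the case of the previous one," i.e., by the chain rule through $\widehat{\mathcal{S}}$ and the adjoint identity, which is exactly what you carry out. Your bookkeeping of the perturbations — $(\bta,\eta)$ absorbed into the source of $\widehat{\mathcal{D}}$ while $(\bvsigma,\Lambda)$ enter additively and vanish at second order — matches the paper's definition of $\widehat{\mathcal{D}}$ and the stated formulas.
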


		We can thus see that a minimizer $\bro_{\bzeta}\in\mathcal{U}$ of \eqref{pobjectivfunk} satisfies \eqref{inclusion:1}, %may it be local or global, , 
		i.e., $\bzeta\in \digamma(\widehat{\mathcal{S}}(\bro_{\bzeta}),\widehat{\mathcal{D}}(\bro_{\bzeta}),\bro_{\bzeta})$. For simplicity, we also use the notation $\digamma(\bro) := \digamma(\widehat{\mathcal{S}}(\bro),\widehat{\mathcal{D}}(\bro),\bro)$
		The next theorem shows the convergence of solutions of the perturbed problem \eqref{peropticon} to a solution of \eqref{optcon}.

		\begin{theorem}
			Let $\{\bzeta_k\}\subset \mathcal{P}$ be a sequence satisfying $\|\bzeta_k\|_{\mathcal{P}}\to  0$ as $k\to \infty$. If $\bro_{\bzeta_k}\in\mathcal{U}$ is a global solution to \eqref{peropticon} for each $\bzeta_k$, then there exists $\bro^\star\in \mathcal{U}$ such that, up to a subsequence, $\bro_{\bzeta_k}\ws \bro^\star$ in $L^\infty(I;\bL^\infty(\omega_q)\times L^\infty(\omega_h))$. Furthermore, $\bro^\star\in\mathcal{U}$ is a global solution of \eqref{optcon} and $\widehat{\mathcal{S}}(\bro_{\bzeta_k})\to \mathcal{S}(\bro^\star)$ in $C(\overline{Q})^2\times C(\overline{Q})$.
			
			% Let $\{\bro_{\bzeta}\}$ be a familiy of global solution to the perturbed problems \eqref{pobjectivfunk}. Let $\bro^\star$ be a weak*-limit in $L^\infty(Q)$ of a subsequence $\{\bro_{\bzeta_k}\}_{k=1}^\infty$ with $\|\zeta_K\|_{\mathcal{P}}\to  0$ for $k\to \infty$. Then $\bro^\star$ is a global solution of unperturbed problem $J_0$. Additionally, we have the strong convergence $u\to u^\star$ in -- and $\theta \to \theta^\star$ in .
		\end{theorem}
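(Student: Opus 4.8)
The plan is to combine weak-$*$ compactness of the admissible set with the strong continuity of the control-to-state map and the perturbation estimate of \Cref{estpersta}. First I would extract a convergent subsequence. Since every $\bro_{\bzeta_k}$ lies in $\mathcal{U}$, the bound \eqref{boundcon} gives $\|\bro_{\bzeta_k}\|_{\bL^\infty\times L^\infty}\le M_{\mathcal{U}}$, so by the Banach--Alaoglu theorem there is a subsequence (not relabeled) and an element $\bro^\star$ with $\bro_{\bzeta_k}\ws\bro^\star$ in $L^\infty(I;\bL^\infty(\omega_q)\times L^\infty(\omega_h))$. The set $\mathcal{U}$ is convex and closed in $L^2$, hence weakly closed there; since on the bounded domain weak-$*$ convergence in $L^\infty$ implies weak convergence in $L^2$, we get $\bro^\star\in\mathcal{U}$. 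Because $Q$ has finite measure, the same weak-$*$ convergence also yields weak convergence in $\bL^s(I\times\omega_q)\times L^s(I\times\omega_h)$ for the fixed $s>2$ of \Cref{standingassumps}, which is exactly the mode of convergence required by the continuity lemma.

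Next I would establish the strong state convergence by splitting
\[
\widehat{\mathcal{S}}(\bro_{\bzeta_k})-\mathcal{S}(\bro^\star)=\big[\widehat{\mathcal{S}}(\bro_{\bzeta_k})-\mathcal{S}(\bro_{\bzeta_k})\big]+\big[\mathcal{S}(\bro_{\bzeta_k})-\mathcal{S}(\bro^\star)\big].
\]
The first bracket is controlled by \Cref{estpersta}: its $\bW^{2,1}_{s,\sigma}\times W^{2,1}_s$ norm is bounded by $c(\|\widehat{\bu}_0\|+\|\widehat{\theta}_0\|+\|\widehat{\blf}\|+\|\widehat{h}\|)\le c\|\bzeta_k\|_{\mathcal{P}}\to0$, and the embedding $\bW^{2,1}_{s,\sigma}\times W^{2,1}_s\hookrightarrow C(\overline{Q})^2\times C(\overline{Q})$ turns this into convergence to zero in $C(\overline{Q})$. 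The second bracket tends to zero in $C(\overline{Q})^2\times C(\overline{Q})$ by the continuity lemma applied to the fixed operator $\mathcal{S}$, using the weak convergence $\bro_{\bzeta_k}\rightharpoonup\bro^\star$ from the first step. Hence $\widehat{\mathcal{S}}(\bro_{\bzeta_k})\to\mathcal{S}(\bro^\star)=:(\bu^\star,\theta^\star)$ in $C(\overline{Q})^2\times C(\overline{Q})$.

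Finally I would pass to the limit in the optimality inequality. Fix an arbitrary $\bro\in\mathcal{U}$. Global optimality of $\bro_{\bzeta_k}$ for \eqref{peropticon} gives $J_{\bzeta_k}(\bro_{\bzeta_k})\le J_{\bzeta_k}(\bro)$. On the right, for the fixed $\bro$ the states $\widehat{\mathcal{S}}(\bro)$ converge to $\mathcal{S}(\bro)$ in $C(\overline{Q})$ (again by \Cref{estpersta}), and the linear perturbation terms built from $\bvsigma,\Lambda,\bta,\eta$ vanish because $\|\bzeta_k\|_{\mathcal{P}}\to0$ while the integrands stay bounded; thus $J_{\bzeta_k}(\bro)\to J(\bro)$. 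On the left, the strong $C(\overline{Q})$ convergence of the states established above makes every tracking and terminal term of $J_{\bzeta_k}(\bro_{\bzeta_k})$ converge to the corresponding term of $J(\bro^\star)$ — the terminal terms converge because uniform convergence on $\overline{Q}$ controls the traces at $t=T$ — while the perturbation terms again vanish; hence $J_{\bzeta_k}(\bro_{\bzeta_k})\to J(\bro^\star)$. Combining, $J(\bro^\star)\le J(\bro)$ for every $\bro\in\mathcal{U}$, so $\bro^\star$ is a global solution of \eqref{optcon}.

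The principal obstacle is the second step: the states of the perturbed problems are produced by $\widehat{\mathcal{S}}$, whereas the continuity lemma is stated only for the fixed operator $\mathcal{S}$. The resolution is exactly the triangle-inequality splitting above, which decouples the two sources of convergence — the vanishing data perturbations (handled quantitatively by \Cref{estpersta} together with the Amann embedding into $C(\overline{Q})$) and the weak-$*$ convergence of the controls (handled qualitatively by the compact-embedding continuity lemma). A minor technical point to check is that weak-$*$ convergence in $L^\infty$ transfers to the weak $L^s$ hypothesis of the continuity lemma, which holds since $Q$ has finite measure and $L^{s'}\subset L^1$.
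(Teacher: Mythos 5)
Your argument is correct and is precisely the standard direct-method reasoning (weak-$*$ compactness of $\mathcal{U}$, continuity of the control-to-state map under weak convergence, the perturbation estimate of \Cref{estpersta} to decouple $\widehat{\mathcal{S}}$ from $\mathcal{S}$, and passage to the limit in the optimality inequality) that the paper delegates to the cited reference \cite[Theorem 4.2]{casas2022} without spelling it out. Your version simply fills in the details of that same approach, including the correct observation that weak-$*$ convergence in $L^\infty$ on the finite-measure set transfers to the weak $L^s$ convergence required by the continuity lemma.
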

		\begin{proof}
			The claim follows from arguments similar to those in \cite[Theorem 4.2]{casas2022}.
		\end{proof}

		The following converse holds.
		\begin{theorem}
			Let $\bro^\star$ be a strict strong local solution to \eqref{optcon}. Then there exists a sequence of strong local minimizers $\bro_{\zeta_k}$ \eqref{peropticon} such that $\bro_{\bzeta_k}\ws \bro^\star$ in $L^\infty(I;\bL^\infty(\omega_q)\times L^\infty(\omega_h))$ and $\widehat{\mathcal{S}}(\bro_{\bzeta_k})\to \mathcal{S}(\bro^\star)$ in $C(\overline{Q})^2\times C(\overline{Q})$.
		\end{theorem}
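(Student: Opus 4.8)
The plan is to use the standard localization device for stability of local minimizers (the same philosophy as the preceding theorem and \cite[Theorem 4.2]{casas2022}): restrict each perturbed problem to a fixed strong neighborhood of $\bro^\star$ on which $\bro^\star$ is the unique minimizer of the unperturbed problem, solve the restricted problems by the direct method, and then verify that the restricting constraint is inactive so that the restricted minimizers are genuine strong local minimizers of \eqref{peropticon}.

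Since $\bro^\star$ is a \emph{strict} strong local solution, I would fix $\alpha>0$ so that $\bro^\star$ is the \emph{unique} global minimizer of $J$ over $\mathcal{U}^\alpha:=\{\bro\in\mathcal{U}:\|\mathcal{S}(\bro)-\mathcal{S}(\bro^\star)\|_{C(\overline{Q})^2\times C(\overline{Q})}\le\alpha\}$ (for continuous states this sup-norm agrees with the $\bL^\infty\times L^\infty$ norm used in the definition of strong local minimality). Let $\{\bzeta_k\}\subset\mathcal{P}$ with $\|\bzeta_k\|_{\mathcal{P}}\to0$ and denote by $\widehat{\mathcal{S}}_k$ the perturbed control-to-state operator for $\bzeta_k$. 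I would consider the localized perturbed problems of minimizing $J_{\bzeta_k}$ over $\bro\in\mathcal{U}$ subject to $\|\widehat{\mathcal{S}}_k(\bro)-\mathcal{S}(\bro^\star)\|_{C(\overline{Q})^2\times C(\overline{Q})}\le\alpha$. By \Cref{estpersta} and the embedding $\bW^{2,1}_{s,\sigma}\times W^{2,1}_s\hookrightarrow C(\overline{Q})^2\times C(\overline{Q})$ for $s>2$, one has $\|\widehat{\mathcal{S}}_k(\bro^\star)-\mathcal{S}(\bro^\star)\|_{C}\le c\|\bzeta_k\|_{\mathcal{P}}\to0$, so $\bro^\star$ is feasible for the localized problem once $k$ is large. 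Existence of a minimizer $\bro_{\bzeta_k}$ then follows from the direct method, using that $\mathcal{U}$ is bounded, convex and weak-$\ast$ sequentially closed in $L^\infty$, that the control-to-state operator maps weak-$\ast$ convergent sequences to $C(\overline{Q})$-convergent states (so the constraint set is weak-$\ast$ closed, again with the help of \Cref{estpersta}), and that $J_{\bzeta_k}$ is weak-$\ast$ lower semicontinuous.

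To pass to the limit, note that $\{\bro_{\bzeta_k}\}$ is bounded in $L^\infty$, hence along a subsequence $\bro_{\bzeta_k}\ws\tilde\bro\in\mathcal{U}$. Splitting $\|\widehat{\mathcal{S}}_k(\bro_{\bzeta_k})-\mathcal{S}(\tilde\bro)\|_{C}\le\|\widehat{\mathcal{S}}_k(\bro_{\bzeta_k})-\mathcal{S}(\bro_{\bzeta_k})\|_{C}+\|\mathcal{S}(\bro_{\bzeta_k})-\mathcal{S}(\tilde\bro)\|_{C}$ and bounding the first summand by $c\|\bzeta_k\|_{\mathcal{P}}$ (\Cref{estpersta}) and the second by the continuity of the control-to-state operator yields $\widehat{\mathcal{S}}_k(\bro_{\bzeta_k})\to\mathcal{S}(\tilde\bro)$ in $C(\overline{Q})^2\times C(\overline{Q})$, so $\tilde\bro\in\mathcal{U}^\alpha$. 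Since the tracking part of $J_{\bzeta_k}$ is continuous under $C$-convergence of states while the linear perturbation terms (those carrying $\bvsigma,\Lambda,\bta,\eta$) tend to zero with $\bzeta_k$, we obtain $J_{\bzeta_k}(\bro_{\bzeta_k})\to J(\tilde\bro)$ and $J_{\bzeta_k}(\bro^\star)\to J(\bro^\star)$. Optimality of $\bro_{\bzeta_k}$ and feasibility of $\bro^\star$ then give $J(\tilde\bro)=\lim_k J_{\bzeta_k}(\bro_{\bzeta_k})\le\lim_k J_{\bzeta_k}(\bro^\star)=J(\bro^\star)$. Because $\tilde\bro\in\mathcal{U}^\alpha$ and $\bro^\star$ is the unique minimizer there, strictness forces $\tilde\bro=\bro^\star$; as the limit is independent of the subsequence, the whole sequence satisfies $\bro_{\bzeta_k}\ws\bro^\star$ in $L^\infty$ and $\widehat{\mathcal{S}}_k(\bro_{\bzeta_k})\to\mathcal{S}(\bro^\star)$ in $C(\overline{Q})^2\times C(\overline{Q})$.

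It remains to upgrade the localized minimizers to strong local minimizers of the full problem \eqref{peropticon}. From $\widehat{\mathcal{S}}_k(\bro_{\bzeta_k})\to\mathcal{S}(\bro^\star)$ in $C$, for $k$ large the strict inequality $\|\widehat{\mathcal{S}}_k(\bro_{\bzeta_k})-\mathcal{S}(\bro^\star)\|_{C}<\alpha$ holds, so the localizing constraint is inactive at $\bro_{\bzeta_k}$; setting $r_k:=\alpha-\|\widehat{\mathcal{S}}_k(\bro_{\bzeta_k})-\mathcal{S}(\bro^\star)\|_{C}>0$, every $\bro\in\mathcal{U}$ with $\|\widehat{\mathcal{S}}_k(\bro)-\widehat{\mathcal{S}}_k(\bro_{\bzeta_k})\|_{C}<r_k$ stays feasible for the localized problem by the triangle inequality and hence satisfies $J_{\bzeta_k}(\bro)\ge J_{\bzeta_k}(\bro_{\bzeta_k})$, which is exactly strong local minimality. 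The only genuinely delicate point is the combination of the limit passage with strictness, namely guaranteeing that the subsequential weak-$\ast$ limit equals $\bro^\star$ (rather than merely a control sharing its state); this is where \emph{strict} is read as uniqueness of the minimizing control on $\mathcal{U}^\alpha$. Everything else is routine direct-method and perturbation-estimate bookkeeping.
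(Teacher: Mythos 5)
Your proof is correct and is essentially the argument the paper delegates to \cite[Theorem 4.3]{casas2022}: localize each perturbed problem to a fixed strong neighborhood of $\bro^\star$, obtain minimizers by the direct method using the weak-$\ast$-to-$C(\overline{Q})$ continuity of the (perturbed) control-to-state map and \Cref{estpersta}, identify the limit via strictness, and then observe the localizing constraint is inactive for large $k$. No substantive deviation from the paper's intended route, and the one delicate point you flag (strictness as uniqueness of the minimizing control on the localized set) is handled correctly.
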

		\begin{proof}
			The claim follows from arguments similar to those in \cite[Theorem 4.3]{casas2022}.
		\end{proof}
		
		As an application of the obtained regularity, we get the stability of the solution of \eqref{peropticon} with respect to linear perturbations on the optimal control problem \eqref{optcon}.
		
		% \begin{align}\label{pobjectivfunk}
			%     \min_{\bro\in \mathcal{U}}\, J_{\bzeta}(\bro):= \frac{\alpha_1}{2}\int_Q \vert \bu-\bu_{d}\vert^2\ \du x\du t + \frac{\alpha_2}{2}\int_Q \vert \theta-\theta_{d}\vert^2\ \du x\du t + \frac{\beta_1}{2}\int_\Omega \vert \bu(T)-\bu_{T}\vert^2 \ \text{d}x\\
			%     + \frac{\beta_2}{2}\int_\Omega |\theta(T) - \theta_T| \du x+\int_0^T \left[\int_{\omega_q}\bvsigma\cdot\, \bq\du x +\int_{\omega_h}\Lambda\Theta \du x\right] \du t+ \int_Q \bta\cdot\bu+ \eta\,\theta \du x\du t \nonumber
			% \end{align}
		% subject to \eqref{box} and
		%  \begin{align}
			%         \partial_t \bu - \nu\Delta\bu + (\bu\cdot\nabla)\bu + \nabla p & = \be_2\theta + \blf + \widehat{\blf} + \bq\chi_{\omega_q} &&\text{in }Q,\label{pB1}\\
			%         \dive\bu & = 0 &&\text{in }Q,\label{pB2}\\
			%         \partial_t\theta -\kappa\Delta \theta + \bu\cdot\nabla\theta & = h + \widehat{h} + \Theta\chi_{\omega_h} &&\text{in }Q\label{pB3},\\
			%         \bu = 0, \quad \theta & = 0 && \text{on }\Sigma,\label{pB4}\\
			%         \bu(0,\cdot) = \bu_0 + \widehat{\bu}_0, \quad \theta(0,\cdot) & = \theta_0+ \widehat{\theta}_0 && \text{in }\Omega,\label{pB5}
			%     \end{align}

		\begin{theorem}\label{theorem:pertproblin}
			Let $\bro^\star\in \mathcal{U}$ be a weak local solution to \eqref{optcon}. Let Assumption \ref{growth1} be satisfied for a $\tau=1$. Then, there exist positive constants $c,\alpha $ such that
			\begin{align*}
				\begin{aligned}
					&\|\bq-\bq^\star\|_{L^1(I\times\omega_q)}+\|\Theta-\Theta^\star\|_{L^1(I\times\omega_h)}\\
					&\leq c \Big( \|\bvsigma\|_{\bL^\infty(I\times\omega_q)}+\|\Lambda\|_{L^\infty(I\times\omega_h)}+ \|\widehat{\bu}_0\|_{W^{2-2/s,s}_{0,\sigma}}+\|\widehat{\theta}_0\|_{W^{2-2/s,s}_{0,\sigma}}+\|\widehat{\blf}\|_{\bL^s}+\|\widehat{h}\|_{L^s} + \| \bta\|_{\bL^s} + \|\eta\|_{L^s} \Big)^{1/\mu}
				\end{aligned}
			\end{align*}
			for all local minimizers $\bro\in \mathcal{U}$ of \eqref{pobjectivfunk}  with $\|\bro-\bro^\star \|_{\bL^1\times L^1}<\alpha$.
		\end{theorem}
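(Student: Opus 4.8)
The estimate is, up to bookkeeping of norms, a direct corollary of the strong metric Hölder subregularity already established in \Cref{theorem:smhsr}. The plan is therefore not to redo any analysis, but to verify that every local minimizer of the perturbed problem \eqref{pobjectivfunk} feeds into the inclusion \eqref{inclusion:1}, apply the control estimate \eqref{estimate:controls}, and then expand the abstract perturbation norm $\|\bzeta\|_{\mathcal{P}}$ into the explicit sum appearing on the right-hand side. So the first step is to show that a local minimizer $\bro\in\mathcal{U}$ of \eqref{pobjectivfunk} with $\|\bro-\bro^\star\|_{\bL^1\times L^1}<\alpha$ satisfies $\bzeta\in\digamma(\bro)$, where $\bzeta=(\widehat{\blf},\widehat{h},\widehat{\bu}_0,\widehat{\theta}_0,\bta,\eta,0,0,-\bvsigma,-\Lambda)$.

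To obtain the inclusion I would invoke the first-order necessary condition for \eqref{peropticon}: local optimality yields $J_{\bzeta}'(\bro)(\widetilde{\bro}-\bro)\ge 0$ for all $\widetilde{\bro}\in\mathcal{U}$. Using the adjoint representation of the first variation in \eqref{jderivative:per}, this variational inequality reads
\begin{equation*}
\int_0^T\left[\int_{\omega_q}(\widehat{\bw}+\bvsigma)\cdot(\widetilde{\bq}-\bq)\du x+\int_{\omega_h}(\widehat{\Psi}+\Lambda)(\widetilde{\Theta}-\Theta)\du x\right]\du t\ge 0,
\end{equation*}
which is precisely $-(\bvsigma,\Lambda)\in\widehat{\mathcal{D}}(\bro)+\mathcal{N}_{\mathcal{U}}(\bro)$. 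Coupled with the defining identities $(\bu,\theta)=\widehat{\mathcal{S}}(\bro)$ and $(\bw,\Psi)=\widehat{\mathcal{D}}(\bro)$, which are encoded in the first two blocks of $\digamma$ through $\mathcal{F}$ and $\mathcal{G}$, this gives exactly $\bzeta\in\digamma(\bro)=\digamma(\widehat{\mathcal{S}}(\bro),\widehat{\mathcal{D}}(\bro),\bro)$.

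With the inclusion in hand, and since $\bro^\star$ satisfies \Cref{growth1} with $\tau=1$, \Cref{theorem:smhsr} applies and \eqref{estimate:controls} yields $\|\bro-\bro^\star\|_{\bL^1\times L^1}\le c\,\|\bzeta\|_{\mathcal{P}}^{1/\mu}$ on a neighborhood of radius $\alpha$. It then remains only to bound $\|\bzeta\|_{\mathcal{P}}$, which by definition is the sum of the norms of its components in the respective spaces; the terms $\bta,\eta$ enter $\mathcal{P}$ through $\bL^\infty(Q)\times L^\infty(Q)$, so on the bounded set $Q$ the embedding $L^\infty(Q)\hookrightarrow L^s(Q)$ reproduces the $\|\bta\|_{\bL^s}+\|\eta\|_{L^s}$ contributions, exactly as in the intermediate estimate of the proof of \Cref{theorem:smhsr} via \Cref{lemma:adjperest}. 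Recalling that $\|\bq-\bq^\star\|_{L^1(I\times\omega_q)}+\|\Theta-\Theta^\star\|_{L^1(I\times\omega_h)}=\|\bro-\bro^\star\|_{\bL^1\times L^1}$ closes the argument. I do not expect a genuine obstacle here: the hard content resides in \Cref{perpcest} and \Cref{theorem:smhsr}. The only point demanding care is consistency of constants—one must take $\alpha$ as the minimum of the radii arising in \Cref{growth1}, \Cref{secestc}, and \Cref{theorem:smhsr}, so that every admissible local minimizer $\bro$ with $\|\bro-\bro^\star\|_{\bL^1\times L^1}<\alpha$ lies in the region where all cited estimates are simultaneously valid.
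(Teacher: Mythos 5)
Your proposal is correct and follows exactly the paper's route: the paper's proof is the two-sentence observation that a local minimizer of \eqref{pobjectivfunk} satisfies $\bzeta\in\digamma(\bro)$ (via the first-order condition and \eqref{jderivative:per}, as you spell out) followed by a direct application of \Cref{theorem:smhsr}. Your additional care in verifying the normal-cone inclusion, expanding $\|\bzeta\|_{\mathcal{P}}$, and synchronizing the radius $\alpha$ with those of \Cref{growth1}, \Cref{secestc} and \Cref{theorem:smhsr} merely fills in details the paper leaves implicit.
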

		\begin{proof}
			One can easily check that because $\bro\in\mathcal{U}$ is a local minimizer of \eqref{pobjectivfunk} then $\bzeta \in \digamma(\bro)$. We then directly apply \Cref{theorem:smhsr} to obtain the desired estimate.
		\end{proof}
		
		\begin{remark}
			In Theorem \ref{theorem:pertproblin}, stability of the optimal controls and states under linear perturbations was considered. The approach used also allows for nonlinear perturbations. To not further extend the presentation, we refer to \cite[Section 5]{dominguez2022} and \cite[Section 5]{jork2023} a detailed discussion on dealing with nonlinear perturbations.  As an example, we present the Tikhonov regularization in the subsection below.
		\end{remark}
		
		\subsubsection{Stability with respect to the Tikhonov regularization and the desired data}
		
		As a special case of the perturbed problem above, we may consider the Tikhonov regularization of the  problem with additional disturbances in the initial data and the data that is to be tracked; that is, we may consider
		\begin{align}\label{pobjectivfunkTik}
			\min_{\bro\in \mathcal{U}}J_{\mathcal{T}}(\bro)&:= \frac{\alpha_1}{2}\int_Q \vert \bu-(\bu_{d}+\widehat{\bu}_d)\vert^2\ \du x\du t + \frac{\alpha_2}{2}\int_Q \vert \theta-(\theta_{d}+\widehat{\theta}_d)\vert^2\ \du x\du t \\
			&+ \frac{\alpha_T}{2}\int_\Omega \vert \bu(T,\cdot)-\bu_{T}\vert^2 \ \text{d}x+\int_0^T\left[\frac{\varepsilon_1}{2}\int_{\omega_q} \vert\bq \vert^2 \du x+ \frac{\varepsilon_1}{2}\int_{\omega_h} \vert\Theta \vert^2 \du x\right]\du t\nonumber
		\end{align}
		subject to \eqref{box} and \eqref{pB1}-\eqref{pB5}. 
		
		We can easily find the derivatives of objective functional just as in the case of the previous one. 
		\begin{theorem}
			The objective functional $J_{\mathcal{T}}$ is of class $C^\infty$. Furthermore, the first and second variation can be calculated as stated below:
			\begin{align}\label{jderivative:per}
				\begin{aligned}
					J_{\mathcal{T}}'(\bro)(\delta\bro) =&\, \alpha_1\int_Q(\widehat{\bu} -\bu_{d} - \widehat{\bu}_d)\cdot\widehat{\bv} \du x\du t+\alpha_2\int_Q ( \widehat{\theta} - \theta_d - \widehat{\theta}_d)\widehat{\vartheta} \du x\du t +\beta_1\int_\Omega (\widehat{\bu}(T)-\bu_T)\cdot\widehat{\bv}(T) \du x \\
					& + \beta_2\int_\Omega (\widehat{\theta}(T)-\theta_T)\widehat{\vartheta}(T)\du x+\int_0^T \left[\varepsilon_1\int_{\omega_q}\bq\cdot\, \delta\bq\du x +\varepsilon_2\int_{\omega_h}\Theta\, \delta\Theta \du x\right] \du t\\
					=&\, \int_0^T\int_{\omega_q} (\widehat{\bw} + \varepsilon_1\bq)\cdot\delta\bq \du x\du t + \int_0^T\int_{\omega_h} (\widehat{\Psi} + \varepsilon_2\Theta)\delta\Theta \du x\du t,
				\end{aligned}
			\end{align}
			\begin{align}
				\begin{aligned}
					J_{\mathcal{T}}''(\bro)(\delta\bro, \delta\bro) &= \alpha_1 \|\widehat{\bv}\|_{\bL^2}^2 + \alpha_2\|\widehat{\vartheta}\|_{L^2}^2 + \beta_1 \|\widehat{\bv}(T) \|_{\bL^2}^2 + \beta_2\|\widehat{\vartheta}(T) \|_{L^2}^2  + \varepsilon_1\|\bq\|_{L^2(I;\bL^2(\omega_q))}^2 \\
					&\quad + \varepsilon_2\|\Theta\|_{L^2(I;L^2(\omega_h))}^2  - 2( (\widehat{\bv} \cdot\nabla)\widehat{\bv}, \widehat{\bw} )_Q - 2( \widehat{\bv}\cdot\nabla \widehat{\vartheta}, \widehat{\Psi})_Q.
				\end{aligned}
			\end{align}
			where $(\widehat{\bu},\widehat{\theta}):=\widehat{\mathcal{S}}(\bro)$, $(\widehat{\bv},\widehat{\vartheta}):=\widehat{\mathcal{S}}'(\bro)(\delta\bro)$, $(\widehat{\bw},\widehat{\Psi}):=  \widehat{\mathcal{D}}(\bro)$ with $\bta = -\alpha_1\widehat{\bu}_d$ and $\eta = -\alpha_2\widehat{\theta}_d$
		\end{theorem}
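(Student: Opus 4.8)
The plan is to treat $J_{\mathcal{T}}$ as a minor variant of the perturbed functional $J_{\bzeta}$ and to reuse the chain-rule computation already carried out for $J$ and for $J_{\bzeta}$. First I would observe that $J_{\mathcal{T}}$ is the composition of the map $\bro\mapsto\widehat{\mathcal{S}}(\bro)=(\widehat{\bu},\widehat{\theta})$ with the quadratic tracking functional of the states, augmented by the control penalty $\frac{\varepsilon_1}{2}\|\bq\|_{L^2(I;\bL^2(\omega_q))}^2+\frac{\varepsilon_2}{2}\|\Theta\|_{L^2(I;L^2(\omega_h))}^2$. Since $\widehat{\mathcal{S}}$ is of class $C^\infty$ by the preceding proposition (itself a consequence of the implicit function theorem applied to $\mathcal{F}$), since the tracking part is a smooth quadratic map of the states, and since the penalty is a bounded quadratic form in the control, the functional $J_{\mathcal{T}}$ is of class $C^\infty$ and its derivatives are obtained by differentiating each summand separately.

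For the first variation I would apply the chain rule to the tracking terms, using $\widehat{\mathcal{S}}'(\bro)(\delta\bro)=(\widehat{\bv},\widehat{\vartheta})$, and differentiate the penalty directly to produce $\varepsilon_1\int_0^T\!\int_{\omega_q}\bq\cdot\delta\bq\,\du x\du t+\varepsilon_2\int_0^T\!\int_{\omega_h}\Theta\,\delta\Theta\,\du x\du t$; this already gives the first displayed expression for $J_{\mathcal{T}}'$. To reach the adjoint form I would invoke the duality identity established earlier, namely $\langle(\widehat{\bw},\widehat{\Psi}),\delta\bro\rangle=\langle(\alpha_1(\widehat{\bu}-\bu_d)+\bta,\,\alpha_2(\widehat{\theta}-\theta_d)+\eta)+g_T(\cdots),\,\widehat{\mathcal{S}}'(\bro)\delta\bro\rangle$ with $(\widehat{\bw},\widehat{\Psi})=\widehat{\mathcal{D}}(\bro)$. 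The only point needing verification is that the prescribed shift $\bta=-\alpha_1\widehat{\bu}_d$, $\eta=-\alpha_2\widehat{\theta}_d$ reproduces the perturbed residuals, i.e. $\alpha_1(\widehat{\bu}-\bu_d)+\bta=\alpha_1(\widehat{\bu}-\bu_d-\widehat{\bu}_d)$ and likewise for the temperature, so that $\widehat{\mathcal{D}}(\bro)$ is exactly the adjoint state of the perturbed tracking cost. Substituting turns the state integrals into $\int_0^T\!\int_{\omega_q}(\widehat{\bw}+\varepsilon_1\bq)\cdot\delta\bq\,\du x\du t+\int_0^T\!\int_{\omega_h}(\widehat{\Psi}+\varepsilon_2\Theta)\,\delta\Theta\,\du x\du t$.

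For the second variation I would differentiate once more. The four tracking terms yield the quadratic contributions $\alpha_1\|\widehat{\bv}\|_{\bL^2}^2+\alpha_2\|\widehat{\vartheta}\|_{L^2}^2+\beta_1\|\widehat{\bv}(T)\|_{\bL^2}^2+\beta_2\|\widehat{\vartheta}(T)\|_{L^2}^2$ together with residual terms paired against the second-order state $\widehat{\mathcal{S}}''(\bro)[\delta\bro,\delta\bro]$. Exactly as in the computation of $J''$ for the unperturbed problem, these residual terms are rewritten through the adjoint system solved by $(\widehat{\bw},\widehat{\Psi})$ and, upon inserting the characterization of the right-hand side of $\widehat{\mathcal{S}}''$ (namely $\bF=-[(\widehat{\bv}\cdot\nabla)\widehat{\bv}+\cdots]$ and $G=-[\widehat{\bv}\cdot\nabla\widehat{\vartheta}+\cdots]$), collapse to $-2((\widehat{\bv}\cdot\nabla)\widehat{\bv},\widehat{\bw})_Q-2(\widehat{\bv}\cdot\nabla\widehat{\vartheta},\widehat{\Psi})_Q$. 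The penalty, being an exact quadratic form in the control, contributes its Hessian $\varepsilon_1\|\delta\bq\|_{L^2(I;\bL^2(\omega_q))}^2+\varepsilon_2\|\delta\Theta\|_{L^2(I;L^2(\omega_h))}^2$, which completes the stated formula.

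I expect no genuine obstacle here: the argument is structurally identical to the already-established derivative formulas for $J$ and $J_{\bzeta}$, and the paper itself signals this by remarking that the derivatives are found ``just as in the case of the previous one.'' The only care required is the bookkeeping that absorbs the perturbed desired data $\widehat{\bu}_d,\widehat{\theta}_d$ into the linear shift $(\bta,\eta)$, so that the single operator $\widehat{\mathcal{D}}$ plays the role of the adjoint for the perturbed tracking cost; once this identification is in place, the two derivative formulas follow by routine differentiation.
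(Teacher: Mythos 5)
Your proposal is correct and follows essentially the same route the paper intends: the paper states this theorem without proof, remarking only that the derivatives are found ``just as in the case of the previous one,'' i.e.\ by the chain rule through $\widehat{\mathcal{S}}$ together with the adjoint identity for $\widehat{\mathcal{D}}$, which is exactly what you do, including the key bookkeeping step of absorbing $\widehat{\bu}_d,\widehat{\theta}_d$ into $(\bta,\eta)$. One small point: your (correct) computation of the Hessian of the Tikhonov penalty yields $\varepsilon_1\|\delta\bq\|_{L^2(I;\bL^2(\omega_q))}^2+\varepsilon_2\|\delta\Theta\|_{L^2(I;L^2(\omega_h))}^2$, whereas the displayed formula in the theorem writes $\varepsilon_1\|\bq\|^2+\varepsilon_2\|\Theta\|^2$; your version is the right one and the stated formula appears to contain a typo, so you should not claim your term ``completes the stated formula'' verbatim but rather flag the discrepancy.
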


		Below, we obtain the estimate for the Tikhonov regularized problem.
		\begin{theorem}
			Let $\bro^\star\in \mathcal{U}$ be a weak local solution to \eqref{optcon}. Let Assumption \ref{growth1} be satisfied for $\tau = 1$. Then, there exist positive constants $c,\alpha$ such that
			\begin{align*}
				\begin{aligned}
					&\|\bq-\bq^\star\|_{L^1(I\times\omega_q)}+\|\Theta-\Theta^\star\|_{L^1(I\times\omega_h)}\\
					&\leq c \Big( \varepsilon_1 + \varepsilon_2 + \|\widehat{\bu}_0\|_{W^{2-2/s,s}_{0,\sigma}}+\|\widehat{\theta}_0\|_{W^{2-2/s,s}_{0,\sigma}}+\|\widehat{\blf}\|_{\bL^s}+\|\widehat{h}\|_{L^s} + \| \widehat{\bu}_d\|_{\bL^s} + \|\widehat{\theta}_d\|_{L^s} \Big)^{1/\mu},
				\end{aligned}
			\end{align*}
			for all local minimizers $\bro\in \mathcal{U}$ of \eqref{pobjectivfunkTik}  with $\|\bro-\bro^\star \|_{\bL^1\times L^1}<\alpha$.  
		\end{theorem}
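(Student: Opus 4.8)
The plan is to recognize the Tikhonov-regularized problem \eqref{pobjectivfunkTik} as a particular instance of the linearly perturbed problem \eqref{peropticon}, so that \Cref{theorem:pertproblin} applies almost verbatim. Comparing the first variation of $J_{\mathcal{T}}$ displayed above with that of $J_{\bzeta}$, the Tikhonov terms contribute $\varepsilon_1\bq$ and $\varepsilon_2\Theta$ in exactly the positions where $\bvsigma$ and $\Lambda$ appear in $J_{\bzeta}'$, while the disturbances in the desired data enter through the adjoint $\widehat{\mathcal{D}}(\bro)$ via $\bta=-\alpha_1\widehat{\bu}_d$ and $\eta=-\alpha_2\widehat{\theta}_d$, as already noted in the derivative computation. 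First I would fix a local minimizer $\bro\in\mathcal{U}$ of \eqref{pobjectivfunkTik} with $\|\bro-\bro^\star\|_{\bL^1\times L^1}<\alpha$ and write out its first-order necessary condition $J_{\mathcal{T}}'(\bro)(\bro'-\bro)\ge 0$ for all $\bro'\in\mathcal{U}$. Using the displayed form of $J_{\mathcal{T}}'$, this translates into the inclusion $-(\varepsilon_1\bq,\varepsilon_2\Theta)\in\widehat{\mathcal{D}}(\bro)+\mathcal{N}_{\mathcal{U}}(\bro)$, that is, $\bzeta\in\digamma(\bro)$ with the perturbation tuple $\bvsigma=\varepsilon_1\bq$, $\Lambda=\varepsilon_2\Theta$, $\bta=-\alpha_1\widehat{\bu}_d$, $\eta=-\alpha_2\widehat{\theta}_d$, and the remaining components $\widehat{\blf},\widehat{h},\widehat{\bu}_0,\widehat{\theta}_0$ carried over unchanged.

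Once the inclusion is established, I would invoke \Cref{theorem:pertproblin} (equivalently the subregularity bound \eqref{estimate:controls} of \Cref{theorem:smhsr}) to obtain
\begin{align*}
&\|\bq-\bq^\star\|_{L^1(I\times\omega_q)}+\|\Theta-\Theta^\star\|_{L^1(I\times\omega_h)}\\
&\leq c\Big(\|\bvsigma\|_{\bL^\infty(I\times\omega_q)}+\|\Lambda\|_{L^\infty(I\times\omega_h)}+\|\widehat{\bu}_0\|_{W^{2-2/s,s}_{0,\sigma}}+\|\widehat{\theta}_0\|_{W^{2-2/s,s}_{0,\sigma}}+\|\widehat{\blf}\|_{\bL^s}+\|\widehat{h}\|_{L^s}+\|\bta\|_{\bL^s}+\|\eta\|_{L^s}\Big)^{1/\mu}.
\end{align*}
It then remains to convert the right-hand side into the stated form. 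Since every admissible control obeys the uniform bound \eqref{boundcon}, I would estimate $\|\bvsigma\|_{\bL^\infty(I\times\omega_q)}=\varepsilon_1\|\bq\|_{\bL^\infty(I\times\omega_q)}\le \varepsilon_1 M_{\mathcal{U}}$ and likewise $\|\Lambda\|_{L^\infty(I\times\omega_h)}\le\varepsilon_2 M_{\mathcal{U}}$; moreover $\|\bta\|_{\bL^s}=\alpha_1\|\widehat{\bu}_d\|_{\bL^s}$ and $\|\eta\|_{L^s}=\alpha_2\|\widehat{\theta}_d\|_{L^s}$. Absorbing the fixed constants $M_{\mathcal{U}},\alpha_1,\alpha_2$ into $c$ then yields exactly the claimed estimate.

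The only genuinely delicate point is the self-referential nature of the perturbation: here $\bvsigma=\varepsilon_1\bq$ and $\Lambda=\varepsilon_2\Theta$ depend on the very minimizer $\bro$ whose distance to $\bro^\star$ we are bounding, so the Tikhonov term is a nonlinear, rather than a fixed linear, perturbation. This turns out to be harmless because the estimate of \Cref{theorem:pertproblin} is applied for each such $\bro$ separately, and its right-hand side depends on $(\bvsigma,\Lambda)$ only through their $L^\infty$ norms, which are controlled \emph{a priori} by $\varepsilon_i M_{\mathcal{U}}$ independently of $\bro$ thanks to \eqref{boundcon}. Hence no fixed-point or circularity argument is needed, and the proof reduces to the identification of the perturbation data with the abstract framework followed by the uniform control-bound substitution.
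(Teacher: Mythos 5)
Your proposal is correct and follows essentially the same route as the paper: the paper's proof likewise identifies $\bzeta = (\widehat{\blf},\widehat{h},\widehat{\bu}_0,\widehat{\theta}_0,-\alpha_1\widehat{\bu}_d,-\alpha_2\widehat{\theta}_d,0,0,-\varepsilon_1\bq,-\varepsilon_2\Theta)\in\digamma(\bro)$ and then applies \Cref{theorem:smhsr} together with the uniform bound \eqref{boundcon}. Your additional remark on the self-referential nature of the Tikhonov perturbation and why it is harmless is a worthwhile clarification, but the underlying argument is identical.
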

		\begin{proof}
			Because $\bro\in\mathcal{U}$ is a weak local minimizer of \eqref{pobjectivfunkTik} we see that $$\bzeta = (\widehat{\blf},\widehat{h},\widehat{\bu}_0,\widehat{\theta}_0,-\alpha_1\widehat{\bu}_d,-\alpha_2\widehat{\theta}_d,0,0,-\varepsilon_1\bq,-\varepsilon_2\Theta)\in\digamma(\bro).$$
			A direct application of \Cref{theorem:smhsr} and \eqref{boundcon} will thus give us the desired estimate.
		\end{proof}
		\begin{remark}
			We mention that the underlying assumption to achieve this stability is that $\tau = 1$ in \Cref{growth1}. Nevertheless, we can also achieve such stability with $\tau = 1/2$, see the proof of \cite[Theorem 5.2]{DJNS2023}. We decided not to repeat the proof here since if the second variation is nonnegative both the assumptions are equivalent.
		\end{remark}
		\begin{remark}
			If both Assumption \ref{growth1} and Assumption \ref{growth2} are satisfied at the same time, we can extend the interval for feasible $\mu$ from $[1,2)$ to $[1, \infty)$. 
			See the arguments in \cite[Remark 6.10]{J2023}.
		\end{remark}

		\subsection{Stability under Assumption \ref{growth2}}\label{sec6}
		To obtain the error estimates for the optimal controls we used Assumption \ref{growth1} which deals with the growth of the objective functional with regards to the controls. Under Assumption \ref{growth2}, which employs a growth of the objective functional with respect to the states, we can still obtain stability estimates. However, Assumption \ref{growth2} allows only stability estimates of the states and adjoint states, and we can not utilize the normal cone formulation and Lemma \ref{perpcest}. Instead, we prove the estimates directly. In what follows, we assume $\beta_1 = \beta_2 = 0$, and use perturbations of the form $\bzeta_0 = (\widehat{\blf},\widehat{h},\widehat{\bu}_0,\widehat{\theta}_0,\bta,\eta,0,0,0,0)$ if not otherwise indicated.
		\begin{theorem}\label{thm:stabstate}
			Let $\bro^\star \in \mathcal{U}$ satisfy Assumption \ref{growth2} for $\tau=1$, and $(\bu^\star,\theta^\star) = \mathcal{S}(\bro^\star)$. Then there exists constants $c, \alpha$ such that
			\begin{equation}\label{statestab}
				\| \bu -\bu^\star \|_{\bL^2}+\| \theta -\theta^\star\|_{L^2} \leq c\| \bzeta_0 \|_{\mathcal{P}}
			\end{equation}
			for all $\bro \in \mathcal{U}$ with $\bzeta_0 \in \digamma (\bro)$ and $\| \bu -\bu^\star \|_{\bL^\infty}+\| \theta -\theta^\star\|_{L^\infty}<\alpha$, where $(\bu,\theta) = \mathcal{S}(\bro)$.
		\end{theorem}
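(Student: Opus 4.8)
The plan is to convert Assumption \ref{growth2} into a coercivity estimate for the state difference and then to bound the resulting first--variation term \emph{linearly} in the perturbation. Throughout set $(\bu,\theta)=\mathcal{S}(\bro)$, $(\widehat\bu,\widehat\theta)=\widehat{\mathcal{S}}(\bro)$, $(\bw,\Psi)=\mathcal{D}(\bro)$ and $(\widehat\bw,\widehat\Psi)=\widehat{\mathcal{D}}(\bro)$, with $\alpha$ taken small enough that Assumption \ref{growth2} and \Cref{secests}, \Cref{lemma:supp2}, \Cref{lemma:supp3} all apply on the $\bL^\infty$--neighborhood (cf. \Cref{remark:toweak}). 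First I would extract the optimality content of $\bzeta_0\in\digamma(\bro)$: its normal--cone component says $-(\widehat\bw,\widehat\Psi)\in\mathcal{N}_{\mathcal{U}}(\bro)$, which tested against $\bro^\star$ gives the perturbed variational inequality $\int_0^T\!\int_{\omega_q}\widehat\bw\cdot(\bq-\bq^\star)+\int_{\omega_h}\widehat\Psi(\Theta-\Theta^\star)\le 0$. Starting from Assumption \ref{growth2} with $\tau=1$ and applying the mean value theorem to $J'$ along $\bro_t=\bro^\star+t(\bro-\bro^\star)$, I would rewrite its right--hand side as $J'(\bro)(\bro-\bro^\star)+[J''(\bro^\star)-J''(\bro_t)](\bro-\bro^\star)^2$, exactly as in the proof of \Cref{theorem:sufficientoptimality}.

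Next the difference--of--second--derivatives term is absorbed. Since $\bro_t-\bro^\star=t(\bro-\bro^\star)$, the scaling $(\bro-\bro^\star)^2=t^{-2}(\bro_t-\bro^\star)^2$ together with \Cref{secests} bounds it by $t^{-2}\varepsilon(\|\bu_t-\bu^\star\|_{\bL^2}^2+\|\theta_t-\theta^\star\|_{L^2}^2)$, where $(\bu_t,\theta_t)=\mathcal{S}(\bro_t)$; the linearity of $\mathcal{S}'(\bro^\star)$ and \Cref{lemma:supp2} (applied at both $\bro_t$ and $\bro$) convert this back into $\varepsilon$ times $\|\bu-\bu^\star\|_{\bL^2}^2+\|\theta-\theta^\star\|_{L^2}^2$, the factors $t^{\pm2}$ cancelling. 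Choosing $\varepsilon$ suitably, this term is $\le \tfrac{c}{2}(\|\bu-\bu^\star\|_{\bL^2}^2+\|\theta-\theta^\star\|_{L^2}^2)$ and is moved to the left, leaving $\tfrac{c}{2}(\|\bu-\bu^\star\|_{\bL^2}^2+\|\theta-\theta^\star\|_{L^2}^2)\le J'(\bro)(\bro-\bro^\star)$.

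The crux is the final, linear estimate of $J'(\bro)(\bro-\bro^\star)$. By \eqref{jderivative} we have $J'(\bro)(\bro-\bro^\star)=\int_{\omega_q}\bw\cdot(\bq-\bq^\star)+\int_{\omega_h}\Psi(\Theta-\Theta^\star)$; subtracting the nonpositive perturbed inequality yields $J'(\bro)(\bro-\bro^\star)\le\int_{\omega_q}(\bw-\widehat\bw)\cdot(\bq-\bq^\star)+\int_{\omega_h}(\Psi-\widehat\Psi)(\Theta-\Theta^\star)$. Rather than pairing the adjoint gap against the uncontrolled control difference (which only yields a square--root bound), I would re--express both pairings through \eqref{jderivative} and its perturbed analogue, turning the right--hand side into $\alpha_1(\bu-\bu_d,\bv-\widehat\bv)+\alpha_1(\bu-\widehat\bu,\widehat\bv)+\alpha_2[\,\cdots\,]-(\bta,\widehat\bv)-(\eta,\widehat\vartheta)$, with $(\bv,\vartheta)=\mathcal{S}'(\bro)(\bro-\bro^\star)$ and $(\widehat\bv,\widehat\vartheta)=\widehat{\mathcal{S}}'(\bro)(\bro-\bro^\star)$. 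Each summand now carries one factor of a linearized state: \Cref{estpersta} gives $\|\bu-\widehat\bu\|_{\bL^\infty}+\|\theta-\widehat\theta\|_{L^\infty}\le c\|\bzeta_0\|_{\mathcal{P}}$, \Cref{lemma:pertlinsest} gives $\|\bv-\widehat\bv\|_{\bL^2}+\|\vartheta-\widehat\vartheta\|_{L^2}\le c\|\bzeta_0\|_{\mathcal{P}}(\|\bv\|_{\bL^2}+\|\vartheta\|_{L^2})$, and the boundedness of $\|\bu-\bu_d\|_{\bL^2}$, $\|\bta\|_{\bL^2}$, $\|\eta\|_{L^2}$ controls the remaining factors, while \Cref{lemma:supp2} and \Cref{lemma:supp3} identify $\|\bv\|_{\bL^2}+\|\vartheta\|_{L^2}$ and $\|\widehat\bv\|_{\bL^2}+\|\widehat\vartheta\|_{L^2}$ with $\|\bu-\bu^\star\|_{\bL^2}+\|\theta-\theta^\star\|_{L^2}$ up to a constant. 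This produces $J'(\bro)(\bro-\bro^\star)\le c\|\bzeta_0\|_{\mathcal{P}}(\|\bu-\bu^\star\|_{\bL^2}+\|\theta-\theta^\star\|_{L^2})$.

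Combining the last two conclusions gives $\|\bu-\bu^\star\|_{\bL^2}^2+\|\theta-\theta^\star\|_{L^2}^2\le c\|\bzeta_0\|_{\mathcal{P}}(\|\bu-\bu^\star\|_{\bL^2}+\|\theta-\theta^\star\|_{L^2})$, and dividing out one power of the state norm (the estimate being trivial when it vanishes) yields \eqref{statestab}. The main obstacle is precisely the third paragraph: to obtain a genuinely linear, rather than square--root, bound one must \emph{not} estimate $\int(\bw-\widehat\bw)\cdot(\bq-\bq^\star)$ by $\|\bw-\widehat\bw\|_{\bL^1}$, but instead route the control pairing through the adjoint duality \eqref{jderivative} so as to expose one factor of the linearized state, which the linearized--stability lemmas then trade for $\|\bu-\bu^\star\|_{\bL^2}+\|\theta-\theta^\star\|_{L^2}$; the $t^{\pm2}$ scaling bookkeeping for $\bro_t$ in the absorption step is the other delicate point.
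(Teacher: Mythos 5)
Your proposal is correct and follows essentially the same route as the paper's proof: the perturbed variational inequality from the normal-cone component of $\digamma$, the mean-value/Taylor expansion with the second-derivative gap absorbed via \Cref{secests} (including the $t^{\pm 2}$ cancellation borrowed from the proof of \Cref{theorem:sufficientoptimality}), and then the linear bound on $J'_{\bzeta_0}(\bro)(\bro-\bro^\star)-J'(\bro)(\bro-\bro^\star)$ obtained by writing both first variations through the state representation \eqref{jderivative}, \eqref{jderivative:per} and invoking \Cref{estpersta}, \Cref{lemma:pertlinsest}, \Cref{lemma:supp2} and \Cref{lemma:supp3} before dividing out one power of the state norm. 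The point you flag as the crux --- avoiding the adjoint $L^1$ pairing so as to expose one factor of the linearized state --- is exactly the mechanism the paper uses.
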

		\begin{proof}
			Since $0 \in (\widehat{\bw},\widehat{\Psi}) +  \mathcal{N}_{\mathcal{U}}(\bro)$, where $(\widehat{\bw},\widehat{\Psi}) = \widehat{\mathcal{D}}(\bro)$, we see, according to \eqref{jderivative:per}, Assumption \ref{growth2} and the mean value theorem, that
			\begin{align*}
				0&\geq J_{\bzeta_0}'(\bro)(\bro-\bro^\star)  \geq J'(\bro^\star)(\bro-\bro^\star)+ J''(\bro^\star)(\bro-\bro^\star)^2-\Big\vert J_{\bzeta_0}'(\bro)(\bro-\bro^\star) - J'(\bro)(\bro-\bro^\star)\Big\vert\\
				&-\Big\vert  J'(\bro)(\bro-\bro^\star)- J'(\bro^\star)(\bro-\bro^\star)- J''(\bro^\star)(\bro-\bro^\star)^2\Big\vert\\
				&\geq c\big(\|\bu-  \bu^\star\|_{\bL^2}^2+\|\theta - \theta^\star\|_{L^2}^2\big)-\Big\vert J_{\bzeta_0}'(\bro)(\bro-\bro^\star)-J'(\bro)(\bro-\bro^\star)\Big\vert\\
				&-\Big\vert  J''(\bro_t)(\bro-\bro^\star)^2- J''(\bro^\star)(\bro-\bro^\star)^2\Big\vert,
			\end{align*}
			where $(\bu,\theta) = \mathcal{S}(\bro)$, $(\bu^\star, \theta^\star) = \mathcal{S}(\bro^\star)$ and $\bro_t = \bro^\star + t (\bro - \bro^\star)$ for some $t\in [0,1]$, as long as $\| \bu -\bu^\star \|_{\bL^\infty}+\| \theta -\theta^\star\|_{L^\infty}<\alpha_1$, where $\alpha_1 >0$ is as in Assumption \ref{growth2}. Thus, by letting $(\bu_t,\theta_t) = \mathcal{S}(\bro_t)$, Lemma \ref{secests} gives us -- whenever $\| \bu -\bu^\star \|_{\bL^\infty}+\| \theta -\theta^\star\|_{L^\infty}<\min\{ \alpha_1, \delta \}$ -- 
			\begin{align*}
				\Big\vert J_{\bzeta_0}'(\bro)(\bro-\bro^\star) - J'(\bro)(\bro-\bro^\star)\Big\vert\geq c\big(\|\bu -  \bu^\star\|_{L^2(Q)}^2+\|\theta - \theta^\star\|_{L^2(Q)}^2\big) - \varepsilon\big( \|\bu_t -  \bu^\star\|_{L^2(Q)}^2+\|\theta_t - \theta^\star\|_{L^2(Q)}^2 \big)
			\end{align*}
			Using the same arguments as in the proof of \Cref{theorem:sufficientoptimality}, we thus see that \begin{align*}
				&\Big\vert J_{\bzeta_0}'(\bro)(\bro-\bro^\star) - J'(\bro)(\bro-\bro^\star)\Big\vert\geq c_1\big(\|\bu -  \bu^\star\|_{\bL^2}^2+\|\theta - \theta^\star\|_{L^2}^2\big)\\
				& \ge \frac{c_1}{2}\big(\|\bu -  \bu^\star\|_{\bL^2} + \|\theta - \theta^\star\|_{L^2} \big)^2
			\end{align*}
			whenever $\| \bu -\bu^\star \|_{\bL^\infty}+\| \theta -\theta^\star\|_{L^\infty}< \alpha$ for some $\alpha>0$.
			It is left for us to estimate the left-hand side. Using \eqref{jderivative}, \eqref{jderivative:per} and H{\"o}lder inequality, we thus see that
			\begin{align*}
				&\Big\vert J'_{\bzeta_0}(\bro)(\bro-\bro^\star)-J'(\bro)(\bro-\bro^\star)\Big\vert \leq  \alpha_1(\|{\bv}\|_{\bL^2} \| \widehat{\bu} -\bu\|_{\bL^2} + \|\widehat{\bu} - \bu_d\|_{\bL^{2}}\| \widehat{\bv} - \bv\|_{\bL^{2}})\\
				& + \alpha_2(\| \vartheta \|_{L^2} \| \widehat{\theta} -\theta\|_{L^2} + \|\widehat{\theta} - \theta_d\|_{L^{2}}\|\widehat{\vartheta} - \vartheta \|_{L^{2}}) + \|\bta\|_{\bL^2} \| \widehat{\bv}\|_{\bL^2} + \|\eta \|_{L^2}\| \widehat{\vartheta}\|_{L^2} %+ {M}_{\mathcal{U}}\|(\bvsigma,\Lambda)\|_{\bL^\infty_{\omega_q}\times L^\infty_{\omega_h}},
			\end{align*}
			where $(\bv,\vartheta) = \mathcal{S}'(\bro)(\bro-\bro^\star)$ and $(\widehat{\bv},\widehat{\vartheta}) = \widehat{\mathcal{S}}'(\bro)(\bro-\bro^\star)$. Let us now majorize the terms, first by utilizing Lemma \ref{estpersta} to get
			\begin{align*}
				&\begin{aligned}
					&\alpha_1\| \bv\|_{\bL^2} \| \widehat{\bu} - \bu\|_{\bL^2} + \alpha_2 \| \vartheta \|_{L^2} \|  \widehat{\theta} - \theta \|_{L^2} \\
					& \leq c \big(\| \bv \|_{\bL^2} + \| \vartheta \|_{L^2} \big)\big(\|\widehat{\bu}_0\|_{\bW^{2-2/s,s}_{0,\sigma}} + \|\widehat{\theta}_0\|_{W^{2-2/s,s}_{0}} + \|\widehat{\blf}\|_{\bL^s} + \|\widehat{h}\|_{L^s}\big),
				\end{aligned}
			\end{align*}
			On the other hand, we use Lemmata \ref{estpersta} and \ref{lemma:pertlinsest}
			\begin{align*}
				&\begin{aligned}
					&\alpha_1 \|\widehat{\bu} - \bu_d\|_{\bL^{2}}\| \widehat{\bv} - \bv\|_{\bL^{2}} + \alpha_2\| \widehat{\theta} - \theta_d\|_{L^{2}}\|  \widehat{\vartheta} - \vartheta \|_{L^{2}}\\  
					&\leq  c \big(\| \bv \|_{\bL^2} + \| \vartheta \|_{L^2} \big)\big(\|\widehat{\bu}_0\|_{\bW^{2-2/s,s}_{0,\sigma}} + \|\widehat{\theta}_0\|_{W^{2-2/s,s}_{0}} + \|\widehat{\blf}\|_{\bL^s} + \|\widehat{h}\|_{L^s}\big),
				\end{aligned}
			\end{align*}
			where the constant $c>0$ consists of the constants $M_{\mathcal{U}}$ and $M_{\mathcal{P}}$. Similarly, we get by virtue of Lemmata \ref{estpersta} and \ref{lemma:pertlinsest}
			\begin{align*}
				&\begin{aligned}
					& \|\bta\|_{\bL^2} \| \widehat{\bv} \|_{\bL^2} + \|\eta\|_{L^2} \| \widehat{\vartheta} \|_{L^2}  \leq \big( \|\bta\|_{\bL^2} + \|\eta\|_{L^2} \big) \big(\|\bv \|_{\bL^2} + \|\vartheta\|_{L^2} + \| \widehat{\bv} - \bv\|_{\bL^2} + \| \widehat{\vartheta} - \vartheta \|_{L^2}\big)\\
					&  \leq \big(1 + M_{\mathcal{P}}\big) \big( \|\bta\|_{\bL^2} + \|\eta\|_{L^2} \big)\big(\|\bv \|_{\bL^2} + \|\vartheta\|_{L^2} \big) ,
				\end{aligned}
			\end{align*}
			We finally use \eqref{estimate:linearcomp} and \eqref{estimate:diff-lin} to find an $\alpha>0$ for which $\| \bu -\bu^\star \|_{\bL^\infty}+\| \theta -\theta^\star\|_{L^\infty}< \alpha$ implies 
			\begin{align*}
				&\frac{c_1}{2}\big(\|\bu -  \bu^\star\|_{\bL^2} + \|\theta - \theta^\star\|_{L^2} \big)^2\\
				&\leq c \big(\|\bu -  \bu^\star\|_{\bL^2} + \|\theta - \theta^\star\|_{L^2} \big)\big(\|\widehat{\bu}_0\|_{\bW^{2-2/s,s}_{0,\sigma}} + \|\widehat{\theta}_0\|_{W^{2-2/s,s}_{0}} + \|\widehat{\blf}\|_{\bL^s} + \|\widehat{h}\|_{L^s} + \|\bta\|_{\bL^2} + \|\eta\|_{L^2} \big).
			\end{align*}
			If $\|\bu -  \bu^\star\|_{\bL^2} + \|\theta - \theta^\star\|_{L^2} = 0$ then \eqref{statestab} holds trivially. Otherwise, we can divide both sides by $\|\bu -  \bu^\star\|_{\bL^2} + \|\theta - \theta^\star\|_{L^2}$ which then gives us \eqref{statestab}.
		\end{proof}
		If we allow also control perturbations in the objective functional we still obtain Hölder stability.
		\begin{theorem}\label{thm:stabstateholder}
			Let $\bro^\star \in \mathcal{U}$ satisfy Assumption \ref{growth2} for $\tau=1$, and $(\bu^\star,\theta^\star) = \mathcal{S}(\bro^\star)$. Then there exists constants $c, \alpha>0$ such that
			\begin{equation}\label{statestab}
				\| \bu -\bu^\star \|_{\bL^2}+\| \theta -\theta^\star\|_{L^2} \leq c\| \bzeta \|_{\mathcal{P}}^{1/2}
			\end{equation}
			for all $\bro \in \mathcal{U}$ with $\bzeta \in \digamma (\bro)$ and $\| \bu -\bu^\star \|_{\bL^\infty}+\| \theta -\theta^\star\|_{L^\infty}<\alpha$, where $(\bu,\theta) = \mathcal{S}(\bro)$.
		\end{theorem}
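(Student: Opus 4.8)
The plan is to reduce the statement to the linear estimate of \Cref{thm:stabstate} by isolating the contribution of the control perturbations $(\bvsigma,\Lambda)$, which are now allowed to be nonzero, and to show that this contribution enters only at first order in $\|\bzeta\|_{\mathcal{P}}$. Since $\bzeta\in\digamma(\bro)$ means $-(\bvsigma,\Lambda)\in\widehat{\mathcal{D}}(\bro)+\mathcal{N}_{\mathcal{U}}(\bro)$, testing the normal-cone inequality with $\bro^\star\in\mathcal{U}$ and invoking the first-variation formula \eqref{jderivative:per} yields
\begin{equation*}
0\ge J_{\bzeta}'(\bro)(\bro-\bro^\star)=J_{\bzeta_0}'(\bro)(\bro-\bro^\star)+\int_0^T\!\!\int_{\omega_q}\bvsigma\cdot(\bq-\bq^\star)\du x\du t+\int_0^T\!\!\int_{\omega_h}\Lambda(\Theta-\Theta^\star)\du x\du t,
\end{equation*}
where $\bzeta_0$ is the perturbation $\bzeta$ with its control components set to zero. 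Hence $J_{\bzeta_0}'(\bro)(\bro-\bro^\star)\le R$, where $R$ denotes minus the sum of the two control integrals. Because $\bro,\bro^\star\in\mathcal{U}$ are uniformly bounded by \eqref{boundcon}, we have $\|\bro-\bro^\star\|_{\bL^1\times L^1}\le C_0$ for a fixed constant $C_0=C_0(M_{\mathcal{U}},Q)$, so H\"older's inequality gives $|R|\le(\|\bvsigma\|_{\bL^\infty}+\|\Lambda\|_{L^\infty})\|\bro-\bro^\star\|_{\bL^1\times L^1}\le C_0\|\bzeta\|_{\mathcal{P}}$.

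Next I would run the same expansion as in the proof of \Cref{thm:stabstate}: write $J_{\bzeta_0}'(\bro)(\bro-\bro^\star)$ as the sum of $[J_{\bzeta_0}'(\bro)-J'(\bro)](\bro-\bro^\star)$ and $J'(\bro)(\bro-\bro^\star)$, apply the mean value theorem to replace the latter by $J'(\bro^\star)(\bro-\bro^\star)+J''(\bro_t)(\bro-\bro^\star)^2$ with $\bro_t=\bro^\star+t(\bro-\bro^\star)$, and use \Cref{growth2} together with \Cref{secests} (and \Cref{lemma:supp2}, \Cref{lemma:supp3} to pass between state and linearized-state norms) to absorb the second-order remainder. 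Combining with $J_{\bzeta_0}'(\bro)(\bro-\bro^\star)\le R$ this produces
\begin{equation*}
\tfrac{c_1}{2}\big(\|\bu-\bu^\star\|_{\bL^2}+\|\theta-\theta^\star\|_{L^2}\big)^2\le|R|+\big|[J_{\bzeta_0}'(\bro)-J'(\bro)](\bro-\bro^\star)\big|,
\end{equation*}
valid once $\|\bu-\bu^\star\|_{\bL^\infty}+\|\theta-\theta^\star\|_{L^\infty}$ is small. The bracketed term is exactly the quantity estimated in \Cref{thm:stabstate} through \Cref{estpersta} and \Cref{lemma:pertlinsest}, hence it is bounded by $c\big(\|\bu-\bu^\star\|_{\bL^2}+\|\theta-\theta^\star\|_{L^2}\big)\|\bzeta_0\|_{\mathcal{P}}$.

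Writing $X:=\|\bu-\bu^\star\|_{\bL^2}+\|\theta-\theta^\star\|_{L^2}$ and using $\|\bzeta_0\|_{\mathcal{P}}\le\|\bzeta\|_{\mathcal{P}}$, the two bounds combine into the scalar quadratic inequality $\tfrac{c_1}{2}X^2\le cX\|\bzeta\|_{\mathcal{P}}+C_0\|\bzeta\|_{\mathcal{P}}$. Absorbing the mixed term by Young's inequality, $cX\|\bzeta\|_{\mathcal{P}}\le\tfrac{c_1}{4}X^2+c'\|\bzeta\|_{\mathcal{P}}^2$, leaves $X^2\le c''\big(\|\bzeta\|_{\mathcal{P}}+\|\bzeta\|_{\mathcal{P}}^2\big)$, whence $X\le c\|\bzeta\|_{\mathcal{P}}^{1/2}$ as soon as $\|\bzeta\|_{\mathcal{P}}$ is small enough that the linear term dominates, which is precisely \eqref{statestab}.

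The hard part is the term $R$. Unlike the genuinely perturbation-driven discrepancy $[J_{\bzeta_0}'-J'](\bro-\bro^\star)$, which is proportional to the state gap $X$ and therefore only rescales the coefficient on the left, $R$ pairs the $L^\infty$-small multipliers $\bvsigma,\Lambda$ against the control gap $\bq-\bq^\star$, which under \Cref{growth2} we cannot control better than by the fixed bound $C_0$. This forces a standalone term of order $\|\bzeta\|_{\mathcal{P}}$ on the right-hand side of a quadratic inequality for $X$, and it is exactly this term that degrades the linear rate of \Cref{thm:stabstate} to the H\"older rate $1/2$; the delicate point is the bookkeeping of which constants are fixed (through $M_{\mathcal{U}}$ and $M_{\mathcal{P}}$) versus which tend to zero with $\|\bzeta\|_{\mathcal{P}}$.
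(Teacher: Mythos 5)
Your proposal is correct and follows essentially the same route as the paper: the paper's proof is a two-line sketch that reruns the argument of \Cref{thm:stabstate}, arrives at $\tfrac{c_1}{2}\big(\|\bu-\bu^\star\|_{\bL^2}+\|\theta-\theta^\star\|_{L^2}\big)^2\le c\,\|\bro-\bro^\star\|_{\bL^1\times L^1}\|\bzeta\|_{\mathcal{P}}$, and takes the square root using the uniform $L^1$ bound from the box constraints. Your version merely makes the bookkeeping explicit (separating the control-pairing term $R$ from the state-proportional term and absorbing the latter by Young), and correctly identifies $R$ as the source of the degradation from linear to H\"older-$1/2$ rate.
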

		
		To prove the theorem we shall rely on the following lemmata which are analogous to, and whose proofs can be patterned with that of \Cref{lemma:supp2}.
		\begin{lemma}\label{lemma:supp2per}
			Let $\bro,\widehat{\bro} \in L^s(I;\bL^s(\omega_q)\times L^s(\omega_h))$, $(\widehat{\bu},\widehat{\theta}) = \widehat{\mathcal{S}}(\bro),(\widehat{\bu}^\star,\widehat{\theta}^\star) = \widehat{\mathcal{S}}(\widehat{\bro}) \in \bW^{2,1}_{s,\sigma}\times W^{2,1}_s$ and $(\widehat{\bv}^\star,\widehat{\vartheta}^\star) = \widehat{\mathcal{S}}'(\widehat{\bro})(\bro-\widehat{\bro})\in \bW^{2,1}_{s,\sigma}\times W^{2,1}_s$. There exists $\delta>0$ such that whenever $\|\widehat{\bu}^\star -  \widehat{\bu} \|_{\bL^\infty} +\|\widehat{\theta}^\star - \widehat{\theta}\|_{L^\infty} < \delta$ we have 
			\begin{align}\label{estimate:diff-lin}
				\|\widehat{\bu}^\star -  \widehat{\bu} \|_{\bL^2} +\|\widehat{\theta}^\star - \widehat{\theta}\|_{L^2} \le 2\left(\| \widehat\bv^\star \|_{\bL^2} + \|\widehat\vartheta^\star\|_{L^2}\right)\le 3\Big(\|\widehat{\bu}^\star -  \widehat{\bu} \|_{\bL^2} +\|\widehat{\theta}^\star - \widehat{\theta}\|_{L^2} \Big)
			\end{align}
		\end{lemma}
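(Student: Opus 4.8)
The plan is to mirror the proof of \Cref{lemma:supp2}, now working with the perturbed control-to-state operator $\widehat{\mathcal{S}}$ in place of $\mathcal{S}$. The crucial observation is that both $(\widehat{\bu},\widehat{\theta}) = \widehat{\mathcal{S}}(\bro)$ and $(\widehat{\bu}^\star,\widehat{\theta}^\star) = \widehat{\mathcal{S}}(\widehat{\bro})$ are generated from the \emph{same} perturbed data $(\blf + \widehat{\blf},\, h + \widehat{h},\, \bu_0 + \widehat{\bu}_0,\, \theta_0 + \widehat{\theta}_0)$ and differ only in the control; hence every additive perturbation cancels in their difference. Consequently the difference satisfies precisely the homogeneous-data linearized system treated in the unperturbed setting, so the auxiliary estimate \eqref{diffvslin} carries over verbatim with the starred quantities replaced by their hatted analogues. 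As recorded in \Cref{remark:toweak}, it then suffices to impose smallness directly on $\|\widehat{\bu}^\star - \widehat{\bu}\|_{\bL^\infty} + \|\widehat{\theta}^\star - \widehat{\theta}\|_{L^\infty}$, which is exactly the hypothesis assumed here; this bypasses the \eqref{stcongap}-type step that previously converted an $\bL^1$ control distance into an $\bL^\infty$ state distance.

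First I would verify that the pair $(\widehat{\bu} - \widehat{\bu}^\star - \widehat{\bv}^\star,\ \widehat{\theta} - \widehat{\theta}^\star - \widehat{\vartheta}^\star)$ solves a system of the form \eqref{Blg1}--\eqref{Blg5} with $\bu_1 = \bu_2 = \widehat{\bu}^\star$, $\theta = \widehat{\theta}^\star$, vanishing initial data, and quadratic-remainder forcings $\bF = -((\widehat{\bu} - \widehat{\bu}^\star)\cdot\nabla)(\widehat{\bu} - \widehat{\bu}^\star)$ and $G = -(\widehat{\bu} - \widehat{\bu}^\star)\cdot\nabla(\widehat{\theta} - \widehat{\theta}^\star)$. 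This follows by subtracting the linearized equation for $\widehat{\bv}^\star = \widehat{\mathcal{S}}'(\widehat{\bro})(\bro-\widehat{\bro})$ from the difference equation for $\widehat{\bu} - \widehat{\bu}^\star$ and splitting the convective term as $((\widehat{\bu} - \widehat{\bu}^\star)\cdot\nabla)\widehat{\bu} = ((\widehat{\bu} - \widehat{\bu}^\star)\cdot\nabla)\widehat{\bu}^\star + ((\widehat{\bu} - \widehat{\bu}^\star)\cdot\nabla)(\widehat{\bu} - \widehat{\bu}^\star)$. Invoking \eqref{estimate:lweak} and bounding the convective forcings exactly as in the proof of \Cref{lemma:supp1} yields
\begin{align*}
\|\widehat{\bu} - \widehat{\bu}^\star - \widehat{\bv}^\star\|_{\bL^2} + \|\widehat{\theta} - \widehat{\theta}^\star - \widehat{\vartheta}^\star\|_{L^2} \le c\,\|\widehat{\bu} - \widehat{\bu}^\star\|_{\bL^\infty}\Big( \|\widehat{\bu} - \widehat{\bu}^\star\|_{\bL^2} + \|\widehat{\theta} - \widehat{\theta}^\star\|_{L^2} \Big).
\end{align*}

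Then, exactly as in \Cref{lemma:supp2}, I would split
\begin{align*}
\|\widehat{\bu}^\star - \widehat{\bu}\|_{\bL^2} + \|\widehat{\theta}^\star - \widehat{\theta}\|_{L^2} \le \|\widehat{\bu} - \widehat{\bu}^\star - \widehat{\bv}^\star\|_{\bL^2} + \|\widehat{\theta} - \widehat{\theta}^\star - \widehat{\vartheta}^\star\|_{L^2} + \|\widehat{\bv}^\star\|_{\bL^2} + \|\widehat{\vartheta}^\star\|_{L^2},
\end{align*}
bound the first two terms by the displayed remainder estimate, and choose $\delta>0$ with $c\delta \le 1/2$. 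The smallness hypothesis $\|\widehat{\bu}^\star - \widehat{\bu}\|_{\bL^\infty} + \|\widehat{\theta}^\star - \widehat{\theta}\|_{L^\infty} < \delta$ then absorbs the remainder into the left-hand side, giving $\tfrac12(\|\widehat{\bu}^\star - \widehat{\bu}\|_{\bL^2} + \|\widehat{\theta}^\star - \widehat{\theta}\|_{L^2}) \le \|\widehat{\bv}^\star\|_{\bL^2} + \|\widehat{\vartheta}^\star\|_{L^2}$, which is the first inequality of \eqref{estimate:diff-lin}. The second inequality follows symmetrically, estimating $\|\widehat{\bv}^\star\|_{\bL^2} + \|\widehat{\vartheta}^\star\|_{L^2}$ against the state difference plus the same remainder term and absorbing with the identical choice of $\delta$.

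The only point requiring genuine care — and the main obstacle — is the justification that the additive data perturbations $(\widehat{\blf},\widehat{h},\widehat{\bu}_0,\widehat{\theta}_0)$ truly drop out of the difference equation, so that the homogeneous-initial-data structure needed to apply \Cref{lemma:supp1} is preserved. Once this cancellation is checked, the argument is a line-for-line transcription of \Cref{lemma:supp2} under the weaker hypothesis of \Cref{remark:toweak}, and no new estimates are needed.
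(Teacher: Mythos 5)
Your proposal is correct and follows exactly the route the paper intends: the paper gives no separate proof of this lemma but states that it is "patterned with that of \Cref{lemma:supp2}", and your argument is precisely that transcription — the data perturbations cancel in the difference of the two perturbed states, the remainder $(\widehat{\bu}-\widehat{\bu}^\star-\widehat{\bv}^\star,\widehat{\theta}-\widehat{\theta}^\star-\widehat{\vartheta}^\star)$ satisfies the quadratic-forcing linearized system as in \Cref{lemma:supp1}, and the absorption is done directly from the $\bL^\infty$ state-distance hypothesis as anticipated in \Cref{remark:toweak}. No gaps.
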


		\begin{proof}[Proof of {\Cref{thm:stabstateholder}}]
			The proof follows that of Theorem \ref{thm:stabstate}, using \eqref{estimate:linearcomp} and \eqref{estimate:diff-lin} to find an $\alpha>0$ for which $\| \bu -\bu^\star \|_{\bL^\infty}+\| \theta -\theta^\star\|_{L^\infty}< \alpha$ implies 
			\begin{align*}
				&\frac{c_1}{2}\big(\|\bu -  \bu^\star\|_{\bL^2} + \|\theta - \theta^\star\|_{L^2} \big)^2\leq c \|\bro-  \bro^\star\|_{\bL^1\times L^1}\| \bzeta \|_{\mathcal{P}}.
			\end{align*}
			Due to the box constraints, we have a uniform upper bound on $\|\bro-  \bro^\star\|_{\bL^1\times L^1}$, and taking the root yields the claim.
		\end{proof}
		
		As a direct consequence, we obtain the following stability result for the adjoint states.
		\begin{corollary}
			\label{corollary:statsolutions}
			Let $\bro^\ast\in \mathcal{U}$ satisfy  Assumption \ref{growth2} with $\tau = 1$. Then, there exist positive constants $c,\alpha$ such that
			\begin{align}
				\|\nabla( \bw -  \bw^\star)\|_{L^\infty(Q)}+\|\nabla(\Psi - \Psi^\star)\|_{L^\infty(Q)} \leq c\| \bzeta_0 \|_{\mathcal{P}}^{2/5}
			\end{align}
			for all local minimizers $\bro\in \mathcal{U}$ of \eqref{peropticon} with $\bzeta_0$ and $\| \bu -\bu^\star \|_{\bL^\infty}+\| \theta -\theta^\star\|_{L^\infty}<\alpha$, where $(\bu,\theta) = \mathcal{S}(\bro)$, $(\bu^{\star},\theta^{\star}) = \mathcal{S}(\bro^{\star})$, $(\bw,\Psi) = \mathcal{D}(\bro)$ and $(\bw^{\star},\Psi^{\star}) = \mathcal{D}(\bro^{\star})$.
			If control perturbations in the objective functional are admissible in $\bzeta$, there exist constants $c,\alpha>0$ such that
			\begin{align}
				\|\nabla( \bw -  \bw^\star)\|_{L^\infty(Q)}+\|\nabla(\Psi - \Psi^\star)\|_{L^\infty(Q)} \leq c\| \bzeta\|_{\mathcal{P}}^{2/10}
			\end{align}
			for all local minimizers $\bro\in \mathcal{U}$ of \eqref{peropticon} with $\bzeta$ and $\| \bu -\bu^\star \|_{\bL^\infty}+\| \theta -\theta^\star\|_{L^\infty}<\alpha$.
		\end{corollary}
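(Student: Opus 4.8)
The plan is to deduce the corollary from the adjoint stability estimate of \Cref{lemma:adjstability}, combined with the state stability already proved in \Cref{thm:stabstate} and \Cref{thm:stabstateholder}, and then to upgrade the resulting weak (spatial $\bL^2$) control of the adjoint gradients to the strong $L^\infty(Q)$ norm by interpolation. First I would invoke \Cref{lemma:adjstability} with $\beta_1 = \beta_2 = 0$, which kills the two $\bW^{2,1}_{2,\sigma}$/$W^{2,1}_2$ terms on the right-hand side and reduces it to the state difference alone, giving $\|\nabla(\bw - \bw^\star)\|_{L^\infty(\bL^2)} + \|\nabla(\Psi - \Psi^\star)\|_{L^\infty(L^2)} \le c\big(\|\bu - \bu^\star\|_{\bL^2} + \|\theta - \theta^\star\|_{L^2}\big)$, where $(\bw,\Psi) = \mathcal{D}(\bro)$ and $(\bw^\star,\Psi^\star) = \mathcal{D}(\bro^\star)$. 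Substituting \Cref{thm:stabstate} then yields $\|\nabla(\bw - \bw^\star)\|_{L^\infty(\bL^2)} + \|\nabla(\Psi - \Psi^\star)\|_{L^\infty(L^2)} \le c\|\bzeta_0\|_{\mathcal{P}}$, and likewise \Cref{thm:stabstateholder} gives the same bound with $\|\bzeta\|_{\mathcal{P}}^{1/2}$ in the admissible-control-perturbation case.

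The core of the argument is to convert this spatial $\bL^2$ control of the gradient into the claimed $L^\infty(Q)$ estimate. Here I would exploit that $\bw, \bw^\star, \Psi, \Psi^\star$ are uniformly bounded in $\bW^{2,1}_{s,\sigma} \times W^{2,1}_s$ by the constant $M_{\mathcal{A}}$ of \eqref{gradadest}, so that by the Amann embedding $\bW^{2,1}_{s,\sigma} \hookrightarrow C(\overline{I};\bW^{2-2/s,s}_{0,\sigma})$ the differences $\bw - \bw^\star$ and $\Psi - \Psi^\star$ are, at every $t$, uniformly bounded in $\bW^{2-2/s,s}(\Omega)$. Fixing $t$ and applying the Gagliardo--Nirenberg inequality in $\Omega \subset \mathbb{R}^2$ to interpolate $\|\nabla(\bw - \bw^\star)(t)\|_{L^\infty(\Omega)}$ between this uniform $\bW^{2-2/s,s}$ bound and the small spatial $\bL^2$ norm of the gradient, the dimensional balance pins down the interpolation exponent as $(s-4)/\big(2(s-2)\big)$ on the weak factor; taking $s = 12$ (permissible since \Cref{theorem:adjointLP} and hence \eqref{gradadest}, \eqref{uniboundadj} hold for every finite $s$, and $\bW^{11/6,12} \hookrightarrow C^1$) makes this weight exactly $2/5$. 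Taking the supremum over $t \in [0,T]$ and absorbing $M_{\mathcal{A}}$ into the constant gives $\|\nabla(\bw - \bw^\star)\|_{L^\infty(Q)} + \|\nabla(\Psi - \Psi^\star)\|_{L^\infty(Q)} \le c\big(\|\bu - \bu^\star\|_{\bL^2} + \|\theta - \theta^\star\|_{L^2}\big)^{2/5}$.

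Chaining this with the two state-stability inputs finishes both claims: with $\bzeta_0$ and \Cref{thm:stabstate} (exponent $1$) the overall exponent is $1 \cdot \tfrac{2}{5} = \tfrac{2}{5}$, while with $\bzeta$ and \Cref{thm:stabstateholder} (exponent $\tfrac12$) the extra factor produces $\tfrac12 \cdot \tfrac{2}{5} = \tfrac15 = \tfrac{2}{10}$, exactly as stated. I expect the main obstacle to be the interpolation step rather than the reductions: one must confirm that the uniform higher-order bound on the adjoint differences is available in a space strong enough for the $C^1$ embedding at the chosen $s$, carry the Gagliardo--Nirenberg exponent correctly through the dimensional balance so that it lands on $2/5$, and verify that the estimate is uniform in $t$ so that the pointwise-in-time spatial interpolation indeed passes to the $L^\infty(Q)$ norm. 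The passage through \Cref{lemma:adjstability} and the substitution of the two state estimates are then routine.
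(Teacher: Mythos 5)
Your proof is correct and reaches both exponents, but it takes a genuinely different route from the paper. The paper writes down the adjoint system satisfied by $(\bw-\bw^\star,\Psi-\Psi^\star)$, applies \Cref{theorem:adjointLP} at the level $s=5$ together with the embedding $\bW^{2,1}_{5,\sigma}\times W^{2,1}_5\hookrightarrow C(\overline I;C^1(\overline\Omega)^2\times C^1(\overline\Omega))$, and produces the exponent $2/5$ by interpolating the \emph{data} of that system in Lebesgue norms, $\|f\|_{L^5}\le\|f\|_{L^\infty}^{3/5}\|f\|_{L^2}^{2/5}$, before chaining with \Cref{thm:stabstate} and \Cref{thm:stabstateholder}. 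You instead reuse \Cref{lemma:adjstability} as a black box to get the $L^\infty(\bL^2)$ gradient bound and then interpolate the \emph{solution} in space, between the uniform $C(\overline I;\bW^{2-2/s,s})$ bound coming from \eqref{gradadest} and the Amann embedding and the small spatial $\bL^2$ gradient norm; your exponent computation $(s-4)/(2(s-2))$ and the choice $s=12$ are correct and the final chaining is identical. What your route buys is that the only PDE estimate needed for the difference is the already-proved weak one, avoiding the re-estimation of the right-hand sides $\bF,G$ in $L^5$; what it costs is that the interpolation inequality you invoke involves the fractional space $\bW^{11/6,12}$ and is therefore not covered by the integer-order Gagliardo--Nirenberg lemma stated in the preliminaries. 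This is the one point you should nail down: either cite a fractional Gagliardo--Nirenberg result, or argue elementarily by first embedding $\bW^{2-2/s,s}\hookrightarrow C^{1,1-4/s}$ (so $\nabla(\bw-\bw^\star)(t)$ is uniformly bounded in $C^{0,2/3}$ for $s=12$) and then using the H\"older--Lebesgue interpolation $\|g\|_{L^\infty(\Omega)}\le c\,[g]_{C^{0,\gamma}}^{2/(2+2\gamma)}\|g\|_{L^2(\Omega)}^{2\gamma/(2+2\gamma)}$, which with $\gamma=2/3$ gives exactly the weight $2/5$ on the small factor. A last cosmetic remark: taking $s=12$ in \eqref{gradadest} tacitly requires the tracking data to lie in $L^{12}$-type spaces, but the paper's own choice $s=5$ makes the analogous tacit assumption, so this is consistent with the paper's conventions.
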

		
		\begin{proof}
			From the embedding $\bW^{2,1}_{5,\sigma}\times W^{2,1}_5 \hookrightarrow C(\overline{I};C^1(\overline{\Omega})^2\times C^1(\overline{\Omega}))$ and by virtue of \Cref{theorem:adjointLP}
			\begin{align*}
				\begin{aligned}
					&  \|\nabla( \bw -  \bw^\star)\|_{L^\infty(Q)}+\|\nabla(\Psi - \Psi^\star)\|_{L^\infty(Q)}
					\le c \left(\|\bF\|_{\bL^5} + \|G\|_{L^5}\right),
				\end{aligned}
			\end{align*}
			where 
			\begin{align*}
				\begin{aligned}
					& \bF = ( (\bu - \bu^\star)\cdot\nabla )\bw + (\nabla(\bu^\star - \bu))^\top\bw + \Psi\nabla(\theta^\star - \theta) + \alpha_1(\bu - \bu^\star),\\
					& G = (\bu^\star - \bu)\cdot\nabla\Psi + \alpha_2(\theta - \theta^\star),\\
					& \bw_T = \beta_1(\bu(T) - \bu^\star(T)),\text{ and }\Psi_T =  \beta_2(\theta(T)-\theta^\star(T)).
				\end{aligned}
			\end{align*}
			Using similar arguments as in \Cref{lemma:adjstability} and \ref{lemma:adjperest} we see that
			\begin{align*}
				&\|\nabla( \bw -  \bw^\star)\|_{L^\infty(Q)}+\|\nabla(\Psi - \Psi^\star)\|_{L^\infty(Q)}
				\\
				&\le c\left( \| \bu -\bu^\star \|_{\bL^\infty}+\| \theta -\theta^\star\|_{L^\infty} \right)^{3/5}\left( \| \bu -\bu^\star \|_{\bL^2}+\| \theta -\theta^\star\|_{L^2} \right)^{2/5}.
			\end{align*}
			Finally, \Cref{thm:stabstate} gives us the desired estimate. The second claim is then a consequence of \Cref{thm:stabstateholder}.
		\end{proof}

		\subsubsection{Stability of the the second-order sufficient optimality condition}\label{sec7}
		In this section, we advance in discussing an open question regarding the stability of the second-order sufficient optimality condition commonly used in PDE-constrained optimal control. The investigation of second-order optimality conditions has been carried out extensively in the last decades, for an overview we refer to the survey paper \cite{CT2015}. We are interested in the following stability result:
		Assume that there exists a positive constant $c$ such that the optimal control $\bro^\star$ and optimal state $(\bu^\star,\theta^\star) = \mathcal{S}(\bro^\star)$ of \eqref{optcon} satisfies
		\begin{equation}\label{eq:secorderstab}
			J''(\bro^\star)(\bro-\bro^\star)^2 \geq  c\Big ( \|\bu -  \bu^\star \|_{\bL^2}^2 +\|\theta - \theta^\star\|_{L^2}^2\Big) \text{ for all } (\bro-\bro^\star)\in C_{\bro^\star}^\tau,
		\end{equation}
		where $(\bu,\theta) = \mathcal{S}(\bro)$ and $ C_{\bro^\star}^\tau$ is the extended cone of critical directions in $\bro^\star$.
		Now consider a sufficiently small perturbation $\bzeta$, a solution $\widehat{\bro}$ for the perturbed problem \eqref{peropticon}, and $(\widehat{\bu}^\star,\widehat{\theta}^\star) = \widehat{S}(\widehat{\bro})$ its optimal state. Can we infer from \eqref{eq:secorderstab} the existence of a positive constant $\hat c$ such that 
		\begin{equation}\label{eq:secorderstabperb}
			J''_{\bzeta}(\widehat{\bro})(\bro-\widehat{\bro})^2 \geq  \hat c\Big ( \|\widehat{\bu} -  \widehat{\bu}^{\star} \|_{\bL^2}^2 +\|\widehat{\theta} - \widehat{\theta}^{\star}\|_{L^2}^2\Big) \text{ for all } (\bro-\widehat{\bro})\in C_{\widehat{\bro}}^\tau
		\end{equation}
		where $(\widehat{\bu},\widehat{\theta}) = \widehat{S}(\bro)$, with $\bro^\star-\widehat{\bro}$ being sufficiently small? This question was discussed for instance in \cite[Theorem 4.6]{QW2018}, where the authors show that \eqref{eq:secorderstabperb} holds for the directions $\bro-\bro^\star$, that is for the cone $C_{\bro^\star}^\tau$. But this cone is not a critical cone for the perturbed problem \eqref{peropticon}. In this section, we show that by making a stronger assumption than \eqref{eq:secorderstab}, but which is still very reasonable for tracking type optimal control problems, we can derive the growth in \eqref{eq:secorderstabperb}. Namely, we ask for the following: there exists a positive constant $\tilde c$ such that
		\begin{equation}\label{equation:presecb1}
			\min\{\alpha_1,\alpha_2\}-2\left(\|\nabla \bw^\star\|_{\bL^\infty} + \|\nabla \Psi^\star\|_{L^\infty} \right) \ge \widetilde{c}.
		\end{equation}
		It is clear that \eqref{eq:secorderstab} is implied by \eqref{equation:presecb1} and that \eqref{equation:presecb1} holds if \eqref{trackingclose} is satisfied, which on the other hand is very reasonable for tracking-type problems.
		Then we can infer the stability of the second-order condition \eqref{eq:secorderstabperb} as a consequence of the solution stability obtained in Theorem \ref{thm:stabstate} for sufficiently small perturbations.

		\begin{theorem}\label{stabiltyofthesec}
			Let $\bro^\star\in\mathcal{U}$ satisfy \eqref{trackingclose} %and $0\in \digamma(\mathcal{S}(\bro^\star),\mathcal{D}(\bro^\star),\bro^\star)$,
			and let $\bzeta$ be sufficiently small. Suppose that $\widehat{\bro}\in\mathcal{U}$ and $\alpha>0$ are such that $\bzeta\in\digamma(\widehat{\bro})$ and $\|\overline{\bu} -  \bu^\star \|_{\bL^\infty} +\|\overline{\theta} - \theta^\star\|_{L^\infty} <\alpha$, where $(\bu^\star,\theta^\star) = \mathcal{S}(\bro^\star)$, $(\overline{\bu},\overline{\theta}) = {\mathcal{S}}(\widehat{\bro})$ and $\alpha>0$ is as in \Cref{thm:stabstate}. Then there exist positive constants $c$ and $\delta$ such that 
			\begin{equation}\label{equation:growthper}
				J_{\bzeta}''(\widehat{\bro})(\bro - \widehat{\bro})^2\geq  c\Big ( \|\widehat{\bu} -  \widehat{\bu}^{\star} \|_{\bL^2}^2 +\|\widehat{\theta} - \widehat{\theta}^\star\|_{L^2}^2  \Big)
			\end{equation}
			for all $\bro\in \mathcal{U}$ with $\|\widehat{\bu} -  \widehat{\bu}^\star\|_{\bL^\infty} +\|\widehat{\theta} - \widehat{\theta}^\star\|_{L^\infty} <\delta$, where $(\widehat{\bu},\widehat{\theta}) = \widehat{\mathcal{S}}(\bro)$ and $(\widehat{\bu}^\star,\widehat{\theta}^\star) = \widehat{\mathcal{S}}(\widehat{\bro})$.
			%This shows that $\widehat{\bro}$ is a strong local minimizer of \eqref{peropticon}. %Further, the constant $c$ in \eqref{equation:growthper} can be chosen such that \eqref{equation:growthper} is satisfied with the same constant for all perturbations $\bzeta$ with $\| \bzeta\|_{\mathcal{P}}<\kappa$ for a $\kappa >0$ fixed.
			
			%Let $\bro^\star\in \mathcal{U}$ satisfy \eqref{trackingclose} and Assumption \ref{mixedgrowth1} and ... Then there exists positive constants $c$ and $\delta$ such that 
			% \begin{equation}\label{equation:growthpermixed}
				%     J_{\bzeta}''(\widehat{\bro})(\bro - \widehat{\bro})^2\geq  c\Big ( \|\widehat{\bu} -  \widehat{\bu}^\star \|_{\bL^2}^2 +\|\widehat{\theta} - \widehat{\theta}^\star\|_{L^2}^2  \Big)
				% \end{equation}
			% for all $\bro\in \mathcal{U}$ such that $\|\widehat{\bu}^\star -  \widehat{\bu} \|_{\bL^\infty} +\|\widehat{\theta}^\star - \widehat{\theta}\|_{L^\infty} <\delta$.
			
		\end{theorem}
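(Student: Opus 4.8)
The plan is to reduce the asserted growth to the strict positivity of the scalar coefficient multiplying $\|\widehat{\bv}^\star\|_{\bL^2}^2+\|\widehat{\vartheta}^\star\|_{L^2}^2$ once the convective terms in $J_{\bzeta}''$ have been absorbed, and then to transfer that growth to the genuine state difference through the perturbed analogue of Lemma~\ref{lemma:supp2}. First I would write out the perturbed second variation at $\widehat{\bro}$ in the direction $\bro-\widehat{\bro}$; recalling $\beta_1=\beta_2=0$ and setting $(\widehat{\bv}^\star,\widehat{\vartheta}^\star)=\widehat{\mathcal{S}}'(\widehat{\bro})(\bro-\widehat{\bro})$ and $(\widehat{\bw},\widehat{\Psi})=\widehat{\mathcal{D}}(\widehat{\bro})$,
$$J_{\bzeta}''(\widehat{\bro})(\bro-\widehat{\bro})^2=\alpha_1\|\widehat{\bv}^\star\|_{\bL^2}^2+\alpha_2\|\widehat{\vartheta}^\star\|_{L^2}^2-2((\widehat{\bv}^\star\cdot\nabla)\widehat{\bv}^\star,\widehat{\bw})_Q-2(\widehat{\bv}^\star\cdot\nabla\widehat{\vartheta}^\star,\widehat{\Psi})_Q.$$
Since $\widehat{\bv}^\star$ is divergence free and vanishes on $\Sigma$, the antisymmetry of the trilinear forms gives $((\widehat{\bv}^\star\cdot\nabla)\widehat{\bv}^\star,\widehat{\bw})_Q=-((\widehat{\bv}^\star\cdot\nabla)\widehat{\bw},\widehat{\bv}^\star)_Q$ and $(\widehat{\bv}^\star\cdot\nabla\widehat{\vartheta}^\star,\widehat{\Psi})_Q=-(\widehat{\bv}^\star\cdot\nabla\widehat{\Psi},\widehat{\vartheta}^\star)_Q$; combined with Hölder's and Young's inequalities (the embedding $\bW^{2,1}_{s,\sigma}\times W^{2,1}_s\hookrightarrow C(\overline{I};C^1(\overline{\Omega})^2\times C^1(\overline{\Omega}))$ for $s>4$ making $\|\nabla\widehat{\bw}\|_{\bL^\infty}+\|\nabla\widehat{\Psi}\|_{L^\infty}$ finite, as in \eqref{uniboundadj}) this yields
$$J_{\bzeta}''(\widehat{\bro})(\bro-\widehat{\bro})^2\ge\Big(\min\{\alpha_1,\alpha_2\}-2\big(\|\nabla\widehat{\bw}\|_{\bL^\infty}+\|\nabla\widehat{\Psi}\|_{L^\infty}\big)\Big)\big(\|\widehat{\bv}^\star\|_{\bL^2}^2+\|\widehat{\vartheta}^\star\|_{L^2}^2\big).$$

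The core of the argument, and the step I expect to be the main obstacle, is to show that for $\bzeta$ small the bracketed coefficient stays bounded away from zero. By Theorem~\ref{suffgro2} the tracking hypothesis \eqref{trackingclose} yields \eqref{equation:presecb1}, that is $\min\{\alpha_1,\alpha_2\}-2(\|\nabla\bw^\star\|_{\bL^\infty}+\|\nabla\Psi^\star\|_{L^\infty})\ge\widetilde{c}$ for $(\bw^\star,\Psi^\star)=\mathcal{D}(\bro^\star)$, so it suffices to make $\|\nabla(\widehat{\bw}-\bw^\star)\|_{\bL^\infty}+\|\nabla(\widehat{\Psi}-\Psi^\star)\|_{L^\infty}$ smaller than $\widetilde{c}/4$. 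I would split $\widehat{\bw}-\bw^\star=(\widehat{\mathcal{D}}(\widehat{\bro})-\mathcal{D}(\widehat{\bro}))+(\mathcal{D}(\widehat{\bro})-\mathcal{D}(\bro^\star))$, and likewise for $\widehat{\Psi}-\Psi^\star$. The second difference is exactly the object estimated in Corollary~\ref{corollary:statsolutions} (applied at $\bro=\widehat{\bro}$, using $\bzeta\in\digamma(\widehat{\bro})$ and $\|\overline{\bu}-\bu^\star\|_{\bL^\infty}+\|\overline{\theta}-\theta^\star\|_{L^\infty}<\alpha$ with $(\overline{\bu},\overline{\theta})=\mathcal{S}(\widehat{\bro})$), hence is $\le c\|\bzeta\|_{\mathcal{P}}^{1/5}$. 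The first difference, comparing the perturbed and unperturbed adjoint at the same control $\widehat{\bro}$, solves an adjoint system of the form \eqref{Badg1}--\eqref{Badg5} whose $\bL^5\times L^5$ right-hand side is controlled, via Lemma~\ref{estpersta} for $\widehat{\mathcal{S}}(\widehat{\bro})-\mathcal{S}(\widehat{\bro})$ and directly by $\|\bta\|_{\bL^5}+\|\eta\|_{L^5}$, by $c\|\bzeta\|_{\mathcal{P}}$; applying Theorem~\ref{theorem:adjointLP} with $s=5$ and the embedding $\bW^{2,1}_{5,\sigma}\times W^{2,1}_5\hookrightarrow C(\overline{I};C^1(\overline{\Omega})^2\times C^1(\overline{\Omega}))$ then gives the $\bL^\infty$ gradient bound (this is the mechanism of Lemmata~\ref{lemma:adjstability} and~\ref{lemma:adjperest} pushed to the higher integrability needed for $C^1$ control). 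Both contributions tend to $0$ as $\|\bzeta\|_{\mathcal{P}}\to0$, so choosing $\bzeta$ small enough yields $\min\{\alpha_1,\alpha_2\}-2(\|\nabla\widehat{\bw}\|_{\bL^\infty}+\|\nabla\widehat{\Psi}\|_{L^\infty})\ge\widetilde{c}/2$, and therefore $J_{\bzeta}''(\widehat{\bro})(\bro-\widehat{\bro})^2\ge\frac{\widetilde{c}}{2}(\|\widehat{\bv}^\star\|_{\bL^2}^2+\|\widehat{\vartheta}^\star\|_{L^2}^2)$.

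Finally I would transfer the growth from the linearized states to the genuine state difference. By Lemma~\ref{lemma:supp2per}, applied with base point $\widehat{\bro}$ and test control $\bro$, there is $\delta>0$ such that $\|\widehat{\bu}-\widehat{\bu}^\star\|_{\bL^\infty}+\|\widehat{\theta}-\widehat{\theta}^\star\|_{L^\infty}<\delta$ implies $\|\widehat{\bu}-\widehat{\bu}^\star\|_{\bL^2}+\|\widehat{\theta}-\widehat{\theta}^\star\|_{L^2}\le2(\|\widehat{\bv}^\star\|_{\bL^2}+\|\widehat{\vartheta}^\star\|_{L^2})$, where $(\widehat{\bu},\widehat{\theta})=\widehat{\mathcal{S}}(\bro)$ and $(\widehat{\bu}^\star,\widehat{\theta}^\star)=\widehat{\mathcal{S}}(\widehat{\bro})$. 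Since $(a+b)^2\ge a^2+b^2$ for $a,b\ge0$, this gives $\|\widehat{\bv}^\star\|_{\bL^2}^2+\|\widehat{\vartheta}^\star\|_{L^2}^2\ge\frac{1}{8}(\|\widehat{\bu}-\widehat{\bu}^\star\|_{\bL^2}^2+\|\widehat{\theta}-\widehat{\theta}^\star\|_{L^2}^2)$, and combining with the preceding lower bound establishes \eqref{equation:growthper} with $c=\widetilde{c}/16$. The delicate point throughout is the uniform-in-$\bzeta$ $C^1$ control of the adjoint difference, on which the positivity of the coefficient, and hence the entire growth estimate, depends; the remaining steps merely reassemble estimates already proved in Section~\ref{sec4} and the first part of Section~\ref{sec5}.
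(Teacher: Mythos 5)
Your proposal is correct and follows essentially the same route as the paper: both arguments reduce the growth to the positivity of $\min\{\alpha_1,\alpha_2\}-2(\|\nabla\bw^\star\|_{\bL^\infty}+\|\nabla\Psi^\star\|_{L^\infty})\ge\widetilde{c}$ coming from \eqref{trackingclose}, control the adjoint-gradient discrepancy by splitting through $\mathcal{D}(\widehat{\bro})$ and invoking Lemma~\ref{lemma:adjperest} together with Corollary~\ref{corollary:statsolutions}, and finish with Lemma~\ref{lemma:supp2per}. The only (cosmetic) difference is that you absorb the trilinear terms using the perturbed adjoint $\widehat{\mathcal{D}}(\widehat{\bro})$ directly and then perturb the coefficient, whereas the paper adds and subtracts $\mathcal{D}(\bro^\star)$ inside the trilinear forms first; your explicit remark that the $\bL^\infty$ gradient bound requires running the adjoint estimate at integrability $s>4$ rather than the $W^{2,1}_2$ level of Lemma~\ref{lemma:adjperest} is a welcome precision.
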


		\begin{proof}%[Proof of {\Cref{stabiltyofthesec}}]
			First, if condition \eqref{trackingclose} is fulfilled, there exist a positive constant $\widetilde{c}$, such that the control $\bro^\star$ satisfies
			\begin{equation}\label{equation:secb1}
				\min\{\alpha_1,\alpha_2\}-2\left(\|\nabla \bw^\star\|_{\bL^\infty} + \|\nabla \Psi^\star\|_{L^\infty} \right) \ge \widetilde{c},
			\end{equation}
			where $(\bw^\star,\Psi^\star) = \mathcal{D}(\bro^\star)$. Furthermore, we infer from the proof of \Cref{suffgro2} that \eqref{equation:secb1} implies
			\begin{equation}\label{equation:grothsecglo}
				J''(\bro^\star)(\bro-\bro^\star)^2\geq  c\Big ( \|\bu-  \bu^\star \|_{\bL^2}^2 +\|\theta - \theta^\star\|_{L^2}^2  \Big),
			\end{equation}
			whenever $\|\bu-  \bu^\star \|_{\bL^\infty} +\|\theta - \theta^\star\|_{L^\infty} < \alpha_1$ for some $\alpha_1 >0$, where $(\bu,\theta) = \mathcal{S}(\bro)$. This also implies that $\bro^\star$ is a strong local minimizer of \eqref{optcon}.
			%By Theorem \ref{thm:stabstate}, \eqref{equation:secb1} implies stability of the state and the adjoint state.
			Now to prove \eqref{equation:growthper}, let us first denote as $(\widehat{\bv}^\star,\widehat{\vartheta}^\star) = \widehat{\mathcal{S}}'(\widehat{\bro})(\bro - \widehat{\bro})$ and $(\widehat{\bw}^\star,\widehat{\Psi}^\star)\in \widehat{\mathcal{D}}(\widehat{\bro})$, we thus estimate the second variation of the perturbed problem below as follows:
			\begin{equation*}
				\begin{aligned}
					J_{\bzeta}''(\widehat{\bro})(\bro-\widehat{\bro})^2 &= \alpha_1 \|\widehat{\bv}^{\star} \|_{\bL^2}^2 + \alpha_2\|\widehat{\vartheta}^{\star}  \|_{L^2}^2  - 2( (\widehat{\bv}^{\star}\cdot\nabla)\widehat{\bv}^{\star}, \widehat{\bw}^{\star} )_Q - 2( \widehat{\bv}^{\star}\cdot\nabla \widehat{\vartheta}^{\star}, \widehat{\Psi}^{\star})_Q\\
					&= \alpha_1 \|\widehat{\bv}^{\star} \|_{\bL^2}^2 + \alpha_2\|\widehat{\vartheta}^{\star}  \|_{L^2}^2  - 2( (\widehat{\bv}^{\star}\cdot\nabla)\widehat{\bv}^{\star}, {\bw}^{\star} )_Q - 2( \widehat{\bv}^{\star}\cdot\nabla \widehat{\vartheta}^{\star}, {\Psi}^{\star})_Q\\
					&\quad - 2( (\widehat{\bv}^{\star}\cdot\nabla)\widehat{\bv}^{\star}, \widehat{\bw}^{\star} - {\bw}^{\star} )_Q - 2( \widehat{\bv}^{\star}\cdot\nabla \widehat{\vartheta}^{\star}, \widehat{\Psi}^{\star} - {\Psi}^{\star})_Q\\
					&\ge \big(\min\{\alpha_1,\alpha_2\}-2\left(\|\nabla \bw^\star\|_{\bL^\infty} + \|\nabla \Psi^\star\|_{L^\infty} \right) \big)\big(\|\widehat{\bv}^{\star} \|_{\bL^2}^2 + \|\widehat{\vartheta}^{\star}\|_{L^2}^2 \big)\\
					&\quad - 2\left(\|\nabla(\widehat{\bw}^\star - \bw^\star)\|_{\bL^\infty} + \|\nabla(\widehat{\Psi}^\star  - \Psi^\star)\|_{L^\infty} \right)\big(\|\widehat{\bv}^{\star} \|_{\bL^2}^2 + \|\widehat{\vartheta}^{\star}\|_{L^2}^2 \big)
				\end{aligned}
			\end{equation*}
			Denoting by $(\overline{\bw},\overline{\Psi}) = \mathcal{D}(\widehat{\bro})$, from \Cref{lemma:adjperest} and \Cref{corollary:statsolutions}  we get
			\begin{align*}
				&\|\nabla(\widehat{\bw}^\star - \bw^\star)\|_{\bL^\infty} + \|\nabla(\widehat{\Psi}^\star  - \Psi^\star)\|_{L^\infty}\\
				&\le \|\nabla(\widehat{\bw}^\star - \overline{\bw})\|_{\bL^\infty} + \|\nabla(\widehat{\Psi}^\star  - \overline{\Psi})\|_{L^\infty} + \|\nabla(\overline{\bw} - \bw^\star)\|_{\bL^\infty} + \|\nabla(\overline{\Psi}  - \Psi^\star)\|_{L^\infty}\\
				& \le c\big( \| \bzeta \|_{\mathcal{P}} + \| \bzeta \|_{\mathcal{P}}^{1/5}\big).
			\end{align*}
			Therefore, by utilizing \Cref{lemma:supp2per} and by choosing $\bzeta$ small enough such that $\| \bzeta \|_{\mathcal{P}} + \| \bzeta \|_{\mathcal{P}}^{1/5} < \widetilde{c}$, we get the desired estimate. 
		\end{proof}

		It is clear that the constant $c>0$ in \eqref{equation:growthper} holds for all $\bzeta$ small enough such that $\| \bzeta \|_{\mathcal{P}} + \| \bzeta \|_{\mathcal{P}}^{1/5} < \widetilde{c}$. As a direct consequence of Theorem \ref{stabiltyofthesec} we have the following result, which highlights the optimality of $\widehat{\bro}\in\mathcal{U}$ and uniform growth.
		\begin{corollary}
			Suppose that the assumptions in \Cref{stabiltyofthesec} hold. Then there exist constants $\delta>0$ and $\widehat{c}>0$, which is independent on $\bzeta$, such that
			\begin{equation}
				J_{\bzeta}(\bro)-J_{\bzeta}(\widehat{\bro})\geq \widehat{c}  \Big(\|\widehat{\bu} -  \widehat{\bu}^\star \|_{\bL^2}^2 +\|\widehat{\theta} - \widehat{\theta}^\star\|_{L^2}^2  \Big)
			\end{equation}   and all $\bro\in \mathcal{U}$ such that $\|\widehat{\bu} -  \widehat{\bu}^\star \|_{\bL^\infty} +\|\widehat{\theta} - \widehat{\theta}^\star\|_{L^\infty} <\delta$, where $(\widehat{\bu}^\star,\widehat{\theta}^\star) = \widehat{S}(\widehat{\bro})$ and $(\widehat{\bu},\widehat{\theta}) = \widehat{S}({\bro})$. This shows that $\widehat{\bro}\in\mathcal{U}$ is a strong local minimizer of \eqref{peropticon}.
		\end{corollary}
		\begin{proof}
			%Let us take a $\bro^{\star,\zeta}$ which is a solution of the first order necessary optimality condition of \eqref{peropticon} with $\|\zeta\|_{\mathcal{P}}<\kappa$.
			We apply Taylor's theorem and Theorem \ref{stabiltyofthesec} to infer
			\begin{equation*}
				\begin{aligned}
					&J_{\bzeta}(\bro)-J_{\bzeta}(\widehat{\bro}) =J_{\bzeta}'(\widehat{\bro})(\bro-\widehat{\bro})+\frac{1}{2}J''_{\bzeta}(\widehat{\bro}_t)(\bro-\bro^{\star,\zeta})^2\\
					&\geq J_{\bzeta}'(\widehat{\bro})(\bro-\widehat{\bro})+\frac{1}{2}J''_{\bzeta}(\widehat{\bro})(\bro-\widehat{\bro})^2 -\frac{1}{2}\Big\vert J''_{\bzeta}(\widehat{\bro}_t)(\bro-\widehat{\bro})^2-J''_{\bzeta}(\widehat{\bro})(\bro-\widehat{\bro})^2\Big\vert\\
					&\geq  c\Big ( \|\widehat{\bu} -  \widehat{\bu}^{\star} \|_{\bL^2}^2 +\|\widehat{\theta} -\widehat{\theta}^{\star}\|_{L^2}^2  \Big) -\frac{1}{2}\Big\vert J''_{\bzeta}(\widehat{\bro}_t)(\bro-\widehat{\bro})^2-J''_{\bzeta}(\widehat{\bro})(\bro-\widehat{\bro})^2\Big\vert.
				\end{aligned}
			\end{equation*}
			where $\widehat{\bro}_t = \widehat{\bro} + t(\bro-\widehat{\bro})$ for some $t\in (0,1)$ and $c>0$ is as in Theorem \ref{stabiltyofthesec}. Using arguments as in the proof of \Cref{secestc} and \Cref{secests}, the last term can be estimated by
			\begin{equation}
				\begin{aligned}
					\frac{1}{2}\Big\vert J''_{\bzeta}(\widehat{\bro}_t)(\bro-\widehat{\bro})^2-J''_{\bzeta}(\widehat{\bro})(\bro-\widehat{\bro})^2\Big\vert \leq \varepsilon  (\|\widehat{\bu} -  \widehat{\bu}^{\star} \|_{\bL^2}^2 +\|\widehat{\theta} -\widehat{\theta}^{\star}\|_{L^2}^2),
				\end{aligned}
			\end{equation}
			for all $\bro \in \mathcal{U}$ with $\|\widehat{\bu} -  \widehat{\bu}^{\star} \|_{\bL^\infty} +\|\widehat{\theta} -\widehat{\theta}^{\star}\|_{L^\infty} <\alpha$,
			where $\varepsilon>0$ can be chosen arbitrarily small and independent of $\bzeta$.
			Thus we can select a uniform growth constant $\widehat {c}:=c -\varepsilon>0$.
		\end{proof}

\begin{remark}
The results in this section rely on \eqref{trackingclose} which already implies global growth of the second variation, therefore the restriction of the second-order condition to the extended critical cone is not needed.
\end{remark}

\section{Conclusion}\label{conclusion}

In this paper, we studied a tracking-type optimal control problem subject to the Boussinesq system and the controls appear distributively both on the continuity and heat equations which satisfy a box constraint. Since the objective functional contains no regularization for the controls, the solution is expected to be of bang-bang type. We provided first order necessary and second order sufficient conditions for the optimal control problem. The main objective in this paper was to establish the stability of the problem with respect to several perturbations: i.) linear perturbations on both the objective functional and the state equations; ii.) nonlinear perturbations in the form of the Tikhonov regularization and perturbation on the desired data.

Aside from the said stability results, we also provided the $L^p$ regularity of the solutions of the Boussinesq system, its linearization and a corresponding adjoint system, which -- as far as we are aware -- is a novelty. Another novelty that we would want to highlight is the stability of the growth of the second-order derivative of the objective functional with respect to sufficiently small perturbations which does not depend on the extended critical cone of critical directions for the optimal bang-bang control.

\end{document}